\tikzset{
    >=stealth,
    every picture/.style={thick},
    graphs/every graph/.style={empty nodes},
}
\tikzstyle{vertex}=[
\tikzstyle{printersafe}=[decoration={snake,amplitude=0pt}]
\newcommand{\Aut}{\operatorname{Aut}}
\newcommand{\aff}{\operatorname{aff}}
\newcommand{\gr}{\operatorname{gr}}
\newcommand{\PDiv}{\operatorname{PDiv}}
\newcommand{\WDiv}{\operatorname{WDiv}}
\newcommand{\Pic}{\operatorname{Pic}}
\newcommand{\reg}{\operatorname{reg}}
\newcommand{\Cl}{\operatorname{Cl}}
\newcommand{\id}{\operatorname{id}}
\newcommand{\loc}{\operatorname{loc}}
\newcommand{\supp}{\operatorname{supp}}
\newcommand{\Spec}{\operatorname{Spec}}
\newcommand{\divv}{\operatorname{div}}
\newcommand{\Cox}[1]{\mathcal{R}_{#1}}
\newcommand{\pp}{\mathbb{P}}
\newcommand{\qq}{\mathbb{Q}}
\newcommand{\zz}{\mathbb{Z}}
\newcommand{\nn}{\mathbb{N}}
\newcommand{\cc}{\mathbb{C}}
  \newtheorem{introthm}{Theorem}
  \newtheorem{theorem}{Theorem}[section]
  \newtheorem{lemma}[theorem]{Lemma}
  \newtheorem{proposition}[theorem]{Proposition}
  \newtheorem{corollary}[theorem]{Corollary}
  \newtheorem{notation}[theorem]{Notation}
  \newtheorem{definition}[theorem]{Definition}
  \newtheorem{example}[theorem]{Example}
\newtheorem{remark}[theorem]{Remark}
\theoremstyle{remark}
\numberwithin{equation}{section}
\begin{document}

\title[Iteration of Cox rings of klt singularities]{Iteration of Cox rings of klt singularities}

\author[L.~Braun]{Lukas Braun}
\address{Mathematisches Institut, Albert-Ludwigs-Universit\"at Freiburg, Ernst-Zermelo-Strasse 1, 79104 Freiburg im Breisgau, Germany}
\email{lukas.braun@math.uni-freiburg.de}

\subjclass[2020]{Primary 14B05, 
Secondary 14M25, 14F35.}

\author[J.~Moraga]{Joaqu\'in Moraga}
\address{Department of Mathematics, Princeton University, Fine Hall, Washington Road, Princeton, NJ 08544-1000, USA
}
\email{jmoraga@princeton.edu}
\maketitle

\begin{abstract}
In this article, we study the iteration of Cox rings of klt singularities (and Fano varieties) 
from a topological perspective.
Given a klt singularity $(X,\Delta;x)$,
we define the iteration of Cox rings of $(X,\Delta;x)$.
The first result of this article
is that the iteration of Cox rings ${\rm Cox}^{(k)}(X,\Delta;x)$ of a klt singularity 
stabilizes for $k$ large enough. 
The second result is a boundedness one, 
we prove that for a $n$-dimensional klt singularity $(X,\Delta;x)$ the iteration of Cox rings  stabilizes for $k\geq c(n)$,
where $c(n)$ only depends on $n$.
Then, we use Cox rings to establish the existence of a simply connected factorial canonical cover (or scfc cover) of a klt singularity.
The scfc cover generalizes both, the universal cover and the iteration of Cox rings.
We prove that the scfc cover dominates any sequence of quasi-\'etale finite covers and reductive abelian quasi-torsors of the singularity.
We characterize when the iteration of Cox rings is smooth and when the scfc cover is smooth. We also characterize when the spectrum of the iteration coincides with the scfc cover. 
Finally, we give a complete description of the regional fundamental group,
the iteration of Cox rings,
and the scfc cover of klt singularities of complexity one.
Analogous versions of all our theorems are also proved for Fano type morphisms.
To extend the results to this setting,
we show that the Jordan property holds
for the regional fundamental group of Fano type morphisms.
\end{abstract}

\setcounter{tocdepth}{1}
\tableofcontents

\section{Introduction}

The Cox ring ${\rm Cox}(X)$ of an algebraic variety $X$ captures
the geometry of the variety and all the line bundles on it~\cite{ADHL15}.
This ring is also known as universal torsor in arithmetic geometry~\cite{CTS76,CTS77}.
The Cox ring construction generalizes the classic description
of toric varieties as quotients of (big open subsets of) affine spaces~\cite{Cox95}.
Whenever the $\cc$-algebra ${\rm Cox}(X)$
is finitely generated, it controls all birational 
models of $X$ via GIT~\cite{HK00}.
In this case, we say that $X$ is a {\em Mori dream space}, 
since the variety $X$ behaves optimally with respect
to the minimal model program (also known as Mori program).
The use of Cox rings to study varieties 
has become a standard technique in algebraic geometry.
An explicit presentation of the Cox ring
in terms of generators and relations often enlightens 
the geometry of the variety.
In this direction, there are several results on 
weak del Pezzo surfaces~\cite{BP02,HT02}.
Many results about Cox rings have been obtained for 
K3 surfaces~\cite{AHL10,ACL21}.
In higher dimensions, the Cox rings of some Fano manifolds 
have been described explicitly~\cite{DHHKL15,HKL16,HLM19}.
The Cox rings of certain Moduli spaces are considered in~\cite{Cas09,MR17,BM17}. 
The study of Cox rings has also become a central topic
in $\mathbb{T}$-varieties~\cite{AH06,AIPSV12}, especially in the case of {\em complexity one}, i.e., 
$n$-dimensional algebraic varieties with an effective action
of a $(n-1)$-dimensional torus~\cite{HaSu10, AP12, BHHN16}.
In this case, the Cox ring can be described combinatorially.
More generally, horospherical varieties can be described using Cox rings~\cite{LT17,Vez20}.
In many cases, computations of intersection theory can be carried out in the Cox ring of an algebraic variety.
The most general setting in which Cox rings are known to be well-defined is algebraic stacks~\cite{HM15, HMT20}. In the present work, we will mostly deal with the class of integral, noetherian, normal schemes~\cite[Sec 2.3]{HMT20}.
We refer the reader to~\cite{LV09,ADHL15} for a systematic study of Cox rings.

Cox rings have also been used to study singularities.
In this case, the definition of the Cox ring is
often applied to a certain resolution of singularities. 
In~\cite{FGL11}, the authors study the Cox ring of the minimal resolution of a surface Du Val singularity.
In~\cite{Don16}, the author provides two different descriptions of the Cox ring of the minimal resolution of a quotient singularity.
Further results have been obtained towards the computation of the Cox ring of some minimal (or crepant) resolution of singularities~\cite{DG17,DK17,Gra18}.
In~\cite{ABHW18}, a slightly different approach to studying Kawamata log terminal (klt) singularities via Cox rings is proposed by the authors.
Instead of looking at the Cox ring of a resolution of singularities, 
the authors use the definition of Cox ring on the germ itself.
Then, if possible, this process is iterated to simplify the singularity (and possibly, increasing the dimension).
This construction generalizes the presentation 
of surface klt singularities as quotients of factorial canonical singularities by solvable finite groups.
In~\cite{ABHW18}, the iteration of Cox rings is performed for singularities of complexity one. The iteration has at most four steps and can be read off directly from the first Cox ring.
Furthermore, the last variety in this sequence, the so-called \emph{master Cox ring},
is factorial and it can be listed explicitly.
In~\cite{HW18}, the authors characterize all varieties with a torus action of complexity one that admit a finite iteration of Cox rings. In~\cite{Gag19}, it is shown that for spherical varieties, the iteration of Cox rings has at most two steps. More generally, in the works~\cite{Vez20,Vez20a}, Vezier considers the iteration of Cox rings for $G$-varieties of complexity one and determines bounds on the number of iterations.

In order to define an iteration of Cox rings,
we must check that (the spectrum of) the Cox ring ${\rm Cox}(X)$ 
of our variety $X$ is itself a Mori dream space.
It is known that Fano type varieties are a special class
of Mori dream spaces~\cite{BCHM10}.
Furthermore, the Cox ring of a Fano type variety
is an affine Gorenstein canonical quasi-cone~\cite{GOST15, Bro13, Bra19}. In particular, it is an affine model of a klt singularity.
A klt singularity in turn is a local version
of a Fano type variety.
Indeed, a klt singularity is a relative Mori dream space over itself, i.e., when considering the identity as the structure morphism.
Thus, it is natural to iterate the Cox construction for Fano type varieties, or more generally, for klt singularities.
The first author made this observation in~\cite{Bra19}, where he proves
the existence and termination of the iteration of Cox rings
for Fano type varieties and klt quasi-cones.

\begin{introthm}
[Cf.~\cite{Bra19}]
\label{introthm:braun}
Let $X$ be a 
Fano type variety.
Then, for each $k\geq 0$ the $k$-th iteration of Cox rings
${\rm Cox}^{(k)}(X)$ exists.
Furthermore, the iteration stabilizes for $k$ large enough.
\end{introthm}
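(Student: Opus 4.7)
The plan has two parts. First, I would establish by induction on $k$ that each iterate ${\rm Cox}^{(k)}(X)$ is of Fano type so that the next Cox ring is well defined. The base case is the hypothesis. For the inductive step, $X$ of Fano type is a Mori dream space by~\cite{BCHM10}, so $\Cl(X)$ is finitely generated and ${\rm Cox}(X)$ is a finitely generated $\cc$-algebra. By~\cite{GOST15, Bro13, Bra19}, $\Spec\,{\rm Cox}(X)$ is an affine Gorenstein canonical klt quasi-cone, and in particular a Fano type variety over its vertex. This structure is exactly what is needed to form the next Cox ring, closing the induction.

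Second, for termination, I would use the divisor class group as a monotonically decreasing invariant. When $\Cl({\rm Cox}^{(k)}(X))=0$, the ring ${\rm Cox}^{(k)}(X)$ is factorial and the Cox construction returns the same ring, so the iteration stabilizes. In general, the free part of $\Cl$ is eliminated by a single Cox step, while torsion classes correspond to abelian quasi-\'etale covers of the relevant klt quasi-cone. Iterating then amounts to climbing a tower of such covers, and to conclude termination one needs that this tower is finite. This is essentially the finiteness of the (regional) algebraic fundamental group of a klt singularity.

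The main obstacle is the torsion step. While the free rank drops cleanly under one Cox operation, ruling out an infinite tower of torsion quasi-\'etale covers is not automatic and requires a genuine finiteness theorem for such covers of klt singularities. This is precisely the kind of deep local finiteness (Jordan property, finiteness of the regional fundamental group) that sits at the center of the present paper, and any proof of the theorem must invoke an input of this type.
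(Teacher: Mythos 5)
Your proposal is correct and follows essentially the same route as the paper (which states Theorem~\ref{introthm:braun} with a citation to~\cite{Bra19} and reproves the more general relative version as Theorem~\ref{thm:bounded-iteration}): existence by showing each iterate remains of Fano type so that the next Cox ring is finitely generated, and termination by observing via Lemma~\ref{le:CoxCox} and Remark~\ref{rem:fin-Cox-covers} that the iteration induces a tower of finite abelian quasi-\'etale covers that must stabilize because the regional fundamental group is finite. The phrase that the free part of $\Cl$ is ``eliminated by a single Cox step'' is a mild imprecision (the cleaner statement is that the iteration terminates exactly when some $\Cl(\overline{X}_k)$ becomes torsion-free), but the substantive content --- that the obstruction is torsion in the class group, that it is resolved by finiteness of $\pi_1^{\rm reg}$, and that the free/torsion split corresponds to the identity component and component group of the characteristic quasi-torus --- matches the paper's proof.
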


We recall that the iteration of Cox rings can lead to two different outcomes: it could stop with a factorial master Cox ring or an affine variety which is not a Mori dream space.
Furthermore, it could lead to an infinite sequence of Cox rings.
In this article, we recover Theorem~\ref{introthm:braun} for klt singularities in the general setting.
This means that the iteration of Cox rings always exists for klt singularities
and terminates after finitely many iterations.
We also generalize the concept of Cox rings for log pairs, 
leading to our first result.

\begin{introthm}\label{introthm2-existence-iteration-local}
Let $(X,\Delta;x)$ be a 
Kawamata log terminal singularity.
Then, for each $k\geq 0$ the $k$-th iteration of Cox rings
${\rm Cox}^{(k)}(X,\Delta;x)$ exists.
Furthermore, the iteration stabilizes for $k$ large enough.
\end{introthm}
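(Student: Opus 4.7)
The plan is to adapt the strategy of Theorem~\ref{introthm:braun} from Fano type varieties to germs of klt singularities, with the extra ingredient of recent results on the regional fundamental group. I would organize the proof into two parts: existence at each step, and termination of the sequence.

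For existence, the first task is to show that the local class group $\operatorname{Cl}(X,x)$ is finitely generated. The torsion subgroup is controlled by the abelianization of the regional fundamental group $\pi_1^{\rm reg}(X,x)$, which is finite for every klt singularity. The free part is the local Picard rank, which is finite because a klt germ admits, via a relative BCHM argument, an affine log Fano model over its spectrum with finitely generated relative Picard group. Once $\operatorname{Cl}(X,x)$ is finitely generated, the Cox sheaf of divisorial algebras is well defined, and its finite generation as an $\mathcal{O}_{X,x}$-algebra again follows from relative BCHM. To iterate, one must check that the distinguished preimage point $x'$ on $X' := \operatorname{Spec}\,{\rm Cox}(X,\Delta;x)$, together with the transformed boundary $\Delta'$, is again a klt singularity; this I would obtain by interpreting $X' \to X$ as a good quotient by the characteristic quasi-torus (composed with a quasi-\'etale cover coming from the torsion part of $\operatorname{Cl}$) and applying the standard preservation of klt-ness under such quotients, as in the arguments underlying~\cite{GOST15,Bra19}. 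This gives a well-defined inductive definition of ${\rm Cox}^{(k)}(X,\Delta;x)$.

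For termination, the strategy is to identify a strictly decreasing invariant along the iteration. To each $(X^{(k)},\Delta^{(k)};x^{(k)})$ I would attach the lexicographic pair $(\rho_{\rm loc},|\operatorname{Cl}(X^{(k)},x^{(k)})_{\rm tor}|) \in \mathbb{Z}_{\ge 0}^2$, where $\rho_{\rm loc}$ is the local Picard rank at $x^{(k)}$. A Cox ring step kills the free part of the class group at the distinguished point, and a quasi-\'etale cover step (corresponding to the torsion part, in the spirit of the iteration of~\cite{ABHW18}) strictly decreases the order of the finite torsion group, unless one has already reached a factorial klt singularity, in which case the sequence stabilizes. Since both entries are non-negative integers, finite descent forces termination.

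The main obstacle is to control the transformation of the class group under a single Cox iteration, and in particular to rule out that new torsion or new divisorial generators appear at $x^{(k+1)}$ which would indefinitely sustain the iteration. The technical heart of the argument is thus the comparison between $\operatorname{Cl}(X^{(k+1)},x^{(k+1)})$ and the grading group used to build ${\rm Cox}^{(k+1)}$, which must be carried out carefully using the characteristic quasi-torus action and the divisorial structure of the germ. Once this comparison is in place, the finiteness of $\pi_1^{\rm reg}$ and of the local Picard rank for klt singularities bound the invariant at every step, and lexicographic descent closes the argument.
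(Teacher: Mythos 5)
Your existence argument is broadly aligned with the paper's: finite generation of the local class group, finite generation of the Cox ring via relative BCHM, and preservation of klt type under the quasi-torus quotient (the paper carries this out in Corollary~\ref{cor:klt-mds} and Theorem~\ref{thm:klt-Cox-ring}). However, your termination argument via lexicographic descent on $(\rho_{\rm loc},|\Cl_{\rm tor}|)$ has a genuine gap: neither coordinate of your pair decreases monotonically along the iteration.

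The torsion order can strictly \emph{increase} from one step to the next. By Corollary~\ref{cor:abelianization}, the torsion of the local class group at each stage is the abelianization of the current regional fundamental group, and a single abelian-cover step replaces $G := \pi_1^{\rm reg}$ by its commutator subgroup $[G,G]$. The sequence of orders $|G^{\rm ab}|$, $|[G,G]^{\rm ab}|$, $|[[G,G],[G,G]]^{\rm ab}|$, $\ldots$ is not monotone in general: taking $G = S_4$ gives derived series $S_4 \supset A_4 \supset V_4 \supset 1$, with abelianization orders $2, 3, 4, 1$. So the torsion coordinate can go up. Similarly, the free rank of $\Cl(X_1,\Delta_1;x_1)$ after passing to the finite abelian cover need not be bounded by the free rank before; there is no \emph{a priori} control on the number of new exceptional or locally non-principal divisors that appear on the cover. (Note also that the local \emph{Picard} rank $\rho_{\rm loc}$ is identically zero for a gr-local ring by Lemma~\ref{lem:pic0}, so as stated that coordinate is vacuous; you presumably meant the free rank of the class group, but that too can grow.) The correct termination mechanism, which the paper uses in the proof of Theorem~\ref{thm:bounded-iteration}, does not look at a single step: it observes that the iteration builds a tower of finite quasi-\'etale Galois covers $(X_k,\Delta_k) \to (X,\Delta)$ whose \emph{total} Galois group $S_k$ has strictly increasing order as long as the torsion is nontrivial (Corollary~\ref{cor:Cox-it-solv-cover}), and this order is bounded by $|\pi_1^{\rm reg}(X,\Delta;x)|$, which is finite by \cite{Bra20}. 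That is the descent you need; your proposed pair does not supply it.
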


The main tool used to prove the above theorem is the finiteness of the regional fundamental group of  a klt singularity.

Two natural questions emanate from the two above theorems.
First, we can ask how many times we need to iterate the Cox construction before it stabilizes.
A natural way to study the iteration of Cox rings is to quotient (in each step) by the connected component of the solvable group acting on each model ${\rm Cox}^{(k)}(X,\Delta;x)$.
In this way, we obtain a sequence of finite solvable Galois covers of the starting singularity (or Fano variety). 
This method was initiated in~\cite{Bra19}.
Using the Jordan property for the regional fundamental group of klt singularities~\cite{BFMS20}, we prove that the number of iterations is bounded from above by a constant which only depends on the dimension.
This means that the iteration of Cox rings is controlled by the topology of the variety (or singularity).
The following theorem has a projective and a local version. 
For simplicity of the exposition, we just write the local version in the introduction.

\begin{introthm}\label{introthm3-bounded-iteration-local}\label{introthm:bounded-iteration}
There exists a constant $c(n)$, only depending on $n$, satisfying the following.
Let $(X,\Delta;x)$ be a $n$-dimensional
Kawamata log terminal singularity.
Then, the $k$-th iteration of Cox rings
${\rm Cox}^{(k)}(X,\Delta;x)$ stabilizes
for $k\geq c(n)$.
\end{introthm}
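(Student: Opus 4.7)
The plan is to translate the Cox iteration into a tower of finite solvable quasi-\'etale covers of $(X,\Delta;x)$ and then to bound its length via the Jordan property for the regional fundamental group established in~\cite{BFMS20}, which is a dimension-only result.

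First, I would analyze the structure of a single step. The transition ${\rm Cox}^{(k)}(X,\Delta;x) \to {\rm Cox}^{(k-1)}(X,\Delta;x)$ is a quasi-torsor under the reductive abelian group $H_k = \Spec \cc[\Cl({\rm Cox}^{(k-1)})]$, which decomposes as $H_k = T_k \times F_k$ with $T_k$ a torus and $F_k$ a finite abelian group. Quotienting ${\rm Cox}^{(k)}$ by the connected torus $T_k$ (and iterating for $j \leq k$) produces a variety $Y_k$ of the same dimension as $X$, together with a finite Galois cover $Y_k \to X$ whose Galois group $G_k$ is solvable and carries a filtration with successive quotients isomorphic to $F_1,\dots,F_k$. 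I would then verify, using standard properties of the Cox torsor on normal varieties, that each $Y_k \to X$ is quasi-\'etale, so that $G_k$ arises as a finite solvable quotient of the regional fundamental group $\pi_1^{\mathrm{reg}}(X,\Delta;x)$, which is finite by Braun's theorem (the same input that drives Theorem~\ref{introthm2-existence-iteration-local}).

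Next, I would invoke~\cite{BFMS20}: there is a constant $J(n)$, depending only on $n$, such that every finite quotient of the regional fundamental group of an $n$-dimensional klt singularity contains a normal abelian subgroup of index at most $J(n)$. For a solvable group this bounds the derived length by an explicit function $d(n)$ of $J(n)$. Since each step of the iteration with non-trivial $F_k$ contributes a new abelian layer to the derived series of $G_k$, the number of iterations with a non-trivial finite part is bounded by $d(n)$.

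The main obstacle, as I see it, is to rule out an unbounded number of ``purely toric'' iterations where $F_k$ is trivial but $T_k$ is positive-dimensional, since such steps neither enlarge $Y_k$ nor increase the derived length of $G_k$. To handle them I would argue that once the class group of the current model ${\rm Cox}^{(k-1)}$ is torsion-free, the next Cox quasi-torsor by the full Picard quasi-torus exactly kills the free part, so a purely toric step is either immediately followed by stabilization (trivial class group) or by a step that reintroduces non-trivial torsion, at which point the derived-length argument continues. Interlacing the two kinds of layers then yields a bound of the form $c(n) = O(d(n))$, depending only on $n$, which is what the theorem asserts.
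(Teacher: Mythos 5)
Your proposal takes the same overall route as the paper: encode the Cox iteration as a tower of finite solvable quasi-\'etale covers of $(X,\Delta;x)$ and bound the length of that tower by the Jordan property for the regional fundamental group of klt singularities from~\cite{BFMS20}. That is exactly the paper's strategy (Theorem~\ref{thm:bounded-iteration}), and the finiteness of $\pi_1^{\rm reg}(X,\Delta;x)$ is indeed the input driving existence of the iteration as well.

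The one place where the argument needs to be tightened is the sentence ``each step of the iteration with non-trivial $F_k$ contributes a new abelian layer to the derived series of $G_k$.'' A tower of finite abelian covers with abelian layers $F_1,\dots,F_k$ only makes $G_k$ solvable of derived length \emph{at most} $k$; it does not by itself force the tower to coincide with the derived series (compare a cyclic $2$-group of order $2^k$, which has derived length $1$ but admits a length-$k$ filtration with abelian quotients). The fact that the Cox filtration \emph{is} the derived series — equivalently, that $\Cl(X_{j-1},\Delta_{j-1})_{\rm tor}$ is the abelianization of $\pi_1^{\rm reg}(X_{j-1},\Delta_{j-1};x_{j-1})$ — is the crucial structural input; in the paper it is Corollary~\ref{cor:abelianization}, used through Corollary~\ref{cor:Cox-it-solv-cover}. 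Without it, the derived-length bound does not directly bound the number of iterations. Once that identification is in place, your derivation of the bound (a finite solvable group with a normal abelian subgroup of index at most $J(n)$ has derived length at most $\log_2 J(n)+1$) is both valid and cleaner than the paper's diagram chase, which produces the somewhat coarser constant $2\log_2(c(n))+3$.

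Finally, the discussion of ``purely toric'' steps is more complicated than necessary. Once $\Cl(\overline{X}_k,\overline{\Delta}_k)$ is torsion-free, $\overline{X}_{k+1}$ is already factorial (this is how the paper's proof begins, citing~\cite[1.4.1.5]{ADHL15}): the free part is killed in one step, and torsion cannot reappear, because by Remark~\ref{rem:fin-Cox-covers} the torsion in $\Cl(\overline{X}_{k+1})$ is detected on the abelian cover $X_{k+1}\to X_k$, which is trivial in this situation. So there is no interlacing to control — a torsion-free step is immediately followed by stabilization — and you can simply add $1$ to the derived-length bound.
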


Secondly, we can ask how (if possible) to control
the dimension of the iteration of Cox rings.
For instance, we can ask if there is any invariant of the singularity which can give an upper bound for the dimension of the master Cox ring.
Note that, in general, the iteration of Cox rings could have arbitrarily large dimension.
Indeed, the spectrum of the Cox ring of an affine toric variety of dimension $n$ and Picard rank $\rho$ is isomorphic
to the affine space $\mathbb{A}^{n+\rho}$.
On the other hand, even if the Picard rank of the singularity is bounded, it could happen that the Cox ring itself (or any of the higher iterated Cox rings) has unbounded Picard rank.
Thus, in general, the Picard rank of the initial germ does not control the dimension of the master Cox ring.
This leads to our third result in terms of the second homotopy group of the smooth locus.
The following theorem answers the above question.

\begin{introthm}
\label{introthm4-bounded-dim-it-local}
Let $n$ and $o$ be positive integers.
Let $(X,\Delta;x)$ be a n-dimensional Kawamata log terminal singularity. 
Assume that $\pi_2^{\rm reg}(X,\Delta;x) \otimes \mathbb{Q}$ has rank $o$.
Then, the master Cox ring of $(X,\Delta;x)$ has dimension at most $n+o$.
\end{introthm}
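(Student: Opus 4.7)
The plan is to track how each step of the iteration contributes to the total dimension increase, and to bound the total by $o$ via the long exact sequence of homotopy groups for the quasi-torus torsors that realize each Cox step.

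By Theorem~\ref{introthm2-existence-iteration-local}, the iteration stabilizes after finitely many steps at the master Cox ring. I would decompose each step $(Y^{(k)},\Delta^{(k)}) \rightsquigarrow (Y^{(k+1)},\Delta^{(k+1)})$ into two phases: a finite quasi-\'etale cover that kills the torsion part of $\Cl(Y^{(k)})$ (corresponding to the disconnected part of the acting quasi-torus, and on the topological side to passing to a finite cover of the smooth locus), followed by a quasi-torus torsor with connected structure group $T_k \simeq (\cc^*)^{r_k}$ of rank $r_k = \rank(\Cl(Y^{(k)})_{\rm free})$. The finite quasi-\'etale cover preserves both the dimension and the rank of the rational second homotopy group of the smooth locus (the latter is a finite-cover invariant, since finite \'etale covers of a space have the same universal cover and hence the same $\pi_2$, and removing a codimension $\geq 2$ subset from the smooth locus does not affect $\pi_2$). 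Thus $\dim{\rm Cox}^{(\infty)}(X,\Delta;x) - n = \sum_k r_k$ and the theorem reduces to proving $\sum_k r_k \leq o$.

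For each torsor phase I would restrict to the big open subset $U_k \subseteq (Y^{(k)})^{\rm reg}$ on which $(Y^{(k+1)})^{\rm reg} \to U_k$ is a genuine principal $T_k$-bundle. Its long exact sequence of homotopy groups, combined with $\pi_2(T_k)=0$, reads
\[
0 \to \pi_2((Y^{(k+1)})^{\rm reg}) \to \pi_2(U_k) \to \pi_1(T_k) \to \pi_1((Y^{(k+1)})^{\rm reg}) \to \cdots .
\]
Since $(Y^{(k)})^{\rm reg}\setminus U_k$ has complex codimension at least $2$ and hence real codimension at least $4$, one has $\pi_2(U_k) \cong \pi_2((Y^{(k)})^{\rm reg})$. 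Tensoring with $\qq$, provided that the connecting homomorphism $\pi_2(U_k)_{\qq} \to \qq^{r_k}$ is surjective, the rank of $\pi_2^{\rm reg}\otimes\qq$ strictly drops by exactly $r_k$ at this step. Telescoping over the (finitely many) iterations yields $\sum_k r_k \leq \rank(\pi_2^{\rm reg}(X,\Delta;x)\otimes\qq) = o$, which is the required bound.

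The main obstacle is the rational surjectivity of each connecting homomorphism $\pi_2(U_k)_{\qq} \to \pi_1(T_k)_{\qq}$. Via Hurewicz, this map is rationally dual to the first Chern class map $\Cl(Y^{(k)})_{\rm free}\otimes\qq \to H^2(U_k,\qq)$, and because the Cox torsor is the universal one for the class group, this Chern class map is tautological. Its injectivity would follow from the exponential sequence on $U_k$ together with the vanishing $H^1(U_k,\mathcal{O})=0$ in a sufficiently small analytic neighborhood of $x$, which is the standard rational-singularities vanishing available for klt germs. This is the klt-specific input that makes the $\pi_2$-based count sharp; the rest of the argument is a purely topological unwinding of the long exact sequences under the codimension-$\geq 2$ hypothesis.
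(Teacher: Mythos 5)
Your overall strategy -- iterating the long exact sequence of the $T_k$-torsors step-by-step and telescoping the rank drop in $\pi_2^{\rm reg}\otimes\mathbb{Q}$ -- is a genuinely different route from the paper's, which instead applies the long exact sequence \emph{once}, to the scfc cover $\widetilde{\overline{X}}\to X$ with structure group $G$ (an extension of $\pi_1^{\rm reg}(X,\Delta;x)$ by a torus $\mathbb{T}$). Because the scfc cover has \emph{trivial} regional fundamental group, the entire group $\pi_1(G)\cong\mathbb{Z}^{\dim\mathbb{T}}$ is the kernel of $\pi_1(G)\to\pi_1^{\rm reg}(\widetilde{\overline{X}})$, so surjectivity of the connecting map is immediate from exactness, with no extra geometric input. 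Your step-by-step version has to supply that surjectivity at every stage, which is the weak point of the proposal.

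The mechanism you propose for that surjectivity -- relating the connecting homomorphism to the first Chern class map and invoking $H^1(U_k,\mathcal{O}_{U_k})=0$ on the smooth locus -- has two gaps. First, the vanishing is \emph{not} standard for the smooth locus of a general klt germ: writing the local cohomology sequence $0=H^1(X,\mathcal{O})\to H^1(X^{\rm reg},\mathcal{O})\to H^2_{X^{\rm sing}}(X,\mathcal{O})$ on a small Stein neighborhood, Cohen--Macaulayness only kills $H^i_W$ for $i<\operatorname{codim} W$; when $X^{\rm sing}$ has codimension exactly $2$ (which is allowed for klt), $H^2_{X^{\rm sing}}$ can be nonzero, so the injectivity of the Chern class map does not come for free. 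Second, identifying the topological connecting map $\pi_2(U_k)\to\pi_1(T_k)$ with the (co-)dual of the Chern class map via Hurewicz requires care when $\pi_1(U_k)$ is nontrivial; rational Hurewicz compares $\pi_2\otimes\mathbb{Q}$ with $H_2$ of the \emph{universal cover}, not of $U_k$ itself, and the $\pi_1$-action on $\pi_2$ then enters.

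Both issues can be bypassed with an ingredient already in the paper and which you do not use: at each step the total coordinate space $\overline{Y^{(k)}}$ is itself a klt singularity (Theorem~\ref{thm:klt-Cox-ring}), so $\pi_1^{\rm reg}(\overline{Y^{(k)}})$ is a \emph{finite} group. Hence in the exact segment
\[
\pi_2(U_k)\;\longrightarrow\;\pi_1(T_k)\cong\mathbb{Z}^{r_k}\;\longrightarrow\;\pi_1^{\rm reg}(\overline{Y^{(k)}}),
\]
the image of $\mathbb{Z}^{r_k}$ on the right is finite, so the kernel -- which equals the image of the connecting map -- is a finite-index subgroup of $\mathbb{Z}^{r_k}$ and thus has full rank $r_k$; tensoring with $\mathbb{Q}$ gives the surjectivity directly. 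With this substitution your telescoping argument closes rigorously and is a valid alternative to the paper's single-application argument on the scfc cover, at the cost of a multi-step bookkeeping that the scfc construction packages into one fibration.
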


We will prove Theorem~\ref{introthm2-existence-iteration-local} in two different settings, for two different definitions of the iteration of Cox rings. 
We will define the iteration of Cox rings for the (Zariski) local ring and the Henselization of the local ring (i.e., the local ring in the \'etale topology) of a Kawamata log terminal singularity.
The first one will be called the {\em affine iteration}, while the second will be called the {\em Henselian iteration}.

The advantage of the affine iteration is that the outcome of the iteration is an affine klt variety with a distinguished point.
Thus, techniques of affine geometry can be applied to the master Cox ring in this case.
On the other hand, the Henselian iteration captures the local topology of the singularity.
This is the main property that we will use for our next theorem.
We prove that klt singularities admit factorial canonical simply connected covers. 
This cover can be understood as a cover that encompasses all the good properties of the universal cover and the iteration of Cox rings. 

\begin{introthm}
\label{introthm-5-existence-scf-cover}
Let $(X,\Delta;x)$ be a Kawamata log terminal singularity.
Let $X^h$ be the spectrum of the Henselization of the local ring of $X$ at $x$.
There exists a local Henselian ring $R_Y$ so that $Y={\rm Spec}(R_Y)$ satisfies:
\begin{enumerate}
    \item $Y$ is canonical factorial,
    \item $\pi_1^{\rm reg}(Y,y)$ is trivial, 
    \item $Y$ admits the action of a reductive group $G$, and 
    \item we have an isomorphism $Y/G\cong X^h$.
\end{enumerate}
Furthermore, $G$ is an extension of a solvable reductive group and $\pi_1^{\rm reg}(X,\Delta,x)$.
\end{introthm}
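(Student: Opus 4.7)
My plan is to construct $Y$ by combining two operations: the universal cover of the regular locus (a finite quasi-\'etale cover whose deck group is the regional fundamental group, known to be finite for klt germs) and the iteration of spectra of Cox rings (a tower of reductive abelian quasi-torsors, terminating in a factorial klt germ). To make the second available here, I would first establish a Henselian analogue of Theorem~\ref{introthm2-existence-iteration-local}, proved by running the same argument with the Henselization of the local ring at $x$ replacing the local ring throughout; the key inputs, namely finiteness of $\pi_1^{\rm reg}$ for klt germs and the Mori-dream-space property of relevant Fano-type contractions, are insensitive to this replacement.

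The construction then proceeds by first taking the universal cover of the regular locus of $X^h$, obtaining a finite Galois cover $\widetilde{X}^h \to X^h$ with deck group $\pi_1^{\rm reg}(X,\Delta;x)$ and $\pi_1^{\rm reg}(\widetilde{X}^h) = 1$, and then applying the Henselian Cox iteration to $\widetilde{X}^h$ to reach a factorial canonical germ $Y$. Each iterated Cox step is a reductive abelian quasi-torsor with diagonalizable structure group $\Hom(\Cl(-),\cc^*)$, so the combined Galois group $H$ of $Y \to \widetilde{X}^h$ is a reductive solvable group (an iterated extension of tori by finite abelian groups, which remains reductive and solvable in characteristic zero). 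The group $G = \operatorname{Gal}(Y/X^h)$ then fits into the extension
\[
1 \longrightarrow H \longrightarrow G \longrightarrow \pi_1^{\rm reg}(X,\Delta;x) \longrightarrow 1,
\]
and $G$ is itself reductive because extensions of reductive groups by finite groups are reductive in characteristic zero. Canonicity is preserved throughout by standard log-discrepancy formulas for pullbacks under quasi-\'etale covers and under reductive abelian quasi-torsors of a klt pair.

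The main obstacle is property (2), triviality of $\pi_1^{\rm reg}(Y,y)$. If the Henselian Cox iteration over the simply connected base $\widetilde{X}^h$ yields a germ with trivial regional fundamental group---which one expects, since Cox quasi-torsors are controlled by the class group and should not introduce new finite quasi-\'etale data in the Henselian local category---then the conclusion follows at once. If not, I would instead alternate universal-cover and Cox-iteration blocks, and use the Jordan property for $\pi_1^{\rm reg}$ of klt germs (the input underlying Theorem~\ref{introthm:bounded-iteration}) together with the Henselian version of Theorem~\ref{introthm2-existence-iteration-local} to bound the number of alternations and to verify that the combined kernel remains a reductive solvable group. Establishing this termination together with the desired extension structure is where I expect the proof to be most technical.
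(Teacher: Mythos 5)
Your construction — universal cover of the regular locus first, then the Cox ring — is the same architecture as the paper's proof of Theorem~\ref{thm:scfc-cover}, and you also correctly isolate the crux, namely item~(2): the Cox step must not reintroduce nontrivial quasi-\'etale data. But at precisely that point your argument stalls. The phrase ``which one expects, since Cox quasi-torsors are controlled by the class group and should not introduce new finite quasi-\'etale data'' is a hope, not a proof: the long exact sequence for a torus bundle only gives a \emph{surjection} $\pi_1^{\rm reg}(\overline{X}) \twoheadrightarrow \pi_1^{\rm reg}(X)$, so in principle the Cox cover could acquire an extra finite kernel $A$ even when the base is simply connected. The paper closes this in Proposition~\ref{prop:fiber-prod-fincov-and-quotpres}: it writes the exact sequence
$\mathbb{Z}^{\dim \mathbb{T}} \to \pi_1^{\rm reg}(\overline{X}) \to \pi_1^{\rm reg}(X) \to 1$,
identifies the image $A$ of $\mathbb{Z}^{\dim \mathbb{T}}$ as finite, and then uses Proposition~\ref{prop:quot-pres} to produce a monomorphism $A \hookrightarrow \Cl(\tilde{X}/Z,\tilde{\Delta})$; since the universal cover has torsion-free class group (Corollary~\ref{cor:abelianization}: torsion of $\Cl$ is the abelianization of $\pi_1^{\rm reg}$, which is trivial here), $A=1$. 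That observation is the missing content, and it is not automatic.

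The same observation also collapses the rest of your plan. Because $\Cl(\tilde{X}^h)$ is torsion-free, the characteristic quasi-torus of the Cox ring of $\tilde{X}^h$ is already a \emph{connected} torus, so a \emph{single} Cox step produces a factorial canonical germ; there is no iteration to run and no need to bound anything. Likewise, your ``fallback'' of alternating universal-cover and Cox blocks with a Jordan-property termination count is over-engineered: once you take the universal cover and invoke the torsion-free class group argument, the construction terminates in one more step by design. So the concrete gap is the missing proof that a torus quasi-torsor over a $\pi_1^{\rm reg}$-trivial klt Henselian germ is again $\pi_1^{\rm reg}$-trivial — the combination of the homotopy long exact sequence, the monomorphism $\mathbb{X}(H) \hookrightarrow \Cl$ of Proposition~\ref{prop:quot-pres}, and the torsion-freeness of $\Cl$ of the universal cover. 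Everything else in your sketch (Henselian MDS property via Theorem~\ref{thm:hen-cox}, preservation of canonicity, the short exact sequence for $G$, $G$ reductive as extension of a finite group by a torus) lines up with the paper.
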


Throughout this article, reductive groups are not assumed to be connected. In particular, a solvable reductive group is an extension of a finite solvable group by a torus.

We call the germ $(Y,y)$ constructed in Theorem~\ref{introthm-5-existence-scf-cover} the {\em simply connected factorial canonical cover} of the klt singularity, or {\em scfc} cover for short.
Note that the name of this cover is idiosyncratic since the condition on the regional fundamental group is stronger than being simply connected. However, in the context of singularities, it is natural to consider the fundamental group of the smooth locus instead of the fundamental group of the germ itself.
The scfc cover is a generalization of both; the universal cover and the iteration of Cox rings of a singularity.
Furthermore, it satisfies the universal cover of both aforementioned covers.
Our next result says that the scfc cover of a Kawamata log terminal singularity dominates any sequence of pointed abelian covers and pointed finite covers.

\begin{introthm}
\label{introthm-6-univ-scf-cover}
Let $(X,\Delta;x)$ be a Kawamata log terminal singularity.
Let $X^h$ be the spectrum of the Henselization of the local ring of $X$ at $x$.
Let $(Y,y)$ be the scfc cover of $(X,\Delta;x)$.
Let 
\[
(X^h,x) \leftarrow (X_1,x_1) \leftarrow (X_2,x_2) \leftarrow \dots \leftarrow (X_n,x_n)
\]
be a sequence of pointed finite covers and pointed abelian covers. 
Let $X_n^h$ be the spectrum of the Henselization of the local ring of $X_n$ at $x_n$.
Then, there is a quotient morphism $Y\rightarrow X_n^h$.
\end{introthm}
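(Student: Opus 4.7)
The plan is to argue by induction on the length $n$ of the chain of covers. The crucial observation, which follows directly from properties (1)--(4) of the scfc cover in Theorem~\ref{introthm-5-existence-scf-cover}, is that $(Y,y)$ admits no non-trivial pointed quasi-\'etale finite covers and no non-trivial pointed reductive abelian quasi-torsors. Granting this, the inductive step reduces to a standard pullback argument.

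For the base case $n=0$, the scfc structure morphism $Y \to X^h$ is already a quotient by $G$. For the inductive step, suppose a quotient morphism $Y \to X_k^h$ has been constructed. I would form the fiber product
\[
Y'_{k+1} \;:=\; Y \times_{X_k^h} X_{k+1}^h \longrightarrow Y
\]
and check that $Y'_{k+1} \to Y$ is again of the same type (pointed finite cover or pointed reductive abelian quasi-torsor) as $X_{k+1} \to X_k$. In the finite cover case, $Y'_{k+1}\to Y$ is a pointed quasi-\'etale cover, hence corresponds to a finite quotient of $\pi_1^{\rm reg}(Y,y)$; by property (2) this group is trivial, so the cover splits and a section provides the lift $Y \to X_{k+1}^h$. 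In the abelian case, a reductive abelian quasi-torsor over $Y$ decomposes into a finite abelian quasi-torsor (trivialized by the previous argument) and a torus quasi-torsor, which is classified, character by character, by $\Cl(Y)$. Since $R_Y$ is factorial and local Henselian by property (1), one has $\Cl(Y) = \Pic(Y) = 0$, so the torsor is trivial and again splits.

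The main obstacle will be upgrading the lifted map $Y \to X_{k+1}^h$ from a morphism to a genuine quotient morphism in the sense required, i.e.\ realizing $X_{k+1}^h$ as $Y/G_{k+1}$ for a reductive group $G_{k+1}$. For this I would exploit the description in Theorem~\ref{introthm-5-existence-scf-cover} of $G$ as an extension of a solvable reductive group by $\pi_1^{\rm reg}(X,\Delta;x)$: climbing the chain introduces at each stage either a finite Galois structure group (absorbed into the $\pi_1^{\rm reg}$ factor) or a reductive abelian structure group (absorbed into the solvable reductive factor), and one must check that these extensions remain compatible with the ambient $G$-action on $Y$. This compatibility should ultimately reduce to the universal property satisfied by the iteration of Cox rings together with the universal cover, both packaged into $Y$. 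A final technical point is to track base points carefully and to work throughout in the category of pointed Henselian schemes over $(X^h,x)$, so that the sections produced from triviality of the pulled-back cover are canonical and respect the chosen basepoint $y$.
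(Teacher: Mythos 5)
Your plan is in the right spirit — exploiting the rigidity of $Y$ coming from triviality of $\pi_1^{\rm reg}(Y,y)$ and factoriality of $R_Y$ — and the ``crucial observation'' you state is correct: $Y$ admits no non-trivial pointed finite covers and no non-trivial pointed reductive abelian quasi-torsors. However, there is a genuine gap that you yourself flag but do not resolve, and it is not a removable technicality.

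The gap is precisely the ``main obstacle'' you identify. Trivializing the pulled-back cover $Y'_{k+1} := Y \times_{X_k^h} X_{k+1}^h \to Y$ and taking a section $s\colon Y \to Y'_{k+1}$ gives a morphism $f := \pi_2 \circ s \colon Y \to X_{k+1}^h$ satisfying $\psi \circ f = \phi_k$, but there is no reason this $f$ should be a \emph{quotient} morphism — indeed it need not even be surjective. In the torus quasi-torsor case, under a trivialization $Y'_{k+1} \cong Y \times T$, the $G_k$-action becomes $g \cdot (y,t) = (gy, c(g,y)t)$ for some $T$-valued cocycle $c$; the image of the constant section $y \mapsto (y,1)$ under the quotient $\pi_2 \colon Y\times T \to X_{k+1}^h$ has the right dimension only when the cocycle $c$ is sufficiently non-degenerate, and this needs an argument that ties back to condition (3) in Definition~\ref{def:quot-pres} (characters of $T$ inject into $\Cl(X_k)$) and to the fact that $Y$ is built precisely so that $\Cl(X_k) \to \Cl(Y)$ kills everything. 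Your sketch then gestures at ``absorbing'' the new structure group into the known extension structure of $G$ and at a ``universal property,'' but this is exactly the statement being proved, so as written the argument is circular at that point.

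The paper avoids this lift-and-upgrade step entirely and proceeds in the opposite direction: rather than lifting $Y \to X_k^h$ one step at a time, it proves that the \emph{scfc cover itself is invariant along the chain}. Concretely, it passes to the universal covers $\tilde{X}_{(j)}$, uses Proposition~\ref{prop:fiber-prod-fincov-and-quotpres} to show that $\tilde{X}_{(j+1)} \to \tilde{X}_{(j)}$ is again a (torus) quasi-torsor, and then invokes Lemma~\ref{le:CoxCox} to conclude that the Cox rings of $\tilde{X}_{(j)}$ and $\tilde{X}_{(j+1)}$ coincide. Since the scfc cover of $X_{(j)}$ is by construction the total coordinate space of $\tilde{X}_{(j)}$, this shows all the scfc covers coincide, and the quotient morphism $Y \to X_n^h$ then comes \emph{for free} from Theorem~\ref{thm:scfc-cover} applied to $X_n$ — group structure and all. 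To repair your proposal you would essentially need to re-derive this invariance (or import Lemma~\ref{le:CoxCox} and Proposition~\ref{prop:fiber-prod-fincov-and-quotpres}), at which point the direct section construction becomes redundant.
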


In view of the above theorem, the scfc cover of a Kawamata log terminal singularity can be regarded as the best singularity that can be obtained from $(X,\Delta;x)$ by taking sequences of finite covers and abelian covers, 
more generally, by taking solvable-finite covers, i.e., covers by finite extensions of solvable reductive groups.
So far, we have four different covers of klt singularities (see Appendix~\ref{appendix}).
The universal cover, 
the Cox ring,
the iteration of Cox rings,
and the scfc cover.
The universal cover of a klt singularity is smooth
if and only if the singularity is the quotient of $\cc^n$ by a finite group acting linearly. 
Furthermore, we know that the Cox ring of a klt singularity is smooth if and only if the 
singularity is formally toric. 
The following theorem characterizes when the iteration of Cox rings of a klt singularity is smooth. 

\begin{introthm}
\label{introthm7-smooth-it}
Let $(X,\Delta;x)$ be a klt singularity.
Then, the following statements are equivalent:
\begin{enumerate}
\item The spectrum of the iteration of Cox rings ${\rm Cox}^{\rm it}(X,\Delta;x)$ is smooth, and
\item $(X,\Delta;x)$ is a finite quasi-\'etale solvable quotient of a toric singularity. 
\end{enumerate}
\end{introthm}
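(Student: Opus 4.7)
The plan is to prove the two implications using the description of the iteration of Cox rings as a chain of quasi-\'etale quotients by characteristic quasi-tori, each of which is an extension of an algebraic torus by a finite abelian group. I will combine this with two facts recalled just before the statement: the Cox ring of an affine toric variety of dimension $n$ and Picard rank $\rho$ is the polynomial algebra on $n+\rho$ variables, and the Cox ring of a klt singularity is smooth if and only if the singularity is formally toric.

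For the direction $(2)\Rightarrow(1)$, let $(T,t)$ be a toric singularity and let $G$ be a finite solvable group acting quasi-\'etally on $(T,t)$ with quotient $(X,\Delta;x)$. A composition series $G = G_0 \supset G_1 \supset \dots \supset G_m = 1$ with abelian subquotients exhibits $(X,\Delta;x)$ as a tower of abelian finite quasi-\'etale quotients over $(T,t)$. Each such abelian cover is inverted by a single step of the Cox iteration: the torus part of the characteristic quasi-torus inverts the torus quotient implicit in the Cox construction, while the finite part inverts the abelian cover. Thus after finitely many iterations we reach $(T,t)$, and one further Cox step produces ${\rm Cox}(T)$, which is an affine space, hence smooth. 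Therefore ${\rm Cox}^{\rm it}(X,\Delta;x)$ is smooth.

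For the direction $(1)\Rightarrow(2)$, suppose $M = {\rm Cox}^{\rm it}(X,\Delta;x)$ is smooth. Let $X^{(k-1)}$ denote the penultimate stage of the iteration, so that ${\rm Cox}(X^{(k-1)}) = M$ is smooth; by the recalled characterization, $X^{(k-1)}$ is formally toric. Collapsing all iteration steps into a single quasi-\'etale quotient via the construction underlying Theorem~\ref{introthm-5-existence-scf-cover} yields a reductive group $H$ acting on $M$ with $X^h \cong M/H$, where $H$ is an extension of the torus $T_0 := H^0$ by the finite solvable component group $F := H/T_0$. Set $N := M/T_0$. Since $T_0$ is a torus acting on the smooth affine variety $M$, the intermediate quotient $N$ is an affine toric variety, and the residual map $N \to X^h$ is a quasi-\'etale cover by the finite solvable group $F$. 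Descending from the Henselian to the Zariski-local setting identifies $(X,\Delta;x)$ as a finite quasi-\'etale solvable quotient of the toric germ $(N,n)$.

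The main obstacle lies in this second implication: while the Henselian presentation $X^h \cong (M/T_0)/F$ is produced uniformly by the reductive-group description of the iteration, converting it into an algebraic presentation $(X,\Delta;x) = (N,n)/F$ with $N$ an honest toric singularity requires Artin-type descent that simultaneously preserves the toric structure on $N$ and the solvable quasi-\'etale action of $F$. Establishing this compatible descent, together with the inductive bookkeeping that shows the combined group $H$ really has torus identity component and finite solvable component group, is the technical core of the proof.
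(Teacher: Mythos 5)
Your overall strategy matches the paper's (use the solvable composition series in one direction, and the reductive-group presentation of the iteration in the other), but both implications as written have real gaps, and the one you flag as the "technical core" is not actually the missing step.

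For $(2)\Rightarrow(1)$, the claim that "each such abelian cover is inverted by a single step of the Cox iteration" is incorrect. At each step the iteration takes the cover corresponding to the \emph{entire} torsion subgroup of $\Cl$, i.e.\ the abelianization of the current regional fundamental group; it does not track the particular abelian subquotients of your chosen composition series for $G$. In general the iteration overshoots the composition series, so after $m$ steps you do not arrive at $(T,t)$ — you arrive at some finite cover of $T$. The paper resolves this by running the iteration in parallel with the composition series and showing the iteration dominates it: one gets covers $Y_i \to X_i$, and the crucial observation (which your sketch omits entirely) is that $Y_n \to T$ is a finite quasi-\'etale abelian cover branching only along the support of $\Delta_T$, hence $Y_n$ is again toric. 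Only then is the Cox ring of the $n$-th stage dominated by an affine space, forcing the next iterate to be smooth.

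For $(1)\Rightarrow(2)$, the assertion "Since $T_0$ is a torus acting on the smooth affine variety $M$, the intermediate quotient $N:=M/T_0$ is an affine toric variety" is false in this generality; a torus quotient of a smooth affine variety need not be toric. What one needs is that $M$ itself is an affine space and the $G$-action is linear. The paper supplies this via Luna's \'etale slice theorem at the unique $G$-fixed point of $M$ (the vertex of the gr-local structure), reducing to $M\cong\mathbb{A}^n$ with $G\leq\operatorname{GL}_n(\cc)$; then $\mathbb{A}^n/\mathbb{T}$ is genuinely toric and $X$ is its quotient by the finite solvable group $G/G^0$. The linearization step is the real technical core of this implication. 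By contrast, the Henselian-to-Zariski descent you single out is not a genuine obstacle here: the statement about a klt germ being a quasi-\'etale quotient of a toric singularity is already understood \'etale-locally throughout the paper, so working on $X^h$ is enough.
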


The following theorem characterizes when the scfc cover of a klt singularity is smooth. 

\begin{introthm} 
\label{introthm8-smooth-scfc}
Let $(X,\Delta;x)$ be a klt singularity.
Then, the following statements are equivalent:
\begin{enumerate}
\item The simply connected factorial canonical cover of $(X,\Delta)$ is smooth, and 
\item $(X,\Delta)$ is a finite quasi-\'etale quotient of a projective toric singularity. 
\end{enumerate}
\end{introthm}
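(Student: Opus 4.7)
The plan is to deduce Theorem~\ref{introthm8-smooth-scfc} from Theorem~\ref{introthm7-smooth-it} together with the universal property of the scfc cover established in Theorem~\ref{introthm-6-univ-scf-cover}. The direction $(2)\Rightarrow(1)$ will rest on the fact that the spectrum of the Cox ring of a toric singularity is an affine space, while $(1)\Rightarrow(2)$ reduces to an application of Theorem~\ref{introthm7-smooth-it} to the regional universal cover of the germ.

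For the direction $(2)\Rightarrow(1)$, assume that $(X,\Delta;x)$ is a finite quasi-\'etale quotient of a projective toric singularity $(T_0,\Delta_0;t_0)$. By Cox's presentation, the Cox ring of a toric variety is a polynomial ring, so the spectrum of ${\rm Cox}(T_0^h)$ is the Henselization of an affine space at the origin; this scheme is smooth, factorial, canonical, and has trivial regional fundamental group. Hence the scfc cover of $(T_0,\Delta_0;t_0)$ is realized by this Cox spectrum, and in particular it is smooth. Appealing to Theorem~\ref{introthm-6-univ-scf-cover}, the scfc cover $Y$ of $(X,\Delta;x)$ dominates the pointed finite cover $T_0\to X^h$, and therefore also dominates the scfc cover of $(T_0,\Delta_0)$; conversely, the latter is itself a simply connected factorial canonical cover of $X^h$ satisfying the defining properties of Theorem~\ref{introthm-5-existence-scf-cover}, so the two scfc covers agree and $Y$ is smooth.

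For the direction $(1)\Rightarrow(2)$, assume that the scfc cover $Y$ of $(X,\Delta;x)$ is smooth. Write $\Gamma=\pi_1^{\rm reg}(X,\Delta;x)$ and let $X^{\rm univ}\to X^h$ denote the regional universal cover. The construction behind Theorem~\ref{introthm-5-existence-scf-cover} builds $Y$ in two stages: first pass to $X^{\rm univ}$, then take the spectrum of the Henselian iteration of Cox rings of $X^{\rm univ}$. Accordingly, $Y$ is identified with ${\rm Spec}({\rm Cox}^{\rm it}(X^{\rm univ}))$, and in particular the iteration of Cox rings of $X^{\rm univ}$ is smooth. Theorem~\ref{introthm7-smooth-it} applied to $X^{\rm univ}$ then produces a toric singularity $(T_0,\Delta_0)$ together with a finite quasi-\'etale solvable cover $T_0\to X^{\rm univ}$. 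Composing this with the quasi-\'etale $\Gamma$-cover $X^{\rm univ}\to X^h$ yields a finite quasi-\'etale cover $T_0\to X^h$, exhibiting $(X,\Delta;x)$ as a finite quasi-\'etale quotient of $(T_0,\Delta_0)$.

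The main subtlety will be to match the adjective \emph{projective} in the statement of Theorem~\ref{introthm8-smooth-scfc} with the toric singularity produced by Theorem~\ref{introthm7-smooth-it}: one needs to verify that the master Cox ring of $X^{\rm univ}$ arises as the germ at a torus-fixed point of a projective toric variety, which should be traceable through the quasi-cone and Gorenstein canonical structure of Cox rings of Fano type varieties noted in the introduction.
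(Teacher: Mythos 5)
Your proof is correct but takes a genuinely different route from the paper's. The paper's proof of Theorem~\ref{thm:smooth-scfc-local} (from which Theorem~\ref{introthm8-smooth-scfc} follows) is constructive in both directions. For $(2)\Rightarrow(1)$, the paper assembles a commutative diagram relating the toric cover $T$ to the iterated Cox ring ${\rm Cox}^{\rm it}(X,\Delta)$, finds a common finite quotient $T'=T/S$ with $S$ perfect, and deduces that the universal cover of ${\rm Cox}^{\rm it}(X,\Delta)$ admits a quasi-\'etale abelian quotient to $T$; hence it is a torus quotient of ${\rm Cox}(T)\cong\mathbb{A}^{\rho(T)+\dim T}$ and the scfc cover is an affine space. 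You instead invoke the universality result (Theorem~\ref{thm-univ-scfc}) to conclude that the scfc covers of $X$ and of $T_0$ coincide, and then observe that the scfc cover of a toric singularity is its (Henselized) polynomial Cox ring. This is cleaner but leans on the full strength of universality, which is a heavier tool than the paper deploys here. For $(1)\Rightarrow(2)$, the paper's argument is a one-step construction: linearize $G$ on the smooth scfc cover $\mathbb{A}^n$ via Luna's \'etale slice theorem, set $T:=\mathbb{A}^n/\mathbb{T}$ with $\mathbb{T}=G^\circ$, and quotient $T$ by the finite component group of $G$ to recover $X$. You instead pass to the universal cover $X^{\rm univ}$ and apply Theorem~\ref{introthm7-smooth-it}; this works, and the identification of the scfc cover with ${\rm Spec}({\rm Cox}^{\rm it}(X^{\rm univ}))$ that you use is exactly what Proposition~\ref{prop:fiber-prod-fincov-and-quotpres} supplies ($\Cl(\tilde{X})$ is torsion free, so the iteration of Cox rings of $\tilde{X}$ stops at the first step), but the paper's direct construction of the toric cover is shorter and does not need Theorem~\ref{introthm7-smooth-it} as a black box.

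The "projective" that you flag as a subtlety to resolve at the end is a typo in the statement of Theorem~\ref{introthm8-smooth-scfc}. The paper's own local result, Theorem~\ref{thm:smooth-scfc-local}, from which the introduction theorem is deduced verbatim, says simply "toric singularity"; the qualifier "projective" belongs to the global Fano-type analogue Theorem~\ref{thm:smooth-scfc-proj}, where it modifies "toric pair." You do not need to exhibit the master Cox ring of $X^{\rm univ}$ as the germ of a projective toric variety, and attempting to do so is a red herring.
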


In Appendix~\ref{appendix}, we show a diagram with all the natural morphisms among the covers considered in this article.
In this direction, it is also natural to compare the iteration of Cox rings with the scfc cover.
We prove that they coincide as long  as the regional fundamental group of the klt singularity is a solvable group.

\begin{introthm}
\label{introthm9-equal-it-scfc}
Let $(X,\Delta;x)$ be a klt singularity.
Then, the following are equivalent:
\begin{enumerate}
    \item The spectrum of the iteration of Cox rings coincides with the simply connected factorial canonical cover, and
    \item the regional fundamental group $\pi_1^{\rm reg}(X,\Delta;x)$ is solvable.
\end{enumerate}
\end{introthm}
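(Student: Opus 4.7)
The plan is to exploit the universal property of the scfc cover established in Theorem~\ref{introthm-6-univ-scf-cover}, together with the group-theoretic structure of the Galois groups of the two covers.

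For the direction $(1)\Rightarrow(2)$: the iteration of Cox rings is built by a finite tower (Theorem~\ref{introthm2-existence-iteration-local}), and each step realizes the previous singularity as a quotient by a diagonalizable group (the Cartier dual of the class group), which is a torus extended by a finite abelian group. Composing these quotients, the morphism $\mathrm{Cox}^{\mathrm{it}}(X,\Delta;x)\to X^h$ presents $X^h$ as a quotient by a reductive group $H$ that is an iterated extension of diagonalizable groups; in particular $H$ is a solvable reductive group. By Theorem~\ref{introthm-5-existence-scf-cover}, the scfc cover $Y\to X^h$ is a quotient by a reductive group $G$ fitting in an extension of a solvable reductive group by $\pi_1^{\rm reg}(X,\Delta;x)$. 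If the two covers coincide as covers of $X^h$, then $G$ and $H$ agree up to stabilizers, and the solvability of $H$ forces $\pi_1^{\rm reg}(X,\Delta;x)$ to be a subquotient of a solvable group, hence itself solvable.

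For the direction $(2)\Rightarrow(1)$: By Theorem~\ref{introthm-6-univ-scf-cover}, applied to any finite truncation of the iteration, the scfc cover dominates the iteration, yielding a natural morphism $\varphi\colon Y\to \mathrm{Cox}^{\mathrm{it}}(X,\Delta;x)$. It suffices to show $\varphi$ is an isomorphism when $\pi_1^{\rm reg}$ is solvable. The key input is that, for a klt singularity, the torsion subgroup of the class group corresponds to the universal finite abelian quasi-\'etale cover, which realizes the abelianization of the regional fundamental group. Consequently, after one step of the iteration, the regional fundamental group of the new singularity is the commutator subgroup of the previous one. When $\pi_1^{\rm reg}(X,\Delta;x)$ is solvable, its derived series terminates after finitely many steps, so at some stage the regional fundamental group vanishes. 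The corresponding master Cox ring is then factorial, canonical, and regionally simply connected, hence satisfies the defining properties of the scfc cover, and the universal property from Theorem~\ref{introthm-6-univ-scf-cover} forces $\varphi$ to be an isomorphism.

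The main obstacle lies in the second direction: one must carefully verify that each step of the iteration really descends the derived series of $\pi_1^{\rm reg}$ one level at a time, i.e., that the finite part of the diagonalizable Galois group at each step is the \emph{full} abelianization of the current regional fundamental group (and not a proper quotient of it). This depends on the compatibility between the torsion of the class group and the abelianized regional fundamental group for klt singularities, which has already been leveraged in the previous theorems of the paper together with the finiteness of $\pi_1^{\rm reg}$, and on matching the termination of the Cox iteration with the vanishing of the regional fundamental group at the final stage.
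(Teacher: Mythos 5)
Your proposal is correct, and it uses the same essential toolkit as the paper (the abelianization result identifying the torsion of the class group with the abelianization of $\pi_1^{\rm reg}$, the lifting of finite covers up torus quasi-torsors, and the universal property of the scfc cover), but assembles these pieces along a somewhat different route. For the direction (solvable) $\Rightarrow$ (iteration $=$ scfc), the paper argues by a single contradiction at the last stage: if the final cover $X'=\mathrm{Cox}^{\rm it}/G^0$ had nontrivial $\pi_1^{\rm reg}$, then (since it is solvable) the abelianization would be nontrivial, giving torsion in $\Cl(\mathrm{Cox}^{\rm it})$, contradicting the factoriality of the already-stabilized iteration. You instead track the derived series $\mathcal{D}_k = [\mathcal{D}_{k-1},\mathcal{D}_{k-1}]$ explicitly down the tower and show it terminates at the trivial group, which is closer in spirit to the mechanism in the proof of the boundedness theorem. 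Both are valid; the paper's version leans on the already-established termination of the iteration, whereas yours effectively re-derives it for solvable $\pi_1^{\rm reg}$, making the argument more self-contained. For the converse direction, the paper passes through the intermediate statement that the iteration has trivial regional fundamental group and then runs a covering-space contradiction argument (a hypothetical cover of $X'$ lifts to a nontrivial cover of the iteration); your version argues directly that the Galois group $H$ of the iteration (solvable, as an iterated extension of diagonalizable groups) coincides with the Galois group $G$ of the scfc cover, so $\pi_1^{\rm reg} \cong G/\mathbb{T}$ is a quotient of a solvable group. This is a valid shortcut, though the phrase \emph{agree up to stabilizers} should be sharpened: the cleanest way to make it precise is to observe first that the iteration inherits the triviality of $\pi_1^{\rm reg}$ from the scfc cover, so that the intermediate finite cover $X_k = \mathrm{Cox}^{\rm it}/\mathbb{T}$ is regionally simply connected and hence the universal cover of $X^h$, forcing $\pi_1^{\rm reg}(X,\Delta;x) \cong H/\mathbb{T}$ to be a finite solvable group. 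You also correctly identify the crux of the argument, namely that the finite part of the characteristic quasi-torus is the \emph{full} abelianization of the current regional fundamental group; this is exactly what Corollary~\ref{cor:abelianization} in the paper supplies.
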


All the theorems in this article are also proved for Fano type varieties.
In many cases, we also prove the statements for Fano type morphisms.
As mentioned above, the boundedness of iterations is a consequence of the Jordan property for the regional fundamental group of klt singularities~\cite{BFMS20}. 
To generalize to the relative setting,
we will need the following relative version of the Jordan property.

\begin{introthm}
\label{introthm10-jordan-relative}
Let $n$ be a positive integer.
There exists a constant $c(n)$, only depending on $n$, satisfying the following.
Let $\phi\colon X \rightarrow Z$ be a projective contraction so that $X$ has dimension $n$.
Let $(X,\Delta)$ be a log pair of Fano type over $Z$.
Let $z\in Z$ be a closed point.
Then, the fundamental group 
$\pi_1^{\rm reg}(X/Z,\Delta;z)$ is finite.
Furthermore, there exists a normal abelian subgroup $A\leqslant \pi_1^{\rm reg}(X/Z,\Delta;z)$ of rank at most $n$ and index at most $c(n)$.
\end{introthm}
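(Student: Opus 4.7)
The plan is to reduce the relative Jordan property to its absolute version for regional fundamental groups of klt singularities, established in~\cite{BFMS20}, via a relative anticanonical cone construction. Both the finiteness of $\pi_1^{\rm reg}(X/Z,\Delta;z)$ and the Jordan bound will be obtained simultaneously by producing a pointed klt singularity $(W,w)$ of dimension at most $n+1$ whose regional fundamental group surjects onto $\pi_1^{\rm reg}(X/Z,\Delta;z)$.

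First, I would replace $Z$ by the spectrum of the strict Henselization of $\mathcal{O}_{Z,z}$ and $X$ by its base change; this preserves the regional fundamental group by its very definition, and makes $Z$ a Henselian local scheme with a klt singularity at $z$. Then, using the Fano type hypothesis, I choose a boundary $\Delta'\geq\Delta$ with $(X,\Delta')$ klt and $-(K_X+\Delta')$ $\phi$-ample, and form the relative anticanonical cone
\[
W \;=\; \Spec \bigoplus_{m\geq 0}\phi_*\mathcal{O}_X\bigl(\lfloor -m(K_X+\Delta')\rfloor\bigr).
\]
Standard cone arguments (cf.~\cite{Bra19}) produce a $\mathbb{Q}$-boundary $\Delta_W$ on $W$ making $(W,\Delta_W)$ klt of dimension $n+1$, with a distinguished vertex $w$ lying over $z$.

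Away from the vertex section, $W\to X$ is an \'etale-locally trivial $\mathbb{G}_m$-torsor. Combining the homotopy exact sequence for this torsor with the purity of \'etale $\pi_1$ across the codimension $\geq 2$ vertex section yields a surjection
\[
\pi_1^{\rm reg}(W,w)\twoheadrightarrow \pi_1^{\rm reg}(X/Z,\Delta;z),
\]
whose kernel is the cyclic image of $\widehat{\mathbb{Z}}=\pi_1^{\rm et}(\mathbb{G}_m)$. Applying~\cite{BFMS20} to the pointed klt singularity $(W,w)$ yields finiteness of $\pi_1^{\rm reg}(W,w)$ (hence also of its quotient $\pi_1^{\rm reg}(X/Z,\Delta;z)$), together with a constant $c_0(n+1)$ and a normal abelian subgroup $A'\leqslant \pi_1^{\rm reg}(W,w)$ of rank at most $n+1$ and index at most $c_0(n+1)$. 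The image of $A'$ in $\pi_1^{\rm reg}(X/Z,\Delta;z)$ is a normal abelian subgroup of index at most $c_0(n+1)$, and since the surjection kills a procyclic subgroup in the $\mathbb{G}_m$-direction, its rank drops from $n+1$ to $n$. Setting $c(n)=c_0(n+1)$ concludes the argument.

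The main obstacle is the cone step: ensuring that $(W,\Delta_W)$ is klt of the stated dimension, and that the induced map on regional fundamental groups is surjective with cyclic kernel. The klt-ness of the cone over a log Fano pair is classical, but requires careful handling in the relative situation because $Z$ itself carries a klt singularity at $z$ and $\Delta$ may intersect the vertex section nontrivially. The $\pi_1$-argument requires the vertex section to be of codimension $\geq 2$ in $W$, which can fail when $\phi$ is birational; in that edge case one instead deduces the conclusion from the klt singularity of $Z$ at $z$ directly, applying~\cite{BFMS20} in dimension $n$. Finally, the sharpening of the rank bound from $n+1$ to $n$ requires verifying that the $\mathbb{G}_m$-fiber class always contributes a nontrivial procyclic factor to the maximal abelian subgroup, a subtle point since a priori this factor might already lie in a complement of $A'$.
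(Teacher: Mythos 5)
Your reduction to the absolute Jordan property of~\cite{BFMS20} via a relative anticanonical cone is essentially the same route the paper takes (the paper passes to the relative ample model $X'$ rather than modifying the boundary, but both produce an $(n{+}1)$-dimensional pointed klt singularity with a $\mathbb{G}_m$-torsor over $(X,\Delta)$ in codimension one). The finiteness claim and the Jordan property of rank $n+1$ and bounded index therefore go through just as you describe.

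The genuine gap is the improvement of the rank bound from $n+1$ to $n$, which you flag at the end but do not resolve. Your argument that ``since the surjection kills a procyclic subgroup in the $\mathbb{G}_m$-direction, its rank drops from $n+1$ to $n$'' is not correct: the normal abelian subgroup $A' \leqslant \pi_1^{\rm reg}(W,w)$ produced by~\cite{BFMS20} need not contain the cyclic kernel $C$ of $\pi_1^{\rm reg}(W,w)\twoheadrightarrow \pi_1^{\rm reg}(X/Z,\Delta;z)$, and $A'\cap C$ can easily be trivial (e.g.\ if $A'$ is an elementary abelian $2$-group and $C$ has odd order), in which case $A'$ maps isomorphically onto its image and the rank of the image is still $n+1$. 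So this step does fail as written. The paper closes this gap with a separate argument: choosing a general $H'\in |-(K_{X'}+\Delta')/Z|_{\qq}$ with coefficients below $1/2$ (so that the orbifold fundamental group is unchanged), applying the canonical bundle formula to obtain a klt pair $(Z,H_Z)$, and then using the fibration exact sequence
\[
\pi_1^{\rm reg}(F,\Delta_F+H_F) \rightarrow \pi_1^{\rm reg}(X'/Z,\Delta'+H';z)\rightarrow \pi_1^{\rm reg}(Z,H_Z;z)\rightarrow 1
\]
to split $A_{X'}$ into a fiber part of rank $\leq \dim F$ and a base part of rank $\leq \dim Z$, yielding rank $\leq \dim F + \dim Z = n$ after intersecting with the Jordan subgroups of $\pi_1^{\rm reg}(F,\Delta_F+H_F)$ and $\pi_1^{\rm reg}(Z,H_Z;z)$ supplied by~\cite{BFMS20}. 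You would need to add this (or some substitute for it) to complete the proof. A secondary caveat: your step of enlarging $\Delta$ to $\Delta'$ to make $-(K_X+\Delta')$ $\phi$-ample while keeping the same $X$ must be done carefully so that the added part has coefficients below $1/2$ (otherwise the orbifold structure and hence $\pi_1^{\rm reg}$ changes); the paper's route via the relative ample model avoids this concern.
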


Here, the group $\pi_1^{\rm reg}(X/Z,\Delta;z)$
consists of loops over a small punctured analytic neighborhood of a closed point $z\in Z$.
In this direction, we also prove an enhanced version of the Jordan property for klt $\mathbb{T}$-singularities.
Recall that the complexity of a $\mathbb{T}$-variety is the dimension of the variety minus the dimension of the torus
acting on it.
In this direction, we prove the following theorem.

\begin{introthm}\label{introthm11-jordan-t-var}
Let $r$ be a positive integer.
There exists a constant $c(r)$, only depending on $r$,
satisfying the following.
Let $(X,\Delta;x)$ be a $n$-dimensional klt $\mathbb{T}$-singularity
of complexity $r$.
Then, there exists a normal abelian subgroup
$A\leqslant \pi_1^{\rm reg}(X,\Delta;x)$ 
of rank at most $n$ and index at most $c(r)$.
\end{introthm}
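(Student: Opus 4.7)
My approach would be to exploit the torus action of dimension $n-r$ to reduce the problem of bounding $\pi_1^{\rm reg}(X,\Delta;x)$ to the analogous problem on a base of dimension $r$, where the Jordan property for klt singularities of \cite{BFMS20} provides a constant depending only on $r$.

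I would begin by forming a $\mathbb{T}$-equivariant quotient $\pi\colon X\to Z$ (for instance a GIT or Chow quotient, using the Altmann--Hausen presentation of $\mathbb{T}$-varieties) with $\dim Z=r$. By the canonical bundle formula, $(Z,\Delta_Z;z)$ inherits a klt germ structure at $z=\pi(x)$. Since the generic fibre of $\pi$ is a translate of $\mathbb{T}$, the $\mathbb{T}$-fibration yields a short exact sequence
\[
1 \longrightarrow K \longrightarrow \pi_1^{\rm reg}(X,\Delta;x) \longrightarrow \pi_1^{\rm reg}(Z,\Delta_Z;z) \longrightarrow 1,
\]
where $K$ is a quotient of $\pi_1(\mathbb{T})\cong\mathbb{Z}^{n-r}$. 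Since $\pi_1^{\rm reg}(X,\Delta;x)$ is finite by Braun's theorem on klt singularities, $K$ is a finite abelian group of rank at most $n-r$. Applying the Jordan property of \cite{BFMS20} to the $r$-dimensional klt pair $(Z,\Delta_Z;z)$ produces a normal abelian subgroup $B$ of rank at most $r$ and index at most $c_{\rm klt}(r)$; its preimage $G'$ in $\pi_1^{\rm reg}(X,\Delta;x)$ has index at most $c_{\rm klt}(r)$ and is an extension of finite abelian groups $1\to K\to G'\to B\to 1$.

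The hard part will be to extract from $G'$ a normal abelian subgroup of index bounded only in terms of $r$. The naive step of passing to the kernel of the conjugation action $B\to\Aut(K)$ is not obviously sufficient, because $\Aut(K)$ sits inside $GL_{n-r}(\mathbb{Z})$ and Minkowski-type bounds on its finite subgroups depend on $n-r$. I would instead exploit the geometric origin of the monodromy: the action $B\to GL(N)$ on the cocharacter lattice $N$ of $\mathbb{T}$ is constrained by the combinatorial description of the $\mathbb{T}$-action (as in Altmann--Hausen), so that its image should be controlled purely by the complexity $r$ rather than by $n-r$. Once this image is bounded by some $c'(r)$, its kernel yields a central extension of finite abelian groups inside $G'$; a final bounded-index reduction (again depending only on $r$) extracts a normal abelian subgroup of rank at most $(n-r)+r=n$, as required.

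The principal obstacle is thus making the control of the monodromy image quantitative purely in $r$. This requires genuinely using the structural theory of $\mathbb{T}$-singularities of complexity $r$ developed in the paper, rather than relying on the general Jordan property alone; alternatively, one could try to bypass this extension analysis by invoking the iteration of Cox rings of $\mathbb{T}$-varieties, since the iteration is known to terminate in a bounded number of solvable-abelian steps for fixed complexity, which would translate directly into a Jordan-type bound on $\pi_1^{\rm reg}(X,\Delta;x)$ with constant $c(r)$.
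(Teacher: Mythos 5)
Your proposal correctly isolates the central difficulty — the monodromy action of the base fundamental group on the lattice $\mathbb{Z}^{n-r}$ of the fibre — but leaves it unresolved, and the paper's actual argument closes this gap in a way your sketch does not anticipate. The paper does not work with $X\to Z$ directly (which is only a rational map or GIT quotient); it passes to the relative spectrum $\tilde{X}:=\Spec_Y \bigoplus_{u\in\sigma^\vee\cap M}\mathcal{O}_Y(\mathcal{D}(u))$ over the normalized Chow quotient $Y$, with a modified boundary $\Gamma$ obtained by raising the horizontal toric divisors to coefficient one. The point is that $\tilde{X}\to Y$ is an orbifold toric bundle which is \emph{trivial} over the log smooth locus of $(Y,B_Y)$. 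Because a trivial bundle over a big open subset computes the regional fundamental group, the kernel $\mathbb{Z}^{n-r}$ is \emph{central} in $\pi_1^{\rm reg}(\tilde{X},\Gamma)$ — the monodromy is not merely bounded in terms of $r$, it vanishes. With centrality in hand, the preimage of the abelian normal subgroup $A_Y\leqslant\pi_1^{\rm reg}(Y,B_Y)$ supplied by \cite{BFMS20} is abelian of rank at most $n$ and index at most $c(r)$; one then pushes forward along the surjection $\pi_1^{\rm reg}(\tilde{X},\Gamma)\twoheadrightarrow\pi_1^{\rm reg}(X,\Delta;x)$. Your plan to bound the image of $B\to\Aut(K)$ via the Altmann--Hausen combinatorics is a plausible-sounding heuristic, but you do not prove it, and it is strictly weaker than what is needed: you would still face a central extension $1\to K\to G'\to B'\to 1$ whose abelianity is not automatic without a further argument.

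Two smaller issues. First, you treat the quotient as a pointed germ $(Z,\Delta_Z;z)$ and invoke the local Jordan property; but the Chow quotient of an affine complexity-$r$ $\mathbb{T}$-variety is a \emph{projective} Fano type pair of dimension $r$ (by \cite[Theorem 4.9]{LS13}), and the paper accordingly applies the global Jordan property for Fano type varieties from \cite{BFMS20}. Second, your suggested alternative via iteration of Cox rings is circular in this paper's logic: the boundedness of iteration for $\mathbb{T}$-varieties (Theorem~\ref{introthm12-it-t-var}) is deduced \emph{from} the present Jordan statement together with the proof of Theorem~\ref{introthm3-bounded-iteration-local}, not the other way around.
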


In particular, the non-abelian quotient of the regional fundamental group of complexity one klt $\mathbb{T}$-singularities is bounded by a constant which is independent of the dimension.
As a consequence of Theorem~\ref{introthm11-jordan-t-var} and the proof of Theorem~\ref{introthm3-bounded-iteration-local}, we conclude the following statement about the iteration of Cox ring of klt $\mathbb{T}$-singularities.

\begin{introthm}\label{introthm12-it-t-var}
Let $r$ be a positive integer.
There exists a constant $c(r)$, 
only depending on $r$,
satisfying the following.
Let $(X,\Delta;x)$ be a klt $\mathbb{T}$-singularity of complexity $r$.
Then, the $k$-th iteration of Cox rings ${\rm Cox}^{(k)}(X,\Delta;x)$ stabilizes for $k\geq c(r)$.
\end{introthm}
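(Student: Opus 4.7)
The plan is to combine Theorem~\ref{introthm11-jordan-t-var} with the strategy used to establish Theorem~\ref{introthm3-bounded-iteration-local} (or rather its proof). Recall that the proof of Theorem~\ref{introthm3-bounded-iteration-local} produces the bound $c(n)$ by running the iteration, observing that each Cox ring step together with a quasi-\'etale universal cover effectively ``peels off'' a solvable-finite piece of $\pi_1^{\rm reg}$ of the current iterate, and then invoking the Jordan property of~\cite{BFMS20} to bound the number of times this can happen in terms of the dimension. The point here is that, for klt $\mathbb{T}$-singularities of complexity $r$, Theorem~\ref{introthm11-jordan-t-var} replaces the dimension-dependent Jordan bound by one depending only on $r$, so the exact same accounting yields a termination constant depending only on $r$.

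The first step is to verify that being a klt $\mathbb{T}$-singularity of complexity at most $r$ is preserved under each iteration of the Cox ring construction and under quasi-\'etale solvable covers. For the latter, a finite quasi-\'etale cover of a $\mathbb{T}$-singularity of complexity $r$ is again a $\mathbb{T}$-singularity of complexity $r$, since the torus action lifts (after passing to a suitable finite cover of the torus) and the quotient dimensions are unchanged. For the former, $\mathrm{Spec}(\mathrm{Cox}(X,\Delta;x))$ carries the action of the torus $T$ together with the characteristic quasi-torus coming from $\mathrm{Cl}(X,\Delta;x)$; combining them into a single torus $T'$ acting on the Cox ring raises the dimension by the rank of the class group but also raises the acting torus by the same amount, so the complexity $\dim - \dim T'$ stays at most $r$. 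Thus every iterate $\mathrm{Cox}^{(k)}(X,\Delta;x)$, together with every intermediate quasi-\'etale cover, is a klt $\mathbb{T}$-singularity of complexity at most $r$.

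Next, at each stage of the iteration, Theorem~\ref{introthm11-jordan-t-var} gives a normal abelian subgroup of $\pi_1^{\rm reg}$ of index at most $c(r)$. Feeding this into the bookkeeping from the proof of Theorem~\ref{introthm3-bounded-iteration-local}, one obtains that the iteration must stabilize after a number of steps controlled by $c(r)$: each non-stabilizing step either reduces the class group (via the torus quotient implicit in the Cox construction) or reduces a non-abelian solvable quotient of the regional fundamental group of the current iterate, and both can happen only boundedly often in terms of $c(r)$. Taking the resulting constant $c'(r)$ and relabelling it as $c(r)$ yields the statement.

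The principal obstacle is the complexity-preservation step: one must show that enlarging the acting torus by the characteristic quasi-torus on the Cox ring does not accidentally push the dimension up faster than the torus dimension, and that this remains true after iterating. Once that combinatorial fact is in place, together with the corresponding statement for quasi-\'etale covers, the argument is a direct transcription of the proof of Theorem~\ref{introthm3-bounded-iteration-local} with the Jordan bound $c(n)$ of~\cite{BFMS20} replaced by the complexity-dependent bound $c(r)$ of Theorem~\ref{introthm11-jordan-t-var}.
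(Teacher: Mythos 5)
Your high-level idea is right: substitute the complexity-dependent Jordan constant of Theorem~\ref{introthm11-jordan-t-var} for the dimension-dependent one of~\cite{BFMS20} and rerun the argument. However, you have misread the structure of the proof of Theorem~\ref{introthm3-bounded-iteration-local}, and this leads you to insert an unnecessary step while leaving the actual termination argument vague.

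The paper's proof of Theorem~\ref{thm:bounded-iteration} applies the Jordan property exactly \emph{once}, to $\mathcal{D}_0 := \pi_1^{\rm reg}(X,\Delta;x)$. It then works entirely with the derived series $\mathcal{D}_i := [\mathcal{D}_{i-1},\mathcal{D}_{i-1}]$ inside this fixed finite group, using Corollary~\ref{cor:Cox-it-solv-cover} and Corollary~\ref{cor:abelianization} to identify $\Cl(X_i)_{\rm tor}$ with $\mathcal{D}_i/\mathcal{D}_{i+1}$. The termination bound $2\log_2(c(n))+3$ comes from tracking the chain of normal subgroups $N_i := \mathcal{D}_i/(\mathcal{D}_i\cap A_0)$ inside $N_0$, which has order at most $c(n)$. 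Nothing in this argument ever looks at the regional fundamental group of a later iterate on its own terms, so nothing needs to be a $\mathbb{T}$-singularity of bounded complexity after the first step. Consequently, your entire ``complexity-preservation'' paragraph, while plausibly correct as a geometric fact about lifting the torus action to the Cox ring, is irrelevant to the proof; identifying it as ``the principal obstacle'' is a misdiagnosis.

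More importantly, your replacement of the actual counting argument with ``each non-stabilizing step either reduces the class group\dots or reduces a non-abelian solvable quotient\dots and both can happen only boundedly often'' is not a proof. The class groups $\Cl(X_i)$ do not shrink in any monotone way, and you give no mechanism forcing termination in $\leq c'(r)$ steps. If you genuinely intend to apply Theorem~\ref{introthm11-jordan-t-var} separately to each iterate's regional fundamental group, you produce abelian subgroups $A_0^{(i)}\leqslant \pi_1^{\rm reg}(X_i,\Delta_i;x_i)$ that are not linked into a chain, and the argument stalls. The correct and much shorter route: observe that Theorem~\ref{introthm11-jordan-t-var} gives a normal abelian subgroup $A\leqslant\pi_1^{\rm reg}(X,\Delta;x)$ of index at most $c(r)$, and then cite the derived-series computation in the proof of Theorem~\ref{thm:bounded-iteration} verbatim, with $c(r)$ in place of $c(n)$, to obtain stabilization after $2\log_2(c(r))+3$ steps.
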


Note that the constant $c(r)$ only depends on the complexity and not on the dimension of the germ.
In Subsection~\ref{subsec:compl-one}, we will culminate the article with an extensive study of the regional fundamental group, the iteration of Cox rings, and the scfc covers of klt $\mathbb{T}$-singularities of complexity one. 
The iteration of Cox rings of these singularities has already been considered in the works~\cite{ABHW18, HW18}.

The present article gives a good understanding of the finite-solvable covers of klt singularities and Fano type varieties.
It is natural to try to extend the above results to general reductive groups. 
However, to do so, a better understanding of semi-simple covers of Fano varieties is required.
It is also interesting to consider the opposite question:
whether the quotient of an affine klt singularity by a reductive group is klt type.
The authors will settle this question in a forthcoming article.

\subsection*{Structure of the paper}
The structure of the paper is as follows: in Section~\ref{sec:prel}, we give some preliminaries about Cox rings, graded-local rings, and the minimal model program.
In Section~\ref{sec:gen-cox}, we turn to define Cox rings in a variety of cases, for morphisms of log pairs, over local rings, and over Henselian rings.
In Section~\ref{sec:bounded}, we turn to prove the existence and boundedness of the iteration of Cox rings for klt singularities. 
In Section~\ref{sec:scfc}, we prove the existence of the simply connected factorial canonical cover of a klt singularity.
In Section~\ref{sec:smoothit}, we give a characterization of Fano type varieties with a smooth iteration of Cox rings
and Fano type varieties with smooth scfc cover.
Finally, in Section~\ref{sec:ex}, we give several examples including a complete classification
of the iteration of Cox rings of klt complexity one $\mathbb{T}$-singularities.

\subsection*{Acknowledgements} 
The authors would like to thank Karl Schwede, Stefano Filipazzi, Christopher Hacon, Burt Totaro, and J\'anos Koll\'ar for many useful comments.

\section{Preliminaries}\label{sec:prel}

Throughout this article, we work over the field of complex numbers $\mathbb{C}$.
The rank of a finite group is the least number of generators. As usual, we may denote by $1$ (resp. $0$) the trivial multiplicative (resp. additive) group.

In this section, we collect some preliminary results and definitions.
In Subsection~\ref{subsec:cox-ring}, we recall the concept of Cox rings and Mori dream spaces.
In Subsection~\ref{subsec:grocal-rings}, we prove some properties about the class groups of gr-local rings.
In Subsection~\ref{subsec:gr-Henselian-rings}, we recall the concept of gr-Henselian rings. 
Then, in Subsection~\ref{subsec:shvs-gr-local}, we define sheaves of gr-local rings.
The Cox sheaves considered in this article will be sheaves of gr-local rings.
In Subsection~\ref{subsec:covers-grocal-rings}, we bring together the concepts of regional fundamental groups and Cox rings.
Finally, in Subsection~\ref{subsec:mmp}, we recollect some notions of singularities of the minimal model program.

\subsection{Cox rings and Mori dream spaces}\label{subsec:cox-ring}
In this subsection, we recall the concept of Cox rings and Mori dream spaces.

\begin{definition}
{\em 
Let $X$ be a normal algebraic variety with free finitely generated class group ${\rm Cl}(X)$.
We can define the {\em Cox ring} of $X$ to be 
\[
{\rm Cox}(X):= 
\bigoplus_{[D]\in {\rm Cl}(X)} 
\Gamma(X,\mathcal{O}_X(D)).
\] 
Here, the multiplication of sections is computed in the field of fractions of $X$.
We say that a normal algebraic variety $X$ is a {\em Mori dream space} (or {\em MDS} for short) if its Cox ring is finitely generated over $\cc$. In this case we denote the affine variety $\overline{X}:=\Spec\, {\rm Cox}(X)$ and call it the {\em total coordinate space} of $X$. We get $X$ back as a good quotient of the big open subset $\hat{X} \subseteq \overline{X}$.
This big open subset $\hat{X}$ is called 
the {\em characteristic space}.
The diagonalizable group (also called a {\em quasi-torus}) $H_X:=\Spec \cc[\Cl(X)]$ is called the {\em characteristic quasi-torus} of $X$.
}
\end{definition}

The name Mori dream space is given to these varieties because they behave optimally with respect to the minimal model program. 
For any divisor $D$ on a Mori dream space $X$, we can run a $D$-MMP that will terminate with either a Mori fiber space or a good minimal model for $D$ (i.e., a model on which its strict transform is a semiample divisor).
Toric varieties are known to be Mori dream spaces.
It is known that the Cox ring of a $n$-dimensional smooth projective toric variety of Picard rank $\rho$ is a polynomial ring in $\rho+n$ variables (see, e.g.,~\cite[Corollary 2.10]{HK00}).
Furthermore, in~\cite[Corollary 1.9]{BCHM10} it is proved that smooth Fano varieties are Mori dream spaces.

The quotient morphism $\hat{X} \xrightarrow{/H_X} X$ restricted to the preimage of the smooth locus of $X$ is a torsor, i.e. a principal $H_X$-bundle. Following~\cite[Definition 1.6.4.1]{ADHL15}, we say that the action of an affine algebraic group $G$ on a variety $Y$ is \emph{strongly stable}, if $Y$ allows an open subset $W$, such that
\begin{enumerate}
    \item  the complement $Y \setminus W$ is of codimension at least two in $Y$,
    \item $G$ acts freely on $W$,
    \item the orbit $G\cdot w$ is closed in $Y$ for every $w \in W$.
\end{enumerate}
In particular, we see that the action of $H_X$ on $\hat{X}$ is strongly stable~\cite[Section 1.6.4]{ADHL15}.

\subsection{Graded-local rings}\label{subsec:grocal-rings}
In this subsection, we recall the concept of gr-local rings and prove some preliminary results about their class groups.

Let $K$ be a finitely generated abelian group and $A$ be a $K$-algebra containing the field of complex numbers $\cc$.
In this subsection, we aim to find a suitable category in which certain generalizations of Cox rings fit in. 
Later on, we define the Cox ring for pair structures on projective varieties and quasi-cones, i.e. affine varieties with a $\cc^*$-action, such that all orbit closures meet in one distinguished point, the vertex. Obviously, quasi-cones share similarities with spectra of local rings. In particular, the Picard group is trivial.
Thus, it is natural to extend the definition of Cox rings to spectra of local rings. This will be done in the next section.
It turns out that the Cox rings of all these objects will be graded-local rings in the sense of~\cite[Definition 1.1.6]{GW78}.
These rings are graded by a finitely generated abelian group $K$, such that the set of graded ideals has a unique maximal element. This ideal does not need to be a maximal ideal in the usual sense. We will see that in our context, such unique maximal element is always a maximal ideal. The equivalent notion of ${\cc}^*$-local rings is considered in~\cite[Definition 1.5.13]{BH93}.

In~\cite[Theorem 2.5]{Hui12} it is proved that being a graded-local ring is equivalent to the degree zero part being a local ring in the classical sense. In particular, the graded maximal ideal is generated by all homogeneous non-units. As the graded maximal ideal is a maximal ideal, it is straightforward to see that in fact $\mathfrak{m}= \mathfrak{m}_0 \oplus \bigoplus_{k \neq 0} A_k$.  We will use the following definition, including the restriction that the ring is finitely generated over the degree zero part.

\begin{definition}{\em 
Let $K$ be a finitely generated abelian group. Let 
\[
A^{(K)}= \bigoplus_{k \in K} A^{(K)}_k
\]
be a $K$-graded noetherian integral domain. Then, we call $A^{(K)}$ a \emph{gr-local} ring if
\begin{enumerate}
    \item The set of graded ideals of $A^{(K)}$ has a unique maximal element $\mathfrak{m}$, which is a maximal ideal in the usual sense, and
    \item the degree-zero part $A^{(K)}_0$ is a local ring with maximal ideal $\mathfrak{m}_0=\mathfrak{m} \cap A^{(K)}_0$ and $A^{(K)}$ is finitely generated as an algebra over $A^{(K)}_0$.
\end{enumerate}}
\end{definition}

\begin{example}{\em 
Let $X$ be a quasi-cone. Then $A:=\mathcal{O}_X(X)$ is a gr-local ring with $A_0=\cc$.
If in addition $X$ is a Mori dream space, then the Cox ring ${\rm Cox}(X)$ has a $(\Cl(X)\times \zz)$-grading that endows it with the structure of a gr-local ring. Note that the ring $A=({\rm Cox}(X)^{(\Cl(X))})_0$ is a gr-local but not a local ring. 
On the other hand, if $X$ is a projective Mori dream space, then ${\rm Cox}(X)^{(\Cl(X))}$ is a gr-local ring. Indeed, in this case $\left({\rm Cox}(X)^{(\Cl(X))}\right)_0$ is just the ground field and thus a local ring.}
\end{example}

\begin{remark}{\em 
We remark that since $A$ is finitely generated over $A_0$, every homogeneous component $A_k$ is a finite $A_0$-module. This can be seen by taking a finite homogeneous set $\{f_i\}_{i \in I}$ of $A_0$-algebra generators of $A$. Then, there are only finitely many monomials in the $f_i$ that do not differ by a monomial lying in $A_0$. }
\end{remark}

Similar to the process of localizing at a prime ideal, we can graded-localize at a graded prime ideal. This process gives us a gr-local ring.

\begin{definition}{\em
Let $A$ be a $K$-graded ring and $\mathfrak{p}$ be a graded prime ideal. Let $S$ be the set of \emph{homogeneous} elements of $A \setminus \mathfrak{p}$. Then 
\[
A_{(\mathfrak{p})}:=S^{-1}A 
\]
is the \emph{gr-localization} of $A$ at $\mathfrak{p}$. It is a graded-local ring. It is not necessarily finitely generated over $\left(A_{(\mathfrak{p})}\right)_0$.
Furthermore, the unique maximal graded ideal is not necessarily maximal in the usual sense. 
}
\end{definition}

It is straightforward to see that the unique graded maximal ideal of $A_{(\mathfrak{p})}$ is maximal if and only if $\mathfrak{p}$ is maximal. 

\begin{example}
{\em
Consider $\cc[x,y]$ with the $\zz$-grading given by 
${\rm deg}(x)=1$ and ${\rm deg}(y)=-1$.
We consider gr-localizations at 
different graded prime ideals $\mathfrak{p}$.
We study them by understanding which scheme points of $\mathbb{A}^2$ define scheme points of $\Spec \cc[x,y]_{(\mathfrak{p})}$. 

When gr-localizing at $\mathfrak{p}=\langle  x,y \rangle$, all closed points on the coordinate axes of $\mathbb{A}^2$ define scheme points of $\Spec \cc[x,y]_{(\mathfrak{p})}$. 
Among the curves, the coordinate axes are the only closures of $\cc^*$-orbits that define points on the quotient.

If instead, we gr-localize at the coordinate axis $\langle  x \rangle$, which is not even graded maximal since it is contained in the graded $\langle x,y \rangle$, only the closed points (different from the origin) on this axis survive. The other axis together with its points vanishes, as all other closures of $\cc^*$-orbits do. Moreover, those curves that do not meet the axis $\langle  x \rangle$ become closed  points. 

If finally, we gr-localize at the orbit $\langle  xy-1 \rangle$, which is graded maximal but not maximal, the surviving closed points are exactly those that lie on this curve. 
}
\end{example}

We remark that localization at a prime $\mathfrak{p}$ factors through  gr-localization at $\mathfrak{p}$. In the following, we collect some useful properties that gr-local rings possess.
First, we note that our definition of gr-local contains the finite-generation property over the degree-zero part $A_0$. Hence, if $A_0$ is essentially of finite type over $\cc$, then so is $A$ (see, e.g.,~\cite[Proposition 1.3.9]{EGA4}).  

The second advantage of the finite-generation property is that the arguments of~\cite[Sec 1.2]{ADHL15} apply. Indeed, we have a surjection $A_0 \otimes \cc[x_1,\ldots,x_n] \twoheadrightarrow A$. In particular, we have an equivalence of categories between gr-local rings and affine schemes of finite type over the spectrum of a local ring with a quasi-torus action.

\begin{example}
{\em
We consider $\cc[x,y]$ with the $\zz$-grading given by the weight $(1,-1)$ and the maximal graded ideal $\mathfrak{m}=\langle x, y \rangle$. The degree zero part is $\cc[xy]$ and the set of homogeneous elements of $\cc[x,y] \setminus \mathfrak{m}$ is just $S:=\cc[xy] \setminus \langle xy \rangle$. On the other hand, graded localizing gives the gr-local ring $\cc[x,y]_{(\mathfrak{m})}$, which has degree-zero elements of the form $\frac{f}{g}$, where $f \in \cc[x,y]$, $g \in S$, and the degree of $f$ equals the degree of $g$. Since the elements of $S$ are of degree zero, both $f$ and $g$ are of degree zero. We conclude that there is an isomorphism 
\[
(\cc[x,y]_{(\mathfrak{m})})_0 \cong \cc[xy]_{\langle xy \rangle}.
\]
This means that we can see the local ring at the origin of the $\cc^*$-quotient $\cc=\Spec \cc[x,y]_0$ as a $\cc^*$-quotient of the gr-local ring $\cc[x,y]_{(\mathfrak{m})}$. Furthermore, this quotient parametrizes orbits in a special way: closed points parametrize closed orbits. Non-closed points parametrize orbits that are not closed, but their closure consists of closures of orbit points. In this, sense it is a good quotient.
}
\end{example}

In the following, we try to formalize this notion of good quotient. The classical one for varieties is not sharp enough here, since it does not stress what is parametrized by non-closed points. 

\begin{definition}{\em 
Let $X$ be an affine scheme over $\cc$. Let an affine algebraic group $G$ act on $X$. A morphism $\varphi \colon X \to Y$ to a scheme $Y$ over $\cc$ is a \emph{good quotient} if:
\begin{enumerate}
    \item $\varphi$ is $G$-invariant, surjective, and affine,
    \item for $U \subseteq Y$ open, the morphism $\mathcal{O}_Y(U) \to \mathcal{O}_X(\varphi^{-1}(U))$ is an isomorphism to $\mathcal{O}_X(\varphi^{-1}(U))^G$,
    \item the image of a $G$-invariant closed subset of $X$ is closed in $Y$, and 
    \item the images of disjoint $G$-invariant closed subsets are disjoint.
\end{enumerate}
Let $x \in X$ be a scheme point with closure $\overline{x}$. We denote by $Gx$ the orbit of $x$. We call the set 
\[
G\overline{x}:=\{g\cdot x' \mid g \in G \text{ and } x' \in \overline{x} \}
\]
the \emph{scheme-orbit} of $x$. 
We say that $\varphi$ is a \emph{good scheme quotient} if in addition to (1)-(4) the following hold:
\begin{enumerate}
    \item[(1')] The image of an orbit $Gx$ (not necessarily closed) with a \emph{closed} scheme-orbit $G\overline{x}$ is a point in $Y$,
     \item[(2')] this point is closed if and only if $Gx$ is closed,
     \item[(3')] if this point is not closed, then it's closure consists of the image $\varphi(G\overline{x})$ of the scheme-orbit, and 
     \item[(4')] in the pre-image of any $y \in Y$ lies exactly one orbit with a closed corresponding scheme-orbit. 
 \end{enumerate}
 We say that a good scheme quotient is a \emph{geometric scheme quotient}, if the pre-image of any $ y \in Y$ is an orbit with a closed corresponding scheme-orbit. 
 }
\end{definition}

In particular, we will see that for a gr-local ring $A$, the morphism $\Spec A \to \Spec A_0$ is a good scheme quotient, which is even geometric if the grading group is finite. We also remark that since we work in characteristic zero, categorical GIT-quotients by reductive groups are {\em universally categorical}, i.e., they behave well under base change, see~\cite[Theorem 1.1]{MFK94}.

\begin{lemma}
Let $K$ be a finite abelian group and $A$ be a $K$-graded gr-local ring. Then $A$ is local.
\end{lemma}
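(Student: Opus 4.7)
The plan is to show that every element of $A\setminus\mathfrak{m}$ is a unit, which forces $\mathfrak{m}$ to be the unique maximal ideal of $A$. The key input is the character-group action that becomes available once $K$ is finite.

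Since $K$ is finite abelian and $A$ contains $\mathbb{C}$, the dual group $\hat{K}:=\Hom(K,\mathbb{C}^{*})$ acts on $A$ by $\mathbb{C}$-algebra automorphisms: each $\chi\in\hat{K}$ sends a homogeneous element $a\in A_{k}$ to $\chi(k)\cdot a$, extended linearly. Because $\mathbb{C}$ contains enough roots of unity for $\hat{K}$ to separate points of $K$, the invariant subring is exactly $A^{\hat{K}}=A_{0}$. Moreover $\mathfrak{m}$ is graded, hence stable under $\hat{K}$.

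For each $a\in A$, form the norm
\[
N(a):=\prod_{\chi\in\hat{K}}(\chi\cdot a)\in A^{\hat{K}}=A_{0}.
\]
Now suppose $a\notin\mathfrak{m}$. By $\hat{K}$-stability of $\mathfrak{m}$, every translate $\chi\cdot a$ lies outside $\mathfrak{m}$; since $\mathfrak{m}$ is maximal (hence prime), the product $N(a)$ lies outside $\mathfrak{m}$ as well. Because $N(a)\in A_{0}$, this gives $N(a)\notin\mathfrak{m}\cap A_{0}=\mathfrak{m}_{0}$, so $N(a)$ is a unit in the local ring $A_{0}$. The factorization $N(a)=a\cdot\prod_{\chi\neq 1}(\chi\cdot a)$ then exhibits $a$ as a unit of $A$.

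Hence $\mathfrak{m}$ is the only maximal ideal of $A$ and $A$ is local. The only subtlety is setting up the character action, which requires both that $K$ be finite and that the ground field contain the relevant roots of unity; both are built into the hypotheses, and no finite-generation arguments beyond those already furnished by the definition of a gr-local ring are needed.
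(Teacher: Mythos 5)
Your proof is correct, but it takes a genuinely different route from the paper. The paper argues geometrically: it supposes a second maximal ideal, considers the corresponding closed point $x \in \Spec A$, observes that its $K$-orbit is a closed finite set of points, and uses the fact that $\Spec A \to \Spec A_0$ is a good quotient to conclude that this orbit must map to the unique closed point of $\Spec A_0$ and hence coincide with the (singleton) orbit of the point cut out by $\mathfrak{m}$, a contradiction. You instead give a purely algebraic argument via the norm $N(a) = \prod_{\chi\in\hat K}(\chi\cdot a)$: this is the classical Artin-type averaging trick, showing directly that any $a \notin \mathfrak{m}$ divides a unit of $A_0$ and is therefore itself a unit, so $\mathfrak{m}$ contains all non-units. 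Both approaches are valid. Yours has the advantage of being self-contained and not leaning on the good-quotient machinery (closed orbits map to closed points, disjoint closed orbits map to disjoint points) that the paper develops earlier in the section; the paper's version fits more naturally into the geometric framework of scheme quotients the authors are setting up for later use. One small point worth making explicit in your write-up: the factorization $N(a) = a\cdot\prod_{\chi\neq 1}(\chi\cdot a)$ relies on the trivial character acting as the identity, and the conclusion that $N(a)\notin\mathfrak{m}$ uses that the graded maximal ideal is not only maximal but, in particular, prime — both facts are immediate, but since they carry the whole weight of the argument it does no harm to name them.
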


\begin{proof}
We have to show that the unique maximal and graded ideal $\mathfrak{m}$ is the only maximal ideal. Assume that there is another maximal ideal, which is not graded by the uniqueness property. Consider the corresponding closed point $x \in \Spec A$. Let $G \cong K$ be the finite abelian group acting on $\Spec A$. The orbit $Gx=G\overline{x}$ is closed, hence it's image $y'$ under the quotient morphism $\Spec A \to \Spec A_0$ is a closed point. Since $A_0$ is local, the point $y$ coincides with the point  corresponding to the maximal ideal $\mathfrak{m}_0=\mathfrak{m} \cap A_0$. But since $Gx$ is closed, it coincides with the orbit $x_{\mathfrak{m}}$.
This leads to a contradiction.
\end{proof}

In particular, we see that gr-local rings as defined above are also graded-local in the classical sense. Namely only allowing $K$ to be a free abelian group. Indeed, the degree-zero part with respect to the free part of the grading group is local as well, by the above lemma.

We finish this subsection proving that the class group of the spectrum of a gr-local ring is concentrated at the unique maximal graded ideal. In particular, the following holds.

\begin{lemma}
\label{lem:pic0}
Let $A$ be a gr-local ring, $X:=\Spec A$, and  $x\in X$ be the closed point corresponding to the unique graded maximal ideal $\mathfrak{m}$. Then
\[
\Cl(X)\cong \Cl(X,x) \cong \Cl(X_x)
\quad
{\rm and}
\quad
\Pic(X)\cong \Pic(X_x) \cong 0.
\]
\end{lemma}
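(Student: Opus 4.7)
The lemma contains four assertions: $\Pic(X_x)=0$, $\Pic(X)=0$, $\Cl(X,x)\cong\Cl(X_x)$, and $\Cl(X)\cong\Cl(X_x)$. The first is immediate, since $X_x=\Spec A_\mathfrak{m}$ is the spectrum of a Noetherian local ring and invertible modules over a local ring are free. The identification $\Cl(X,x)\cong\Cl(X_x)$ is essentially by definition, with $\Cl(X,x):=\Cl(\Spec\mathcal{O}_{X,x})=\Cl(A_\mathfrak{m})$.

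For $\Pic(X)=0$, the $K$-grading on $A$ corresponds to an action of the diagonalizable group $H=\Spec\cc[K]$ on $X$. Since $\Pic(H)=0$, every line bundle $L$ on $X$ admits an $H$-linearization, and hence corresponds to a graded rank-one projective $A$-module $M$. The quotient $M/\mathfrak{m} M$ is a one-dimensional graded vector space over $A/\mathfrak{m}=A_0/\mathfrak{m}_0=\cc$, concentrated in some single degree $k\in K$. Lifting a graded generator via graded Nakayama produces a surjection $A(k)\twoheadrightarrow M$ which, by projectivity of $M$, is an isomorphism. Forgetting the grading, $M\cong A$ as $A$-modules, so $L$ is trivial.

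For $\Cl(X)\cong\Cl(X_x)$, surjectivity of the restriction map is a standard consequence of the localization exact sequence. For injectivity I would first show that every divisor class in $\Cl(X)$ has a graded (that is, $H$-invariant) representative. Consider the homogeneous localization $A_{(0)}$ of $A$ at the multiplicative set of nonzero homogeneous elements: in $A_{(0)}$ every nonzero homogeneous element is a unit, so $A_{(0)}$ is a graded field (a Laurent polynomial ring over its degree-zero subfield), and therefore $\Cl(A_{(0)})=0$. The localization exact sequence then shows $\Cl(A)$ is generated by classes of height-one primes meeting the homogeneous multiplicative set; but for such a prime $\mathfrak{p}$, the graded subideal $\mathfrak{p}^{\gr}$ is a nonzero graded prime contained in $\mathfrak{p}$, and by the height-one hypothesis $\mathfrak{p}^{\gr}=\mathfrak{p}$, i.e., $\mathfrak{p}$ is itself graded. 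Because $\mathfrak{m}$ is the unique maximal graded ideal, every such graded height-one prime sits inside $\mathfrak{m}$ and so passes through $x$, restricting nontrivially to $X_x$.

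To conclude injectivity, suppose $D$ is a graded divisor with $D=\divv_{A_\mathfrak{m}}(h)$ principal on the germ. Then on $X$ we have $\divv_A(h)=D+E$ with $E$ supported on height-one primes outside $\mathfrak{m}$. For each $t\in H$, $(t\cdot h)/h$ has trivial divisor on $X_x$ and hence lies in $A_\mathfrak{m}^*$, defining a $1$-cocycle $H\to A_\mathfrak{m}^*$. A Hilbert-90-type argument for the diagonalizable group $H$ allows one to adjust $h$ by a unit of $A_\mathfrak{m}$ so that $h$ becomes $H$-semi-invariant, i.e., homogeneous in $K(A)$. Then $\divv_A(h)$ is a graded divisor, forcing $E$ to be simultaneously graded and supported off $x$, hence $E=0$. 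Therefore $D=\divv_A(h)$ is principal on $X$, which proves the injectivity. I expect the main technical obstacle to be this final homogeneous-lift step: the cocycle argument requires care because $A_\mathfrak{m}^*$ is typically much larger than $\cc^*$, so one cannot simply read off a character of $H$ from the cocycle.
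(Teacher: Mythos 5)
Your decomposition of the lemma into four assertions is sound, and your treatment of $\Pic(X_x)=0$ and $\Cl(X,x)\cong\Cl(X_x)$ matches the paper. For $\Cl(X)\cong\Cl(X_x)$, your first two steps (every class has a graded representative via the localization sequence at the multiplicative set of nonzero homogeneous elements; every graded height-one prime lies in $\mathfrak{m}$) are essentially a re-derivation of the input the paper takes from~\cite[Prop.~7.1]{Sam64}, and they are correct. The difference---and the genuine gap you already flag---is in the injectivity step. The paper does \emph{not} touch any Hilbert-90 machinery: it works directly with graded divisorial ideals and compares degree components. Concretely, if $I$ is a graded divisorial ideal with $IA_{\mathfrak m}$ principal, one may choose a \emph{homogeneous} generator $a\in I$ of $IA_{\mathfrak m}$ (one of the homogeneous generators of $I$ becomes a generator after localizing); then for any homogeneous $x\in I$ one writes $xq=pa$ in $A$ with $q\notin\mathfrak m$, observes that $q_0$ is a unit of $A_0$ (because $q\notin\mathfrak m=\mathfrak m_0\oplus\bigoplus_{k\neq0}A_k$), and reads off the degree-$(\deg x)$ component $xq_0=p_{k}a$, forcing $x\in(a)$. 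No cohomological input is needed. Your cocycle $c(t)=(t\cdot h)/h\in A_{\mathfrak m}^*$ is the right object, but there is no readily available vanishing of the relevant $H^1$ in this generality: for the connected part of $H$ a Rosenlicht-type argument can be made to produce a character, but when $K$ has torsion the finite factor of $H$ contributes to $H^1(\pi_0(H),A_{\mathfrak m}^*)$, which has no reason to vanish. So this step is not merely a technical obstacle to be worked out---it is where the proposal, as written, fails, and the paper's degree-component comparison is the fix.

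Two smaller remarks. First, the claim ``since $\Pic(H)=0$, every line bundle on $X$ admits an $H$-linearization'' also deserves scrutiny: the standard exact sequence relating $\Pic_H(X)$ and $\Pic(X)$ either assumes $H$ connected or requires controlling a term involving global units of $X$; for diagonalizable $H$ with a nontrivial group of components there can be an obstruction in $H^1(\pi_0(H),A^*)$. The paper sidesteps this by citing~\cite[Lemma~5.1]{Mur69}, which gives a direct argument. Second, once $\Cl(X)\cong\Cl(X_x)$ is established, $\Pic(X)=0$ comes for free: the restriction $\Cl(X)\to\Cl(X_x)$ is injective and sends Cartier classes to Cartier classes, so $\Pic(X)$ embeds into $\Pic(X_x)=0$. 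This makes the linearization argument unnecessary and would streamline your proposal.
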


\begin{proof}
By~\cite[Prop. 7.1]{Sam64}, the class group $\Cl(X)$ is isomorphic to the group of graded divisorial ideals modulo the subgroup of principal graded ideals. Thus, we only have to show that for an ideal $I \subseteq A$, if $IA_\mathfrak{m}$ is principal, then $I$ is already principal. Let $a \in A_{\mathfrak{m}}$ be a generator of $IA_\mathfrak{m}$, which we can assume to be a graded element of $A$. Then, for a graded $x \in I$, there are $p \in A$ and $q \in S=A \setminus \mathfrak{m}$, such that $x=\frac{p}{q}a$ in $A_{\mathfrak{m}}$. So $xq=pa$ holds in $A$. Writing $q_i$  for the graded components of $q$, we know that $q_0$ is nonzero and a unit. Thus, there is a homogeneous component $p_k$ of $p$, such that $x q_0=p_k a$. Hence, $I$ is generated by $a$ in $A$. 
 The argument for triviality of the Picard group is the same as in~\cite[Lemma 5.1]{Mur69}.
\end{proof}

The advantage of the notion of gr-local rings is that not only it will encompass the local Cox rings of singularities, but also it stresses the grading. Note that this provides us with a meaningful notion of finite generation for Cox rings of (spectra of) local rings: the Cox ring should be finitely generated (as an algebra) over the local ring itself.

\begin{remark}{\em 
\label{rem:nongrocalMDS}
If $X$ is an affine Mori dream space, then 
the Cox ring ${\rm Cox}(X)$ may have no grading that makes it a gr-local ring. However, we will see later that the Cox sheaf $\Cox{X}$ is always a sheaf of gr-local rings (see Definition~\ref{def:sheaf-gr-local-rings}).}
\end{remark}

\subsection{Graded-Henselian rings}\label{subsec:gr-Henselian-rings}

In this subsection, we recall the concept of
gr-Henselian rings and prove some preliminary results about their class groups.

The local rings in the Zariski topology are too coarse to capture the local topology at a singularity well. Thus in the following, we consider also the local rings in the \'etale topology, which are Henselian local rings. The resulting Cox rings are gr-local rings with a Henselian degree-zero part. Such rings were studied in~\cite{Cae83} and are called \emph{gr-Henselian} rings.

\begin{definition}[Cf.~\cite{Cae83}]
{\em 
Let $A$ be a gr-local ring. Then, we say that $A$ is \emph{gr-Henselian}, if it satisfies one of the following equivalent conditions:
\begin{enumerate}
    \item $A_0$ is Henselian, and
    \item every graded $A$-algebra is a direct sum of gr-local rings.
\end{enumerate}
}
\end{definition}

In fact,~\cite[Teorem 4.6]{Cae83} contains several more equivalent characterizations analogous to those for Henselian rings. For us, maybe the most important property of gr-Henselian rings is the following.

\begin{theorem}
\label{thm:Cl-gr-Hens}
Let $A$ be an excellent rational $\zz^k$-graded gr-Henselian ring. Let $A_\mathfrak{m}^h$ be the Henselization of the local ring at the unique maximal graded ideal
$\mathfrak{m}$ and $\hat{A}$ be the $\mathfrak{m}$-adic completion. Assume the graded prime ideals $\mathfrak{p}$ of height one in $A$ are in one-to-one-correspondence with the height one prime ideals in $A_0$ via $\mathfrak{p}=\mathfrak{p}_0A$.
Furthermore, we assume the same property holds for the base change $\tilde{A}:=A\otimes_{A_0} \widehat{A_0}$. Then, we have isomorphisms
\[
\Cl(A) \cong \Cl(A_\mathfrak{m}^h) \cong \Cl(\hat{A}).
\]
\end{theorem}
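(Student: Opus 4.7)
My plan is to reduce the statement, via Lemma~\ref{lem:pic0}, to the classical invariance of class groups of excellent normal local rings with rational singularities under Henselization and $\mathfrak{m}$-adic completion; the role of the two bijection hypotheses on height-one graded primes is to ensure that the graded structure is compatible with this reduction.

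First, I would apply Lemma~\ref{lem:pic0} to identify $\Cl(A)\cong\Cl(A_\mathfrak{m})$, where $A_\mathfrak{m}$ is the ordinary localization of $A$ at the unique graded maximal ideal $\mathfrak{m}$. Since $A$ is excellent and rational, so is $A_\mathfrak{m}$; in particular, it is an excellent normal local domain with a rational singularity, as are its Henselization $A^h_\mathfrak{m}$ and its completion $\hat{A}=\widehat{A_\mathfrak{m}}$. This puts all three class groups on the footing of ordinary (non-graded) local rings.

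Next, I would invoke the classical theorem that for an excellent normal local ring $R$ with a rational singularity, the natural maps $\Cl(R)\to\Cl(R^h)\to\Cl(\hat{R})$ are isomorphisms (due to Flenner, Boutot, and Cutkosky, and implicit in the standard treatment of klt germs). Applied to $R=A_\mathfrak{m}$, this yields $\Cl(A_\mathfrak{m})\cong\Cl(A^h_\mathfrak{m})\cong\Cl(\hat{A})$. Combined with the identification from Lemma~\ref{lem:pic0}, this gives the stated chain of isomorphisms.

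The role of the bijection hypotheses is to make this reduction clean. Via Samuel's theorem~\cite[Prop.~7.1]{Sam64} (as in the proof of Lemma~\ref{lem:pic0}), the class group of any $\zz^k$-graded Krull domain is presented by homogeneous divisorial fractional ideals modulo homogeneous principal ones; the bijection for $A$ identifies these graded generators with height-one primes of $A_0$, and the analogous bijection for $\tilde{A}=A\otimes_{A_0}\widehat{A_0}$ propagates this identification compatibly to the partial completion. This compatibility is what ensures that no new divisors appear on $\hat{A}$ beyond those already visible from the graded structure of $A$, so that the classical comparison theorem can be invoked without obstruction.

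The main technical obstacle is verifying that the rationality hypothesis, which is a condition on the graded ring $A$, transfers properly to the ordinary local ring $A_\mathfrak{m}$ and that every Weil divisor on $\hat{A}$ arises from a graded Weil divisor on $A$; this is where the second bijection hypothesis (for $\tilde{A}$) becomes essential, since without it the completion could in principle acquire new height-one primes that are not extensions of primes in $\widehat{A_0}$, and correspondingly extra divisor classes invisible on $A$.
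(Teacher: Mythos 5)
Your argument breaks down at the step where you invoke a ``classical theorem that for an excellent normal local ring $R$ with a rational singularity, the natural maps $\Cl(R)\to\Cl(R^h)\to\Cl(\hat{R})$ are isomorphisms.'' No such theorem exists in that generality. What is known (Boutot--Flenner, \cite[Theorem 6.2]{BF84}) is that $\Cl(R^h)\cong\Cl(\hat{R})$ for excellent rational local rings; the map $\Cl(R)\to\Cl(R^h)$ is always injective but is \emph{not} surjective in general, even for rational (indeed Gorenstein terminal) singularities. The paper itself exhibits a counterexample in Subsection~\ref{subsec:local-Hensel-cox}: for the factorial threefold $\{x^2+y^3+z^3w=0\}$ one has $\Cl(\mathcal{O}_{X,c})=0$ at a general point $c$ of the singular curve, yet $\Cl(\mathcal{O}_{X,c}^h)\cong\zz/3\zz$. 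So after the reduction $\Cl(A)\cong\Cl(A_{\mathfrak m})$ via Lemma~\ref{lem:pic0}, you are still left with exactly the hard problem---why is $\Cl(A_{\mathfrak m})\to\Cl(A_{\mathfrak m}^h)$ surjective?---and your proposal offers no argument for it.

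The point of the gr-Henselian hypothesis (i.e., $A_0$ Henselian) together with the two bijection conditions on height-one graded primes is precisely to make this surjectivity hold; these are genuine hypotheses needed to rule out ``new'' divisors appearing only after Henselization or completion, not merely bookkeeping to make a clean reduction. Your last paragraph correctly identifies that the second bijection hypothesis is needed to control new height-one primes in $\hat{A}$, but your argument never actually uses it: everything is delegated to the nonexistent classical isomorphism. The paper's proof instead follows Flenner's strategy for $\nn$-graded rational rings \cite{Fl81}: it builds the auxiliary ring $B$ (the $\mathfrak{m}$-adic completion of the Laurent extension $A[t_i^{\pm 1}]$), proves via Lemmas~\ref{le:componentcompletion}, \ref{le:cl-injective}, \ref{le:cl-equalizer} that $0\to\Cl(A)\to\Cl(\hat A)\rightrightarrows\Cl(B)$ is an equalizer sequence, and then uses the DCG property of $\hat{A}$ (a consequence of rationality) to show the two maps to $\Cl(B)$ coincide, forcing $\Cl(A)\to\Cl(\hat A)$ to be surjective. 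That equalizer argument is where the two height-one-prime hypotheses actually do their work, and it cannot be bypassed by citing a general comparison theorem for local rings.
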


The assumptions on the height one prime ideals are not as restrictive as they may seem. They are   fulfilled for gr-local rings if the morphisms $\Spec(A) \to \Spec(A_0)$ and $\Spec(\tilde{A}) \to \Spec(\widehat{A_0})$ are locally trivial fiber bundles in codimension one.

To prove the theorem, we follow the line of arguments of~\cite[Sec 1 \& 2]{Fl81}, where an analogous result is proved for $\nn$-graded rational rings. Before we can use the results from~\cite{Fl81}, we have to prove the following lemma.

\begin{lemma}
\label{le:componentcompletion}
Let $A$ be a $\zz^K$-graded gr-local ring with maximal graded ideal $\mathfrak{m}=\mathfrak{m}_0 \oplus \bigoplus_{k \neq 0} A_k$. Then the degree $k$ piece of the $\mathfrak{m}$-adic completion $\hat{A}=\prod_{k \in \zz^K} \underleftarrow{\lim}\, A_k/\left(\mathfrak{m}^j\right)_k$ is isomorphic to the $\mathfrak{m}_0$-adic completion of the $A_0$-module $A_k$.
This means that we have isomorphisms
\[
\underleftarrow{\lim}\, A_k/\left(\mathfrak{m}^j\right)_k \cong \underleftarrow{\lim}\, A_k/\left(\mathfrak{m}_0\right)^j\!A_k =: \widehat{A_k}.
\]
\end{lemma}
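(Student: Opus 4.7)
The lemma asserts that two natural descending $A_0$-submodule filtrations on $A_k$, namely $\{(\mathfrak{m}^j)_k = \mathfrak{m}^j \cap A_k\}$ and $\{\mathfrak{m}_0^j A_k\}$, have isomorphic inverse limits. The plan is to prove cofinality of these filtrations, from which the isomorphism of inverse limits follows formally. One inclusion is immediate: since $\mathfrak{m}_0 = \mathfrak{m}\cap A_0 \subseteq \mathfrak{m}$, we have $\mathfrak{m}_0^j A_k \subseteq (\mathfrak{m}^j)_k$ for every $j$.

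For the converse, I would use the finite generation hypothesis built into the definition of gr-local: write $A = A_0[f_1,\dots,f_r]$ with each $f_i$ homogeneous of nonzero degree $d_i \in \zz^K$, so that $\mathfrak{m} = \mathfrak{m}_0 A + J$ with $J = (f_1,\dots,f_r)$. Expanding $(\mathfrak{m}^{j'})_k = \sum_{a+b=j'} \mathfrak{m}_0^a (J^b)_k$, the summands with $a\geq j$ already lie in $\mathfrak{m}_0^j A_k$. Since $(J^b)_k$ is spanned as an $A_0$-module by monomials $f^\eta$ with $|\eta|\geq b$ and $\deg f^\eta = k$, the task reduces to the following monomial claim: for each $j$ there exists $T(j)$ such that $f^\eta \in \mathfrak{m}_0^j A_k$ whenever $\deg f^\eta = k$ and $|\eta|\geq T(j)$; then choosing $j' \geq j + T(j)$ settles the remaining summands.

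The monomial claim I would prove via Gordan's lemma. The affine semigroup $S_0 := \{\alpha\in\nn^r : \sum_i \alpha_i d_i = 0\}$ is finitely generated, say by $\gamma^{(1)},\dots,\gamma^{(l)}$ of norm bounded by $C_0$, and each $f^{\gamma^{(i)}}$ lies in $A_0\cap \mathfrak{m} = \mathfrak{m}_0$. Similarly, $\{\eta\in\nn^r : \sum_i \eta_i d_i = k\}$ is a finitely generated $S_0$-set with generators $\eta^{(1)},\dots,\eta^{(m)}$ of norm bounded by some $C_k$. Any $\eta$ of degree $k$ thus admits a decomposition $\eta = \sigma + \eta^{(l)}$ with $\sigma\in S_0$ and $|\sigma|\geq |\eta|-C_k$; writing $\sigma = \sum_i n_i \gamma^{(i)}$ gives $f^\sigma \in \mathfrak{m}_0^{\sum_i n_i}\subseteq \mathfrak{m}_0^{\lfloor|\sigma|/C_0\rfloor}$, so $|\eta|\geq C_k + jC_0$ forces $f^\eta = f^\sigma f^{\eta^{(l)}}\in \mathfrak{m}_0^j A_k$, as required.

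The main subtlety I foresee is uniformity in Gordan's lemma: in the expansion $(\mathfrak{m}^{j'})_k = \sum_{a+b=j'} \mathfrak{m}_0^a (J^b)_k$, the intermediate degrees $\deg f^\alpha$ vary over an a priori unbounded subset of $\zz^K$, and a naive argument would demand different base generators $\eta^{(l)}$ for each such degree. The fix is already implicit above: I repackage each contribution $f^\alpha \cdot h$ with $h\in A_{k-\deg f^\alpha}$ a monomial as a single monomial $f^\eta$ of the fixed target degree $k$, so that Gordan's lemma is applied only to the single fiber $\deg^{-1}(k)\cap\nn^r$, whose finite generating set yields one uniform norm bound $C_k$. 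Once this is in place, the bookkeeping from the monomial claim to the filtration-level cofinality is routine.
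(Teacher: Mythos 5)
Your proof is correct and follows essentially the same route as the paper's: both establish cofinality of the filtrations $\{(\mathfrak{m}^j)_k\}$ and $\{\mathfrak{m}_0^j A_k\}$ by reducing to monomials in a finite homogeneous generating set and invoking finiteness of the degree-zero monoid and of the degree-$k$ fiber as a module over it. The only difference is bookkeeping — you use algebra generators of $A$ over $A_0$ and name Gordan's lemma explicitly, whereas the paper uses homogeneous module generators of $\mathfrak{m}$ and cites ``standard monomial combinatorics'' — so your write-up is, if anything, a bit more careful where the paper is terse.
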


\begin{proof}
Let $g_{01},\ldots,g_{0n_0},g_{i_11},\ldots, g_{i_1n_{i_1}},\ldots,g_{i_m1},\ldots, g_{i_mn_{i_m}}$ be a finite set of $A$-module generators of $\mathfrak{m}$, where $g_{ij} \in A_i$ for $1\leq j \leq n_i$.
In the following, by degree, we mean the standard degree of a monomial $m(x_{ij})$. Otherwise, we speak of the $\zz^K$-degree. 
Since $\mathfrak{m}^l$ is generated as an $A$-module  by all monomials in the $g_{ij}$ of degree $l$, we know that $(\mathfrak{m}^l)_0$ is generated as an $A_0$-module by all monomials in the $g_{ij}$ of $\zz^K$-degree zero and degree at least $l$. Indeed, a monomial of degree $l$ and nonzero $\zz^K$-degree $k$ has an $A$-coefficient in $A_{-k}$ in order to lie in $A_0$, and expanding it in the $A_0$-module generators of $A_k$ leads to monomials of degree greater than $l$.
On the other hand, we know that there are only finitely many monomials $m_1,\ldots,m_M$ in the $g_{ij}$ such that any other monomial in the $g_{ij}$ is in turn a monomial in these . This follows from standard monomial combinatorics. We set $\mu:=\max(\deg(m_i))_{i=1,\ldots,M}$ and get
$$
(\mathfrak{m}_0)^{\nu \mu} \subseteq \left(\mathfrak{m}^{\nu \mu}\right)_0 \subseteq (\mathfrak{m}_0)^{\mu}
$$
for $\nu \geq 1$. So the claim follows for $k=0$. The argument for the $A_0$-modules $A_k$ is similar. There are only finitely many monomials in the $g_{ij}$ of $\zz^K$-degree $k$, such that all others of $\zz^K$-degree $k$ differ from them by multiplication with a monomial of $\zz^K$-degree $0$. Set $\mu_k$ to be the maximal degree of these finitely many monomials. Then we get
$$
(\mathfrak{m}_0)^{\nu \mu + \mu_k-1} A_k \subseteq \left(\mathfrak{m}^{\nu \mu + \mu_k}\right)_0 \subseteq (\mathfrak{m}_0)^{\mu} A_k
$$
and the claim is proved.
\end{proof}

We need two additional lemmas and use the following definitions. Let $B$ be the $\mathfrak{m}$-adic completion of $A[t_1,t_1^{-1},\cdots,t_k,t_k^{-1}]$. Denote $\hat{A}\llbracket \mathbf{x} \rrbracket:=\hat{A}\llbracket x_1,\ldots,x_k\rrbracket$ and let $p \colon \hat{A} \to \hat{A}\llbracket \mathbf{x} \rrbracket$ and $p_0 \colon \hat{A}\to B$ be the canonical injections. Let $q \colon \hat{A} \to \hat{A}\llbracket  \mathbf{x} \rrbracket$ be the homomorphism defined by
\[
A_{(z_1,\ldots,z_k)} \ni f \mapsto f\cdot (x_1+1)^{z_1}\cdots (x_k+1)^{z_k},
\]
where $(x_i+1)^{-1}=\sum_{j=1}^{\infty} (-x_i)^j$. Further, let $q_0 \colon \hat{A}\to B$ be the homomorphism defined by
\[
A_{(z_1,\ldots,z_k)} \ni f \mapsto f\cdot t_1^{z_1}\cdots t_k^{z_k},
\]
and $g \colon B \to \hat{A}\llbracket  \mathbf{x} \rrbracket$ the $\hat{A}$-homomorphism mapping $t_i$ to $x_i+1$. Observe that the equalities $g \circ p_0=p$ and $g \circ q_0=q$ hold. We prove the following lemma.

\begin{lemma}
\label{le:cl-injective}
The map $g_* \colon \Cl(B) \to \Cl(\hat{A}\llbracket  \mathbf{x} \rrbracket)$ is injective.
 \end{lemma}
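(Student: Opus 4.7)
The plan is to realize $g$ as the natural map from $B$ into a completion at a suitable maximal ideal, and then to apply classical injectivity results for class groups of excellent normal local domains under completion.

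First, I would identify the target $\hat{A}\llbracket \mathbf{x}\rrbracket$ with the completion of $B$ at an explicit maximal ideal. Set
\[
\mathfrak{n} := \mathfrak{m} B + (t_1 - 1, \ldots, t_k - 1) \subseteq B.
\]
Since $B/\mathfrak{n} \cong A/\mathfrak{m}$, the ideal $\mathfrak{n}$ is maximal. Under the change of variables $x_i = t_i - 1$, one checks directly from the universal property of adic completion that the composition $B \to B_{\mathfrak{n}} \to \widehat{B_{\mathfrak{n}}}$ agrees, up to canonical isomorphism, with the map $g\colon B \to \hat{A}\llbracket \mathbf{x}\rrbracket$. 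The essential bookkeeping here is to verify that the $\mathfrak{m}$-adic completion already built into the construction of $B$ combines with further completion in the $(t_i - 1)$-directions to yield exactly the formal power series ring $\hat{A}\llbracket \mathbf{x}\rrbracket$; this uses Lemma~\ref{le:componentcompletion} and a careful filtration argument comparing $\mathfrak{m}$-adic and $\mathfrak{n}$-adic topologies.

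Once this factorization is in place, I would split $g_*$ as
\[
\Cl(B) \xrightarrow{\alpha} \Cl(B_\mathfrak{n}) \xrightarrow{\beta} \Cl(\widehat{B_\mathfrak{n}}) \cong \Cl(\hat{A}\llbracket \mathbf{x}\rrbracket),
\]
and handle the two pieces separately. The injectivity of $\beta$ is the classical theorem that completion of an excellent normal local domain induces an injection on class groups (Mori/Danilov; cf.\ Fossum's monograph on Krull domains), which applies because excellence of $A$ is inherited by $A[t^{\pm 1}]$, hence by $B_\mathfrak{n}$. For $\alpha$, the kernel consists of classes of height-one primes of $B$ that survive localization at $\mathfrak{n}$ but become principal there; I would rule these out using the $\zz^k$-action on $B$ induced from scaling the $t_i$, together with the hypothesis on height-one graded primes inherited from $A$: any such height-one prime of $B$ either comes from one of $A$ (and hence meets $\mathfrak{m} B \subseteq \mathfrak{n}$) or from the torus directions (and hence meets some $(t_i - 1) \in \mathfrak{n}$).

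The main obstacle will be controlling $\ker(\alpha)$. Unlike in the graded setting of [Fl81], the ring $B$ is not itself local, so $\alpha$ need not be injective on the nose; it is the additional grading/symmetry, together with the one-to-one correspondence between height-one graded primes of $A$ and height-one primes of $A_0$, that collapses this kernel. Showing this carefully — i.e., that every class in $\Cl(B)$ admits a graded representative supported at $\mathfrak{n}$, hence is detected at $B_\mathfrak{n}$ — is the delicate step, after which injectivity of $g_* = \beta \circ \alpha$ follows.
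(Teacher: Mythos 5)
Your factorization of $g_*$ as $\beta\circ\alpha$ through $\Cl(B_{\mathfrak n})$ is a genuinely different decomposition from what the paper does, and the step you flag as ``delicate'' is in fact where the proposal breaks down. Since the localization map $\alpha\colon \Cl(B)\to\Cl(B_{\mathfrak n})$ is always surjective (Nagata), and you establish $\beta$ is injective via Mori--Danilov, the statement ``$\alpha$ is injective'' is \emph{equivalent} to the lemma itself. So you have not reduced the problem; you have restated it, and the ideas you sketch for $\ker(\alpha)$ do not close the gap. In particular, your dichotomy for height-one primes of $B$ is wrong: a prime such as $(t_1-2)$ contracts to $(0)$ in $\hat A$, meets neither $\mathfrak m B$ nor any $(t_i-1)$, and hence lies outside $\mathfrak n$. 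Such primes are exactly the generators of $\ker(\alpha)$ (they become units in $B_{\mathfrak n}$, they do not ``survive and become principal''), and nothing in your torus-action heuristic shows their classes vanish in $\Cl(B)$: the $(\mathbb C^*)^k$-action acts trivially on $\Cl(B)$, so translating such a prime into $\mathfrak n$ just exhibits its class as the class of some other prime, which is circular. You also lean on the height-one correspondence hypotheses of Theorem~\ref{thm:Cl-gr-Hens}, but those are \emph{not} in force for Lemma~\ref{le:cl-injective} --- only Lemma~\ref{le:cl-equalizer} carries the phrase ``under the assumptions of Theorem~\ref{thm:Cl-gr-Hens}'', and the paper's proof of the present lemma makes no use of them.

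The paper's actual route is structurally different and contains the hard ingredients you omit. Given $\mathfrak b$ a divisorial ideal of $B$ with $g_*[\mathfrak b]=0$, it first descends principality along the faithfully flat extensions $B_{\mathfrak p B}\to \hat A\llbracket\mathbf x\rrbracket_{\mathfrak p\hat A\llbracket\mathbf x\rrbracket}$ for \emph{every} prime $\mathfrak p\subseteq\hat A$, giving principality of $\mathfrak b$ at all primes of $B$ contained in $\mathfrak m B$. Then, for an \emph{arbitrary} maximal ideal $\mathfrak n$ of $B$ (not the single distinguished one), the formal smoothness of $\hat A\to B_{\mathfrak n}$ and Bourbaki's parafactoriality criterion force $B_{\mathfrak q}$ to be parafactorial for primes $\mathfrak q\subseteq\mathfrak n$, $\mathfrak q\not\subseteq\mathfrak m B$, of dimension $\geq 2$; an induction on $\dim B_{\mathfrak q}$ then gives principality of $\mathfrak b_{\mathfrak q}$ at these primes. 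Together this shows $\mathfrak b$ is invertible, and finally $\Pic(B)\cong\Pic(B/\mathfrak m B)\cong\Pic((A/\mathfrak m)[t^{\pm 1}])\cong 0$ shows $\mathfrak b$ is principal. None of these three ingredients (descent from $\hat A\llbracket\mathbf x\rrbracket$ to $B_{\mathfrak p B}$, Bourbaki parafactoriality, vanishing of $\Pic(B)$) appears in your proposal; they are exactly what is needed to settle the question you have rephrased.
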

 
 \begin{proof}
 Let $\mathfrak{b}$ be a divisorial ideal of $B$ in the kernel of $g_*$. We note that  $\hat{A}\llbracket  \mathbf{x} \rrbracket$ is the $\langle t_1-1,\ldots,t_k-1\rangle$-adic completion of $B$. For any prime $\mathfrak{p} \subseteq \hat{A}$ the ring extension $B_{\mathfrak{p}B} \to \hat{A}\llbracket  \mathbf{x} \rrbracket_{\mathfrak{p}\hat{A}\llbracket  \mathbf{x} \rrbracket}$ is faithfully flat.
 Thus due to principality of $\mathfrak{b}\otimes_B \mathfrak{p}\hat{A}\llbracket  \mathbf{x} \rrbracket$, also $\mathfrak{b}\cdot B_{\mathfrak{p}B}$ is principal for any prime $\mathfrak{p} \subseteq \hat{A}$.
 
 We want to show that  $\mathfrak{b}$ is locally principal, i.e. $\mathfrak{b}_\mathfrak{p}$ is principal for any prime $\mathfrak{p}$. So let $\mathfrak{n} \subseteq B$ be maximal and $\mathfrak{m}=\hat{A} \cap \mathfrak{n}$ be the unique maximal ideal of $\hat{A}$. Now, we have an isomorphism $\hat{A}/\mathfrak{m} \cong B/\mathfrak{n}$ and the local homomorphism $\hat{A}\to B_{\mathfrak{n}}$ of local rings is formally smooth. Then~\cite[II, Corollaire 9.8]{Bou78} implies that $B_\mathfrak{q}$ is parafactorial for any prime $\mathfrak{q} \subseteq \mathfrak{n}$ with $\mathfrak{q} \not\subseteq \mathfrak{m}B$ and $\dim(B_\mathfrak{q})\geq 2$.  Due to normality of $B_\mathfrak{n}$ and by induction on $\dim(B_\mathfrak{q})$, we get that $\mathfrak{b}_\mathfrak{q}$ is principal. Thus $\mathfrak{b}$ is locally principal. But since $B$ is $\mathfrak{m}B$-adically complete and $A/\mathfrak{m}$ is a field, we get
 $$
 \Pic(B) \cong \Pic(B/\mathfrak{m}B) \cong \Pic((A/\mathfrak{m})[t_1,t_1^{-1},\ldots,t_k,t_k^{-1}]) \cong 0.
 $$
So $\mathfrak{b}$ is principal.
 
 \end{proof}

\begin{lemma}
\label{le:cl-equalizer}
Under the assumptions of Theorem~\ref{thm:Cl-gr-Hens}. The sequence
\[
\xymatrix{
0 \ar[r] & \Cl(A) \ar[r] & \Cl(\hat{A}) \ar@<-.5ex>[r]_{p_{0*}} \ar@<.5ex>[r]^{q_{0*}} & \Cl(B)
}
\]
is exact.
 \end{lemma}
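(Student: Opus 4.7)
The plan is to follow the strategy of~\cite[Sec 2]{Fl81}, which establishes an analogous equalizer sequence for $\nn$-graded rational rings, and adapt each step to the $\zz^k$-graded setting using Lemma~\ref{le:componentcompletion} and Lemma~\ref{le:cl-injective}. First I would verify injectivity of $\Cl(A)\to\Cl(\hat A)$. Since $A$ is gr-Henselian, the degree-zero part $A_0$ is an excellent Henselian local ring. The hypothesis that graded height-one primes of $A$ are in bijection with height-one primes of $A_0$ (and likewise for $\tilde A$ over $\widehat{A_0}$) identifies graded divisorial ideals of $A$ with ordinary divisorial ideals of $A_0$, and analogously on the completion side. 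The classical injectivity $\Cl(A_0)\to\Cl(\widehat{A_0})$ for excellent Henselian rings (cf.~\cite[Prop.~1.4]{Fl81}) then transfers through these correspondences to give injectivity at $\Cl(A)$.

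Next I would check that every class coming from $\Cl(A)$ lies in the equalizer. Let $\mathfrak{a}\subseteq A$ be a graded divisorial ideal. Its extension $\mathfrak{a}\hat A$ is generated by $\zz^k$-homogeneous elements $f \in \hat A$ (using Lemma~\ref{le:componentcompletion} to make sense of the graded pieces of $\hat A$). For such an $f$ of $\zz^k$-degree $\mathbf z$ one has $p_0(f)=f$ and $q_0(f)=f\cdot t^{\mathbf z}$, and since each monomial $t^{\mathbf z}$ is a unit in $B$, the extensions $p_0(\mathfrak{a}\hat A)B$ and $q_0(\mathfrak{a}\hat A)B$ coincide as ideals, hence a fortiori as classes in $\Cl(B)$.

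The main step, and the main obstacle, is showing that any class in the equalizer lies in the image of $\Cl(A)$. Let $\mathfrak{a}$ be a divisorial ideal of $\hat A$ with $p_{0*}\mathfrak{a}=q_{0*}\mathfrak{a}$ in $\Cl(B)$. Applying Lemma~\ref{le:cl-injective} together with the identities $g\circ p_0=p$ and $g\circ q_0=q$ yields $p_*\mathfrak{a}=q_*\mathfrak{a}$ in $\Cl(\hat A\llbracket \mathbf x\rrbracket)$, so there exists $\lambda$ in the fraction field of $\hat A\llbracket\mathbf x\rrbracket$ with $q(\mathfrak{a})=\lambda\cdot p(\mathfrak{a})$ as fractional divisorial ideals. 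Since $q$ twists a homogeneous element of $\zz^k$-degree $\mathbf z$ by the invertible power series $\prod_i(x_i+1)^{z_i}$, this relation should be read as invariance of the class $\mathfrak{a}$ under the infinitesimal $\mathbb{G}_m^k$-action on $\hat A\llbracket\mathbf x\rrbracket$. Expanding $\mathfrak{a}\hat A$ into graded pieces via Lemma~\ref{le:componentcompletion}, and comparing $\mathbf x^\alpha$-coefficients in the cocycle relation for $\lambda$ degree-by-degree, I would conclude that $\mathfrak{a}\hat A$ admits $\zz^k$-homogeneous generators. The height-one-prime hypothesis, applied in the reverse direction to the first paragraph, then lets one descend these graded generators to a graded divisorial ideal of $A$, producing the required preimage in $\Cl(A)$.

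The hard part is the cocycle expansion step: one must carry out the coefficient comparison uniformly across all $k$ grading directions, track convergence of the Laurent-power-series expansions so that each graded piece of $\mathfrak{a}$ genuinely descends from the completion $\hat A$ to $A$ via Lemma~\ref{le:componentcompletion}, and ensure that the principal factor $\lambda$ itself can be absorbed into the divisorial structure. This is where excellence of $A$ and the precise statement of Lemma~\ref{le:componentcompletion} (giving a componentwise identification between $\hat A_k$ and the $\mathfrak{m}_0$-adic completion of $A_k$) do the essential work of turning a formal graded lift into an actual element of $\Cl(A)$.
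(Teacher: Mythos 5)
Your outline is reasonable in spirit but diverges from the paper's argument at exactly the point where you acknowledge you have not carried out the details, and that is where the genuine content lies. The paper does not perform any cocycle or coefficient-comparison expansion. Instead, it introduces the intermediate graded ring $\tilde A := A \otimes_{A_0} \widehat{A_0}$, identified with $\bigoplus_{k} \widehat{A_k}$ via Lemma~\ref{le:componentcompletion}, through which $A \to \hat A$ factors. Given a height-one prime $\bar{\mathfrak a}$ of $\hat A$ in the equalizer, it invokes~\cite[Sec 2]{Fl81} directly to produce a \emph{graded} height-one ideal $\tilde{\mathfrak a} \subseteq \tilde A$ with $\hat{\tilde{\mathfrak a}} = \bar{\mathfrak a}$; then, using the height-one correspondence hypothesis, it reduces to the degree-zero piece $\tilde{\mathfrak a}_0 \subseteq \widehat{A_0}$, and descends from $\widehat{A_0}$ to $A_0$ by applying~\cite[Theorem (6.2)]{BF84}, which is where the \emph{rationality} hypothesis on $A_0$ enters. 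You never mention $\tilde A$, never mention rationality, and never cite~\cite{BF84}; yet rationality is an explicit standing assumption of Theorem~\ref{thm:Cl-gr-Hens}, so a proof that makes no use of it should set off alarm bells. Your ``cocycle expansion'' paragraph is an attempt to reconstruct what~\cite{Fl81} does internally, but you stop at ``one must \dots ensure that the principal factor $\lambda$ itself can be absorbed,'' which is precisely the hard lifting/descent step that the paper offloads to the cited sources. As written, the proposal is not a proof.

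Two secondary issues. First, your invocation of Lemma~\ref{le:cl-injective} to pass from $p_{0*}\mathfrak a = q_{0*}\mathfrak a$ in $\Cl(B)$ to $p_*\mathfrak a = q_*\mathfrak a$ in $\Cl(\hat A\llbracket\mathbf x\rrbracket)$ is misplaced: that implication is plain functoriality of $g_*$; Lemma~\ref{le:cl-injective} (injectivity of $g_*$) is used in the \emph{converse} direction and only in the proof of Theorem~\ref{thm:Cl-gr-Hens}, not in the proof of this lemma. Introducing it here creates a spurious dependency. Second, your verification that classes from $\Cl(A)$ land in the equalizer because the $t^{\mathbf z}$ are units in $B$ is correct and is a useful sanity check, even though the paper suppresses it; similarly, injectivity of $\Cl(A) \to \Cl(\hat A)$ is implicit in the paper (it follows from faithful flatness of $A \to \hat A$, factoring through $A_{\mathfrak m}^h$), so it is fine that you address it, but your route through the height-one-prime identification is more circuitous than needed.
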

 
\begin{proof}
We define
\[
\tilde{A}:= A \otimes_{A_0} \widehat{A_0} = \bigoplus_{k \in \zz^K} \left( A_k \otimes_{A_0} \widehat{A_0} \right) = \bigoplus_{k \in \zz^K} \widehat{A_k}
\cong \bigoplus_{k \in \zz^K}  \underleftarrow{\lim}\, A_k/\left(\mathfrak{m}^j\right)_k ,
\]
where hats denote $\mathfrak{m}_0$-adic completion, the second identity is due to the fact that the $A_k$ are finitely generated $A_0$-modules and the isomorphy is due to Lemma~\ref{le:componentcompletion}.
The $A_0$-algebra-homomorphism $A \to \hat{A}$ factors through $A \to \tilde{A}$ and $\tilde{A} \to \hat{A}$.
By~\cite[Sec 2]{Fl81}, it follows that for any height one prime ideal $\bar{\mathfrak{a}}$ of $\hat{A}$ such that $q_0^*(\bar{\mathfrak{a}})=p_0^*(\bar{\mathfrak{a}})$ in $\Cl(B)$, there is a graded height one ideal $\tilde{\mathfrak{a}} \subseteq \tilde{A}$ such that $\hat{\tilde{\mathfrak{a}}}=\bar{\mathfrak{a}}$.
Then, $\tilde{\mathfrak{a}}_0$ is an ideal of $\widehat{A_0}$ of height one. In particular,  $\tilde{\mathfrak{a}}_0 \tilde{A}=\tilde{\mathfrak{a}}$ by the assumptions of the theorem. Since $A_0$ is rational, by~\cite[Theorem (6.2)]{BF84}, there is a height one prime ideal $\mathfrak{a}_0$ of $A_0$, such that $\widehat{\mathfrak{a}_0}=\tilde{\mathfrak{a}}_0$. Then  $\mathfrak{a}_0A$ is a height one prime of $A$ such that $\mathfrak{a}_0A \otimes_{A_0} \widehat{A_0}=\tilde{\mathfrak{a}}$. So the equalizer of $q_0^*$ and $p_0^*$ indeed equals the image of $\Cl(A)$ in $\Cl(\hat{A})$.
This concludes the proof of the lemma.
\end{proof} 

\begin{proof}[Proof of Theorem ~\ref{thm:Cl-gr-Hens}]
Since $A$ is excellent and rational, the completion $\hat{A}$ has the DCG property. This means that $\pi_*\colon \Cl(\hat{A}\llbracket x \rrbracket) \to \Cl(\hat{A})$ induced by $\pi \colon \hat{A}\llbracket x \rrbracket \to \hat{A}$ mapping $x$ to $0$ is a bijection, see~\cite[p. 128]{Fl81}. But then $\hat{A}\llbracket x_1,\ldots,x_k \rrbracket$ has the DCG property. Moreover, $\omega \colon \hat{A}\llbracket x_1,\ldots,x_k \rrbracket \to  \hat{A}$ mapping all the $x_i$ to $0$ induces a bijection $\omega_*$ between the divisor class groups by induction. But since $\omega \circ p = \omega \circ q$, $p=g \circ p_0$, and $q= g \circ q_0$, by Lemma~\ref{le:cl-injective} and Lemma~\ref{le:cl-equalizer}, we get $p_{0*}=q_{0*}$ and $\Cl(A) \to \Cl(\hat{A})$ is surjective and hence bijective. Since this map factors through $\Cl(A) \to \Cl(A_\mathfrak{m}^h)$, which is injective, bijectivity of the three divisor class groups follows as claimed.
\end{proof}

\subsection{Sheaves of gr-local rings}
\label{subsec:shvs-gr-local}
In this subsection, we define sheaves of gr-local rings on algebraic varieties.

Throughout this subsection, we consider the case that $X$ is only locally a Mori dream space, that is, its Cox sheaf $\Cox{X}$ is locally of finite type in the sense of~\cite[Constr. 1.3.2.1]{ADHL15}. This means that every $x \in X$ has an open affine neighbourhood $U$, such that $\Cox{X}(U)$ is a finitely generated $\cc$-algebra. This makes it possible to define the relative spectrum $\widehat{X} := \Spec_X \Cox{X}$ of the Cox sheaf, the so-called characteristic space of $X$. However, it may happen that the ring of global sections ${\rm Cox}(X)$ is not a finitely generated $\cc$-algebra. 

In this subsection, we define \emph{sheaves of gr-local rings} and we show that a Cox sheaf is locally of finite type in the aforementioned sense if and only if it is a sheaf of gr-local rings. This means, in particular, that this property has to be checked only locally at the singularities whenever the divisor class group $\Cl(X)$ is finitely generated. 

\begin{definition}\label{def:sheaf-gr-local-rings}{\em 
Let $X$ be a normal variety. Let $\mathcal{S}$ be a quasi-coherent sheaf of $\mathcal{O}_X$-modules. If the stalk $\mathcal{S}_x$ of $\mathcal{S}$ at any point of $X$ is a gr-local ring, then we call $\mathcal{S}$ a \emph{sheaf of gr-local rings}.}
\end{definition}

We recall from~\cite[Def 1.3.1.1]{ADHL15}, that the \emph{sheaf of divisorial algebras} associated to a finitely generated subgroup $K \subseteq \WDiv(X)$ is the quasi-coherent sheaf \[
\mathcal{S}:= 
\bigoplus_{D \in K} 
\mathcal{O}_X(D).
\]

\begin{definition}{\em
Let $X$ be a normal variety. Let $\mathcal{S}$ be a quasi-coherent sheaf of $\mathcal{O}_X$-modules. We say that $\mathcal{S}$ is {\em locally of finite type} if for every point $x\in X$ there is an open affine neighborhood $x\in U$ with
$\mathcal{S}(U)$ a finitely generated $\cc$-algebra.
}
\end{definition}

\begin{lemma}
\label{le:fg-local}
Let $X$ be a normal algebraic variety and $\mathcal{S}$ a sheaf of divisorial algebras associated to the finitely generated subgroup $K \subseteq \WDiv(X)$.
Then the stalk $\mathcal{S}_x$ is a finitely generated
$\mathcal{O}_{X,x}$-algebra for any $x \in X$ if and only if $\mathcal{S}$ is locally of finite type.
\end{lemma}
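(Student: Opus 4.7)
The plan is as follows. The direction $(\Leftarrow)$ is routine: if $\mathcal{S}(U)$ is a finitely generated $\cc$-algebra on an affine open $U \ni x$, then after enlarging the generating set with generators of $\mathcal{O}_X(U)$ it is finitely generated as an $\mathcal{O}_X(U)$-algebra, and localizing these generators at $\mathfrak{m}_x \subset \mathcal{O}_X(U)$ yields a finite generating set of $\mathcal{S}_x$ over $\mathcal{O}_{X,x}$.

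For the substantive direction $(\Rightarrow)$, I would fix $x \in X$ and choose a finite set of homogeneous generators $f_1, \ldots, f_n$ of $\mathcal{S}_x$ as an $\mathcal{O}_{X,x}$-algebra, of degrees $D_1, \ldots, D_n \in K$. Each $f_i$ is represented by a section on some affine open neighborhood of $x$; intersecting finitely many such neighborhoods, all $f_i$ are defined on a common affine open $U \ni x$. I would then form the graded $\mathcal{O}_U$-subalgebra $\mathcal{B} := \mathcal{O}_U[f_1, \ldots, f_n] \subseteq \mathcal{S}|_U$, which is locally of finite type by construction. The task reduces to showing that $\mathcal{B}|_V = \mathcal{S}|_V$ on some affine open $V \ni x$, whence $\mathcal{S}(V)$ is finitely generated as a $\cc$-algebra.

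Working degree by degree, each $\mathcal{B}_D \subseteq \mathcal{O}_X(D)|_U$ is a coherent $\mathcal{O}_U$-submodule: it is finitely generated over the noetherian ring $\mathcal{O}_X(U)$ as a submodule of the finitely generated module $\Gamma(U,\mathcal{O}_X(D))$. The hypothesis forces $(\mathcal{B}_D)_x = \mathcal{O}_X(D)_x$, and hence the coherent cokernel $\mathcal{Q}_D := \mathcal{O}_X(D)|_U / \mathcal{B}_D$ vanishes on an affine open $V_D \ni x$.

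The main obstacle is that $K$ is typically infinite, so one cannot intersect the $V_D$ naively. To get around this, I would arrange the generating set so that $D_1, \ldots, D_n$ generate $K$ as a group and that, for each $i$, the $f_j$'s of degree $\pm D_i$ already span $\mathcal{O}_X(\pm D_i)_x$ as $\mathcal{O}_{X,x}$-modules (this is possible since these spanning sets are themselves finite). Then $\mathcal{B}_{\pm D_i} = \mathcal{O}_X(\pm D_i)|_V$ on the finite intersection $V := \bigcap_i V_{\pm D_i}$, and the multiplicative structure of the divisorial algebra inside $K(X)$ propagates this equality to every $D \in K$. The subtle technical step — and what I expect to be the main difficulty — is that for Weil divisors at non-locally-factorial points of $X$, the image $\mathcal{O}_X(D')\cdot\mathcal{O}_X(D'')$ inside $\mathcal{O}_X(D'+D'')$ can be a strict subsheaf; handling this requires including in the generating set enough sections to fill out the relevant reflexive hulls, rather than merely generating each graded piece at the stalk.
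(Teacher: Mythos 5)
Your $(\Leftarrow)$ direction is fine and matches the paper's. For $(\Rightarrow)$ you set up the right framework, lifting homogeneous algebra generators $f_j$ of $\mathcal{S}_x$ to a common affine $U \ni x$, forming the finite-type subalgebra $\mathcal{B} \subseteq \mathcal{S}|_U$, and reducing to $\mathcal{B}=\mathcal{S}$ near $x$. You also correctly identify the two obstacles: $K$ is infinite, so the coherent-cokernel neighborhoods $V_D$ cannot be intersected over all $D$; and the multiplication map $\mathcal{O}_X(D')\cdot\mathcal{O}_X(D'')\to\mathcal{O}_X(D'+D'')$ need not be surjective, so equality of $\mathcal{B}_D$ and $\mathcal{S}_D$ in a finite set of generating degrees does not propagate multiplicatively. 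But the sentence intended to resolve this --- ``including in the generating set enough sections to fill out the relevant reflexive hulls'' --- is not an argument. The problematic degrees are infinite in number, and it is exactly the point at issue how finitely many extra sections, or finitely many extra shrinkings of $U$, could control all of them. The proposal stops precisely where the real work starts.

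The paper's proof does not propagate via multiplication at all and therefore sidesteps the surjectivity failure entirely. It shrinks $U$ once so that $x$ lies on every irreducible component of each chosen generator $D_i$ of $K$, and (after lifting the finitely many algebra generators $f_j$) on every irreducible component of $\supp(f_j)$. The consequence, uniformly over all $D\in K$, is that both the divisorial ideal $\mathcal{S}_D(U)$ and the submodule $J_D:=\sum_i\mathcal{O}(U)p_i$ (spanned by the finitely many monomials $p_i$ in the $f_j$ generating $\mathcal{S}_{D,x}$ over $\mathcal{O}_{X,x}$) have all their associated primes contained in $\mathfrak{m}_x$, hence both equal their own $\mathfrak{m}_x$-saturation. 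Two $\mathfrak{m}_x$-saturated submodules with the same localization at $\mathfrak{m}_x$ coincide, so $J_D=\mathcal{S}_D(U)$ on this one fixed $U$, with no further shrinking and no appeal to multiplicativity. This support/saturation condition, imposed once and for all, is the idea missing from your proposal: it makes the stalk equality $(\mathcal{B}_D)_x=(\mathcal{S}_D)_x$ globalize automatically, degree by degree, rather than asking the algebra structure to do the propagation.
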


\begin{proof}
First let $\mathcal{S}$ be a sheaf of divisorial algebras locally of finite type and $U \subseteq X$ be affine. It follows that for some $k \in \mathbb{N}$, there is a surjection $\mathcal{O}_X(U)[x_1,\ldots,x_k] \to \mathcal{S}(U)$.
Since surjectivity of $R$-modules is local, this induces a surjection $\mathcal{O}_{X,x}[x_1,\ldots,x_k] \to \mathcal{S}_x$ and thus $\mathcal{S}_x$ is a finitely generated
$\mathcal{O}_{X,x}$-algebra for every $x \in U$.

Now, fix $x \in X$ and assume that $\mathcal{S}_x$ is a finitely generated
$\mathcal{O}_{X,x}$-algebra. We fix a set of generators $D_1,\ldots,D_m$ of $K$. 
Then, there is an open affine neighbourhood $x\in U \subseteq X$, such that $x$ lies in every irreducible component of $D_i\cap U$ for any $i$. 
Let $f_1,\ldots,f_k \in \mathcal{S}_x$ be a finite set of $K$-homogeneous $\mathcal{O}_{X,x}$-algebra-generators of the stalk $\mathcal{S}_x$. 
    By shrinking $U$ if necessary , we can lift these germs to sections  $f_1,\ldots,f_k \in \mathcal{S}(U)$, such that $x$ lies in every irreducible component of $\supp(f_i)$ for any $i$. 

Now let $D \in K$. We have a primary decomposition of the divisorial ideal $\mathcal{S}_D(U)=\mathfrak{q}_1 \cdots \mathfrak{q}_r$, such that the associated primes $\mathfrak{p}_i$ all lie in $\mathfrak{m}_x$. In particular, $\mathrm{sat}_{\mathfrak{m}_x}(\mathcal{S}_D(U))=\mathcal{S}_D(U)$, see e.g.~\cite[Prop. 4.9]{AM69}.
The localization $S_{x,D}$ of $\mathcal{S}_D(U)$ is generated as an $\mathcal{O}_{X,x}$-module by monomials $p_1,\ldots,p_l$ in the $f_i$. In particular, $x$ lies in every irreducible component of $\supp(p_i)$ for any $i$. So the $\mathcal{O}(U)$-module $J:=\sum_{i=1}^{l} \mathcal{O}(U) p_i$ has localization $S_{x,D}=\sum_{i=1}^{l} \mathcal{O}_{U,x} p_i$ and saturation $\mathrm{sat}_{\mathfrak{m}_x}(J)=J$. Thus $J=\mathcal{S}_D(U)$ and $\mathrm{S}(U)$ is generated as an $\mathcal{O}(U)$-algebra by the $f_i$. The proof is finished.  
\end{proof}

\begin{corollary}
\label{cor:sheaf-groc-sheaf-finite-type}
Let $X$ be a normal algebraic variety such that $\Cl(X)$ is finitely generated. Then $\Cox{X}$ is a sheaf of gr-local rings if and only if it is locally of finite type.
\end{corollary}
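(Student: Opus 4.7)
The plan is to reduce Corollary~\ref{cor:sheaf-groc-sheaf-finite-type} to Lemma~\ref{le:fg-local} by presenting $\Cox{X}$ as a finitely generated quotient of a sheaf of divisorial algebras, and then matching the two gr-local conditions against finite generation of stalks over the structure sheaf.

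First, since $\Cl(X)$ is finitely generated by hypothesis, I would fix a finitely generated subgroup $K \subseteq \WDiv(X)$ that surjects onto $\Cl(X)$, and let $\mathcal{S}$ be the associated sheaf of divisorial algebras from~\cite[Def.~1.3.1.1]{ADHL15}. The Cox sheaf is then obtained from $\mathcal{S}$ by a homogeneous quotient encoding the principal relations in $K^0:=\ker(K\to\Cl(X))$. This quotient is locally finite, so $\mathcal{S}$ is locally of finite type (respectively, has stalks finitely generated over $\mathcal{O}_{X,x}$) if and only if $\Cox{X}$ is. Hence it suffices to compare Lemma~\ref{le:fg-local}, which characterizes local finite type via finite generation of the stalks over $\mathcal{O}_{X,x}$, with the two conditions in the definition of a gr-local ring applied to $(\Cox{X})_x$ graded by $\Cl(X)$.

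For the forward direction, assuming $\Cox{X}$ is a sheaf of gr-local rings, the stalk $(\Cox{X})_x$ is, by definition, finitely generated as an algebra over its degree-zero part $(\Cox{X})_{x,0}=\mathcal{O}_{X,x}$. Transferring this back to $\mathcal{S}$ and applying Lemma~\ref{le:fg-local} gives local finite type of $\mathcal{S}$, and therefore of the quotient $\Cox{X}$. For the reverse direction, Lemma~\ref{le:fg-local} provides finite generation of each stalk over $\mathcal{O}_{X,x}$, which is condition~(2) of a gr-local ring with local degree-zero part. It remains to produce the unique maximal graded ideal. The natural candidate is
\[
\mathfrak{m} \,:=\, \mathfrak{m}_x \,\oplus\, \bigoplus_{k \in \Cl(X),\, k\neq 0} (\Cox{X})_{x,k},
\]
and I would verify that it is a maximal ideal in the usual sense (the quotient is the residue field $\mathcal{O}_{X,x}/\mathfrak{m}_x$) and that any proper graded ideal is contained in it: such an ideal is generated by homogeneous elements, each of which is either of nonzero degree and hence lies in $\mathfrak{m}$ by construction, or of degree zero and non-unit in $\mathcal{O}_{X,x}$, hence in $\mathfrak{m}_x$.

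The only genuinely non-routine point is checking that finite generation is preserved in both directions when passing from the divisorial presentation $\mathcal{S}$ to the Cox sheaf $\Cox{X}$; this is where I would have to be careful with the homogeneous ideal of relations coming from $K^0$, using that $K^0$ sits inside a finitely generated lattice so that the relations can be described by finitely many homogeneous equations. Once this is in place, the remainder of the argument is formal, and the corollary follows at once.
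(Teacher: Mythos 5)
Your reduction to Lemma~\ref{le:fg-local} is the right skeleton for the forward direction: finite generation of the stalk $(\Cox{X})_x$ over $\mathcal{O}_{X,x}$ and finite generation of $\mathcal{S}_x$ for a presenting sheaf of divisorial algebras differ by the Laurent factor $\cc[K^0]$ (where $K^0=\ker(K\twoheadrightarrow\Cl(X))$ is a finitely generated free group), so the two notions of finite type are interchangeable and Lemma~\ref{le:fg-local} applies. That half of the argument is fine. The reverse direction, however, breaks on the verification of axiom~(1) of the definition of a gr-local ring.

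The submodule
\[
\mathfrak{m} := \mathfrak{m}_x \oplus \bigoplus_{k\in\Cl(X),\ k\neq 0} (\Cox{X})_{x,k}
\]
that you propose as the unique maximal graded ideal is in general \emph{not an ideal}. The stalk $(\Cox{X})_x$ can contain homogeneous units of nonzero degree: if a nonzero class $[D]\in\Cl(X)$ becomes principal near $x$ (i.e., $[D]$ lies in the kernel of $\Cl(X)\to\Cl(\mathcal{O}_{X,x})$), then a local trivialization of $\mathcal{O}_X(D)$ is an invertible homogeneous element $a$ of degree $[D]$, with $a^{-1}$ of degree $-[D]$. Both $a$ and $a^{-1}$ lie in your $\mathfrak{m}$, yet $a\cdot a^{-1}=1$ does not. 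A concrete instance already occurs for $X=\pp^1$ and any point $x$: the stalk of $\Cox{\pp^1}$ at $x$ is a Laurent extension $\mathcal{O}_{\pp^1,x}[\chi^{\pm 1}]$, which has homogeneous units in every degree, and your candidate $\mathfrak{m}$ is not closed under multiplication. The true unique maximal graded ideal is the one generated by the homogeneous \emph{non-units}, as the paper records right after its Definition~2.2, and whether this ideal is additionally a maximal ideal in the usual sense — that is, whether axiom~(1) actually holds for the stalk with its full $\Cl(X)$-grading — is an honest extra condition that finite generation does not provide; it forces precisely the absence of homogeneous units in nonzero degree. So the step where you assert that the quotient by $\mathfrak{m}$ is the residue field $\mathcal{O}_{X,x}/\mathfrak{m}_x$ is unfounded, and the reverse implication is not established. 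To repair the argument you would either have to pass to a coarser grading by $\Cl(X)/\Cl_x(X)$ and then contend with the fact that the resulting degree-zero part is no longer the local ring $\mathcal{O}_{X,x}$ but a Laurent extension of it, or give a separate argument addressing axiom~(1) directly.
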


\begin{example}{\em 
If $X$ is a point, then a sheaf of gr-local rings over $X$ is a gr-local ring.}
\end{example}

\subsection{Coverings of gr-local rings}\label{subsec:covers-grocal-rings}
In this subsection, we bring together the concepts of fundamental group and Cox rings.

There are different notions for the regional fundamental groups of singularities. In the case of a klt singularity, they all agree. Let $ x \in (X,\Delta)$ be a klt singularity. Then the regional fundamental group $\pi_1^{\reg} (X,\Delta;x)$ is the inverse limit of the orbifold fundamental groups $\pi_1^{\reg} (U_{\reg},\Delta;x)$, where $U$ runs through analytic open neighborhoods of $x$. The regional fundamental group is computed by some neighborhood $U$, that can be chosen to be the intersection of $X$ with a small euclidean ball around $x$ in some complex manifold $M \supseteq X$. It equals the fundamental group of the regional link of $x$, the intersection of $X_{\reg}$ with a small euclidean sphere, which is just a deformation retract of $U_{\reg}$.

However, when we work in the algebraic category, we deal with \'etale neighborhoods of local rings. In the case of klt singularities, this makes no difference. This follows from the fact that the regional fundamental group is finite by~\cite[Theorem 1]{Bra20}. In particular,  $\pi_1^{\reg}(X,\Delta;x)$ equals the \'etale fundamental group of the smooth locus of the spectrum of the holomorphic local ring $\mathcal{O}_{X,x}^{\rm hol}$. Since this ring is Henselian, by~\cite[Cor. p. 579]{Elk73}, we have 
\[
\pi_1^{\reg} (X,\Delta;x) \cong \pi_1^{\rm et}(X^h_{x,\reg}, \Delta^h_{\rm reg}) \cong \pi_1^{\rm  et}(\widehat{X}_{x,\reg}, \widehat{\Delta}_{\rm reg}).
\]
Here, $X_x^h$ and $\widehat{X_x}$ denote the spectra of the \'etale and complete local rings. The subscript (or supscrit) reg, means that we consider the regular locus.
Furthermore, the divisor 
$\Delta^h_{\rm reg}$ (resp. $\widehat{\Delta}_{\rm reg}$) is the pull-back of $\Delta$ to $X_{x,{\rm reg}}^h$ 
(resp. $\widehat{X}_{x,{\rm reg}}$).
Since $\pi_1^{\reg} (X,\Delta;x)$ is finite, it is computed by an affine \'etale neighborhood $V_x \to X$ of $x$. 
Moreover, by~\cite[Sec 6]{BF84}, we know that $\Cl(X_x^h)$ and  $\Cl(\widehat{X_x})$ are finitely generated and isomorphic. Thus, we can find an  affine \'etale neighborhood $U_x \to X$ of $x$ that computes both the regional fundamental and the local divisor class group. We will use these facts often throughout the article.

\subsection{Minimal Model Program}\label{subsec:mmp}
In this subsection, we recall the definition of the singularities of the minimal model program.
We also recall some basic constructions as the purely log terminal blow-up.

\begin{definition}
{\em 
A projective morphism $f\colon X\rightarrow Z$ is called a {\em contraction} if $f_*\mathcal{O}_X=\mathcal{O}_Z$.
In particular,
if $X$ is normal and
$X\rightarrow Z$ is a contraction, then
$Z$ is normal as well.
}
\end{definition}

On the other hand, if $g\colon X \to Z$ is an affine morphism, then $X$ is isomorphic to the relative spectrum over $Z$ of the direct image sheaf $f_* \mathcal{O}_X$, i.e., $ X \cong \Spec_Z f_* \mathcal{O}_X$. So if $h= f \circ g \colon X \to Y \to Z$ is an affine morphism $g$ composed with a contraction $f$, then $h_*\mathcal{O}_X$ is a quasi-coherent sheaf of $\mathcal{O}_Z$-modules that is locally of finite type. Morphisms of this kind will become important in the following.

\begin{definition}
\label{def:aff-contraction}
{\em 
A morphism $h\colon X \to Z$ that factors through an affine morphism $g\colon X \to Y$ and a contraction $f \colon Y \to Z$ is called an {\em aff-contraction}.
}
\end{definition}

\begin{example}
{\em
Let $X$ be a projective Mori dream space with structure morphism $\phi \colon X \to {\rm Spec}(\cc)$. Let $\psi \colon \hat{X}=\Spec_X \Cox{X} \to X$ be its characteristic space. Then $h:= \phi \circ \psi$ is an aff-contraction.
}
\end{example}

\begin{definition}
{\em 
Let $X$ be a normal quasi-projective variety.
A {\em log pair} $(X,\Delta)$ consists of $X$ and an effective divisor $\Delta\geq 0$ so that
$K_X+\Delta$ is a $\qq$-Cartier $\qq$-divisor.
}
\end{definition}

\begin{definition}
{\em
Let $(X,\Delta)$ be a log pair.
A {\em prime divisor over $X$} is a prime divisor on a normal quasi-projective variety $Y$ that admits a projective birational morphism to $X$.
This means that there exists a projective birational morphism
$\pi\colon Y\rightarrow X$ and $E\subset Y$ a prime divisor.
The {\em log discrepancy} of $(X,\Delta)$ at $E$ is defined to be
\[
a_E(X,\Delta) :=
1+{\rm coeff}_E(K_Y-\pi^*(K_X+\Delta)). 
\]
A {\rm log resolution} of a log pair $(X,\Delta)$ is a projective birational morphism
$\pi\colon Y\rightarrow X$ so that $Y$ is a regular variety, the exceptional divisor $E$ is purely divisorial, 
and $E_{\rm red}+\pi^{-1}_*\Delta$ has simple normal crossing support.
Any log pair admits a log resolution by Hironaka's resolution of singularities.
}
\end{definition}

\begin{definition}
{\em 
A pair $(X,\Delta)$ is said to be 
{\em Kawamata log terminal} (or {\em klt} for short) if all its log discrepancies are positive.
This means that $a_E(X,\Delta)>0$ for every prime divisor $E$ over $X$.
A pair $(X,\Delta)$ is said to be {\em log canonical} (or {\em lc} for short) if all its log discrepancies are non-negative.
This means that $a_E(X,\Delta)\geq 0$ for every prime divisor $E$ over $X$.
In both cases, it suffices to check all the prime divisors which appear on an arbitrary log resolution of the pair.
A {\em non-klt} center of a pair $(X,\Delta)$ is the image on $X$ of a prime divisor $E$ over $X$ for which $a_E(X,\Delta)\leq 0$.
In particular, if $(X,\Delta)$ is a log canonical pair, a non-klt center is the image on $X$ of a divisor with log discrepancy zero.
}
\end{definition}

\begin{definition}
{\em
A variety $X$ is said to be {\em klt type}
if there exists a boundary $\Delta$ so that $(X,\Delta)$ is a klt pair.
Analogously, we say that a germ $(X,x)$ is {\em klt type} if there exists $\Delta$ through $x$ so that $(X,\Delta)$ is a klt germ.
}
\end{definition}

\begin{definition}
{\em 
A pair $(X,\Delta)$ is called {\em divisorially log terminal} or {\em dlt} if there exists an open subset $U\subset X$ satisfying the following conditions:
\begin{enumerate}
    \item $U$ is smooth and $\Delta|_U$ has simple normal crossing support, 
    \item the coefficients of $\Delta$ are at most one, 
    \item all the non-klt centers of $(X,\Delta)$ intersect $U$ and are given by strata of the divisor $\lfloor \Delta \rfloor$.
\end{enumerate}
A pair $(X,\Delta)$ is said to be {\em purely log terminal} or {\em plt} if it is dlt and it has at most one non-klt center.
}
\end{definition}

\begin{definition}{\em 
Let $X\rightarrow Z$ be a contraction 
and $(X,\Delta)$ be a log pair.
We say that $(X,\Delta)$ is of {\em Fano type} over $Z$
if there exists a boundary
$\Delta'$ on $X$ that is big over $Z$, so that $(X,\Delta+\Delta')$ is klt and 
$K_X+\Delta+\Delta' \sim_{\qq,Z}0$.
}
\end{definition}

\begin{definition}
{\em 
Let $(X,\Delta;x)$ be a klt singularity.
A {\em purely log terminal blow-up} of $(X,\Delta)$ at $x$ (or a {\em plt blow-up} for short) is a projective birational morphism
$\pi\colon Y\rightarrow X$ satisfying the following conditions:
\begin{enumerate}
    \item $\pi$ is an isomorphism on the complement of $x$, 
    \item the pre-image of $x$ on $Y$ is a unique prime divisor $E$,
    \item the pair $(Y,E+\Delta_Y)$ is plt, where $\Delta_Y:=\pi_*^{-1}(\Delta)$, and 
    \item $-E$ is ample over $X$.
\end{enumerate}
In particular, the log pair $(E,\Delta_E)$ obtained by adjunction of $(Y,E+\Delta_Y)$ to $E$ is 
of Fano type.
}
\end{definition}

In this article, we will be concerned with orbifold structures on Fano type varieties and klt singularities. 
Therefore, we will need the following definitios.

\begin{definition}
\label{def:standard-approx}
{\em 
We say that the coefficients of $\Delta$ are {\em standard} if they have the form
$1-\frac{1}{n}$, where $n$ is a positive integer.
Given a boundary $\Delta$ on $X$, we define its {\em standard approximation} to be the effective divisor $\Delta_s$ on $X$ with largest standard coefficients such that $\Delta\geq \Delta_s$.
Note that if $(X,\Delta)$ is of Fano type over $Z$, then $(X,\Delta_s)$ is of Fano type over $Z$ as well.}
\end{definition}

The following is the definition of one of the main kind of covers that we will consider in this article.

\begin{definition}
{\em 
Let $(X,\Delta;x)$ be a klt singularity.
We say that $\phi\colon Y\rightarrow X$ is a {\em finite Galois quasi-\'etale cover} if the following conditions are satisfied:
\begin{enumerate}
    \item There exists a finite group $G$ acting on $Y$,
    \item $X$ is the quotient of $Y$ by $G$, and
    \item the pull-back of $K_X+\Delta$ equals $K_Y+\Delta_Y$, where $\Delta_Y$ is effective.
\end{enumerate}
Note that $Y\rightarrow X$ may not be unramified in codimension one.
However, it is unramified in codimension one when considering $(X^{\rm reg},\Delta^{\rm reg})$ as an orbifold.
This justifies the quasi-\'etale property.
We say that $(Y,y)$ is a {pointed finite Galois quasi-\'etale cover} of $(X,\Delta;x)$ if
$y\in Y$ is a point whose image on $X$ is $x$.
To shorten the notation,
we may say that $Y\rightarrow X$ is a {\em pointed finite cover} of $(X,\Delta;x)$.
}
\end{definition}

\section{Generalized Cox rings}
\label{sec:gen-cox}

In this section, we generalize the concept of Cox rings to different settings and prove some basic properties.
In subsection~\ref{subsec:proj-log-cox}, we will define the Cox ring of a log pair and study its properties.
In subsection~\ref{subsec:rel-log-cox}  and subsection~\ref{subsec:local-cox}, we introduce the relative Cox ring
and the local Cox ring, respectively.
In subsection~\ref{subsec:prop-gen-cox}, we prove some properties of the above generalizations. For instance, we prove that the Cox ring of a relatively Fano type variety admits the structure of a klt type singularity (Theorem~\ref{thm:klt-Cox-ring}).
Finally, in subsection~\ref{subsec:local-Hensel-cox}, we will define the local Henselian Cox ring of a singularity.
This is one of the main objects considered in this article.

\subsection{The Cox ring of a log pair}\label{subsec:proj-log-cox}

In this subsection, we generalize the Cox ring and related notions to log pairs $(X,\Delta)$, where $X$ is a normal algebraic variety and $\Delta$ is an effective divisor on $X$.
We prove some basic properties of the Cox ring of a log pair 
and show that these objects become interesting even for log pair structures on $\pp^1$.

In the case that $\Delta$ has standard coefficients, such pairs can be viewed as geometric orbifolds in the sense of Campana~\cite{Cam11}.
We proceed to define the class group ${\rm Cl}(X,\Delta)$ of a log pair $(X,\Delta)$. 
We will define the class group ${\rm Cl}(X,\Delta)$ to be ${\rm Cl}(X,\Delta_s)$, where $\Delta_s$ is the standard approximation.
Hence, it suffices to define the class group for standard pairs. 

\begin{definition}{\em 
Let $(X,\Delta)$ be a log pair. We denote by $(X^{\rm reg},\Delta^{\rm reg})$ the orbifold smooth locus.
As usual, the divisor $\Delta^{\rm reg}$ denotes the restriction of $\Delta$ to $X^{\rm reg}$.
Then, we have canonical orbifold charts
$V:=\mathbb{A}^n\rightarrow \mathbb{A}^n\cong U\subset X^{\rm reg}$.
These charts are quotients by abelian reflection groups,
ramifying over $\Delta|_U$ for an analytic neighborhood $U$ of any point $x\in X^{\rm reg}$.
For such smooth orbifolds, there is a notion of orbifold Weil divisors and orbifold Picard group ${\rm Pic}^{\rm orb}$ (see, e.g.,~\cite[Sec 4.4.3]{BG08}).
Then, we can define 
$$
\Cl(X,\Delta):=\Pic^{\rm orb} \left(X^{\reg},\Delta^{\reg}\right).
$$
The group ${\rm Cl}(X,\Delta)$ is essentially the group of orbifold Weil divisors ${\rm WDiv}(X,\Delta)$ quotient by linear equivalence of $\qq$-divisors.
}
\end{definition}

Now, we define the sheaves of sections $\mathcal{O}_{(X,\Delta)} (D)$ for orbifold Weil divisors $D$ on pairs $(X,\Delta)$.

\begin{definition}
{\em 
Let $(X,\Delta)$ be a log pair and let $D \in \WDiv(X,\Delta)$. Then, we define the sheaf $\mathcal{O}_{(X,\Delta)}(D)$ by 
\[
\Gamma(U,\mathcal{O}_{(X,\Delta)}(D)) := \langle D' \in \WDiv^{\mathrm{eff}}(U,\Delta) \mid D- D' \in \PDiv(U) \rangle
\]
for any open $U \subseteq X$. In particular,  $\mathcal{O}_{(X,\Delta)}(D)$ is a coherent sheaf of $\mathcal{O}_{X}$-modules for any $D \in \WDiv(X,\Delta)$.
When $X=U$, we may write
$\Gamma(X,\Delta,\mathcal{O}_X(D))$
or simply
$\Gamma(X,\Delta,D)$.
}
\end{definition}

Proceeding as in~\cite[Sec 3.1,3.2]{ADHL15}, we first define the sheaves of divisorial algebras for  subgroups $K \subseteq \Cl(X)$, before defining Cox sheaves and Cox rings.

\begin{definition}{\em 
Let $(X,\Delta)$ be a log pair. Let $N \subseteq \WDiv(X,\Delta)$ be a subgroup. Then the sheaf of divisorial algebras associated to $N$ is
\[
\mathcal{S}^{(N)}:=\bigoplus_{D \in N} \mathcal{S}^{(N)}_D
\text{ where    }
\mathcal{S}^{(N)}_D:=\mathcal{O}_{(X,\Delta)}(D).
\]
}
\end{definition}

Now, if $\Cl(X,\Delta)$ is torsion free, we can define the Cox sheaf to be the sheaf of divisorial algebras associated to any $N \subseteq \WDiv(X,\Delta)$ such that $K \to \Cl(X,\Delta)$ is an isomorphism.
If $\Cl(X,\Delta)$ has torsion, we proceed similarly to~\cite[Constr. 1.4.2.1]{ADHL15} in the case of ordinary Cox rings. The idea is to take the sheaf of divisorial algebras $N \subseteq \WDiv(X,\Delta)$ projecting onto $\Cl(X,\Delta)$, and then quotient by a certain ideal sheaf identifying homogeneous components $\mathcal{S}^{(N)}_D$ and $\mathcal{S}^{(N)}_{D'}$ whenever $D$ and $D'$ are linearly equivalent.

\begin{definition}
\label{def:logCox}
{\em 
Let $(X,\Delta)$ be a log pair with finitely generated log divisor class group  $\Cl(X,\Delta)$. Let $N \subseteq \WDiv(X,\Delta)$ be a finitely generated subgroup. 
Assume that 
\[
c: N \to \Cl(X,\Delta),
\qquad 
D \mapsto [D]
\]
is onto and denote its kernel by $N^0$. Let $\mathcal{S}$ be the sheaf of divisorial algebras associated to $N$. Let $\chi: N^0 \to \cc(X,\Delta)^*$ be a group homomorphism  yielding
\begin{equation}
\label{chi-eq}
\divv(\chi(E))=E     
\end{equation}
for all $E \in N^0$. Denote by $\mathcal{I}$ the sheaf of ideals of $\mathcal{S}$ locally generated by the sections $1-\chi(E)$, where $E$ runs through $N^0$. We define the \emph{log Cox sheaf} of $(X,\Delta)$ to be the quotient sheaf $\Cox{(X,\Delta)}:=\mathcal{S}/\mathcal{I}$, graded by
\[
\Cox{(X,\Delta)}:=\bigoplus_{[D] \in \Cl(X,\Delta)} (\Cox{(X,\Delta)})_{[D]},
\text{ where  }
(\Cox{(X,\Delta)})_{[D]}:=\pi \left( \bigoplus_{D' \in c^{-1}([D])} S_{D'} \right),
\]
and $\pi:\mathcal{S} \to \Cox{(X,\Delta)}$ is the projection.
The ring of global sections 
\[
{\rm Cox}(X,\Delta):=\Gamma(X,\Cox{(X,\Delta)})
\]
of this sheaf is called the \emph{log Cox ring} of $(X,\Delta)$.
In what follows, we may need to consider the Cox ring with respect to a finitely generated subgroup $N\leqslant {\rm WDiv}(X,\Delta)$
which may not surject onto 
${\rm Cl}(X,\Delta)$.
Analogously, in this case we have a homomorphism
$N\rightarrow {\rm Cl}(X,\Delta)$ with kernel $N^0$
and we choose a group
homomorphism
$\chi\colon N^0\rightarrow
\mathbb{C}(X,\Delta)^*$ satisfying the equality~\eqref{chi-eq}.
In this case, we denote the Cox ring by
\[
{\rm Cox}(X,\Delta)_{N,\chi}.
\]
}
\end{definition}

\begin{remark}{\em 
It is clear from the construction that $N^0$ is always a subgroup of $\PDiv(X)$, so $\chi$ is a group homomorphism $\chi: N^0 \to \cc(X)^*$ to the field of rational functions on $X$. Thus, the assertions from~\cite[Sec 1.4.2]{ADHL15} hold. In particular, if $\Gamma(X,\mathcal{O}^*)=\cc^*$, then the above definition of the log Cox sheaf and log Cox ring does not depend on the choice of $N$ and $\chi$ up to isomorphism,  see~\cite[Prop. 1.4.2.2]{ADHL15}. Note that the requirement $\Gamma(X,\mathcal{O}^*)=\cc^*$ is fulfilled for projective varieties and quasi-cones.}
\end{remark}

\begin{proposition}\label{prop:fg-log-cox}
Let $(X,\Delta)$ be a log pair.
The Cox ring ${\rm Cox}(X,\Delta)$ is finitely generated 
if and only if ${\rm Cox}(X)$ is finitely generated.
\end{proposition}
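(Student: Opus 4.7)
The plan is to realise ${\rm Cox}(X,\Delta)$ as a module-finite extension of ${\rm Cox}(X)$, so that both implications reduce to standard commutative algebra (Artin--Tate in one direction and either Artin--Tate or Hilbert--Noether in the other).

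First I would reduce to the case where $\Delta=\sum_{i=1}^k(1-1/n_i)D_i$ has standard coefficients, since by Definition~\ref{def:standard-approx} and the definition of $\Cl(X,\Delta)$ replacing $\Delta$ by its standard approximation $\Delta_s$ does not change the Cox ring. The inclusion of integer Weil divisors into orbifold ones yields a natural injection $\Cl(X)\hookrightarrow \Cl(X,\Delta)$ with finite abelian quotient $G$, generated by the classes of the orbifold divisors $\frac{1}{n_i}D_i$ and of exponent dividing $\lcm(n_i)$. Choosing a finitely generated subgroup $N\subseteq \WDiv(X,\Delta)$ containing both $D_i$ and $\frac{1}{n_i}D_i$, setting $N':=N\cap \WDiv(X)$, and making compatible choices of characters $\chi,\chi'$ in Definition~\ref{def:logCox}, I obtain a graded inclusion ${\rm Cox}(X)\hookrightarrow {\rm Cox}(X,\Delta)$ that realises ${\rm Cox}(X)$ as the degree-zero part of the induced $G$-grading on ${\rm Cox}(X,\Delta)$.

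Next I would show that ${\rm Cox}(X,\Delta)$ is a finite ${\rm Cox}(X)$-module. Writing $s_i\in {\rm Cox}(X)$ for the canonical section of $\mathcal{O}_X(D_i)$ and $t_i$ for the canonical section of $\mathcal{O}_{(X,\Delta)}(\frac{1}{n_i}D_i)$, a direct computation with the floor function gives $t_i^{n_i}=s_i$, so each $t_i$ is integral over ${\rm Cox}(X)$. Combining this with the identity
\[
\Gamma\!\left(X,\mathcal{O}_{(X,\Delta)}\!\left(D_0+\sum_i \frac{a_i}{n_i}D_i\right)\right)=\Gamma(X,\mathcal{O}_X(D_0)) \quad \text{for } 0\le a_i<n_i,
\]
one sees that every homogeneous element of ${\rm Cox}(X,\Delta)$ is a ${\rm Cox}(X)$-multiple of a monomial $t_1^{a_1}\cdots t_k^{a_k}$ with $0\le a_i<n_i$, yielding a finite set of ${\rm Cox}(X)$-module generators. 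The implication ``${\rm Cox}(X)$ finitely generated $\Rightarrow {\rm Cox}(X,\Delta)$ finitely generated'' is then immediate, since the elements $t_1,\ldots,t_k$ generate ${\rm Cox}(X,\Delta)$ as a ${\rm Cox}(X)$-algebra. For the converse I would apply the Artin--Tate lemma to the chain $\cc\subseteq {\rm Cox}(X)\subseteq {\rm Cox}(X,\Delta)$: by hypothesis the outer algebra is finitely generated over $\cc$, and the previous step shows it is module-finite over the middle one, so ${\rm Cox}(X)$ is finitely generated over $\cc$. Equivalently, one may observe that ${\rm Cox}(X)={\rm Cox}(X,\Delta)^{G^\vee}$ for the finite (hence reductive) Cartier dual $G^\vee=\Spec(\cc[G])$ acting via the $G$-grading, and invoke the Hilbert--Noether theorem.

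The main technical obstacle I expect is the coherent choice of the data $N,N',\chi,\chi'$ needed to produce a genuine graded inclusion of Cox \emph{rings} after passing to the quotients by the ideals $\mathcal{I}$ and $\mathcal{I}'$ of Definition~\ref{def:logCox}. This requires reconciling the torsion in $\Cl(X)$ coming from the Weil divisors themselves with the orbifold torsion contributed by the $\frac{1}{n_i}D_i$; in particular, one must verify that $\mathcal{I}\cap \mathcal{S}^{(N')}$ agrees with the analogous ideal $\mathcal{I}'$ defining $\Cox{X}$ as a quotient of the sheaf of divisorial algebras associated to $N'$.
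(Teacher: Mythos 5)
Your proof is correct and follows the same strategy as the paper: both realise $\Cox(X)$ as the Veronese subalgebra of $\Cox(X,\Delta)$ over the finite-index inclusion $\Cl(X)\leqslant\Cl(X,\Delta)$ and transfer finite generation across it. The only difference is that the paper delegates the finite-index step to~\cite[Corollary 1.2.5]{ADHL15}, whereas you prove it by hand via the integrality relations $t_i^{n_i}=s_i$, module-finiteness, and Artin--Tate.
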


\begin{proof}
Note that we have an inclusion of groups
${\rm Cl}(X)\leqslant {\rm Cl}(X,\Delta)$ of finite index.
Furthermore, we have a monomorphism of rings
${\rm Cox}(X)\hookrightarrow {\rm Cox}(X,\Delta)$
obtained by coarsening the grading.
By~\cite[Corollary 1.2.5]{ADHL15}, we conclude that $\mathcal{R}_{(X,\Delta)}(D_1,\dots,D_k)$ is finitely generated over $\cc$ if and only if
$\mathcal{R}_{X}(m_1D_1,\dots,m_kD_k)$ is finitely generated over $\cc$.
\end{proof}

\begin{corollary}
\label{cor:CoxlogCoxfg}
Let $X$ be a Mori dream space.
For any log pair structure $(X,\Delta)$
the Cox ring $\mathcal{R}(X,\Delta)$ is finitely generated.
\end{corollary}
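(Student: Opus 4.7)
The statement follows immediately from the preceding Proposition~\ref{prop:fg-log-cox} together with the definition of a Mori dream space. My plan is simply to chain these two facts together.

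First, by the definition of a Mori dream space given in Subsection~\ref{subsec:cox-ring}, the hypothesis that $X$ is an MDS means precisely that the ordinary Cox ring $\mathrm{Cox}(X) = \bigoplus_{[D]\in\Cl(X)}\Gamma(X,\mathcal{O}_X(D))$ is finitely generated as a $\cc$-algebra. Then I would invoke Proposition~\ref{prop:fg-log-cox}, which gives the equivalence of finite generation of $\mathrm{Cox}(X,\Delta)$ and $\mathrm{Cox}(X)$ for any choice of boundary $\Delta$. Concretely, the log Cox ring $\mathcal{R}(X,\Delta)$ is built from the sheaves $\mathcal{O}_{(X,\Delta)}(D)$ indexed by $\Cl(X,\Delta)$, which contains $\Cl(X)$ as a finite-index subgroup; the cited proposition shows that enlarging the grading group by a finite quantity (and replacing integral divisors with their fractional orbifold refinements) preserves finite generation.

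Combining these two observations yields that $\mathcal{R}(X,\Delta)$ is finitely generated over $\cc$, which is the desired statement. There is no serious obstacle here: the corollary is really just an unpacking of the definition of MDS through Proposition~\ref{prop:fg-log-cox}. If anything, the only subtlety worth noting is the hypothesis $\Gamma(X,\mathcal{O}^*) = \cc^*$ needed to make the construction of $\mathcal{R}(X,\Delta)$ independent of the auxiliary choice of $N$ and $\chi$; but since a Mori dream space is quasi-projective (indeed, essentially projective in the standard usage, or otherwise a quasi-cone in the affine case), this condition is automatic, and so Proposition~\ref{prop:fg-log-cox} applies without modification.
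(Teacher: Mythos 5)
Your proposal is correct and follows the same route the paper intends: the corollary is stated without a separate proof precisely because it is an immediate consequence of Proposition~\ref{prop:fg-log-cox} together with the definition of a Mori dream space. Your aside about $\Gamma(X,\mathcal{O}^*)=\cc^*$ is harmless but not needed for this particular deduction, since Proposition~\ref{prop:fg-log-cox} is already phrased for arbitrary log pairs and does not depend on well-definedness up to isomorphism.
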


The following proposition says that the only case in which the Cox ring of a log pair $(X,\Delta)$ may be non-isomorphic to the Cox ring of $X$
is when there is at least one coefficient of $\Delta$ which is equal to or larger than one half.

\begin{proposition}
Let $(X,\Delta)$ be a log pair so that 
${\rm coeff}_P(\Delta)<\frac{1}{2}$ for every prime divisor $P$ on $X$.
Then ${\rm Cox}(X,\Delta)\cong {\rm Cox}(X)$.
\end{proposition}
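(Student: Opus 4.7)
The plan is to reduce the claim to the observation that, under the given hypothesis, the standard approximation $\Delta_s$ of $\Delta$ vanishes, so that every construction entering the Cox ring of $(X,\Delta)$ collapses to its counterpart for $X$.

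First I would recall that the standard coefficients are exactly the values $1-\tfrac{1}{n}$ for positive integers $n$, that is $0,\tfrac{1}{2},\tfrac{2}{3},\tfrac{3}{4},\ldots$. The hypothesis ${\rm coeff}_P(\Delta) < \tfrac{1}{2}$ for every prime divisor $P$ therefore forces the largest standard coefficient bounded above by ${\rm coeff}_P(\Delta)$ to be zero. By Definition~\ref{def:standard-approx}, this shows $\Delta_s = 0$.

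Next I would invoke the convention $\Cl(X,\Delta) := \Cl(X,\Delta_s)$ (and the analogous convention for the group of orbifold Weil divisors), which under $\Delta_s = 0$ yields $\Cl(X,\Delta) = \Cl(X)$ and $\WDiv(X,\Delta) = \WDiv(X)$, with the trivial orbifold structure on $X^{\reg}$. In particular, the sheaves $\mathcal{O}_{(X,\Delta)}(D)$ from the definition of the previous subsection coincide with the usual sheaves $\mathcal{O}_X(D)$ for $D \in \WDiv(X)$, since no ramification along components of $\Delta$ is imposed.

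Finally I would note that under these identifications, every ingredient in Definition~\ref{def:logCox} --- a finitely generated subgroup $N \subseteq \WDiv(X)$ with $N \twoheadrightarrow \Cl(X)$, its kernel $N^0$, a character $\chi\colon N^0 \to \cc(X)^*$ satisfying $\divv(\chi(E)) = E$, the sheaf of divisorial algebras $\mathcal{S}$, and the ideal sheaf $\mathcal{I}$ generated by $1 - \chi(E)$ --- literally reproduces the classical Cox sheaf construction of~\cite[Constr.~1.4.2.1]{ADHL15}. Hence $\Cox{(X,\Delta)} \cong \Cox{X}$, and taking global sections yields the asserted isomorphism ${\rm Cox}(X,\Delta) \cong {\rm Cox}(X)$. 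There is no substantive obstacle here: the result is essentially a definition chase, the only arithmetic content being the gap between $0$ and $\tfrac{1}{2}$ in the set of standard coefficients.
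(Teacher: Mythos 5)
Your proof is correct and follows essentially the same route as the paper: observe that the coefficient hypothesis forces the standard approximation $\Delta_s$ to vanish, identify orbifold Weil divisors and their section sheaves with the ordinary ones, and conclude that the log Cox construction collapses to the classical one. The only difference is that you unpack the definition chase in a bit more detail than the paper does.
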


\begin{proof}
Note that ${\rm coeff}_P(\Delta)<\frac{1}{2}$ if and only if
$\Delta_s$, the standard approximation of $\Delta$, equals the zero divisor.
The above condition is equivalent to 
${\rm WDiv}(X,\Delta)\cong {\rm WDiv}(X)$.
Furthermore, any section of a orbifold Weil divisor of $(X,\Delta)$
is just a section of a Weil divisor on $X$.
Hence, we have that $\mathcal{R}_{(X,\Delta)} \cong\mathcal{R}_{X}$.
This implies the desired isomorphism.
\end{proof}

We are interested in the universal abelian covering space that the log Cox ring provides us. 
We can also study other abelian covers of $X$.
In analogy to the case of the ordinary Cox ring, they should correspond to quotients of ${\rm Cox}(X,\Delta)$ by subgroups of $\Cl(X,\Delta)$, see~\cite[Them 4.2.1.4]{ADHL15}.  We explore the interplay in the following example.

\begin{example}{\em 
Consider the $D_4$-cone singularity $X$ given by the equation $\{x_3^2+x_1^2x_2+x_2^2x_1=0\}$ in $\mathbb{A}^3$. Then 
\[
\Cl(X)= \langle D_1, D_2 \mid 2D_1=2D_2=0 \rangle \cong (\zz/2\zz)^2,
\]
where $D_1=V(x_1)$, $D_2=V(x_2)$. Then, the Cox ring ${\rm Cox}(X)$ is the $A_1$-singularity $Y$ given by $y_1^2+y_2^2+y_3^2$, where $(\zz/2\zz)^2$ acts via
\[
(a,b)\cdot (y_1,y_2,y_3) := ((-1)^{a}y_1,(-1)^{b}y_2,(-1)^{a+b}y_3).
\]
The generating invariants for this action are 
\[
x_1:=y_1^2, 
\quad
x_2:=y_2^2,
\quad
x_3:=y_1y_2y_3,
\text{ and}
\quad
x_4:=y_3^2.
\]
They satisfy the relation $x_3^2-x_1x_2x_4=0$.
Furthermore, the relation of the $A_1$-singularity gives us $x_1+x_2+x_4=0$. Eliminating $x_4$ gives us back our initial relation. Now, consider pair structures $(X,\Delta)$, with $\Delta=(1-\frac{1}{m_1})D_1 + (1-\frac{1}{m_2})D_2$. We have that 
\[
\Cl(X,\Delta)=\left\langle \frac{1}{m_1}D_1,\frac{1}{m_2}D_2 \mid 2   D_1=2D_2=0 \right\rangle.
\]
Also note that for the ordinary Cox cover $\pi:Y \to X$, we have 
\[
\pi^{-1}(D_1)=V(y_2+iy_3) \cup V(y_2-iy_3),
\quad
\pi^{-1}(D_2)=V(y_1+iy_3) \cup V(y_1-iy_3).
\]
Since $\pi:Y \to X$ does not ramify over divisors, we have 
\[
\pi^*(\Delta)=\left(1-\frac{1}{m_1}\right)(V(y_2+iy_3) + V(y_2-iy_3)) + \left(1-\frac{1}{m_2} \right)(V(y_1+iy_3) + V(y_1-iy_3)).
\]
That is, when $m_1$ and $m_2$ are different from one, the pull-back does not have normal crossings.}
\end{example}

To finish this subsection, we show the Cox ring of a log Fano structure on $\pp^1$. In this case, the standard approximation has at most three non-trivial coefficients.
In the case that there are two non-trivial coefficients,
the Cox ring is isomorphic to $\mathbb{A}^2$ with a
characteristic quasi-torus action.
In the case that there are three non-trivial coefficients,
the Cox ring may not be isomorphic to $\mathbb{A}^2$.

\begin{example}{\em 
Let $\Delta$ be an effective divisor on $\pp^1$ so that
$-(K_{\pp^1}+\Delta)$ is ample.
Assume that $\Delta_s$ has two non-trivial coefficients.
Then $\Delta_s=\left(1-\frac{1}{n}\right)p+\left(1-\frac{1}{m}\right)q$ for some positive integers $n$ and $m$.
In this case, we have that
${\rm Cl}(\pp^1,\Delta)=\langle \frac{1}{n}p,\frac{1}{m}q\mid  p=q \rangle$.
The above group is isomorphic to $\zz\oplus \zz/\gcd(n,m)\zz$.
Let $g=\gcd(n,m)$.
We conclude that the Cox ring is isomorphic to $\mathbb{A}^2$ with the Picard action
\[
t\cdot (x,y) \mapsto \left(t^{\frac{m}{g}}x,t^{\frac{n}{g}}y \right)
\]
and
\[
\mu\cdot (x,y)\mapsto 
\left(\mu^{\frac{n}{g}}x,\mu^{\frac{-m}{g}}y\right),
\]
where $\mu$ is a $g$-th root of unity.
}
\end{example}

\begin{example}
{\em
Let $\Delta$ be an effective divisor on $\pp^1$ so that
$-(K_{\pp^1}+\Delta)$ is ample.
Assume that $\Delta_s$ has three non-trivial coefficients.
In this case, the coefficients of $\Delta_s$ correspond to platonic triples (see, e.g.,~\cite{LS13,LLM19}).
In this case, we have that
${\rm Cl}(\pp^1,\Delta)=\langle \frac{1}{n}p,\frac{1}{m}q,\frac{1}{s}r \mid p=q=r\rangle$.
Let $g=\gcd(ms,ns,nm)$.
We may assume that the points $p,q$ and $r$ are 
$0,\{\infty\}$ and $1$, respectively.
In this case, the class group is isomorphic to
$\zz \oplus T_{n,m,s}$,
where $T_{n,m,s}$ is the roots system of the fork Dynkin diagram
with three branches of length $n,m$ and $s$.
The Cox ring is isomorphic to
\[
\cc[x,y,z]/\langle x^n+y^m+z^s\rangle.
\]
The characteristic quasi-torus action is given by 
\[
t\cdot (x,y,z) =\left(
t^{\frac{ms}{g}}x,
t^{\frac{ns}{g}}y,
t^{\frac{nm}{g}}z
\right)
\]
and $T_{n,m,s}$ acts on $(x,y,z)$ 
in the usual way (see, e.g.,~\cite{Muk04}).
}
\end{example}

\begin{remark}{\em 
By taking all the possible Cox rings of log Fano pairs on $\pp^1$
and quotient by the finite part of the characteristic quasi-torus action, we obtain back surface klt singularities. These singularities are quotients of smooth points by finite groups.
For the classification of surface klt singularities see, e.g.,~\cite{Ale93}.
}
\end{remark}

\subsection{The relative Cox ring of a log pair}\label{subsec:rel-log-cox}
In this subsection, we define the relative Cox ring of a log pair,
prove some basic properties, and give some examples.

\begin{definition}\label{def:rel-cox}{\em 
Let $(X,\Delta)$ be a log pair and $\phi\colon X\rightarrow Z$ be a contraction.
We define the \emph{relative log Cox sheaf of $X/Z$} to be the 
direct image sheaf 
\[
\Cox{(X/Z,\Delta)}:=\phi_* \Cox{(X,\Delta)},
\]
where $\Cox{(X,\Delta)}$ is the log Cox sheaf of $(X,\Delta)$ as in Definition~\ref{def:logCox}. 
If $\Cox{(X/Z,\Delta)}$ is a sheaf of finitely generated $\mathcal{O}_Z$-algebras, we say that $X \to Z$ is a \emph{relative Mori dream space} for the log pair $(X,\Delta)$.
The \emph{relative affine log Cox ring} is defined to be
\[
{\rm Cox}^{\rm aff}(X/Z,\Delta):=\Gamma(\mathcal{R}_{(X/Z,\Delta)},Z).
\]
We write ${\rm aff}$ on top of the relative Cox ring to stress that, in this case, we are working with an affine base $Z$.
Later on, we will be interested in the local behaviour around some special point of the base.
}
\end{definition}

More generally, we can make the above definitions if $h \colon X \to Z$ is an aff-contraction, see Definition~\ref{def:aff-contraction}.

\begin{remark}\label{rem:gen-stalks}{\em 
If $Z$ is a point, we can identify the relative log Cox sheaf $\Cox{(X/Z,\Delta)}$  with the log Cox ring ${\rm Cox}(X,\Delta)$. More generally, when $Z$ is affine, then $\Cox{(X/Z,\Delta)}$ is a sheaf of finitely generated $\mathcal{O}_Z$-algebras if and only if the algebra of global sections ${\rm Cox}^{\rm aff}(X/Z,\Delta)$ is finitely generated over $\mathcal{O}_Z(Z)$ and thus over $\cc$ by the same argument as in~\cite[Prop. 4.3.1.3]{ADHL15}.

Again more generally, if $\Cox{(X/Z,\Delta)}$ is a sheaf of finitely generated $\mathcal{O}_Z$-algebras, then any fiber $X_z:=\phi^{-1}(z)$ of $\phi: X \to Z$ has an open neighbourhood $X_z \subseteq U \subseteq X$, such that $\Cox{(X,\Delta)}(U)$ is a finitely generated $\mathcal{O}_X(U)$-algebra.}
\end{remark}

\subsection{The local Cox ring}\label{subsec:local-cox}
In this subsection, we define the local Cox ring for germs $(X,\Delta;x)$, where $(X,\Delta)$ is a  pair and $x \in X$ is a closed point. More generally, when $\phi: X \to Z $ is a contraction, we define the relative local Cox ring for closed points $z \in Z$.

Here, it makes sense to consider different local models depending on the needs. A priori, we consider points on algebraic varieties $X$. Since we can realize $\Cl(X,\Delta,x)$ as a subgroup of $\Cl(X,\Delta)$. The approach is to  define the local Cox ring at $x \in X$ to be 
$$
\bigoplus_{[D] \in \Cl(X,\Delta,x)} \Gamma(X,\Delta,\mathcal{O}_X(D)).
$$
This definition amounts to choosing a subgroup $N$ of the orbifold Weil divisors of $(X,\Delta)$ surjecting onto $\Cl(X,\Delta,x)$ with kernel $N^0$ and a character $\chi: N^0 \to \cc(X)^*$. Note that by~\cite[Theorem 2.3]{HMT20}, the set of isomorphism classes of Cox rings defined in this way is in bijection to
\[
\mathrm{Ext}^{1}(\Cl(X,\Delta,x),\mathcal{O}(X)^{*}).
\]
This construction only makes sense if $X$ is affine, so we will assume this in the following. Moreover, we assume that the group $N$ consists of Weil divisors going through $x$ and we fix a character $\chi: N^0 \to \cc(X)^*$. Then, we can define the affine local Cox ring 
(or aff-local Cox ring for short) as above. If it is finitely generated over $\mathcal{O}(X)$, then its spectrum is an affine scheme of finite type.

We denote by $X_x$ the spectrum of the local ring of $X$ at $x$. We have $\Cl(X_x,\Delta_x) \cong \Cl(X,\Delta,x)$ and we can identify uniquely the group $N$ with a subgroup of $\WDiv(X_x,\Delta_x)$.
Here, $\Delta_x$ is the pull-back of $\Delta$ to $X_x$.
Moreover, since  $X$ and $X_x$ are birational, we can use the character $\chi$ from above in order to define the Cox ring
$$
\bigoplus_{[D] \in \Cl(X_x)} \Gamma(X_x,\Delta_x,\mathcal{O}_{X_x}(D)).
$$
This is a gr-local ring, finitely generated over the degree-zero part $\mathcal{O}_{X,x}$, which is why we call it the \emph{gr-local Cox ring} of $x \in X$. By localizing at the unique graded maximal ideal, we get a local ring.

\begin{definition}
{\em
Let $X$ be an affine variety, $(X,\Delta)$ a log pair, and $x \in X$ a closed point. Fix a subgroup $N \subseteq \WDiv(X,\Delta)$ of orbifold Weil divisors going trough $x$ such that the induced homomorphism $\varphi\colon N \to \Cl(X,x)$ is surjective. Fix a character $\chi \colon \ker(\varphi) \to \cc(X)^*$. Let $\mathcal{S}$ be the sheaf of divisorial algebras on $X$ associated to $N$ and $\mathcal{I}$ the ideal subsheaf generated by sections $1-\chi(E)$, where $E \in \ker(\varphi)$. Then, we define the \emph{aff-local Cox ring} of $x \in (X,\Delta)$ to be
\[
{\rm Cox}(X,\Delta;x)^{\rm aff}_{N,\chi}:= \bigoplus_{[D] \in \Cl(X,\Delta,x)} \frac{ \bigoplus_{D' \in \varphi^{-1}([D])} \mathcal{S}_{D'}(X)}{\mathcal{I}(X)}.
\]
Similarly, where $X_x:=\Spec \mathcal{O}_{X,x}$, we define the \emph{gr-local Cox ring} of $x \in (X,\Delta)$ to be
\[
{\rm Cox}(X,\Delta;x)^{\gr}_{N,\chi}={\rm Cox}(X_x,\Delta_x)_{N,\chi}:= \bigoplus_{[D] \in \Cl(X_x,\Delta_x)} \frac{ \bigoplus_{D' \in \varphi^{-1}([D])} \mathcal{S}_{D',x}}{\mathcal{I}_{x}}.
\]
Finally, we define the \emph{local Cox ring} of $x \in (X,\Delta)$ to be the localization
\[
{\rm Cox}(X,\Delta;x)^{\loc}_{N,\chi}:=\left({\rm Cox}(X,\Delta;x)^{\gr}_{N,\chi}\right)_{\mathfrak{m}}
\]
at the unique homogeneous maximal ideal of the gr-local Cox ring. We denote the  spectra of these rings by 
\[
\overline{X}^{\aff}_{N,\chi}, \overline{X}^{\gr}_{N,\chi}, \text{ and } \overline{X}^{\loc}_{N,\chi}
\]
respectively.  The isomorphism class of the Cox rings just defined depends on the choice of $N$ and $\chi$, but having made such a choice, we will usually omit them in the notation. 
}
\end{definition}

In particular, ${\rm Cox}(X,\Delta;x)^{\gr}$ is the stalk of the quotient  sheaf $\mathcal{S}/\mathcal{I}$ at $x$ or, equivalently, the gr-localization at the unique pre-image of $x \in X$ in $\overline{X}^{\aff}$. Thus, since localization factors through gr-localization, we have the following commutative diagram.
\[
\xymatrix{ 
\overline{X}^{\loc} \ar[rr] \ar@/^1pc/[rrrr] \ar[dd] && \overline{X}^{\gr} \ar[rr]  \ar[dd] \ar[rrddd] && \overline{X}^{\aff}  \ar[dd] \ar[rrddd]
\ \\
\ \\
 \overline{X}^{\loc}_{\rm fin} \ar[rr]^{\rm \id} && \overline{X}^{\gr}_{\rm fin} \ar[rr]|!{[uu];[drr]}\hole \ar[rrd] && \overline{X}^{\aff}_{\rm fin} \ar[rrd] \\
&&&& X_x \ar[rr]  && X. }
\]

In particular, there is still a morphism $\overline{X}^{\loc} \to X_x$, but it may not be a quotient by the characteristic quasi-torus (at least in a strict sense).
For instance, for an open orbit with the unique fixed point of $\overline{X}^{\gr}$ in its closure, localization will remove all closed points and only keep the generic point of the orbit. 

Note that $\Cl(\overline{X}^{\gr})\cong \Cl(\overline{X}^{\loc})$, since the class group of the gr-local ring ${\rm Cox}(X,\Delta;x)^{\gr}$ is concentrated at the unique graded maximal ideal. This essentially means that we can iterate Cox rings in a unique way. Since ${\rm Cox}(\overline{X}^{\loc})$ can be obtained from  ${\rm Cox}(\overline{X}^{\gr})$ via base change.
The iteration of Cox rings is defined in~\ref{def:cox-iteration}.

\begin{definition}
\label{def:rel-loc-cox}
{\em 
Let $(X,\Delta)$ be a log pair and $\phi\colon X\rightarrow Z$ be a contraction. Let $z\in Z$ be a closed point.
Let $Z_z$ be the spectrum of the local ring $\mathcal{O}_{Z,z}$.
Let $X_z\rightarrow Z_z$ be the projective morphism obtained by the base change $Z_z\rightarrow Z$.
We denote by $\Delta_z$ the pull-back of $\Delta$ to $X_z$.
Analogously to the local case,
we can define the 
{\em relative gr-local Cox ring}
at $z\in Z$ to be 
\[
{\rm Cox}(X/Z,\Delta;z)^{\rm gr}:=
\bigoplus_{[D] \in \Cl(X_z,\Delta_z)} \frac{ \bigoplus_{D' \in \varphi^{-1}([D])} \mathcal{S}_{D',\phi^{-1}(z)}}{\mathcal{I}_{\phi^{-1}(z)}}.
\]
Note that the relative gr-local Cox ring comes with a natural maximal graded ideal $\mathfrak{m}$, i.e., 
the ideal generated by homogeneous regular functions in the Cox ring which correspond to Weil divisors on $X$ that intersect the fiber $\phi^{-1}(z)$ non-trivially.
The {\em relative local Cox ring}
of $(X/Z,\Delta)$ at $z$ is then defined to be
\[
{\rm Cox}(X/Z,\Delta;z)^{\rm loc}:=
\left(
{\rm Cox}(X/Z,\Delta;z)^{\rm gr}
\right)_{\mathfrak{m}}.
\]
The {\em local Cox ring} comes as the special case where $X \to X$ is the identity and $x \in X$:
\[
{\rm Cox}(X,\Delta;x)^{\rm loc} \cong {\rm Cox}(X_x/X_x,\Delta)^{\rm loc}.
\]
}
\end{definition}

\subsection{Properties of generalized Cox rings}
\label{subsec:prop-gen-cox}

In this subsection, we prove some properties of the Cox rings defined in the previous subsections.
First, we prove two general statements concerning relative Mori dream spaces, then we focus on the case of klt pairs.

\begin{proposition}
\label{prop:rel-log-Cox-sheaf-groc}
Let $X \to Z$ be a relative Mori dream space.
Then $\Cox{(X/Z,\Delta)}$ is a sheaf of gr-local rings.
In particular, if $Z$ is the spectrum of a local ring essentially of finite type, then the local Cox ring ${\rm Cox}^{\rm loc}(X/Z,\Delta;z)$ is a local ring essentially of finite type.
\end{proposition}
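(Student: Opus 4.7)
The plan is to verify the gr-local property stalk-by-stalk, pick off the degree-zero piece, and then deduce the ``in particular'' statement by a localization argument.

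First I would compute the stalk of $\Cox{(X/Z,\Delta)} = \phi_\ast \Cox{(X,\Delta)}$ at a point $z\in Z$. Since $\Cox{(X,\Delta)}$ is graded by $\Cl(X,\Delta)$ and direct images preserve gradings, the stalk
\[
A^{(z)} := \bigl(\phi_\ast \Cox{(X,\Delta)}\bigr)_z = \varinjlim_{z\in U\subseteq Z} \Cox{(X,\Delta)}\bigl(\phi^{-1}(U)\bigr)
\]
is naturally $\Cl(X,\Delta)$-graded. To identify its degree-zero part I would use that $\phi$ is a contraction, so $\phi_\ast\mathcal{O}_X = \mathcal{O}_Z$; the degree-zero component of $\Cox{(X,\Delta)}$ is $\mathcal{O}_X$ (this is visible from Definition~\ref{def:logCox} since $\mathcal{O}_{(X,\Delta)}(0)=\mathcal{O}_X$), so $A^{(z)}_0 = \mathcal{O}_{Z,z}$, which is a local ring.

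Next I would check the three conditions of the definition of gr-local ring. Integrality of $A^{(z)}$ follows from the fact that the Cox sheaf embeds into the constant sheaf of rational functions on the characteristic space, so stalks are integral domains; noetherianity follows from finite generation over $\mathcal{O}_{Z,z}$, which is noetherian. For the maximal graded ideal, Hui's theorem recalled in Subsection~\ref{subsec:grocal-rings} gives that a $K$-graded ring whose degree-zero part is local has a unique maximal graded ideal, explicitly
\[
\mathfrak{m} = \mathfrak{m}_{z}\oplus \bigoplus_{[D]\neq 0} A^{(z)}_{[D]}.
\]
To see that $\mathfrak{m}$ is maximal in the usual sense, observe $A^{(z)}/\mathfrak{m} \cong \mathcal{O}_{Z,z}/\mathfrak{m}_z$ is a field. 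Finally, finite generation over $A^{(z)}_0 = \mathcal{O}_{Z,z}$ is exactly the definition of $X\to Z$ being a relative Mori dream space (using Remark~\ref{rem:gen-stalks} to pass from local finite generation on $Z$ to finite generation of the stalk). This gives the gr-local property at every $z$, and for points in the open locus where $X\to Z$ is finitely presented outside the preimage of $z$ the same argument applies verbatim.

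For the ``in particular'' clause, assume $Z=\Spec\mathcal{O}_{Z,z}$ with $\mathcal{O}_{Z,z}$ essentially of finite type over $\cc$. Then the gr-local Cox ring ${\rm Cox}(X/Z,\Delta;z)^{\gr}$ is finitely generated as an $\mathcal{O}_{Z,z}$-algebra by the argument above; in particular it is essentially of finite type over $\cc$. Localizing at the unique maximal graded ideal $\mathfrak{m}$ as in Definition~\ref{def:rel-loc-cox} yields ${\rm Cox}^{\loc}(X/Z,\Delta;z)$, a localization of an algebra essentially of finite type over $\cc$, hence essentially of finite type; since $\mathfrak{m}$ is maximal this is a local ring. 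The main obstacle is really bookkeeping: making sure that ``relative MDS'' transfers cleanly to finite generation of the stalk, and confirming that the unique graded maximal ideal is maximal in the usual sense — both of which are handled above by the contraction hypothesis and by the Hui-type characterization of graded-local rings.
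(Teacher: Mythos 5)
Your proof is correct and follows essentially the same route as the paper's very terse three-sentence argument: identify the stalk's degree-zero piece as $\mathcal{O}_{Z,z}$ via $\phi_*\mathcal{O}_X=\mathcal{O}_Z$, obtain finite generation over $\mathcal{O}_{Z,z}$ from the relative MDS hypothesis (the paper cites Corollary~\ref{cor:sheaf-groc-sheaf-finite-type}, you cite Remark~\ref{rem:gen-stalks}; these amount to the same reduction), and derive the ``in particular'' clause by localizing at $\mathfrak{m}$. You supply considerably more detail than the paper does, notably the explicit verification that the graded maximal ideal is a maximal ideal in the usual sense, which the paper's definition of gr-local requires but whose proof leaves implicit.
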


\begin{proof}
Since $X \to Z$ is a relative Mori dream space, we know that the stalks $\left(\Cox{(X/Z,\Delta)}\right)_z$ are graded rings, with zero graded piece $\mathcal{O}_{Z,z}$, which is a local ring. Finite generation over $\mathcal{O}_{Z,z}$ follows as in Corollary~\ref{cor:sheaf-groc-sheaf-finite-type}. The last assertion follows from the definition.
\end{proof}

The following set of statements shows that klt singularities and weakly Fano pairs behave optimally with respect to the Cox construction. This is known for the classical Cox ring of weakly Fano pairs and klt quasi-cones (see, e.g.,~\cite{GOST15}).

\begin{theorem}\label{thm:relative-fano}
Let $(X,\Delta)$ be of Fano type over $Z$, where $Z$ is either projective, a quasi-cone or the spectrum of a local ring essentially of finite type. Then $X\rightarrow Z$ is a relative Mori dream space for the log pair $(X,\Delta)$.
\end{theorem}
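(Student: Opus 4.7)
The plan is to reduce the assertion to the special case of trivial boundary, and then apply the relative minimal model program of~\cite{BCHM10} to each of the three shapes of base.

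First, I would use the argument of Proposition~\ref{prop:fg-log-cox}, which transfers verbatim to the relative setting since coarsening a grading is compatible with direct image and formation of stalks, to reduce to showing that the ordinary relative Cox sheaf $\Cox{(X/Z)}$ is a sheaf of finitely generated $\mathcal{O}_Z$-algebras. Since $(X,\Delta)$ is of Fano type over $Z$, there exists an effective $\Delta'$ that is big over $Z$ with $(X,\Delta+\Delta')$ klt and $K_X+\Delta+\Delta'\sim_{\qq,Z}0$, so $X$ itself is of Fano type over $Z$ (with boundary $\Delta+\Delta'$). Thus the problem is reduced to showing that the relative Cox sheaf of a Fano type morphism is locally of finite type over the base.

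The main input is then the relative version of~\cite[Corollary 1.3.2]{BCHM10}: for any finitely many Weil divisors $D_1,\dots,D_k$ on a Fano type morphism $\phi\colon X\to Z$ with $Z$ of finite type over $\cc$, the relative section algebra $\bigoplus_{(n_1,\dots,n_k)\in\zz^k}\phi_*\mathcal{O}_X(n_1D_1+\cdots+n_kD_k)$ is a finitely generated sheaf of $\mathcal{O}_Z$-algebras. Choosing $D_1,\dots,D_k$ so that their classes generate a finite-index subgroup of $\Cl(X)$ and then coning off by the relations (via the same torsion argument used in Definition~\ref{def:logCox}) yields finite generation of $\Cox{(X/Z)}$ as a sheaf of $\mathcal{O}_Z$-algebras. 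When $Z$ is projective, this is applied after covering $Z$ by finitely many affine opens; when $Z$ is a quasi-cone, $Z$ is already affine of finite type and the argument applies directly.

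The case where $Z=\Spec R$ for a local ring $R$ essentially of finite type is handled by spreading out: write $R$ as a localization of a finite-type $\cc$-algebra $\tilde R$ at a maximal ideal corresponding to a closed point $z$ of $\tilde Z:=\Spec \tilde R$, extend the data $(X,\Delta,\Delta',\phi)$ to an affine neighborhood $U\subseteq\tilde Z$ of $z$, apply the finite-type case to the spread-out morphism $X_U\to U$, and then base change back to $Z$. The main technical obstacle is the spreading-out itself: one must arrange the extension so that klt-ness, the relative bigness of $\Delta'$ and the relation $K_X+\Delta+\Delta'\sim_{\qq,Z}0$ all persist on a neighborhood of $z$. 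This is possible because every datum involved is of finite presentation, and finite generation of sheaves of algebras is preserved under flat base change, in particular under localization at a prime. Once the spreading-out is arranged, BCHM is invoked as a black box to conclude.
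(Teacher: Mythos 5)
Your overall structure matches the paper's: reduce to the trivial boundary via Proposition~\ref{prop:fg-log-cox}, invoke finite generation of relative section rings from~\cite{BCHM10}, and verify local finite generation of the Cox sheaf. The paper also passes through Remark~\ref{rem:gen-stalks} and Lemma~\ref{le:fg-local} to reduce everything to a stalkwise check, which handles all three shapes of $Z$ in one stroke and avoids the separate spreading-out argument you set up for the essentially-of-finite-type case; but that is a difference of bookkeeping, not of substance.

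There is, however, a genuine gap. You quote a ``relative version of~\cite[Corollary 1.3.2]{BCHM10}'' asserting that for arbitrary Weil divisors $D_1,\dots,D_k$ on a Fano type morphism, the multisection algebra $\bigoplus_{\mathbf{n}}\phi_*\mathcal{O}_X(n_1D_1+\cdots+n_kD_k)$ is finitely generated. But the finite generation results of~\cite{BCHM10} concern $\qq$-Cartier (adjoint) divisors, and a Fano type variety that is not $\qq$-factorial can carry Weil divisors that are not $\qq$-Cartier, for which BCHM says nothing directly. The paper resolves this by first taking a small $\qq$-factorialization $Y\to X$ (using~\cite[Corollary 1.4.3]{BCHM10}), which does not alter the Cox ring because Weil divisors and their section modules are preserved under small birational morphisms, yet renders every Weil divisor on $Y$ $\qq$-Cartier so that~\cite[Corollary 1.3.2]{BCHM10} genuinely applies. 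Without this step your invocation of BCHM on $X$ itself is not justified; you should insert the small $\qq$-factorialization and the observation that smallness leaves the relative Cox sheaf unchanged before the BCHM step.
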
 

\begin{proof}
By Remark~\ref{rem:gen-stalks} and Lemma~\ref{le:fg-local}, it suffices to check that for every point
$z\in Z$ the stalk $\left(\Cox{(X/Z,\Delta)}\right)_z$ is a finitely generated $\mathcal{O}_{Z,z}$-algebra.

We denote by $\phi_X \colon X\rightarrow Z$ the contraction morphism.
By~\cite[Corollary 1.4.3]{BCHM10}, we can take a small $\mathbb{Q}$-factorialization $Y\rightarrow X$ of $X$.
Note that $\pi\colon Y\rightarrow Z$ is still of Fano type over $Z$
(see, for instance~\cite[Lemma 3.1]{GOST15}).
Let $K_Y+\Delta_Y=\pi^*(K_X+\Delta)$.
We denote by $\phi_Y\colon Y\rightarrow Z$ the contraction morphism.
By~\cite[Corollary 1.3.2]{BCHM10} and Lemma~\ref{le:fg-local}, we know that 
$\left(\Cox{(Y/Z)}\right)_z$ is a finitely generated algebra over $\mathcal{O}_{Z,z}$.
Since $\pi$ is small, we conclude that
$\left(\Cox{(X/Z)}\right)_z$ is a finitely generated algebra over $\mathcal{O}_{Z,z}$.
By Proposition~\ref{prop:fg-log-cox}, we deduce that
$\left(\Cox{(X/Z,\Delta)}\right)_z$
is a finitely generated algebra over $\mathcal{O}_{Z,z}$.
Hence, $\Cox{(X/Z,\Delta)}$ is a sheaf 
of finitely generated $\mathcal{O}_Z$-algebras.
Then, $X\rightarrow Z$ is a relative Mori dream space for
the log pair $(X,\Delta)$.
\end{proof}

\begin{corollary}
\label{cor:klt-mds}
Let $x \in (X,\Delta)$ be a klt singularity.
 Then $(X_x,\Delta_x)$ is a Mori dream space at $x\in X$, that is 
 ${\rm Cox}^{\rm gr}(X,\Delta;x)$ 
 is a gr-local ring, finitely generated as an algebra over $\mathcal{O}_{X,x}$.
\end{corollary}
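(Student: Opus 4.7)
The plan is to apply Theorem~\ref{thm:relative-fano} to the identity contraction $X_x \to X_x$, after verifying that a klt pair is of Fano type over itself. First I would replace $X$ by an affine neighborhood of $x$ so that the group $N \subseteq \WDiv(X,\Delta)$ used to define the Cox ring is available, and localize at $x$ to obtain $X_x = \Spec \mathcal{O}_{X,x}$, the spectrum of a local ring essentially of finite type. The pull-back pair $(X_x,\Delta_x)$ is again klt, and $X_x$ is of the type permitted in the hypothesis of Theorem~\ref{thm:relative-fano}.

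Next I would show that $(X_x,\Delta_x)$ is of Fano type over $X_x$ via the identity contraction. Taking $\Delta' = 0$, the condition ``$\Delta'$ big over $X_x$'' is vacuous because the fibers of the identity are points, and the condition $K_{X_x}+\Delta_x \sim_{\qq,X_x}0$ is automatic because every $\qq$-Cartier divisor is trivially $\qq$-linearly equivalent to its own pullback under the identity. Klt-ness of $(X_x,\Delta_x+0)$ is just the klt hypothesis. Hence the identity morphism $X_x \to X_x$ satisfies the Fano type assumption, and Theorem~\ref{thm:relative-fano} gives that $X_x \to X_x$ is a relative Mori dream space for $(X_x,\Delta_x)$, i.e.\ $\Cox{(X_x/X_x,\Delta_x)}$ is a sheaf of finitely generated $\mathcal{O}_{X_x}$-algebras.

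Finally I would invoke Proposition~\ref{prop:rel-log-Cox-sheaf-groc} to conclude that $\Cox{(X_x/X_x,\Delta_x)}$ is a sheaf of gr-local rings, and in particular its ring of global sections, which equals the stalk at the closed point of $X_x$, is a gr-local ring finitely generated over $\mathcal{O}_{X,x}$. To match this with ${\rm Cox}^{\gr}(X,\Delta;x)$, I would unwind Definition~\ref{def:rel-loc-cox} (the analogue for the gr-local ring of the identification ${\rm Cox}(X,\Delta;x)^{\loc} \cong {\rm Cox}(X_x/X_x,\Delta)^{\loc}$ recorded for the local case): the relative gr-local Cox ring at the closed point with respect to the identity coincides tautologically with the gr-local Cox ring of $(X,\Delta;x)$, once the same choices of $N$ and $\chi$ are used.

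The only mildly delicate point is the last identification, which requires checking that the stalk of the relative Cox sheaf at the closed point of $X_x$ agrees, as a graded $\mathcal{O}_{X,x}$-algebra, with the ring $\bigoplus_{[D]\in\Cl(X_x,\Delta_x)} \Gamma(X_x,\Delta_x,\mathcal{O}(D))$ appearing in the definition of ${\rm Cox}^{\gr}(X,\Delta;x)$; this is essentially bookkeeping, using that $\Cl(X_x,\Delta_x) \cong \Cl(X,\Delta,x)$ and that the sheaves $\mathcal{O}_{(X,\Delta)}(D)$ commute with restriction to $X_x$. I do not anticipate any genuine obstacle beyond this verification.
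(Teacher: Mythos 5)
Your proposal is correct and takes essentially the same approach as the paper, which simply sets $X = Z = X_x$ in Theorem~\ref{thm:relative-fano}. The extra verifications you supply — that a klt pair is of Fano type over itself via the identity contraction (with bigness and relative $\qq$-triviality being vacuous), the appeal to Proposition~\ref{prop:rel-log-Cox-sheaf-groc}, and the bookkeeping to identify the stalk of the relative Cox sheaf with ${\rm Cox}^{\gr}(X,\Delta;x)$ — are exactly the content that the paper's one-line proof leaves implicit.
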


\begin{proof}
This follows by setting $X=Z=X_x$ in the statement of Theorem~\ref{thm:relative-fano}.
\end{proof}

\begin{corollary}
\label{cor:klt-mds-glob}
Let $(X,\Delta)$ be a klt pair with finitely generated log divisor class group $\Cl(X,\Delta)$. Then the Cox sheaf is a sheaf of grocal rings. In particular, it is a sheaf of finitely generated $\mathcal{O}_X$-algebras.
\end{corollary}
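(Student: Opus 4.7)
The plan is to verify the gr-local condition stalk by stalk, using the already-established local result Corollary~\ref{cor:klt-mds}, and then to upgrade finite generation on stalks to finite generation on an affine cover by a spreading-out argument modelled on Lemma~\ref{le:fg-local}.

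First I would reduce to checking the stalks at closed points. Let $x \in X$ be a closed point. Since being klt is a local property, the germ $(X_x,\Delta_x)$ is itself a klt singularity, and the localization of the globally-chosen data $(N,\chi)$ in Definition~\ref{def:logCox} defines the gr-local Cox ring ${\rm Cox}^{\gr}(X,\Delta;x)$ in the sense of Definition~\ref{def:rel-loc-cox}. By construction, this ring agrees with the stalk $(\mathcal{R}_{(X,\Delta)})_x$ of the log Cox sheaf at $x$. Corollary~\ref{cor:klt-mds} then says that this stalk is a gr-local ring, finitely generated as an algebra over $\mathcal{O}_{X,x}$. For a non-closed point $\eta \in X$, the stalk at $\eta$ is a further gr-localization of the stalk at any closed specialization $x \in \overline{\{\eta\}}$; since gr-localization of a gr-local ring that is finitely generated over its degree-zero part yields again a gr-local ring with the analogous finite-generation property, the sheaf condition of Definition~\ref{def:sheaf-gr-local-rings} is satisfied at every point.

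For the \emph{in particular} clause, I would adapt the argument of Lemma~\ref{le:fg-local} to the log setting. Fixing a point $x$ and a finite set of $\Cl(X,\Delta)$-homogeneous algebra generators $f_1,\ldots,f_k$ of the stalk over $\mathcal{O}_{X,x}$, we lift them to sections on an open affine neighborhood $U$ of $x$ chosen small enough that $x$ lies in every component of the support of each $f_i$. Given any $D \in N$, the localization of the $\mathcal{O}(U)$-module $\Gamma(U,\mathcal{O}_{(X,\Delta)}(D))$ at $\mathfrak{m}_x$ is generated by monomials in the $f_i$; since the corresponding divisorial ideal agrees with its $\mathfrak{m}_x$-saturation by the support condition, these monomials already generate the module globally on $U$. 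Passing to the quotient by the ideal sheaf $\mathcal{I}$ of Definition~\ref{def:logCox}, the $f_i$ generate the log Cox sheaf as an $\mathcal{O}_U$-algebra on $U$. This is exactly local finite generation. Equivalently, one can invoke the log analog of Corollary~\ref{cor:sheaf-groc-sheaf-finite-type} directly once the stalks are known to be gr-local and finitely generated.

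The substantive input is supplied entirely by Theorem~\ref{thm:relative-fano}/Corollary~\ref{cor:klt-mds}; the only real work here is the spreading-out step, and I expect the main (minor) subtlety to be ensuring that on the log Cox sheaf $\mathcal{S}/\mathcal{I}$, the chosen homogeneous generators at $x$ can be simultaneously lifted to sections on a common affine neighborhood. This is resolved as above by shrinking $U$ so that $x$ lies in every irreducible component of the supports, which forces the $\mathfrak{m}_x$-saturation of the relevant divisorial ideals to coincide with the ideals themselves.
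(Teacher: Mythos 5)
The central flaw is your claim that the gr-local Cox ring ${\rm Cox}^{\gr}(X,\Delta;x)$ ``agrees with the stalk $(\mathcal{R}_{(X,\Delta)})_x$ of the log Cox sheaf at $x$.'' These are graded by different groups and are not the same ring in general. The gr-local Cox ring is, by definition, constructed from a subgroup $N$ surjecting onto the \emph{local} class group $\Cl(X,\Delta,x)\cong\Cl(X_x,\Delta_x)$, whereas the stalk of the Cox sheaf is graded by the \emph{global} group $\Cl(X,\Delta)$. There is a surjection $\Cl(X,\Delta)\to\Cl(X,\Delta,x)$ with kernel $\Cl_x(X,\Delta)$ generated by classes of divisors missing $x$; those degrees contribute invertible elements in the stalk (already for $X=\pp^1$ the stalk is $\mathcal{O}_{\pp^1,x}[u^{\pm 1}]$, not $\mathcal{O}_{\pp^1,x}$). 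So the reduction from the stalk to ${\rm Cox}^{\gr}(X,\Delta;x)$ is not ``by construction''; one has to account for this discrepancy. The paper does so precisely at this point by citing~\cite[Remark 1.3.1.4]{ADHL15} to obtain a surjection ${\rm Cox}^{\gr}(X,\Delta;x)[x_1^{\pm 1},\ldots,x_k^{\pm 1}]\twoheadrightarrow(\mathcal{R}_{(X,\Delta)})_x$, where $k$ is the number of generators of $\Cl_x(X,\Delta)$, and then deduces finite generation of the stalk over $\mathcal{O}_{X,x}$ from that of ${\rm Cox}^{\gr}(X,\Delta;x)$. Your proof silently skips this step.

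The rest of the argument is in line with the paper: you correctly invoke Corollary~\ref{cor:klt-mds} (which rests on Theorem~\ref{thm:relative-fano}) as the substantive input, and your spreading-out step recapitulates Lemma~\ref{le:fg-local} rather than simply citing it — fine but redundant, since the paper reuses that lemma verbatim. The digression on non-closed points is harmless but also unnecessary, as Lemma~\ref{le:fg-local} already reduces the question to closed points. To repair the proof, replace ``By construction, this ring agrees with the stalk'' by the Laurent-extension surjection above, and the argument goes through.
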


\begin{proof}
Since $(X,\Delta)$ is klt, Corollary~\ref{cor:klt-mds} tells us that
for any $x \in X$, the gr-local Cox ring ${\rm Cox}^{\rm gr}(X,\Delta;x)$ is finitely generated over $\mathcal{O}_{X,x}$.

By~\cite[Remark 1.3.1.4]{ADHL15}, we have a surjective homomorphism
\[
{\rm Cox}^{\rm gr}(X,\Delta;x)[x_1^{\pm 1},\ldots,x_k^{\pm 1}] \to \left(\Cox{(X,\Delta;x)}\right)_{x},
\]
where $\Cl(X,\Delta,x)=\Cl(X,\Delta)/\Cl_x(X,\Delta)$ and the subgroup $\Cl_x(X,\Delta)$ has $k$ generators.
Hence, we have that 
\[
\left({\rm Cox}^{\rm loc}(X,\Delta;x))\right)_{x}
\]
is a gr-local ring, finitely generated as an algebra over $\mathcal{O}_{X,x}$. The last assertion follows from  Lemma~\ref{le:fg-local}.
\end{proof}

In view of~\cite[Proposition 4.3.1.4]{ADHL15}, we have the stronger result that affine klt varieties with finitely generated class group are Mori dream spaces.

\begin{corollary}
\label{cor:klt-aff-mds}
Let $(X,\Delta)$ be an affine klt pair with finitely generated log divisor class group $\Cl(X,\Delta)$. Then the Cox ring ${\rm Cox}^{\rm aff}(X,\Delta)$ is finitely generated.
\end{corollary}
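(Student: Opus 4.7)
The plan is to combine Corollary~\ref{cor:klt-mds-glob} with the ``locally finitely generated implies globally finitely generated'' passage available in the affine setting.

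First I would apply Corollary~\ref{cor:klt-mds-glob} to the klt pair $(X,\Delta)$. Since $\Cl(X,\Delta)$ is finitely generated by hypothesis, that corollary yields a well-defined Cox sheaf $\Cox{(X,\Delta)}$ which is a sheaf of gr-local rings, and in particular a sheaf of finitely generated $\mathcal{O}_X$-algebras. Concretely, for each point $x \in X$ the stalk ${\rm Cox}^{\gr}(X,\Delta;x)$ is finitely generated as an $\mathcal{O}_{X,x}$-algebra.

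Next, I would promote local finite generation to global finite generation using the fact that $X$ is affine. This is the content of~\cite[Proposition 4.3.1.4]{ADHL15}: if $X$ is an affine variety and the Cox sheaf $\Cox{(X,\Delta)}$ is locally of finite type, then its ring of global sections ${\rm Cox}^{\aff}(X,\Delta) = \Gamma(X,\Cox{(X,\Delta)})$ is a finitely generated $\mathcal{O}_X(X)$-algebra, hence a finitely generated $\cc$-algebra (since $\mathcal{O}_X(X)$ itself is finitely generated over $\cc$). This is also precisely the ``if and only if'' statement recorded in Remark~\ref{rem:gen-stalks}, applied to the trivial contraction $X \to X$.

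The only step that requires care is checking that the hypotheses of the cited proposition are met by our $\Cox{(X,\Delta)}$ — namely, that it is a quasi-coherent sheaf of $\mathcal{O}_X$-modules locally of finite type — but this has been verified in Section~\ref{subsec:proj-log-cox} and Corollary~\ref{cor:klt-mds-glob}, so there is no serious obstacle. The main conceptual point is really that the local-to-global passage is free on an affine base, and the role of klt-ness is precisely to ensure local finite generation of the Cox sheaf in the first place.
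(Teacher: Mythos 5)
Your proof is correct and matches the paper's own argument: both apply Corollary~\ref{cor:klt-mds-glob} to get that the Cox sheaf is a sheaf of gr-local rings (hence locally of finite type) and then pass from local to global finite generation on the affine base via the relevant result in \cite{ADHL15}. The only cosmetic discrepancy is that the paper's proof cites Proposition~4.3.1.3 of \cite{ADHL15} rather than 4.3.1.4; both references appear in the paper's surrounding discussion and serve the same purpose here.
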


\begin{proof}
This follows directly from Corollary~\ref{cor:klt-mds-glob} and~\cite[Prop. 4.3.1.3]{ADHL15}.
\end{proof}

\begin{lemma}\label{lem:one-dim-cone}
Let $X$ be a $\mathbb{Q}$-factorial $\mathbb{T}$-variety.
Let $X\rightarrow Z$ be a projective contraction.
Let $(X,\Delta)$ be a log pair which is of Fano type over $Z$.
Let $D$ be a $\mathbb{T}$-invariant $\mathbb{Q}$-divisor on $X$. 
Assume that for each prime $P\subset X$ we have that 
\[
{\rm coeff}_P(\Delta) \geq 
1-\frac{1}{i_P(D)},
\]
where $i_P(D)$ is the Cartier index of $D$ at the generic point of $P$.
Then, the spectrum $Y$ of the ring 
\[
\bigoplus_{m\in \zz}
H^0(X/Z,\mathcal{O}_X(mD)) 
\]
is klt type.
\end{lemma}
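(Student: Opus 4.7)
The plan is to reduce to the Cartier case and then use Riemann--Hurwitz to extract a klt boundary on $Y$; the coefficient hypothesis $\coeff_{P}(\Delta)\geq 1-1/i_{P}(D)$ will appear precisely as the condition ensuring effectivity of that boundary.

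First I would verify that $Y$ is a well-defined affine variety over $Z$. Since $(X,\Delta)$ is of Fano type over $Z$, by Theorem~\ref{thm:relative-fano} the morphism $X\to Z$ is a relative Mori dream space; in particular the $\zz$-graded ring $R:=\bigoplus_{m\in\zz}H^{0}(X/Z,\mathcal{O}_{X}(mD))$ is a graded subalgebra of the relative log Cox sheaf of $(X,\Delta)/Z$, hence finitely generated over $\mathcal{O}_{Z}(Z)$. So $Y=\Spec R$ is an affine variety over $Z$ with a $\cc^{*}$-action inherited from the $\zz$-grading.

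Next I would reduce to a Cartier multiple of $D$. Choose $N>0$ with $ND$ Cartier and set $R^{(N)}:=\bigoplus_{k\in\zz}H^{0}(X/Z,\mathcal{O}_{X}(kND))\subseteq R$ and $Y^{(N)}:=\Spec R^{(N)}$. Since $r^{N}\in R^{(N)}$ for every homogeneous $r\in R$, the extension $R^{(N)}\subseteq R$ is integral and hence finite, realising $R^{(N)}$ as the ring of $\mu_{N}$-invariants under the action rescaling $R_{m}$ by $\zeta^{m}$. Since $ND$ is Cartier and $(X,\Delta)$ is of Fano type over $Z$, the classical relative cone construction for Fano-type pairs (cf.~\cite{GOST15,Bra19}) produces an effective boundary $\Delta^{(N)}$ making $(Y^{(N)},\Delta^{(N)})$ klt. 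Descending along the $\mu_{N}$-quotient $\pi\colon Y\to Y^{(N)}$, I would define $\Delta_{Y}$ by $K_{Y}+\Delta_{Y}=\pi^{\ast}(K_{Y^{(N)}}+\Delta^{(N)})$. The map $\pi$ is ramified in codimension one exactly above the primes of $Y^{(N)}$ dominating primes $P\subset X$ with $i_{P}(D)>1$, with ramification index $i_{P}(D)$ (the $\mathbb{T}$-invariance of $D$ gives a clean toric-local model here). A direct computation then shows that the multiplicity of the reduced preimage $\widetilde{P}_{Y}$ in $\Delta_{Y}$ equals $i_{P}(D)\cdot \coeff_{P}(\Delta)-(i_{P}(D)-1)$, which lies in $[0,1)$ if and only if $1-1/i_{P}(D)\leq\coeff_{P}(\Delta)<1$ -- precisely our hypothesis together with klt-ness of $(X,\Delta)$. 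Hence $\Delta_{Y}$ is effective with coefficients in $[0,1)$, and $(Y,\Delta_{Y})$ is klt by the standard ramification formula for klt pairs.

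The principal obstacle is this last step: verifying that the codimension-one ramification index of $\pi$ above the image of $P$ is exactly $i_{P}(D)$. This is a toric-local computation that is feasible thanks to the $\mathbb{T}$-invariance of $D$, but it requires care because the $\mu_{N}$-action on $Y$ can have additional fixed loci over the cone vertex; one must check that those do not introduce extra components of the ramification divisor that would destroy effectivity of $\Delta_{Y}$.
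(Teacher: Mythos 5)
Your approach is genuinely different from the paper's. You reduce to the Cartier case via the cyclic quotient $\pi\colon Y\to Y^{(N)}$ and then pull back a klt boundary along Riemann--Hurwitz; the paper instead first reduces to $D\geq 0$, runs a $\mathbb{T}$-equivariant $D$-MMP over $Z$ to make $D$ ample over $Z$, builds the Seifert $\cc^*$-bundle $\phi\colon\tilde{X}\to X$ with the projection $r\colon\tilde{X}\to Y$, exhibits a klt boundary $\Delta_s+E+\Gamma$ on $X$ whose anticanonical class is $\qq$-proportional over $Z$ to a small positive multiple of $D$, and pushes the crepant pullback forward to $Y$. Your overall shape is coherent, and the bookkeeping of the coefficient hypothesis against Riemann--Hurwitz is correct in spirit.

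The main gap is the step ``Since $ND$ is Cartier and $(X,\Delta)$ is of Fano type over $Z$, the classical relative cone construction for Fano-type pairs produces an effective boundary $\Delta^{(N)}$ making $(Y^{(N)},\Delta^{(N)})$ klt.'' This is not available as a black box: it is essentially the Cartier case of the lemma itself, which is exactly the content the paper establishes via its crepant-pushforward argument, and the references you cite concern singularities of full Cox rings of Fano-type varieties, not relative cones over an arbitrary relatively ample Cartier divisor with a boundary having prescribed coefficients $\coeff_P(\Delta)$ along the images of the $P$. That last clause matters: your Hurwitz computation requires $\Delta^{(N)}$ to have coefficient exactly $\coeff_P(\Delta)$ at the image of $P$, which an abstract ``there exists a klt boundary on $Y^{(N)}$'' does not give. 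You also never arrange for $D$ to be ample over $Z$; without the $D$-MMP and passage to the ample model, $Y^{(N)}=\Spec\bigoplus_k H^0(X/Z,kND)$ is the cone over the ample model of $D$ rather than over $X$, so the divisor-by-divisor matching between primes of $X$ and primes of $Y^{(N)}$, and the identification of ramification indices of $\pi$ with the $i_P(D)$, must be re-derived on that model. The obstacle you flag at the end (the toric-local verification of the ramification index) is real but secondary to these two points.
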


\begin{proof}
Note that if neither $-D$ or $D$ is effective over the base $Z$, 
then there is nothing to prove.
Without loss of generality, we may assume that $D$ is effective.
We run a $D$-MMP over the base
$X\dashrightarrow X'$.
This $D$-MMP terminates since $X$ is of Fano type over the base.
Furthermore, $X'$ is also of Fano type over the base.
Since $\mathbb{T}$ is connected, this MMP is $\mathbb{T}$-equivariant.
The induced divisor $D'$ on $X'$ is still $\mathbb{T}$-invariant.
Let $X''$ be the ample model of $D'$ over $X$.
Hence, $X''$ is of Fano type over $Z$, being the image of a Fano type variety over $Z$.
Let $X^{(3)}$ be a small $\mathbb{Q}$-factorialization of $X''$.
Replacing $X$ by $X^{(3)}$,
we may assume that $D$ is ample over $Z$.

Let $\phi\colon \tilde{X}\rightarrow X$ be the relative spectrum of the divisorial sheaf 
$\bigoplus_{m\in \zz} \mathcal{O}_X(mD)$.
Hence, we have a projection morphism 
$r\colon \tilde{X}\rightarrow Y$ which contracts at most one horizontal divisor over $X$.
We denote such divisor (if it exists) by $F$.
Note that $(X,\Delta)$ is $\mathbb{Q}$-complemented over $Z$,
hence $(X,\Delta_s)$ is $\mathbb{Q}$-complemented over $Z$ as well.
Hence, we can find $\Delta'$ so that
$(X,\Delta_s+\Delta')$ is klt and log Calabi-Yau over $Z$.
Since $\Delta'$ is big over the base,
we can find a general ample divisor $A$ 
and $E\geq 0$ so that
$\Delta' \sim_{\qq,Z} A+E$.
We can assume that
$(X,\Delta_s+A+E)$ is a klt pair which is trivial over $Z$.
Let $\epsilon>0$ be small enough so that
$A-\epsilon D$ is an ample divisor over $Z$.
We can write 
$A-\epsilon D \sim_{\qq,Z} \Gamma \geq 0$ general enough so that the pair
$(X,\Delta_s+E+\Gamma)$ remains klt.
Note that we have a $\qq$-linear equivalence over the base
\[
-(K_X+\Delta_s + E +\Gamma) \sim_{\qq,Z} \epsilon D.
\]
Hence, we have that 
$\phi^*(K_X+\Delta_s+E+\Gamma)=K_{\tilde{X}}+\Gamma_{\tilde{X}}$ satisfies that
$(\tilde{X},\Gamma_{\tilde{X}}+(1-\epsilon)F)$ is a log pair which is klt and $\qq$-trivial over $Y$.
Define $\Gamma:=r_*\Gamma_{\tilde{X}}$.
Then, we have that
$(Y,\Gamma)$ is crepant equivalent to 
$(\tilde{X},\Gamma_{\tilde{X}}+(1-\epsilon)F)$
so it is a klt pair.
\end{proof}

\begin{theorem}\label{thm:klt-Cox-ring}
Let $\phi\colon X \rightarrow Z$ be a contraction.
Assume that $(X,\Delta)$ is a log pair which is of Fano type over $Z$.
Let $K\leqslant {\rm WDiv}(X,\Delta)$ be a finitely generated subgroup.
Consider $\pi\colon K\rightarrow {\rm Cl}(X/Z,\Delta)$ the induced homomorphism and $K_0$ its kernel.
Let $\chi\colon K_0\rightarrow \mathbb{C}(X,\Delta)^*$ be a character.
Then, the spectrum of the Cox ring ${\rm Cox}(X/Z,\Delta)_{K,\chi}$ is 
klt type. 
\end{theorem}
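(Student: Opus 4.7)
The plan is to iterate Lemma~\ref{lem:one-dim-cone} along a system of generators of $K$ and then account for the character $\chi$ via a quasi-\'etale cover. As a first step, I would take a small $\mathbb{Q}$-factorialization (which preserves Fano type by~\cite[Lemma 3.1]{GOST15} and leaves the Cox ring unchanged) and replace $\Delta$ by its standard approximation $\Delta_s$. This arranges that every divisor $D$ in $K$ satisfies the coefficient hypothesis $\mathrm{coeff}_P(\Delta_s) \geq 1-1/i_P(D)$ of Lemma~\ref{lem:one-dim-cone}, since the Cartier index of an orbifold Weil divisor of $(X,\Delta_s)$ along a prime $P$ divides $m_P$, where $\mathrm{coeff}_P(\Delta_s)=1-1/m_P$.

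Next, I would reduce to the case $K \cong \mathbb{Z}^r$ with trivial $\chi$. The torsion subgroup of $K$ together with $(K_0,\chi)$ specify a finite abelian cover of the free-case spectrum that is unramified in codimension one, because the relations $1-\chi(E)$ generating $\mathcal{I}$ witness principal divisors, and so the induced cover introduces no branching along divisors of the base. The klt type condition is preserved under such quasi-\'etale covers by the standard ramification formula, so it suffices to treat the free case.

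I would then induct on the rank $r$. The base case $r=1$ is exactly Lemma~\ref{lem:one-dim-cone}. For the inductive step, with generators $D_1,\dots,D_r$ of $K$, I would apply Lemma~\ref{lem:one-dim-cone} with $D=D_r$ to obtain a klt type affine quasi-cone $W \to Z$ equipped with a $\cc^*$-action, on which the divisors $D_1,\dots,D_{r-1}$ lift to $\cc^*$-invariant Weil divisors. The $K$-graded Cox ring on $X$ is then identified with the $\mathbb{Z}^{r-1}$-graded Cox ring on $W$, so that the inductive hypothesis can be invoked over $W$.

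The main obstacle is that Lemma~\ref{lem:one-dim-cone} only produces a klt type $W$, whereas to reapply the lemma one needs $W$ to be of Fano type over a suitable base. Establishing this requires descending the complement $(X,\Delta_s+E+\Gamma)$ constructed in the proof of Lemma~\ref{lem:one-dim-cone} through the horizontal contraction to produce an anti-effective log canonical boundary on $W$, followed by a mild perturbation by a general relatively ample divisor to recover the Fano type condition; one must also verify that the coefficient hypothesis persists for the lifts of $D_1,\dots,D_{r-1}$, which should follow from the $\cc^*$-equivariance of all constructions involved.
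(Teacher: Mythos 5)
Your overall shape — small $\mathbb{Q}$-factorialization, eliminate torsion and $\chi$ via a finite quasi-\'etale cover, then reduce to the free rank-one case and invoke Lemma~\ref{lem:one-dim-cone} — matches the paper's high-level plan up to the torsion step, and the standard-coefficient observation for the hypothesis of Lemma~\ref{lem:one-dim-cone} is sound. But your mechanism for passing from rank $r$ to rank $1$ diverges from the paper's, and it is exactly there that you run into the difficulty you yourself flag without resolving.

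You propose an induction on ${\rm rank}\, K$: apply Lemma~\ref{lem:one-dim-cone} to $D_r$ to obtain an affine quasi-cone $W$, lift $D_1,\dots,D_{r-1}$ to $W$, and iterate. There are two interlocking problems. First, Lemma~\ref{lem:one-dim-cone} takes as input a pair of \emph{Fano type} over $Z$ and outputs only a \emph{klt type} affine variety; nothing in the lemma or your sketch supplies a contraction $W \to Z'$ over which $W$ is again of Fano type, and yet this is required to apply the lemma (or any version of the finite-generation machinery) a second time. Your remark that ``one must descend the complement through the horizontal contraction and perturb'' names the missing step but does not carry it out, and it is not routine: the complement $(X,\Delta_s+E+\Gamma)$ is built relative to the original base $Z$, while $W$ sits one dimension higher and is only rationally dominated by the relative spectrum $\tilde{X}$, with a horizontal divisor $F$ contracted in between. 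Second, the remaining divisors $D_1,\dots,D_{r-1}$ live on $X$, not on $W$; since $W$ is not birational to $X$ (it has $\dim X + 1$), their ``lift'' is really a pullback to $\tilde{X}$ followed by a pushforward under $\tilde{X}\to W$, and one must check both that the resulting classes generate the correct graded pieces and that the coefficient hypothesis of Lemma~\ref{lem:one-dim-cone} transfers through this zig-zag. None of this is established.

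The paper sidesteps the induction entirely. After reducing to $K$ free (same idea as yours), it forms the orbifold projective bundle $X_1 := \mathbb{P}_X\bigl(\mathcal{O}_X(D_1)\oplus\cdots\oplus\mathcal{O}_X(D_s)\bigr)$, shows by an explicit complement computation and inversion of adjunction that $X_1$ (and the genuine bundle $X_2$ on the Cartier multiples) is again of Fano type over $Z$, and then observes that the multi-section ring $\bigoplus_{(m_1,\dots,m_s)} H^0\bigl(X/Z,\mathcal{O}_X(\sum m_i D_i)\bigr)$ coincides, as a ring, with the section ring of the tautological $\mathbb{Q}$-line bundle on $X_1$ — the $\zz^s$-grading merely coarsens to a $\zz$-grading, which does not affect the singularity type of the spectrum. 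This collapses the rank-$r$ problem to a \emph{single} application of Lemma~\ref{lem:one-dim-cone} on a space where the Fano-type hypothesis has already been verified, which is precisely the input the lemma needs. If you want to keep your inductive scheme, you would need to prove a strengthened version of Lemma~\ref{lem:one-dim-cone} that outputs a quasi-cone of Fano type over a new base together with a compatible orbifold boundary; otherwise I would recommend adopting the projective-bundle reduction.
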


\begin{proof}
Observe that replacing $X$ with a small $\mathbb{Q}$-factorialization does not change the Cox ring, so we may assume that $X$ is $\mathbb{Q}$-factorial.
Let $D_1,\dots,D_s,D_{s+1},\dots,D_r$ be a finite set of Weil divisors so that 
$\langle D_1,\dots,D_s \rangle$ maps isomorphically to
$\pi(N)_{\rm free}$
and 
$\langle D_{s+1},\dots,D_r \rangle$ surjects onto 
$\pi(N)_{\rm tor}$.
We denote the spectrum of the Cox ring ${\rm Cox}(X/Z,\Delta)_{N,\chi}$ by $Y'$.
Note that we have a natural split
$\mathbb{T}\cong \mathbb{T}_0 \times A$,
where $A$ is a finite abelian group
and $\mathbb{T}_0$ is a torus.
Let $Y$ be the quotient of $Y'$ by $A$ and $X'$ the quotient of $Y'$ by $\mathbb{T}_0$.
Then, we have a commutative diagram as follows
\[
\xymatrix{ 
Y'\ar[r]^-{/A}\ar[d]_-{/\mathbb{T}_0} & Y\ar[d]^-{/\mathbb{T}_0} \\ 
X' \ar[r]^-{/A} & X.}
\]
We denote the finite quasi-\'etale Galois morphism
$X'\rightarrow X$ by $p$.
We have a natural isomorphism
\[
{\rm Cox}(X/Z,\Delta)_{N,\chi} 
\cong 
\bigoplus_{(m_1,\dots,m_s)\in \zz^s} 
H^0(X'/Z, \mathcal{O}_{X'}(m_1p^*(D_1)+
\dots +m_sp^*(D_s)). 
\] 
The above isomorphism is induced by
the isomorphism
\[
p_*\mathcal{O}_{X'} 
\cong 
\bigoplus_{D\in \pi(K)_{\rm tor}} \mathcal{O}_X(D).
\]
The finite morphism
$X'\rightarrow X$ ramifies with multiplicity at most $m$ at prime divisors of $X$ with coefficient at least $1-\frac{1}{m}$.
Then, the log pull-back of $K_X+\Delta$
is a klt pair $K_{X'}+\Delta'$.
Since $(X,\Delta)$ is of Fano type over $Z$, we can find a boundary $B$ on $X$ so that $K_X+\Delta+B\sim_{\mathbb{Q},Z} 0$ is klt and $B$ is big over $Z$.
We may assume that $B$ contains no component of the branch locus of $p$.
Then, the pull-back $B':=p^*(B)$ satisfies that $K_{X'}+\Delta'+B'_{\mathbb{Q},Z} 0$ is klt and $B'$ is big over $Z$.
Hence, $(X',\Delta')$ is of Fano type over $Z$.
Hence, it suffices to prove the statement for $\pi(K)$ free.
Thus, we may replace $X$ with $X'$
and assume $s=r$.

We reduce to the case in which each 
$D_i \sim_{\mathbb{Q},Z} K_X+B_i$, 
where $(X,B_i)$ is klt and $B_i\geq A$ for some fixed effective divisor $A$
ample over $Z$.
Without loss of generality, we may assume that each $D_i$ is effective.
Furthermore, we may assume that $(X,B'+D_i)$ is klt, where 
$B':=\Delta+B$ is big over $Z$.
For this purpose, it suffices to replace $D_i$ with $D_i/n_i$ with $n_i$ large enough.
Note that $K_X+B'+D_i\sim_{\mathbb{Q},Z} D_i$.
Since $B'$ is big over $Z$, we can write $B'\sim_{\mathbb{Q},Z} A+E$
where $A$ is ample over $Z$ and $E$ is effective.
By choosing a very general section of $A$, we may replace $B'$ with $A+E$
and set $B_i=D_i+A+E$.

In this step, we reduce to the case in which there is a single divisor $D_1$.
For each $D_i$, we can find $k_i>0$ so that $k_iD_i$ is Cartier.
Consider the orbifold projective bundle
\[
X_1:=\mathbb{P}_X(\mathcal{O}_X(D_1)\oplus \dots \oplus \mathcal{O}_X(D_s)) 
\]
over $X$,
and the projective bundle
\[
X_2:=\mathbb{P}_X (\mathcal{O}_X(k_1D_1)\oplus \dots \oplus \mathcal{O}_X(k_sD_s))
\]
over $X$.
Note that we have a finite morphism
$X_1\rightarrow X_2$.
We denote by $\pi_1\colon X_1\rightarrow X$ and 
$\pi_2\colon X_2\rightarrow X$ the corresponding morphisms.
We claim that $X_2$ is of Fano type over $Z$.
Let $H_1,\dots,H_{s+1}$ be the hyperplane sections of $X_2$ over $X$
and $H:=H_1+\dots+H_{s_1}$.
Let $\Delta_{X_2}=\pi_2^*(B')$.
By inversion of adjunction, we conclude that the pair
$(X_2,H+\Delta_{X_2})$ is dlt and
$K_{X_2}+H+\Delta_{X_2}$ is $\mathbb{Q}$-trivial over $Z$.
Furthermore, the boundary
$H+\Delta_{X_2}$ is big over $Z$.
For $\epsilon>0$ small enough, the pair
$\pi_2^*(A)+\epsilon H$ is ample over $Z$.
Let $A_{X_2} \sim_{\mathbb{Q},Z} \pi_2^*(A)+\epsilon H$ be a general effective divisor.
We conclude that 
\[
K_{X_2}+(1-\epsilon)H 
+\Delta_{X_2} + (1-\epsilon)\pi_2^*(A)+
\pi_2^*(H) + A_{X_2} \sim_{\mathbb{Q},Z} 0
\]
is klt and its boundary is effective.
We conclude that $X_2$ is of Fano type over $Z$.
By taking $\epsilon$ small enough, we can make sure that the log pull-back of the above pair to $X_1$ is a  klt pair.
Thus, we conclude that $X_1$ is of Fano type over $Z$ as well.
Note that the section ring of the tautological $\mathbb{Q}$-line bundle of $X_1$ coincides with the multi-section ring generated by the $D_i$'s on $X$. However, the grading given by the ring of sections of the tautological line bundle is coarser. Thus, replacing $X$ with $X_1$, we reduced the statement to the $\mathbb{T}$-equivariant case with $s=1$. Then, the statement follows from Lemma~\ref{lem:one-dim-cone}.
\end{proof}

\begin{corollary}\label{cor:loc-pot-klt}
Let $(X,\Delta)$ be a log pair
and $\phi\colon X\rightarrow Z$ be a contraction.
Assume that $(X,\Delta)$ is of Fano type over $Z$.
Let $z\in Z$ be a closed point.
Then, the spectrum of the Cox ring
${\rm Cox}^{\rm aff}(X/Z,\Delta)$
(resp. 
${\rm Cox}^{\rm gr}(X/Z,\Delta;z)$
or 
${\rm Cox}^{\rm loc}(X/Z,\Delta;z)$)
is klt type.
\end{corollary}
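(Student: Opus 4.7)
The plan is to reduce all three assertions to Theorem~\ref{thm:klt-Cox-ring} by passing through a chain of Zariski localizations under which the klt-type property is preserved. Since the statement concerns a closed point $z\in Z$, I would first shrink $Z$ to an affine open neighborhood $U$ of $z$. The Fano type condition is preserved under restriction to an open subset of the base, so $(X_U,\Delta_U)$ remains of Fano type over $U$. Applying Theorem~\ref{thm:klt-Cox-ring} to the contraction $X_U\to U$, together with a finitely generated subgroup $K\leqslant{\rm WDiv}(X_U,\Delta_U)$ and a character $\chi$ inherited from the original data, yields that $\Spec{\rm Cox}(X_U/U,\Delta_U)_{K,\chi}$ is klt type. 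By Definition~\ref{def:rel-cox}, this ring is precisely ${\rm Cox}^{\rm aff}(X/Z,\Delta)$ computed over $U$, which handles the affine case.

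Next, I would invoke Definition~\ref{def:rel-loc-cox}: the gr-local Cox ring ${\rm Cox}^{\rm gr}(X/Z,\Delta;z)$ is obtained from the affine Cox ring above by base change along the faithfully flat localization $\mathcal{O}_Z(U)\to \mathcal{O}_{Z,z}$; equivalently, it is the localization at the multiplicatively closed set of homogeneous elements of the affine Cox ring that do not vanish on the fiber over $z$. In scheme-theoretic terms, $\Spec{\rm Cox}^{\rm gr}(X/Z,\Delta;z)$ is a flat inverse limit of open subschemes of $\Spec{\rm Cox}^{\rm aff}$. Since being klt type is a Zariski-local property of a pair $(Y,\Delta_Y)$ and passes to arbitrary localizations (one simply restricts the boundary produced in the proof of Theorem~\ref{thm:klt-Cox-ring}), the gr-local case follows.

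Finally, the local Cox ring ${\rm Cox}^{\rm loc}(X/Z,\Delta;z)$ is by definition the localization of ${\rm Cox}^{\rm gr}(X/Z,\Delta;z)$ at the unique maximal graded ideal $\mathfrak{m}$. Its spectrum is the local spectrum of $\Spec{\rm Cox}^{\rm gr}$ at the closed point determined by $\mathfrak{m}$, and therefore is klt type by exactly the same argument.

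The only substantive step is the first one, which is handled by Theorem~\ref{thm:klt-Cox-ring}; the remaining content is purely bookkeeping about which ring-theoretic localizations of an affine klt pair remain klt type, and poses no essential obstacle. The mildly delicate point to verify is that the canonical maps $\overline{X}^{\rm loc}\to\overline{X}^{\rm gr}\to\overline{X}^{\rm aff}$ identify a klt boundary on the affine model with boundaries that witness the klt-type structure on the two localized models, but this reduces to tracking supports and pulling back the boundary $\Delta_{\rm Cox}$ constructed during the proof of Theorem~\ref{thm:klt-Cox-ring}.
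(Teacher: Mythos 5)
Your overall strategy matches the paper's: handle the affine case directly from Theorem~\ref{thm:klt-Cox-ring}, obtain the gr-local case by realizing $\operatorname{Spec}\,{\rm Cox}^{\rm gr}(X/Z,\Delta;z)$ as a base change of an affine Cox ring, and deduce the local case by localizing at the unique graded maximal ideal. However, there is a genuine gap in the central step, which you describe as "purely bookkeeping" but which is in fact the only real content of the argument. The relative gr-local Cox ring is, by Definition~\ref{def:rel-loc-cox}, graded by $\Cl(X_z,\Delta_z)$ and built from a surjection $\varphi\colon N\to\Cl(X_z,\Delta_z)$ with $N$ a subgroup of divisors on the \emph{local} model $X_z$. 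The affine Cox ring over an open $U\ni z$, by contrast, is graded by a subgroup $K\leqslant{\rm WDiv}(X_U,\Delta_U)$ mapping to $\Cl(X_U/U,\Delta_U)$. These two grading data are not the same \emph{a priori}: divisors in $\ker\varphi$ are principal on $X_z$ but may fail to be principal on any finite-type $X_U$, so the restriction $\Cl(X_U,\Delta_U)\to\Cl(X_z,\Delta_z)$ is generally only surjective, not an isomorphism. Until one chooses $K$ carefully, the claim that "${\rm Cox}^{\rm gr}(X/Z,\Delta;z)$ is obtained from the affine Cox ring by base change along $\mathcal{O}_Z(U)\to\mathcal{O}_{Z,z}$" is unjustified, and writing "$K$ inherited from the original data" leaves precisely this unresolved.

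What is actually needed, and what the paper does, is the following: pick Weil divisors $W'_1,\dots,W'_k$ on $X_z$ whose classes generate $\Cl(X_z,\Delta_z)$; after shrinking $Z$ around $z$, lift these to Weil divisors $W_1,\dots,W_k$ on $X$; shrink $Z$ again so that the finitely many relations among the $[W'_i]$ in $\Cl(X_z,\Delta_z)$ already hold for the $[W_i]$ in $\Cl(X/Z,\Delta)$, i.e.\ so that the subgroup $K=\langle W_1,\dots,W_k\rangle$ maps isomorphically onto its image in $\Cl(X_z,\Delta_z)$. Only then does Theorem~\ref{thm:klt-Cox-ring} applied to ${\rm Cox}^{\rm aff}(X/Z,\Delta)(W_1,\dots,W_k)$ yield a ring whose base change along $Z_z\to Z$ is ${\rm Cox}^{\rm gr}(X/Z,\Delta;z)$. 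Your proof as written cannot be completed without this lifting-and-matching step, because the "localization at homogeneous elements not vanishing on the fiber" you propose preserves the $\Cl(X_U,\Delta_U)$-grading rather than producing the $\Cl(X_z,\Delta_z)$-grading, and hence need not coincide with ${\rm Cox}^{\rm gr}(X/Z,\Delta;z)$. Your treatment of the affine and local cases, and your observation that klt type descends along these base changes by restricting the boundary, are both fine.
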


\begin{proof}
The statement for ${\rm Cox}^{\rm aff}(X/Z,\Delta;z)$ is a direct consequence of Theorem~\ref{thm:klt-Cox-ring}.
It suffices to show the statement for 
${\rm Cox}^{\rm gr}(X/Z,\Delta;z)$.
Indeed, the spectrum of the localization at the maximal graded ideal will have klt type singularities provided that the spectrum of
${\rm Cox}^{\rm gr}(X/Z,\Delta;z)$ satifies that property.
Let $W'_1,\dots,W'_k$ be Weil divisors on $X_z$ 
whose classes generate
${\rm Cl}(X_z/Z_z,\Delta_z)$.
Up to shrinking $Z$ around $z$, we may find Weil divisors $W_1,\dots,W_k$ on $X$ whose pull-backs to $X_z$ coincide with the $W'_i$'s.
Up to shrinking $Z$ around $z$ again, 
we may assume that the group $K$ generated by the $W_i$'s in ${\rm Cl}(X/Z,\Delta)$ 
is isomorphic to the group
$K'$ generated by the $W_i$'s in 
${\rm Cl}(X_z/Z_z,\Delta_z)$.
We can consider the Cox ring construction
with respect to the subgroup $K\leqslant {\rm Cl}(X/Z,\Delta)$.
We denote this ring by 
\begin{equation}
\label{eq:ring}
{\rm Cox}^{\rm aff}(X/Z,\Delta)(W_1,\dots,W_k).
\end{equation} 
By Theorem~\ref{thm:klt-Cox-ring}, 
we know that the spectrum of the ring~\eqref{eq:ring}
is klt type.
Note that the spectrum of the relative grocal Cox ring 
${\rm Cox}^{\rm gr}(X/Z,\Delta;z)$ is induced
by the base change $Z_z\rightarrow Z$ from 
the spectrum of the ring~\eqref{eq:ring}.
Hence, we conclude that the spectrum of
${\rm Cox}^{\rm gr}(X/Z,\Delta;z)$ is
klt type.
\end{proof}

\subsection{The local Henselian Cox ring}\label{subsec:local-Hensel-cox}
In this section, we define the local Henselian Cox ring for germs $(X,\Delta;x)$, where $(X,\Delta)$ is a pair and $x \in X$ is a closed point. More generally, when $\phi: X \to Z $ is a contraction, we define the relative local Henselian Cox ring for closed points $z \in Z$.

\begin{definition}{\em 
Let $(X,\Delta)$ be a log pair and $\phi\colon X\rightarrow Z$ be a contraction. Let $z\in Z$ be a closed point.
As usual, we denote by $Z^h$ the spectrum of the Henselization of the local ring of $Z$ at $z$.
We obtain a morphism by base change $X^h\rightarrow Z^h$.
The {\em relative gr-Henselian Cox ring} at $z\in Z$ is defined to be
\[
{\rm Cox}^{\rm gr\text{-}h}(X/Z,\Delta;z):={\rm Cox}(X^h/Z^h,\Delta^h),
\]
where the right side is the relative Cox ring of the associated morphism as defined in Definition~\ref{def:rel-cox}.
Then the {\em relative local Henselian Cox ring} at $z\in Z$ is defined to be
\[
{\rm Cox}^h(X/Z,\Delta;z) := 
\left({\rm Cox}^{\rm gr\text{-}h}(X/Z,\Delta;z)_{\mathfrak{m}}\right)^h.
\]
Here, the localization at the maximal ideal 
$\mathfrak{m}$ defined by homogeneous regular functions on the Cox ring which correspond to Weil divisors on $X^h$ which intersect $\phi^{-1}(z)$ non-trivially, followed by Henselization.
The boundary $\Delta^h$ is the boundary induced by $\Delta$ on $X^h$.
We can also define the {\em gr-Henselian} and {\em local Henselian Cox ring},
which come as the special cases where $X\to X$ is the identity and $x\in X$ is a closed point.
We denote them respectively by
\[
{\rm Cox}^{\rm gr\text{-}h}(X,\Delta;x) 
\quad \mathrm{and}
\quad
{\rm Cox}^h(X,\Delta;x).
\]
}
\end{definition}

The following example shows that the Henselian local Cox ring often
differs from the local Cox ring.
The former captures the local topology of the singularity,
while the latter does not (see subsection~\ref{subsec:covers-grocal-rings}).

\begin{example}{\em
This example shows that the Henselian local Cox ring
can be different from the local Cox ring.
Consider the factorial quasi-cone threefold singularity $X$ defined by
\[
\{ (x,y,z) \mid x^2+y^3+z^3w=0\}.
\] 
There is a singular stratum $C$ given by 
$x=y=z=0$. Around a general point $c$ of $C$, \'etale locally,  $X$ is isomorphic to $\cc$ times the $E_6$-singularity. Thus the regional fundamental group at $c$ is the binary tetrahedral group, which has abelianization $\zz/3\zz$. 
However, the class group ${\rm Cl}(X)$ and thus ${\rm Cl}(X_c)$ is trivial.
Since the regional fundamental group of the singularity is not perfect,
then ${\rm Cl}(X_c^h)$ is non-trivial, 
so the local Henselian Cox ring at $c$ is non-trivial
while the local Cox ring is $X$ itself.
Similar examples are given in~\cite[Ex 5.5]{BF84}.}
\end{example}

The following theorem shows that the relative local Henselian Cox ring is well-behaved for Fano
type morphisms.

\begin{theorem}\label{thm:hen-cox}
Let $X\rightarrow Z$ be a Fano type morphism,
where $(Z,z)$ is the spectrum of a local ring essentially of finite type over $\cc$.
Let $X^h\rightarrow Z^h$ be the base change to the Henselization of the local ring. 
Then, the following statements hold:
\begin{enumerate}
\item The class group
${\rm Cl}(X^h/Z^h)$ is finitely generated,
\item the relative gr-Henselian Cox ring
${\rm Cox}^{\rm gr\text{-}h}(X/Z,\Delta;z)$
is finitely generated over $\mathcal{O}_{Z^h}$, and 
\item the spectra 
${\rm Spec}({\rm Cox}^{\rm gr\text{-}h}(X/Z,\Delta;z))$
and
${\rm Spec}({\rm Cox}^h(X/Z,\Delta;z))$
are klt type.
\end{enumerate}
\end{theorem}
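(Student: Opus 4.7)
The strategy is to deduce all three assertions from their non-Henselian analogues (Theorem~\ref{thm:relative-fano}, Theorem~\ref{thm:klt-Cox-ring}, Corollary~\ref{cor:loc-pot-klt}) via base change along a carefully chosen \'etale neighborhood of $z\in Z$, exploiting that the Henselization $Z^h$ is the filtered colimit of such neighborhoods. For (1), by the discussion in Subsection~\ref{subsec:covers-grocal-rings} together with~\cite[Sec 6]{BF84}, since $\mathcal{O}_{Z,z}$ is excellent we can find an affine \'etale neighborhood $V \to Z$ of $z$ such that $\Cl(X^h/Z^h) \cong \Cl(X_V/V)$. The morphism $X_V \to V$ inherits the Fano type property after \'etale base change, and by~\cite[Corollary 1.3.2]{BCHM10} (see also Theorem~\ref{thm:relative-fano}) its relative class group is finitely generated.

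For (2), applying Theorem~\ref{thm:relative-fano} to $X_V \to V$ yields that $\Cox{(X_V/V, \Delta_V)}$ is a sheaf of finitely generated $\mathcal{O}_V$-algebras. The relative Cox construction commutes with base change along the flat morphism $Z^h \to V$: the grading groups agree by part (1) and the characters defining the Cox sheaf pull back, so the resulting $\mathcal{O}_{Z^h}$-algebra ${\rm Cox}^{\rm gr\text{-}h}(X/Z,\Delta;z)$ is finitely generated.

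For (3), Theorem~\ref{thm:klt-Cox-ring} applied to $X_V \to V$ gives that $\Spec({\rm Cox}(X_V/V, \Delta_V))$ is klt type. Base change along the \'etale morphism $Z^h \to V$ preserves the klt property, since the log pull-back of a klt pair along an \'etale morphism is again klt; the subsequent localization at the maximal graded ideal followed by Henselization is also \'etale-local and so preserves klt as well. Thus both $\Spec({\rm Cox}^{\rm gr\text{-}h}(X/Z,\Delta;z))$ and $\Spec({\rm Cox}^h(X/Z,\Delta;z))$ are klt type.

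The main technical difficulty is verifying that the Cox construction genuinely commutes with the \'etale base change $V \to Z^h$; concretely, one must transport the chosen subgroup $N \leqslant \WDiv(X_V,\Delta_V)$ and character $\chi\colon N^0 \to \cc(X_V)^*$ from Definition~\ref{def:logCox} to $X^h$ and check that the resulting rings coincide. This should follow by combining Theorem~\ref{thm:Cl-gr-Hens} (matching class groups through Henselization) with the independence of the Cox ring from the choice of $N$ and $\chi$ up to isomorphism, but careful bookkeeping is needed to handle the torsion part of $\Cl(X^h/Z^h)$ and to ensure the defining ideal sheaf $\mathcal{I}$ of Definition~\ref{def:logCox} is compatible with the base change.
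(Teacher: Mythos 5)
Your plan for parts (2) and (3) — spread out to an \'etale neighborhood $V$ of $z$ where the relevant divisors are defined, apply Theorem~\ref{thm:relative-fano}, Theorem~\ref{thm:klt-Cox-ring} and Corollary~\ref{cor:loc-pot-klt}, and base change back along the flat map $Z^h\to V$ — is exactly what the paper does, including the ``careful bookkeeping'': the paper descends a finite set of Weil divisor generators $W_1,\ldots,W_r$ of $\Cl(X^h/Z^h)$ to a pointed \'etale cover $(Z',z')$, then to an algebraic model $(Z'',z'')$, and invokes faithfully flat base change for the multigraded section ring. So on (2) and (3) you have reconstructed the argument.

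Part (1) is where you diverge, and where your argument has a gap in its logical order. You assert that ``we can find an affine \'etale neighborhood $V\to Z$ of $z$ such that $\Cl(X^h/Z^h)\cong\Cl(X_V/V)$'' and then derive finite generation. But $\Cl(X^h/Z^h)$ is the filtered colimit of the groups $\Cl(X_V/V)$ along inclusions; the colimit stabilizes at a finite stage $V$ \emph{only after} you know it is finitely generated. As stated, you are using the conclusion to justify the existence of $V$. Moreover, the cited result \cite[Sec 6]{BF84} and the discussion in Subsection~\ref{subsec:covers-grocal-rings} are about the absolute class group $\Cl(\mathcal{O}_{Z,z}^h)$ of a rational singularity, not directly about the relative group $\Cl(X^h/Z^h)$. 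The paper handles (1) by a different route: it notes that the subgroup of $\Cl(X^h/Z^h)$ generated by the exceptional Weil divisors of $X^h\to Z^h$ and the class group of the generic fiber is automatically finitely generated, so the problem reduces to showing $\Cl(Z^h)$ is finitely generated; for that it uses a plt blow-up of the klt germ $(Z,z)$ to bound the free rank by $\operatorname{rank}\Cl(E)+1$ and the finiteness of $\pi_1^{\reg}(Z,z)$ (Braun's theorem) to bound the torsion. The \cite{BF84} route you gesture at is acknowledged in a remark after the theorem as an alternative for the absolute case; to repair your proof of (1), first invoke \cite[Theorem 6.1]{BF84} for the rational singularity $(Z,z)$ to get finite generation of $\Cl(Z^h)$, then perform the reduction from $\Cl(X^h/Z^h)$ to $\Cl(Z^h)$ as the paper does, and only afterward conclude the colimit is computed by a finite stage $V$.
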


\begin{proof}
Note that $(Z,z)$ is a klt type singularity.
Indeed, since $X\rightarrow Z$ is a Fano type morphism, we can find a boundary $B$ on $X$ so that
$(X,B)$ is klt and $K_X+B\sim_{\mathbb{Q},Z}0$.
By the canonical bundle formula, 
we can find a boundary $B_Z$ on $Z$ so that
$(Z,B_Z)$ is klt.

We prove the first statement.
The subgroup of ${\rm Cl}(X^h/Z^h)$ generated by the
class of the effective Weil divisors contracted by $X^h\rightarrow Z^h$ and the class group of the generic fiber
is finitely generated.
Hence, it suffices to show that ${\rm Cl}(Z^h)$ is finitely generated.
Let $Y_0\rightarrow Z$ be a purely log terminal blow-up of the klt type singularity $(Z,z)$.
By base change, we obtain a plt blow-up
$Y_0^h\rightarrow Z^h$ of the local Henselian klt singularity. 
Let $E$ be the exceptional divisor.
Then, we have that 
\[
{\rm rank}_{\mathbb{Q}} ( {\rm Cl}(Z^h)_{\mathbb{Q}}) 
\leq {\rm rank}_{\mathbb{Q}} ({\rm Cl}_{\mathbb{Q}} (E)+1).
\]
On the other hand, the torsion subgroup of ${\rm Cl}(Z^h)$ 
is finite. Indeed, its order is bounded by the order of the regional fundamental group of $Z$ at $z$, which is finite by~\cite[Theorem 2]{Bra20}.
We conclude that ${\rm Cl}(Z^h)$ is finitely generated, so
${\rm Cl}(X^h/Z^h)$ is finitely generated as claimed.
This proves the first statement.

We prove the second statement.
Since ${\rm Cl}(X^h/Z^h)$ is finitely generated, we can find a finite set of Weil divisors
$W_1,\dots,W_r$ on $X^h$ which generate this group.
Recall that  $Z^h\rightarrow Z$ is a colimit of \'etale morphisms.
Hence, there exists a pointed \'etale cover $(Z',z')\rightarrow (Z,z)$ a base change 
$X'\rightarrow Z'$ and divisors $W'_1,\dots,W'_r$ on $X'$ which pull-back to $W_1,\dots,W_r$ respectively.
Since $Z'\rightarrow Z$ is of finite type, we conclude that $Z'$ is essentially of finite type over $\cc$.
Hence, $X'\rightarrow Z'$ is a projective morphism to the spectrum of a local ring essentially of finite type over $\cc$.
We can find a projective morphism $X''\rightarrow Z''$ of Fano type over a pointed affine algebraic variety $(Z'',z'')$ so that the base change of $X''\rightarrow Z''$ to the localization of $Z''$ at $z''$ is isomorphic to $X'\rightarrow Z'$.
Let $W^{''}_1,\dots,W^{''}_r$ be Weil divisors on $X''$ which restrict to the divisors $W'_1,\dots,W'_r$.
By Theorem~\ref{thm:relative-fano}, we conclude that the multigraded ring
\begin{equation}\label{eq:cox-ring-partial}
{\rm Cox}(X''/Z'',\Delta)(W^{''}_1,\dots,W^{''}_r)
\end{equation} 
is finitely generated over
$\cc$.
By faithfully flat base change, 
we conclude that the ring 
\[
{\rm Cox}^{\rm gr\text{-}h}(X/Z,\Delta;z)
\] 
is finitely generated over 
$\mathcal{O}_{Z^h}$.
This proves the second statement.

By the proof
of Corollary~\ref{cor:loc-pot-klt}, 
we have that the spectrum of the ring~\eqref{eq:cox-ring-partial} is klt type.
Hence, the same statement holds 
when we take the base change with respect to $Z^h\rightarrow Z''$.
We conclude that 
the spectrum of
${\rm Cox}^{\rm gr\text{-}h}(X/Z,\Delta;z)$
is klt type. Since the spectrum of ${\rm Cox}^{h}(X/Z,\Delta;z)$ is obtained from this ring by localization and Henselization, it is klt type as well.
\end{proof}

\begin{remark}{\em 
Finite generatedness of the class group $\Cl(\mathcal{O}_{X,x}^h)$ of the \'etale local ring more generally holds for rational singularities~\cite[Theorem 6.1]{BF84}. Moreover, it is isomorphic to the class group of the completion, i.e., $\Cl(\mathcal{O}_{X,x}^h) \cong \Cl(\widehat{\mathcal{O}_{X,x}})$, by~\cite[Theorem 6.2]{BF84}.
This complements the statement of Theorem~\ref{thm:Cl-gr-Hens}, that for gr-Henselian rational rings, all local class groups are isomorphic.}
\end{remark} 

\begin{remark} 
{\em 
We remark at this point that due to the above considerations, it would also be possible to define Cox rings and iteration of Cox rings for complete local rings. We omit the complete local case in order not to overload the notation. One may feel free to pass to a completion or also to base change to a {\em gr-complete Cox ring} anytime. Graded rings over complete local rings are also considered in~\cite{Cae83}.
}
\end{remark}

\section{Boundedness of iteration of Cox rings}
\label{sec:bounded}

In this section, we aim to prove that the iteration of the Cox ring of a relatively log Fano variety is bounded in terms of the dimension.
In subsection~\ref{subsec:iteration-cox-ring}, we will define the iteration of Cox.
We prove that for a Fano type morphism, the iteration stops after finitely many steps.
In particular, the iteration stabilizes for klt singularities.
In subsection~\ref{subsec:finite-rel-fund}, we prove the Jordan property for the relative regional fundamental group of a relative Fano type variety.
Finally, in subsection~\ref{subsec:bounded-iteration}, we use the Jordan property to prove the boundedness of iteration of Cox rings. 
This means that there exists an upper bound for the number of iterations which only depends on the dimension.

\subsection{Iteration of Cox rings for relative Mori dream spaces}
\label{subsec:iteration-cox-ring}

In this subsection, we define the iteration of Cox rings and generalize some results from~\cite{Bra19} to the case of relative Mori dream spaces. 
The setting is the following. Let $(X,\Delta)$ be a log pair and $\phi \colon X \to Z$ be a contraction, so that $(X,\Delta)$ becomes a relative Mori dream space over $Z$. Here, $Z$ will either be affine, the spectrum of a local ring essentially of finite type, or the Henselization of such a ring. 
We denote by $\mathbb{T}_X$ the characteristic quasi-torus of $X$ over $Z$, which is a direct product of a torus $\mathbb{T}_0$ and a finite abelian group $A$. The next crucial statement is a generalization of~\cite[Lemma 1]{Bra19}:

\begin{lemma}
\label{le:CoxCox}
Let $\phi\colon X \to Z$ be a contraction
and let $(X,\Delta)$ be a log pair.
Assume that $(X,\Delta)$ is a relative Mori dream space over $Z$.
Denote by $\overline{X}:=\Spec {\rm Cox}(X/Z,\Delta)_{N,\chi}$ the characteristic space of the relative log Cox ring with respect to $N \subseteq \WDiv(X,\Delta)$ and $\chi$. Denote by $X_1$ the finite Galois cover of $X$ corresponding to the abelian group $A$, by $\Delta_1$ the log-pullback of $\Delta$ to $X_1$,  and by $Y$ the quotient of $\overline{X}$ by $A$. Then, the following statements hold:
\begin{enumerate}
    \item $Y$ is $\qq$-factorial over $Z$ and there exists a boundary $\Delta_Y$ on $Y$, such that the aff-contraction $Y \to Z$ is a relative Mori dream space for $(Y,\Delta_Y)$ with characteristic quasi-torus $A$. In particular, there are $N_Y \subseteq \WDiv(Y,\Delta)$ and $\chi_Y$, such that
    \[
    {\rm Cox}(X/Z,\Delta)_{N,\chi} \cong {\rm Cox}(Y/Z,\Delta_Y)_{N_Y,\chi_Y}.
    \]
    \item There exists a boundary $\overline{\Delta}$ on $\overline{X}$, so that $(X_1,\Delta_1)$ is a relative Mori dream space over $Z$ if and only if  $(\overline{X},\overline{\Delta})$ is a relative Mori dream space over $Z$. If this is the case, then $\mathbb{T}_{X_1} \cong \mathbb{T}_{\overline{X}} \times \mathbb{T}_0$ and there exist $N_{X_1}$, $\chi_{X_1}$, $N_{\overline{X}}$, and $\chi_{\overline{X}}$, such that 
    \[
    {\rm Cox}(X_1/Z,\Delta_1)_{N_{X_1}, \chi_{X_1}} \cong {\rm Cox}(\overline{X}/Z,\overline{\Delta})_{N_{\overline{X}}, \chi_{\overline{X}}}.
    \]
\end{enumerate}
In particular, if {\rm (2)} holds, we have a commutative diagram, where dashed arrows denote good quasi-torus quotients of big open subsets
\[
\xymatrix@R=30pt@C=30pt{ 
\overline{X}_1=\overline{\overline{X}} \ar@{-->}[dr]^{/\mathbb{T}_{\overline{X}}} \ar@{-->}[ddr]_{/\mathbb{T}_{X_1}}\\
& \overline{X}=\overline{Y} \ar[r]^{/\mathbb{T}_Y} \ar@{-->}[d]^{/\mathbb{T}_{0}} \ar@{-->}[dr]^{/\mathbb{T}_X}& Y\ar@{-->}[d] \\ 
&X_1 \ar[dr] \ar[r] & X \ar[d] \\
& & Z.
}
\]

\end{lemma}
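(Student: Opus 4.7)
The plan is to exploit the two-step factorization of the good quasi-torus quotient
\[
\overline{X} \xrightarrow{/\mathbb{T}_0} X_1 \xrightarrow{/A} X
\qquad \text{and} \qquad
\overline{X} \xrightarrow{/A} Y \xrightarrow{/\mathbb{T}_0} X,
\]
which arises from the splitting $\mathbb{T}_X \cong \mathbb{T}_0 \times A$, and then recognize both $Y$ and $X_1$ as intermediate MDS's. Throughout, finite generation of algebras is checked locally over $Z$, so the relative setting does not add essential difficulty beyond~\cite{Bra19}; the delicate point is keeping track of the grading subgroups $N$ and characters $\chi$ that describe the Cox rings.

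For part (1), I would first define $\Delta_Y$. Since $(X,\Delta)$ is a relative MDS, its characteristic space $\overline{X}$ acquires a canonical boundary $\overline{\Delta}_X$ via log-pullback through $\overline{X}\rightarrow X$, and the residual $A$-quotient $\overline{X}\rightarrow Y$ is quasi-\'etale in codimension one away from a divisorial ramification locus. I would define $\Delta_Y$ by the standard Riemann--Hurwitz formula so that $\overline{X}\rightarrow Y$ becomes crepant for the boundaries $(\overline{X},\overline{\Delta}_X)$ and $(Y,\Delta_Y)$. Because $\Pic(\overline{X})=0$ after~\cite[Lemma 1.6.4.3]{ADHL15} and $Y=\overline{X}/A$, the class group $\Cl(Y,\Delta_Y)$ is generated by the ramification divisors and is isomorphic to $A$; in particular $Y$ is $\mathbb{Q}$-factorial. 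To see $(Y,\Delta_Y)$ is a relative MDS with characteristic quasi-torus $A$, identify
\[
{\rm Cox}(Y/Z,\Delta_Y)_{N_Y,\chi_Y} = \bigoplus_{\alpha\in A} \Gamma(Y,\mathcal{O}_{(Y,\Delta_Y)}(D_\alpha))
\]
with the $A$-isotypical decomposition of $(\phi\circ\pi)_*\mathcal{O}_{\overline{X}}$, which in turn equals ${\rm Cox}(X/Z,\Delta)_{N,\chi}$ by construction, after a natural choice $N_Y\subseteq\WDiv(Y,\Delta_Y)$ surjecting onto $A$ and a character $\chi_Y$ compatible with $\chi$.

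For part (2), I would first set $\overline{\Delta}$ to be the log-pullback of $\Delta_1$ under the $\mathbb{T}_0$-torsor (in codimension one) $\overline{X}\rightarrow X_1$; note that $\mathbb{T}_0$ acts freely on a big open, so $\overline{\Delta}$ coincides with $\overline{\Delta}_X$ introduced above. The characteristic quasi-torus $\mathbb{T}_{X_1}$ is $\Hom(\Cl(X_1,\Delta_1),\mathbb{G}_m)$, and the free part of $\Cl(X_1,\Delta_1)$ is precisely the pullback of the free part of $\Cl(X,\Delta)$ (since $X_1\rightarrow X$ is a quasi-\'etale abelian cover trivializing the torsion part), while the remaining classes of $X_1$ come from $\Cl(\overline{X},\overline{\Delta})$ via the $\mathbb{T}_0$-torsor description. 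This yields $\mathbb{T}_{X_1}\cong \mathbb{T}_{\overline{X}}\times\mathbb{T}_0$ at the level of character groups. Applying~\cite[Theorem 4.2.1.4]{ADHL15} in the relative setting (which is legitimate because all the relevant algebras are locally of finite type over $\mathcal{O}_Z$ by Lemma~\ref{le:fg-local} and Proposition~\ref{prop:rel-log-Cox-sheaf-groc}), finite generation of ${\rm Cox}(X_1/Z,\Delta_1)$ is equivalent to finite generation of ${\rm Cox}(\overline{X}/Z,\overline{\Delta})$, and the two Cox rings are isomorphic as multigraded algebras over $\mathcal{O}_Z$.

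The main obstacle I anticipate is the bookkeeping of the subgroups $N_{X_1}, N_{\overline{X}}$ and characters $\chi_{X_1}, \chi_{\overline{X}}$: one must exhibit a compatible choice so that the two Cox rings are identified on the nose rather than merely up to isomorphism class, which requires tracing through the construction in Definition~\ref{def:logCox} and using the freeness of $\Pic(\overline{X})$ to lift characters from the kernel $N^0\subseteq\WDiv(X,\Delta)$ to a character on the enlarged kernel sitting inside $\WDiv(\overline{X},\overline{\Delta})$. Once this is done, the final commutative diagram of good quasi-torus quotients records the fact that $\overline{\overline{X}}$ is the common characteristic space of $(X_1,\Delta_1)$ and $(\overline{X},\overline{\Delta})$, and all the claimed maps are forced by the universal property of the characteristic space.
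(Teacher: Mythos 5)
Your overall skeleton matches the paper's: pull the boundaries back along the torus bundles $Y\to X$ and $\overline{X}\to X_1$, then identify $\mathrm{Cox}(X/Z,\Delta)_{N,\chi}$ with the $A$-graded piece over $Y$ for part (1), and climb up the tower by a $\mathbb{T}_0$-torsor argument for part (2). However, there are two genuine gaps where the proof sketch asserts the conclusion instead of proving it. In part (1) you write that the $A$-isotypical decomposition of $(\phi\circ\pi)_*\mathcal{O}_{\overline{X}}$ ``in turn equals $\mathrm{Cox}(X/Z,\Delta)_{N,\chi}$ by construction'' and then silently infer that this ring is a Cox ring for $(Y,\Delta_Y)$. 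That inference is exactly what needs proving. The paper does it via the recognition criterion of \cite[Theorems 1.6.4.3, Cor.\ 1.6.4.4]{ADHL15}, which requires first showing that $\mathrm{Cox}(X/Z,\Delta)_{N,\chi}$ is \emph{factorially} $A$-graded (this is where \cite[Theorem 1.5.3.7]{ADHL15} and \cite[Theorem 1.5]{Bech12} come in, the latter to coarsen the $\Cl(X/Z,\Delta)$-grading to an $A$-grading without losing factoriality), and then that the $A$-action on $\overline{X}$ is strongly stable because the ambient $\mathbb{T}_X$-action is. None of these verifications appear in your sketch, and they are not automatic. In part (2) you invoke \cite[Theorem 4.2.1.4]{ADHL15} to get the isomorphism $\mathrm{Cox}(X_1/Z,\Delta_1)\cong\mathrm{Cox}(\overline{X}/Z,\overline{\Delta})$; that theorem is about quotient presentations being dominated by the characteristic space, not about transporting a Cox ring structure along a torus quasi-torsor. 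The paper instead re-applies \cite[Theorem 1.5]{Bech12} to refine the $\Cl(\overline{X}/Z,\overline{\Delta})$-grading to a $\Cl(\overline{X}/Z,\overline{\Delta})\times\zz^{\dim\mathbb{T}_0}$-grading, and then checks strong stability of $\mathbb{T}_{\overline{X},0}\times\mathbb{T}_0$ on $\overline{\overline{X}}$ before applying the recognition criterion.

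One further slip: you claim ``the free part of $\Cl(X_1,\Delta_1)$ is precisely the pullback of the free part of $\Cl(X,\Delta)$.'' That is false in general: the free rank of the class group can jump under quasi-\'etale abelian covers, and indeed the correct identity the proof produces is $\Cl(X_1/Z,\Delta_1)\cong\Cl(\overline{X}/Z,\overline{\Delta})\times\zz^{\dim\mathbb{T}_0}$, of which the $\zz^{\dim\mathbb{T}_0}$ factor comes from $\Cl(X/Z,\Delta)_{\rm free}$ but the $\Cl(\overline{X}/Z,\overline{\Delta})_{\rm free}$ factor is generally nonzero. So $\mathbb{T}_{X_1}\cong\mathbb{T}_{\overline{X}}\times\mathbb{T}_0$ is a \emph{consequence} of the Cox ring identification, not an input you can deduce independently from a naive description of $\Cl(X_1,\Delta_1)$. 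To repair the sketch you would need to supply the factoriality-of-grading and strong-stability checks and replace the citation of \cite[Theorem 4.2.1.4]{ADHL15} with the grading-refinement argument via \cite[Theorem 1.5]{Bech12}.
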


\begin{corollary}
\label{cor:CoxCoxFano}
Under the assumptions of Lemma~\ref{le:CoxCox}.
Assume that  $(X,\Delta)$ is of Fano type over $Z$. Then:
\begin{enumerate}
    \item $(X_1,\Delta_1)$ is of Fano type over $Z$.
    \item $\overline{X}$ has Gorenstein canonical singularities.
    \item There is a boundary $\overline{\Delta}$  on $\overline{X}$, such that $(\overline{X},\overline{\Delta})$ is a relative Mori dream space over $Z$ and its Cox ring coincides with the Cox ring of $(X_1,\Delta_1)$.
\end{enumerate}
\end{corollary}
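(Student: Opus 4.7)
The three parts are coupled: $(1)$ feeds both $(2)$ and $(3)$, so I plan to prove them in the order $(1)$, $(3)$, $(2)$. For $(1)$, I will pull back the Fano-type data of $(X,\Delta)$ along the finite Galois quasi-\'etale cover $\pi_1\colon X_1\to X$ from Lemma~\ref{le:CoxCox}. Choose $B\geq 0$ big over $Z$ with $(X,\Delta+B)$ klt and $K_X+\Delta+B\sim_{\mathbb{Q},Z}0$. By the definition of $\Delta_1$ as the log-pullback and the quasi-\'etale property, $\pi_1^*(K_X+\Delta+B)=K_{X_1}+\Delta_1+\pi_1^{*}B$, with $\pi_1^{*}B$ still big over $Z$. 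Since log-pullback along a quasi-\'etale morphism preserves log discrepancies and hence kltness, $(X_1,\Delta_1+\pi_1^{*}B)$ is klt, witnessing the Fano-type structure of $(X_1,\Delta_1)$ over $Z$.

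For $(3)$, I will apply Theorem~\ref{thm:relative-fano} to the Fano-type pair $(X_1,\Delta_1)$ produced in $(1)$ to obtain that $(X_1,\Delta_1)$ is a relative Mori dream space over $Z$. Then part $(2)$ of Lemma~\ref{le:CoxCox} directly yields a boundary $\overline{\Delta}$ on $\overline{X}$ making $(\overline{X},\overline{\Delta})$ a relative Mori dream space over $Z$ and providing the asserted isomorphism of Cox rings.

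For $(2)$, I will invoke and extend the Gorenstein canonical property of Cox ring spectra of Fano-type varieties (\cite{GOST15,Bro13,Bra19}) to the present relative pair setting. Since $\overline{X}$ is normal and klt type by Theorem~\ref{thm:klt-Cox-ring}, it suffices to check that $K_{\overline{X}}\sim 0$ and that all discrepancies are nonnegative. The former will follow from the vanishing of the canonical character of the $\mathbb{T}_X$-action on $\overline{X}$ combined with the triviality of $\Cl(\overline{X})$ for total coordinate spaces, promoting $K_{\overline{X}}\sim_{\mathbb{Q}}0$ to linear triviality. The latter will follow from the ramification formula for the quasi-torus quotient $\overline{X}\dashrightarrow X$: the standard multiplicities $1-\tfrac{1}{m_P}$ appearing in $\Delta$ are precisely matched by the ramification indices $m_P$ of the Cox cover, so the klt discrepancies of $(X,\Delta)$ transfer to nonnegative discrepancies on $\overline{X}$. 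The main obstacle lies here, since the cited Gorenstein canonical results are typically stated in the absolute non-pair case; the verification is a bookkeeping extension of~\cite{Bra19}, but one must carefully track how the characters $\chi$ attached to $\Delta$ interact with the canonical character of $\mathbb{T}_X$ and with the ramification of the quasi-torus quotient over $Z$.
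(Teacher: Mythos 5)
Your treatment of items (1) and (3) matches the paper's: item (1) is established by log-pulling back the Fano-type data along the quasi-\'etale cover $X_1\to X$ (which is exactly what happens in the proof of Theorem~\ref{thm:klt-Cox-ring}, to which the paper defers), and item (3) is the immediate combination of Theorem~\ref{thm:relative-fano} with Lemma~\ref{le:CoxCox}(2).

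Your proof of item (2) takes a genuinely different route and contains an error. You invoke ``the triviality of $\Cl(\overline{X})$ for total coordinate spaces'' to promote $K_{\overline{X}}\sim_{\mathbb{Q}}0$ to linear triviality, but $\Cl(\overline{X})$ is not trivial in general: the total coordinate space is factorial only when $\Cl(X/Z,\Delta)$ is torsion-free (the paper cites~\cite[1.4.1.5]{ADHL15} for exactly this). Were $\Cl(\overline{X})$ always trivial, the iteration of Cox rings would terminate after one step, contradicting the entire point of this paper. The correct mechanism for $K_{\overline{X}}\sim 0$ in the absolute Fano case is the one in~\cite{GOST15}: the canonical module of the Cox ring is the degree shift by $[K_X+\Delta]$, which is free because that class lies in the grading group. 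The paper instead factors $\overline{X}\to Y$ through the index one cover $\tilde{Y}\to Y$ of $K_Y$ — a cyclic quasi-\'etale quotient presentation — so $K_{\overline{X}}$ is pulled back from the Cartier $K_{\tilde{Y}}$, giving Gorensteinness (only locally over $Z$, since $Y$ is merely $\mathbb{Q}$-factorial over $Z$, a subtlety your proposal does not address). The paper then simply observes that Gorenstein together with klt type (Theorem~\ref{thm:klt-Cox-ring}, which you also cite) yields canonical, because positive integer log discrepancies are at least one. Your plan to re-verify discrepancies via a ramification formula for the quasi-torus quotient $\overline{X}\dashrightarrow X$ is therefore unnecessary, and moreover is not a straightforward bookkeeping exercise: that quotient is not a finite cover, so the finite ramification formula for the boundary multiplicities $1-\tfrac{1}{m_P}$ only governs the finite part of the quasi-torus, not the torus part.
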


\begin{proof}
The first item follows from the proof of Theorem~\ref{thm:klt-Cox-ring}. The second item follows by the same considerations as in the proof of~\cite[Theorem 1]{Bra19}, with the following two differences. Firstly, $Y$ is in general only $\qq$-factorial over $Z$, so we only get Gorensteinnness locally. Secondly, Cox rings are not unique but involve a choice of subgroup $N \subseteq \WDiv$ and $\chi$. Thus, for some index one cover $\tilde{Y} \to Y$ of $K_Y$ and some choice of $N_Y \subseteq \WDiv(Y,\Delta)$ and $\chi_Y$, the Cox construction $\overline{Y} \to Y$ factors through $\tilde{Y} \to Y$. Indeed, the index one cover is cyclic quasi-\'etale and thus a quotient presentation in the sense of~\cite[Sec 4.2.1]{ADHL15}. Thus, $\overline{Y}$ is Gorenstein. Since it is klt type by Theorem~\ref{thm:klt-Cox-ring}, it is canonical.
The third item follows from Lemma~\ref{le:CoxCox}, {\rm (2)} and the fact that the relative Fano type $(X_1,\Delta_1)$ is a  relative Mori dream space over $Z$.
\end{proof}

\begin{remark}
{\em Note that in order to ensure that $\overline{X}$ is Mori Dream, in general it does not suffice that it is Gorenstein canonical. The essential property is that $(X_1,\Delta_1)$ is of Fano type relative over the base.
}
\end{remark}

\begin{proof}[Proof of Lemma~\ref{le:CoxCox}]
We start by defining the boundaries on $Y$ and $\overline{X}$. Since $\overline{X} \to X_1$ and $Y \to X$ are locally trivial torus bundles in codimension one, we can uniquely pullback Weil divisors by first restricting to the smooth locus, pulling back via usual pullback of Cartier divisors and finally taking the closure, (see, e.g.,~\cite[Rem 1.3.4.1]{ADHL15}). Hence, we can define $\Delta_Y$ and $\overline{\Delta}$ to be the pullbacks of $\Delta$ and $\Delta_1$, respectively. Moreover, the divisor
$\overline{\Delta}$ is the unique divisor so that $K_{\overline{X}}+\overline{\Delta}$
is the log pull-back of $K_Y+\Delta_Y$.

Now, we argue that ${\rm Cox}(X/Z,\Delta)_{N,\chi}$ together with the coarsened $A$-grading is the Cox ring of $(Y,\Delta_Y)$ over $Z$. 
First, since ${\rm Cox}(X/Z,\Delta)_{N,\chi}$ is the Cox ring of $(X,\Delta)$ over $Z$, it is factorially $\Cl(X/Z,\Delta)$-graded by~\cite[Theorem 1.5.3.7]{ADHL15}. 
Thus by~\cite[Theorem 1.5]{Bech12}, it is also factorially $A$-graded. Since $\overline{X} \to X$ is the characteristic space of $(X,\Delta)$ over $Z$, the characteristic quasi-torus $\mathbb{T}_X$ acts log-strongly stably on $(\overline{X},\overline{\Delta})$, thus the subgroup $A$ acts log-strongly stably as well.
Altogether, by Theorem~1.6.4.3 and Corollary~1.6.4.4 of~\cite{ADHL15}, we get that $\Cl(Y/Z,\Delta_Y) \cong A$ and ${\rm Cox}(X/Z,\Delta)_{N,\chi}$ is a Cox ring for $(Y,\Delta_Y)$ over $Z$. The choice of $N_Y \subseteq \WDiv(Y,\Delta)$ and $\chi_Y$ is as follows: we can identify $N \subseteq \WDiv(X,\Delta)$ with a subgroup of $\WDiv^{\mathbb{T}_0}(Y,\Delta_Y)$. For $N_Y$, we take the subgroup mapping to the torsion part of $\Cl(X/Z,\Delta)$, while $\chi_Y$ is the restriction of $\chi$ to this subgroup concatenated with the inclusion $\cc(X,\Delta)^* \hookrightarrow \cc(Y,\Delta_Y)^*$. So the first item from the Lemma follows.

We prove the second item. We already defined the boundary $\overline{\Delta}$ on $\overline{X}$. First assume $(\overline{X},\overline{\Delta})$ is a relative Mori dream space over $Z$. Then, for a choice $N_{\overline{X}} \subseteq \WDiv(\overline{X},\overline{\Delta})$, we can assume that $N_{\overline{X}}$ is a subgroup of $\WDiv^{\mathbb{T}_0}(\overline{X},\overline{\Delta})$. Moreover, it is the direct product of a subgroup mapping isomorphically to the free part of $\Cl(\overline{X}/Z,\overline{\Delta})$ and a subgroup mapping to its torsion part.
Secondly, for a choice of $\chi_{\overline{X}}$, we can assume that $\chi_{\overline{X}}$ maps to $\cc(X_1,\Delta_1)^* \subseteq \cc(\overline{X},\overline{\Delta})^*$.

Then, the Cox ring  ${\rm Cox}(\overline{X}/Z,\overline{\Delta})_{N_{\overline{X}}, \chi_{\overline{X}}}$ is factorially $\Cl(\overline{X}/Z,\overline{\Delta})$-graded. Invoking~\cite[Theorem 1.5]{Bech12} as above, we see that it is also factorially $\Cl(\overline{X}/Z,\overline{\Delta}) \times \zz^{\dim(\mathbb{T}_0)}$-graded. Moreover, the action of $\mathbb{T}_{\overline{X},0} \times \mathbb{T}_0$ on $\overline{\overline{X}}$ is strongly stable, since the actions of $\mathbb{T}_{\overline{X},0}$  and $\mathbb{T}_0$ on $\overline{\overline{X}}$ and  $\overline{X}$, respectively, are so. Thus ${\rm Cox}(\overline{X}/Z,\overline{\Delta})_{N_{\overline{X}}, \chi_{\overline{X}}}$ is indeed a Cox ring for $(X_1,\Delta_1)$ over $Z$.
In particular, \[
\Cl(X_1/Z,\Delta_1) \cong \Cl(\overline{X}/Z,\overline{\Delta}) \times \zz^{\dim(\mathbb{T}_0)}.
\]
The choice of $N_{X_1} \subseteq \Cl(X_1/Z,\Delta_1)$ and $\chi_{X_1}$ is as follows: for $N_{X_1}$ take the direct product of $N_{\overline{X}}$ as a subgroup of $\WDiv(X_1/Z,\Delta_1)$ and an arbitrary subgroup mapping isomorphically to the $\zz^{\dim(\mathbb{T}_0)}$-part of $\Cl(X_1/Z,\Delta_1)$. Hence, we can identify the kernel of $N_{X_1} \to \Cl(X_1/Z,\Delta_1)$ with the kernel of $N_{\overline{X}} \to  \Cl(\overline{X}/Z,\overline{\Delta})$.Thus, $\chi_{\overline{X}}$ from above can be taken to define $\chi_{X_1}$.

The arguing in the other direction, i.e., when $(X_1,\Delta_1)$ is a Mori dream space over $Z$, is analogous to the proof of the first item.
This concludes the proof.
\end{proof}

\begin{definition}
\label{def:cox-iteration}
{\em
Let $\phi \colon X \to Z$ be a contraction (or an aff-contraction) and $(X,\Delta)$ a relative Mori dream space over $Z$. 
We denote $\mathbb{T}^1:=\mathbb{T}_X$ with torus part $\mathbb{T}^1_0$ and finite abelian part $A^1$ respectively.
We define 
\[
{\rm Cox}^{(1)} (X/Z,\Delta):={\rm Cox} (X/Z,\Delta), 
\text{ }
\overline{X}_1:=\overline{X}=\Spec  {\rm Cox} (X/Z,\Delta),
\text{ and }
\overline{\Delta}_1:=\overline{\Delta}.
\] 
We iteratively define ${\rm Cox}^{(i)} (X/Z,\Delta)$ as follows. 
Assume $(\overline{X}_{i-1},\overline{\Delta}_{i-1})$ is a relative Mori dream space over $Z$. Then, we set
\[
{\rm Cox}^{(i)} (X/Z,\Delta):={\rm Cox} (\overline{X}_{i-1}/Z,\overline{\Delta}_{i-1}),
\text{ }
\overline{X}_{i}:= \Spec {\rm Cox}^{(i)} (X/Z,\Delta),
\text{ and }
\mathbb{T}^{i}:=\mathbb{T}_{\overline{X}_{i-1}}=\mathbb{T}^{i}_{0} \times A^{i}.
\]
We let $\overline{\Delta}_{i}$ be the log-pullback of $\overline{\Delta}_{i-1}$. Then, we call the (possibly infinite) chain
\[
\xymatrix@R=30pt@C=30pt{ 
\cdots \ar@{-->}[r] & (\overline{X}_3,\overline{\Delta}_3)\ar[rrrd] \ar@{-->}[r]^{/\mathbb{T}^3} & (\overline{X}_2,\overline{\Delta}_2)\ar[rrd] \ar@{-->}[r]^{/\mathbb{T}^2} & (\overline{X}_1,\overline{\Delta}_1) \ar[rd] \ar@{-->}[r]^{/\mathbb{T}^1} & (X,\Delta) \ar[d] \\
&&&&
Z
}
\]
the {\em iteration of Cox rings of $(X,\Delta)$ over $Z$}. If $\Cl(\overline{X}_{i}/Z,\overline{\Delta}_{i})$ is trivial for some $i\geq 1$, we say that $(X,\Delta)$ has finite iteration of Cox rings over $Z$.
If the iteration stabilizes for some $k$, i.e., the ring is eventually factorial,
then we denote by ${\rm Cox}^{\rm it}(X/Z,\Delta)$ the isomorphism class over $Z$ of this ring.
The ring ${\rm Cox}^{\rm it}(X/Z,\Delta)$ is called the {\em iteration of Cox rings} or the {\em master Cox ring}.
}
\end{definition}

\begin{remark}
{\em
In the case that $Z$ is local, essentially of finite type or Henselian, in the above definition we iterate the gr-local or gr-Henselian Cox rings. In each step, we can also localize (and Henselize respectively) $\overline{X}_{i-1}$ at the unique graded maximal ideal and take the Cox ring of such spectrum. However, by Lemma~\ref{lem:pic0} and Theorem~\ref{thm:Cl-gr-Hens}, the class groups of these spaces agree. Hence, the iteration defined in such way is compatible to the iteration defined above. Indeed, the localization (and Henselization respectively) of $\overline{X}_{i}$ will always yield the same spectrum.
}
\end{remark}

\begin{remark}
\label{rem:fin-Cox-covers}
{\em
By Lemma~\ref{le:CoxCox}, $(\overline{X},\overline{\Delta})$ is Mori Dream over $Z$ if and only if $(X_1,\Delta_1)$ is so. Thus, the iteration of Cox rings induces a chain of finite abelian Galois covers $(X_i,\Delta_i) \xrightarrow{/A_i} (X_{i-1},\Delta_{i-1})$, where $\Delta_i$ is the log-pullback of $\Delta_{i-1}$. In particular, the characteristic quasi-tori satisfy 
\[
\mathbb{T}_{X_i} \cong \mathbb{T}^{i} \times \mathbb{T}_{X_{i-1}}^0.
\] 
We get the following commutative diagram:
\[
\xymatrix@R=25pt@C=20pt{
\ddots \ar@{-->}[dr] \ar@{-->}[dddr] \ar@{-->}[ddd] \\
&(\overline{X}_{2},\overline{\Delta}_{2}) \ar@{-->}[dd] \ar@{-->}[dr] \ar@{-->}[ddr]\\
&& (\overline{X}_{1},\overline{\Delta}_{1}) \ar@{-->}[d] \ar@{-->}[dr]\\ 
\cdots \ar[r] \ar[drrr] &(X_2,\Delta_2) \ar[drr] \ar[r] &(X_1,\Delta_1) \ar[dr] \ar[r] & (X,\Delta) \ar[d] \\
&& & Z.
}
\]
In particular, Corollary~\ref{cor:CoxCoxFano} tells us that if $(X,\Delta)$ is of Fano type over $Z$, so is $(X_i,\Delta_i)$ for any $i\geq 1$. Hence, the $i$-th iterated Cox ring and $(\overline{X}_{i},\overline{\Delta}_{i})$ is defined for any $i\geq 1$. The question remains if the iteration stabilizes or not. And, if yes, if there is any bound on the number of iteration steps. We answer these questions in Section~\ref{subsec:bounded-iteration}.
}
\end{remark}

We finish the present subsection by showing that the actions of the characteristic quasi-tori can be lifted to the iterated total coordinate spaces. Moreover, they induce an action of a solvable reductive group. This generalizes observations made in~\cite{ABHW18, Bra19}. 
We start with the following lemma slightly generalizing~\cite[Theorem 5.1]{AG10}. This lemma covers the lifting of automorphisms to the Cox ring of  relative Mori dream spaces, affine over the base.

\begin{lemma}
\label{le:aff-lift-aut}
Let $(X,\Delta)$ be a relative Mori dream space, affine over $Z$ and $\Aut_Z(X)$ the automorphism group of $X$ over $Z$. Denote by $\Aut^\mathbb{T}(\overline{X})$ the normalizer of the characteristic quasi-torus $\mathbb{T}$ in the automorphism group $\Aut_Z(\overline{X})$ of $\overline{X}:= \Spec {\rm Cox}(X/Z,\Delta)_{N,\chi}$. Then there is a short exact sequence:
    \[
    \xymatrix{
    1 \ar[r] & \mathbb{T} \ar[r] &  \Aut^\mathbb{T}(\overline{X}) \ar[r] & \Aut_Z(X) \ar[r] & 1.
    }
    \]
The above short exact sequence is called a lifting of ${\rm Aut}_Z(X)$ to ${\rm Aut}_Z(\overline{X})$.
\end{lemma}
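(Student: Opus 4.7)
The plan is to adapt the proof of~\cite[Theorem 5.1]{AG10} to the relative logarithmic setting. Since $X$ is affine over $Z$, so is $\overline{X} := \Spec {\rm Cox}(X/Z,\Delta)_{N,\chi}$, and the characteristic quasi-torus $\mathbb{T} = \Spec \cc[\Cl(X/Z,\Delta)]$ acts on $\overline{X}$ with good quotient $\overline{X} \to X$.

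First, I construct the descent map $\rho\colon \Aut^{\mathbb{T}}(\overline{X}) \to \Aut_Z(X)$. Any $\psi \in \Aut^{\mathbb{T}}(\overline{X})$ normalizes $\mathbb{T}$, hence permutes $\mathbb{T}$-orbits and descends to a $Z$-automorphism $\bar{\psi}$ of the categorical quotient $X$. The assignment $\psi \mapsto \bar{\psi}$ is clearly a group homomorphism containing $\mathbb{T}$ in its kernel. For the reverse inclusion $\ker(\rho) \subseteq \mathbb{T}$, an element $\psi \in \ker(\rho)$ preserves every $\mathbb{T}$-orbit; restricting to the big open locus $\widehat{X} \subseteq \overline{X}$ on which $\mathbb{T}$ acts freely, $\psi$ is orbit-wise translation by an element of $\mathbb{T}$, and this element is independent of the orbit by compatibility with the grading of the Cox ring (the only homogeneous units of $\Cox{(X/Z,\Delta)}$ are those coming from $\mathcal{O}_Z(Z)^*$, which already act via the grading as elements of $\mathbb{T}$). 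By normality, the resulting element of $\mathbb{T}$ extends uniquely to all of $\overline{X}$.

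Surjectivity of $\rho$ is the main step and the main obstacle. Given $\phi \in \Aut_Z(X)$, I lift $\phi$ to an element of $\Aut^{\mathbb{T}}(\overline{X})$. After enlarging $N$ to a finitely generated $\phi^*$-invariant subgroup of $\WDiv(X,\Delta)$ still surjecting onto $\Cl(X/Z,\Delta)$, pullback of sections yields a graded ring isomorphism $\phi^* \colon \mathcal{S} \to \mathcal{S}$ of the sheaf of divisorial algebras attached to $N$, shifting degrees by the induced action of $\phi^*$ on $N$. For $\phi^*$ to descend to an automorphism of ${\rm Cox}(X/Z,\Delta)_{N,\chi} = \mathcal{S}/\mathcal{I}$, I need to arrange that $\phi^*\mathcal{I} = \mathcal{I}$. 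A direct computation shows that for $E \in N^0$ the rational function $\phi^*\chi(E)$ has the same divisor as $\chi(\phi^*E)$, so there is a character $\lambda\colon N^0 \to \Gamma(X,\mathcal{O}_X^*)$ with $\phi^*\chi(E) = \lambda(E)\,\chi(\phi^*E)$. The technical heart, following~\cite[Theorem 5.1]{AG10}, is to extend $\lambda$ to a character of all of $N$ valued in a group of units identifiable with the group of $Z$-points of $\mathbb{T}$, so that composing $\phi^*$ with the corresponding $\mathbb{T}$-action produces a map preserving $\mathcal{I}$. In the present affine-over-$Z$ situation this extension is obtained by the same cohomological argument as in~\cite{AG10}, with $\cc^*$ replaced by $\mathcal{O}_Z(Z)^*$. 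The resulting automorphism $\tilde{\phi}$ of $\overline{X}$ normalizes $\mathbb{T}$ because it permutes homogeneous components, and satisfies $\rho(\tilde{\phi}) = \phi$ by construction, completing the proof.
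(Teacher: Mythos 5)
Your proposal follows the same overall template as the paper's proof (adapt \cite[Theorem 5.1]{AG10}), but the mechanism you use for surjectivity diverges in a way that introduces a gap. You propose to enlarge $N$ to a finitely generated $\phi^*$-invariant subgroup of $\WDiv(X,\Delta)$ still surjecting onto $\Cl(X/Z,\Delta)$, and then correct $\phi^*\colon\mathcal{S}\to\mathcal{S}$ by a character so that it preserves $\mathcal{I}$. But such an enlargement need not exist: if $\phi^*$ acts with infinite order on $\Cl(X/Z,\Delta)$ (possible when the class group has a free part), or if representatives of the classes are moved around without stabilizing, the $\phi^*$-orbit of a single divisor can generate an infinitely generated subgroup. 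Nothing in the relative Fano/klt hypotheses forces a finitely generated $\phi^*$-stable $N$ to exist. The subsequent character-extension step (extending $\lambda$ from $N^0$ to all of $N$ with values in $\mathcal{O}_Z(Z)^*$) is also stated rather loosely — in AG10 it relies on $\cc^*$ being an injective $\zz$-module, and you would need to check the analogous divisibility, or vanishing of the relevant $\mathrm{Ext}$ class, for $\mathcal{O}_Z(Z)^*$.

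The paper sidesteps both issues by not trying to make $N$ invariant. Instead, given $\psi\in\Aut_Z(X)$, it observes that $\psi$ carries the data $(N,\chi)$ to the new data $(\psi(N),\psi^*(\chi))$ (still a finitely generated subgroup surjecting onto $\Cl$, with a compatible character), so that $\psi^*$ itself is a ring isomorphism
${\rm Cox}(X/Z,\Delta)_{\psi(N),\psi^*(\chi)}\to{\rm Cox}(X/Z,\Delta)_{N,\chi}$. It then fixes an abstract isomorphism $\tau$ between ${\rm Cox}(X/Z,\Delta)_{N,\chi}$ and ${\rm Cox}(X/Z,\Delta)_{\psi(N),\psi^*(\chi)}$ — they determine the same class in $\mathrm{Ext}^1(\Cl,\mathcal{O}(X)^*)$ — and takes $\psi^*\circ\tau$ as the lift. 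This avoids both the invariant-$N$ problem and the explicit cocycle computation. If you want to salvage your route, you would need either to prove that a finitely generated $\phi^*$-invariant $N$ can always be chosen (which is false in general), or to abandon the invariance requirement and instead compare the two Cox rings associated to $(N,\chi)$ and $(\phi(N),\phi^*(\chi))$ directly, which brings you back to the paper's argument.
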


\begin{proof}
The proof is analogous to the one of~\cite[Theorem 5.1]{AG10}. We need to argue the surjectivity of the map $\Aut^\mathbb{T}(\overline{X}) \to \Aut_Z(X)$ since we have non-isomorphic Cox rings depending on the choice of $N \subseteq \WDiv(X,\Delta)$ and $\chi$. An automorphism $\psi \in \Aut_Z(X)$ induces an automorphism $\psi$ of $\WDiv(X,\Delta)$. Observe that $\psi$ maps the kernel of the surjective map $  N \to \Cl(X/Z,\Delta)$ to the kernel of the surjective map $ \varphi \colon \psi(N) \to \Cl(X/Z,\Delta)$ and induces a character $\psi^*(\chi)\colon \ker(\varphi) \to \cc(X)^*$. Hence, ${\rm Cox}(X/Z,\Delta)_{N,\chi}$ and ${\rm Cox}(X/Z,\Delta)_{\psi(N),\psi^*(\chi)}$ are isomorphic. Moreover, fixing an isomorphism $\tau$ between them, $\psi^* \circ \tau$ is an element of $\Aut^\mathbb{T}(\overline{X})$ mapping to $\psi$.
This finishes the proof of the lemma.
\end{proof}

The following proposition explains how to lift automorphisms to the Cox ring of a relative Mori dream space that is projective over the base. Here, we have to distinguish between automorphisms on the total coordinated space $\overline{X}$ and its big open subset $\hat{X}$. Following~\cite[Sec 4.2.4]{ADHL15}, we denote by ${\rm Bir}_{2,Z}(X)$ the \emph{weak automorphisms} of $X$ over $Z$, namely birational maps $X \dashrightarrow X$ being regular isomorphisms  in codimension one over $Z$.

\begin{proposition}
\label{prop:proj-lift-aut}
Let $(X,\Delta)$ be a relative Mori dream space, projective over $Z$.
Let $\Aut_Z(X)$ be the automorphism group of $X$ over $Z$. 
Then, the following statements hold:
\begin{enumerate}
    \item $\Aut_Z(X)$ is a linear algebraic group.
     \item There is a divisor $L$ on $X$, ample over $Z$, with relative section ring $R_L$ and spectrum $\tilde{X}:=\Spec R_L$, such that there is a short exact sequence 
    \[
    \xymatrix{
    1 \ar[r] & \cc^* \ar[r] &  \Aut^{\cc^*}(\tilde{X}) \ar[r] & \Aut_Z(X) \ar[r] & 1.
    }
    \]
    \item If $(X,\Delta)$ is of Fano type over $Z$, then for $L:=-(K_X+\Delta)$, the spectrum $\tilde{X}$ of the section ring $R_L$ allows an action of these $Z$-automorphisms $\Aut_Z(X,\Delta)$ leaving $\Delta$ invariant, as a subgroup of  $\Aut_Z(\tilde{X})$. In particular, there is a split short exact sequence
    \[
    \xymatrix{
    1 \ar[r] & \cc^* \ar[r] &  \Aut^{\cc^*}(\tilde{X}) \ar[r] & \Aut_Z(X,\Delta) \ar[r] & 1.
    }
      \]
    \item There is a commutative diagram with exact sequences as rows and vertical inclusions of finite index:
    \[
    \xymatrix{
    1 \ar[r] & \mathbb{T} \ar[r] &  \Aut^\mathbb{T}(\overline{X}) \ar[r] & {\rm Bir}_{2,Z}(X) \ar[r] & 1 \\
    1 \ar[r] & \mathbb{T} \ar[r] \ar@{=}[u] &  \Aut^\mathbb{T}(\hat{X}) \ar[r] \ar@{^{(}->}[u] & \Aut_Z(X) \ar@{^{(}->}[u] \ar[r] & 1.
    }
    \]
  \end{enumerate}
\end{proposition}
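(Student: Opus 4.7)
The plan is to establish the four parts essentially in the order (4), (2), (1), (3), using Lemma~\ref{le:aff-lift-aut} as the affine building block and transferring to the projective situation via suitably graded section rings.

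For (4), I would apply the construction of Lemma~\ref{le:aff-lift-aut} directly to the Cox sheaf. A weak automorphism $\phi \in {\rm Bir}_{2,Z}(X)$ acts on $\WDiv(X,\Delta)$ preserving $\Cl(X/Z,\Delta)$ (being a regular isomorphism in codimension one), and precisely the argument used in the proof of Lemma~\ref{le:aff-lift-aut} — replacing $N$ by $\phi(N)$ and $\chi$ by $\phi^*\chi$ and composing with a fixed identification of Cox rings — lifts $\phi$ to an element of $\Aut^{\mathbb{T}}(\overline{X})$, yielding the top row. Restricting such a lift to the characteristic space $\hat{X} \subseteq \overline{X}$ produces the bottom row: the torsor $\hat{X} \to X_{\reg}$ is preserved globally by honest automorphisms of $X$, while merely weak automorphisms preserve it only in codimension one. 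The finite index of both vertical inclusions follows from the finiteness of the Mori chamber decomposition of $X/Z$: the quotient ${\rm Bir}_{2,Z}(X)/\Aut_Z(X)$ acts faithfully on this finite set, and only weak automorphisms fixing the ample chamber descend to regular ones (cf.~\cite[Sec 4.2.4]{ADHL15}).

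For (2), I would begin with any divisor $L_0$ ample over $Z$ and use the finite-index statement of (4) to deduce that the $\Aut_Z(X)$-orbit of $[L_0]$ in $N^1(X/Z)$ is finite. Summing over this orbit produces an $L$ ample over $Z$ with $\phi^*L \sim L$ for every $\phi \in \Aut_Z(X)$. The section ring $R_L$ is finitely generated over $\mathcal{O}_Z$ because $X/Z$ is a Mori dream space, and the mechanism of Lemma~\ref{le:aff-lift-aut} applied to the $\zz$-graded ring $R_L$ — with isomorphisms of $H^0(X, mL)$ unique only up to scalars — yields the asserted extension by $\cc^*$. Part (1) then follows immediately: $\tilde{X} := \Spec R_L$ is affine of finite type over $Z$, so $\Aut_Z(\tilde{X})$ is a linear algebraic group, and $\Aut_Z(X)$ is the quotient of its closed subgroup $\Aut^{\cc^*}_Z(\tilde{X})$ by the central $\cc^*$, hence linear algebraic.

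For (3), the key observation is that $K_X+\Delta$ is a canonical $\qq$-divisor, not merely a $\qq$-linear equivalence class: every $\phi \in \Aut_Z(X,\Delta)$ satisfies $\phi^*K_X = K_X$ via the canonical identification on $X_{\reg}$, and $\phi^*\Delta = \Delta$ by hypothesis, so with $L = -(K_X+\Delta)$ one has $\phi^*L = L$ as an actual $\qq$-divisor. This eliminates the $\cc^*$-ambiguity in the induced action on $H^0(X, mL)$ and yields a canonical homomorphism $\Aut_Z(X,\Delta) \to \Aut^{\cc^*}_Z(\tilde{X})$ splitting the exact sequence of (2). The main obstacle I expect is the finite-index claim in (4), which requires a relative Mori-chamber-decomposition argument to show that any weak automorphism of $X/Z$ preserving the ample chamber is in fact regular; once this chamber-theoretic input is secured, the other three parts reduce to essentially formal applications of Lemma~\ref{le:aff-lift-aut}.
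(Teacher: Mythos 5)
Your argument for part~(1) contains a genuine gap. You claim that since $\tilde{X} = \Spec R_L$ is affine of finite type over $Z$, the group $\Aut_Z(\tilde{X})$ is a linear algebraic group, and then conclude by passing to the closed subgroup $\Aut^{\cc^*}_Z(\tilde{X})$ and quotienting by the central $\cc^*$. This intermediate claim is false: the automorphism group of an affine variety of finite type is not, in general, a linear algebraic group. Already $\Aut(\mathbb{A}^2)$ over a point fails to be algebraic, containing the infinite-dimensional family of shears $(x,y)\mapsto (x, y + f(x))$ for arbitrary polynomials $f$. What \emph{is} true is that the subgroup $\Aut^{\cc^*}_Z(\tilde{X})$ normalizing the $\cc^*$-action is linear algebraic, because such automorphisms preserve the positive grading of the cone and hence a finite-dimensional generating subspace, yielding an embedding into some $\mathrm{GL}$; but this must be argued directly, not deduced from linear algebraicity of the ambient $\Aut_Z(\tilde{X})$, which does not hold. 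The paper avoids this entirely: it first constructs the $\Aut_Z(X)$-invariant ample class $[L]$ by observing that the relative nef cone is rational polyhedral and that $\Aut_Z(X)$ permutes its finitely many rays, takes the sum of primitive ray generators, and then invokes Brion's theorem (\cite[Theorem 2.16]{Bri19}) for the linear algebraicity of $\Aut_Z(X)$.

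A secondary imprecision, related to the above, is your deduction in part~(2). You propose to use the finite-index statement of~(4) to conclude that the $\Aut_Z(X)$-orbit of an ample class $[L_0]$ in $N^1(X/Z)$ is finite. The finite index of $\Aut_Z(X)$ in ${\rm Bir}_{2,Z}(X)$ does not by itself bound that orbit. The mechanism that makes the orbit finite is that the action of $\Aut_Z(X)$ on $N^1(X/Z)$ factors through the finite group of permutations of the rays of the rational polyhedral nef cone; once this is observed, there is no need to route through~(4), and indeed the paper works with the nef cone directly at the start of its proof. Your parts~(3) and~(4) are sound and essentially identical in spirit to the paper's: part~(3) via the intrinsic invariance of $K_X + \Delta$ as a $\qq$-divisor eliminating the $\cc^*$-ambiguity, and part~(4) via the Cox-ring lifting à la Lemma~\ref{le:aff-lift-aut} together with Mori chamber finiteness, matching the reference to \cite[Theorem 4.2.4.1]{ADHL15}.
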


\begin{proof}
Since $(X,\Delta)$ is relative Mori Dream over $Z$, the cone of nef divisors relative to $Z$ is rational polyhedral. Its rays are permuted by the group of components of $\Aut_Z(X)$. Taking the sum of the ray generators, we get the relatively ample $\Aut_Z(X)$-invariant class $[L]$. By~\cite[Theorem 2.16]{Bri19}, $\Aut_Z(X)$ is a linear algebraic group. 

Now, $\Aut_Z(X)$ may not stabilize the divisor $L$, but only its class.
We argue as in the proof of Lemma~\ref{le:aff-lift-aut}. Since $L$ is ample, its relative stable base locus is empty (see, e.g.,~\cite[Sec 2.3]{Bri19}). Thus, we obtain an isomorphism $\Aut^{\cc^*}(L) \cong \Aut^{\cc^*}(\tilde{X})$. Hence, we get the short exact sequence of the second item. The statement about $L:=-(K_X+\Delta)$ in the case of a Fano type follows since $K_X$ is invariant under the action of $\Aut_Z(X)$. The proof of the last item is analogous to the one of~\cite[Theorem 4.2.4.1]{ADHL15}.
\end{proof}

In what follows, 
we aim to lift the whole characteristic quasi-torus action to the iterated Cox rings. In this way, we will produce an action of a solvable reductive group. The derived normal series of this solvable reductive group reflects the iteration of Cox rings. 
We denote the $k$-th derived subgroup of a group $G$ by $\mathcal{D}^G_k:=[\mathcal{D}^G_{k-1},\mathcal{D}^G_{k-1}]$, where $\mathcal{D}^G_0:=G$.

An important property of Cox rings is that their spectra dominate all \emph{quotient presentations} in the sense of~\cite[Sec 4.2.1]{ADHL15}. This means, all good quasi-torus quotients $Y \to X = Y / H$, such that the action of $H$ is strongly stable. In analogy to the notion of quasi-\'etale covers, we call them \emph{abelian quasi-torsors} in the followng. In the classical setting, where $X$ is assumed to have only constant invertible global functions, the quasi-torsors are assumed to have only constant invertible $H$-homogeneous functions. Thus in our setting, we have two major differences: firstly, a priori, we have invertible non-constant functions on $X$, which means that we have non-isomorphic Cox rings depending on the choice of $N \subseteq \WDiv(X)$ and the character $\chi$. This means that an abelian quasi-torsor $Y \to  X$ is dominated by $\overline{X}_{N,\chi} \to X$ for some choice
of $N$ and $\chi$. Secondly, what we have to impose is not that invertible $H$-homogeneous functions are constant, but that they descend to $X$. This property is fulfilled e.g. if there is at least one maximal homogeneous ideal in $\mathcal{O}(Y)$. This is the case in all situations relevant for us, e.g. finite coverings of relative Fano type $(X,\Delta)$ or (iterated) Cox rings. However, even if $X$ has only constant invertible functions, there may be non-constant non-homogeneous invertible functions in the Cox ring (see, e.g.,~\cite[Ex 1.4.4.2]{ADHL15}). The precise definition of an abelian quasi-torsor in our setting is the following.

\begin{definition}
\label{def:quot-pres}{\em 
Let $(X,\Delta)$ be a relative Mori dream space  over $Z$. Let $Y \to X= Y/H$ be a good quotient by a quasi-torus $H$. We call $\varphi\colon Y \to X$ an {\em
abelian quasi-torsor}, if the following are satisfied.
\begin{enumerate}
    \item Let $H_0$ be the identity component and $A$ be the group of components of $H$. Then the finite abelian cover $Y':=Y/H_0 \xrightarrow{/A} X$ is log quasi-\'etale over $(X,\Delta)$ with log-pullback $\Delta_{Y'}$ of $\Delta$.
    \item There are big open subsets $U_{Y'} \subseteq Y'$ and $U_Y :=\varphi^{-1}(U_{Y'}) \subseteq Y$ such that the restriction 
    \[
    \left. \varphi \right|_{U_Y}\colon U_Y \xrightarrow{/H_0} U_{Y'}
    \]
    is an \'etale locally trivial $H_0$-bundle. In particular, the action of $H_0$ on $Y$ is strongly-stable. 
    \item Global invertible homogeneous functions on $Y$ descend to $X$ via the induced homomorphism  $\mathcal{O}(Y)^H \cong \mathcal{O}(X) \hookrightarrow \mathcal{O}(Y)$.
\end{enumerate}
In the case that $H$ is a torus, then we may say that it is a {\em torus quasi-torsor}.
Whenever the quasi-torus $H$ is clear from the context,
we may just say that $Y\rightarrow X$ is a {\em quasi-torsor}.

Let $(X,\Delta;x)$ be a klt singularity.
We say that $(Y,y)$ is a {\em pointed abelian quasi-torsor} of $(X,\Delta;x)$ if there
exists an abelian quasi-torsor $Y\rightarrow X$ so that the image of $y$ in $X$ equals $x$. To shorten notation, we may say that
$Y\rightarrow X$ is an {\em abelian pointed cover}.
Observe that if $Y\rightarrow X$ is an abelian pointed cover, 
then the corresponding finite morphism
$Y'\rightarrow X$ is a finite pointed cover.

}
\end{definition}

\begin{proposition}
\label{prop:quot-pres}
Let $(X,\Delta)$ be a relative Mori dream space  over $Z$. Let $Y \to X= Y/H$ be a quasi-torsor. Then, there exists
\begin{itemize}
\item a monomorphism $\mathbb{X}(H)\rightarrow {\rm Cl}(X/Z,\Delta)$, 
\item a subgroup $N_Y \leqslant N\leqslant {\rm WDiv}(X)$, 
\item surjections $\varphi \colon N \to \Cl(X/Z,\Delta)$ and $\left. \varphi \right|_{N_Y} \colon N_Y \to \mathbb{X}(H)$, and 
\item a character $\chi: \ker(\varphi) \to \cc(X)^*$
\end{itemize}
such that the following statements are satisfied:
\begin{enumerate}
    \item $Y \cong \Spec_X \mathcal{S}^{(N_Y)} / \mathcal{I}$, where $\mathcal{I}$ is the ideal subsheaf of $\mathcal{S}^{(N_Y)}$ locally generated by sections $1-\chi(E)$,
    where $E$ runs in $\ker(\left. \varphi \right|_{N_Y})$.
    \item There is a commutative diagram
         \[
    \xymatrix{
 \hat{X}_{N,\chi} \ar[rr]^{/ H'} \ar[dr]^{/\mathbb{T}_X} & & Y \ar[dl]^{/H} \\
 & X
    }
    \]
    where the quasi-torus $H'$ is defined by the exact sequence
    $1\rightarrow H'\rightarrow \mathbb{T}_X \rightarrow H \rightarrow 1$.
\end{enumerate}
\end{proposition}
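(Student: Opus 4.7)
The plan is to read off the data $(\iota, N_Y, N, \chi)$ from the $H$-isotypical decomposition
\[
\varphi_*\mathcal{O}_Y \;=\; \bigoplus_{\mu \in \mathbb{X}(H)} \mathcal{L}_\mu.
\]
Conditions (1) and (2) of Definition~\ref{def:quot-pres} imply that, over a big open subset of $X$, the morphism $\varphi$ factors through the finite quasi-\'etale cover $Y' \to X$ followed by an \'etale-locally trivial $H_0$-torsor $Y \to Y'$. This factorisation makes each eigensheaf $\mathcal{L}_\mu$ a line bundle on the orbifold smooth locus of $(X,\Delta)$, so its reflexive hull determines a class in $\Cl(X/Z,\Delta)$. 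This defines a homomorphism $\iota\colon \mathbb{X}(H) \to \Cl(X/Z,\Delta)$. To see that $\iota$ is injective, suppose $\iota(\mu)=0$. Then $\mathcal{L}_\mu$ admits a nowhere-vanishing global section, corresponding to a global $H$-eigenfunction $f \in \mathcal{O}(Y)^*$ of weight $\mu$. Condition (3) of Definition~\ref{def:quot-pres} forces $f$ to descend to a unit on $X$, hence $\mu = 0$.

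Next, I would lift a finite generating set of $\mathbb{X}(H)$ to Weil divisors on $X$ realising its image under $\iota$; these generate a subgroup $N_Y \leqslant \WDiv(X)$ fitting into a surjection $\varphi|_{N_Y}\colon N_Y \twoheadrightarrow \mathbb{X}(H)$. Enlarging $N_Y$ by lifts of a generating set of $\Cl(X/Z,\Delta)/\iota(\mathbb{X}(H))$ produces $N \geqslant N_Y$ with a compatible surjection $\varphi\colon N \twoheadrightarrow \Cl(X/Z,\Delta)$, and automatically $\ker(\varphi|_{N_Y}) \subseteq \ker(\varphi)$. The character $\chi$ on $\ker(\varphi|_{N_Y})$ is extracted from the algebra structure on $\varphi_*\mathcal{O}_Y$: for a principal divisor $E \in \ker(\varphi|_{N_Y})$, the identification of the eigensheaves $\mathcal{L}_\mu$ for equal classes in $\mathbb{X}(H)$ forces a rational function $\chi(E)$ with $\divv(\chi(E)) = E$. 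I then extend $\chi$ to $\ker(\varphi)$ by choosing rational functions of prescribed divisor for the additional generators, which is a routine lifting against the surjection $\cc(X)^* \twoheadrightarrow \PDiv(X)$.

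With these choices, claim (1) amounts to the identification of $\mathbb{X}(H)$-graded $\mathcal{O}_X$-algebras $\mathcal{S}^{(N_Y)}/\mathcal{I} \cong \varphi_*\mathcal{O}_Y$, where quotienting by $\mathcal{I}$ precisely glues together homogeneous components of the divisorial algebra that correspond to equal classes in $\mathbb{X}(H)$; taking $\Spec_X$ gives $Y \cong \Spec_X \mathcal{S}^{(N_Y)}/\mathcal{I}$. For claim (2), dualising $\iota$ gives a surjection $\mathbb{T}_X \twoheadrightarrow H$ whose kernel $H'$ fits in the required short exact sequence, and the $H'$-invariant subsheaf of $\mathcal{S}^{(N)}/\mathcal{I}$ consists exactly of the homogeneous components of degree in $\iota(\mathbb{X}(H))$, which matches $\mathcal{S}^{(N_Y)}/\mathcal{I}$ and hence $\varphi_*\mathcal{O}_Y$; taking relative spectra produces the claimed diagram with $\hat{X}_{N,\chi} \to Y$ the good $H'$-quotient. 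The main obstacle is rigidifying $\chi$ so that the abstract identification in (1) is compatible with the $H$-equivariant algebra structure on $Y$ rather than merely its graded module structure, i.e.\ singling out the correct element in the $\mathrm{Ext}^1(\mathbb{X}(H), \mathcal{O}(X)^*)$-torsor of Cox presentations from~\cite[Theorem 2.3]{HMT20}; once this is done, the rest parallels~\cite[Sec 4.2.1]{ADHL15}, adapted to the boundary $\Delta$ and the relative base $Z$.
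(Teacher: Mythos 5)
Your proof is correct and follows essentially the same route as the paper: the paper invokes~\cite[Prop. 1.6.4.5 and Theorem 4.2.1.4]{ADHL15}, constructing the monomorphism $\mathbb{X}(H)\hookrightarrow\Cl(X/Z,\Delta)$ from the group $E(Y)$ of $\mathbb{X}(H)$-homogeneous rational functions and the descent of their divisors via $q_*$, with injectivity secured by condition (3) of Definition~\ref{def:quot-pres} — exactly the argument you express in sheaf-theoretic form through the eigensheaf decomposition $\varphi_*\mathcal{O}_Y=\bigoplus_\mu\mathcal{L}_\mu$. Your choices of $N_Y\leqslant N$, the extraction and extension of $\chi$, and the subsequent deferral to the ADHL machinery for reconciling the graded-algebra identification with the $H$-equivariant structure all mirror the paper's own brief proof, which likewise reduces these steps to ``the rest of the proof of~\cite[Theorem 4.2.1.4]{ADHL15}''.
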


\begin{proof}
The proof is analogous to the one of~\cite[Theorem 4.2.1.4]{ADHL15}, with the two differences mentioned above. In particular, invoking~\cite[Prop. 1.6.4.5]{ADHL15} and the notation therein, we get the following. Let $M:=\mathbb{X}(H)$ and $E(Y)$ be the multiplicative group of non-zero $M$-homogeneous rational functions on $Y$ with $E(Y)_{w}$ those of degree $w \in M$. Then we have the following diagram of group homomorphisms
\[
    \xymatrix{
E(Y) \ar[rr]^{f \mapsto \divv(f)} && \WDiv(Y)^H \ar@/_/[rr]_{q_*} && \WDiv(X) \ar@/_/[ll]_{q^*}.    }
    \]
As $Y \to X$ is an \'etale locally trivial $H$-bundle in codimension one, the homomorphisms $q_*$ and $q^*$ are inverse to each other.  As in~\cite[Prop. 1.6.4.5]{ADHL15}, but using item (3) of Definition~\ref{def:quot-pres}, the homomorphism $E(Y) \to \WDiv(X)$ induces a monomorphism $M \to \Cl(X/Z,\Delta)$. Thus, we can choose a subgroup $N_Y$ of $ \WDiv(X) \cong \WDiv(Y)^H  $ surjecting onto $M$ and enlarge $N_Y \subseteq N$ such that $\varphi \colon N \to \Cl(X/Z,\Delta)$ is onto. Choosing a character $\chi \colon \ker(\varphi) \to \cc(X)^*$ yields the desired statements together with the rest of the proof of~\cite[Theorem 4.2.1.4]{ADHL15}.
\end{proof}

\begin{corollary}
\label{cor:Cox-it-solv-cover}
Let $(X,\Delta)$ be a relative Mori dream space over $Z$. Assume the $k$-th iterated Cox ring ${\rm Cox}^{(k)} (X/Z,\Delta)$ exists and is of finite type over $Z$. Then $\overline{X}_k$ allows an action of a solvable reductive group $S$ with maximal torus $\mathbb{T}:=\mathbb{T}_{X_{k}}$ and an $S$-invariant big open subset $\hat{X}^k$, such that:
\begin{enumerate}
    \item $X_k \cong \hat{X}^k / \mathbb{T}$ and $X \cong  \hat{X}^k / S $.
    \item $\hat{X}^j \cong  \hat{X}^k / \mathcal{D}_j^S$ and $\mathbb{T}^{j}\cong \mathcal{D}_{j-1}^S/\mathcal{D}_j^S$ for $j \leq k$.
    \item For the finite solvable group $S_{\rm fin}:=S/\mathbb{T}$ and the finite covers $X_j$, the assertions hold analogously, i.e.,
    \[
  X_j\cong X_k / \mathcal{D}_j^{S_{\rm fin}}, \text{ and} \quad A^{j}\cong \mathcal{D}_{j-1}^{S_{\rm fin}}/\mathcal{D}_j^{S_{\rm fin}}.
    \]
\end{enumerate}
\end{corollary}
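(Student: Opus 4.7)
The strategy is to construct the solvable reductive group $S$ by iteratively lifting characteristic quasi-torus actions up the tower of iterated Cox rings using Lemma~\ref{le:aff-lift-aut} (or Proposition~\ref{prop:proj-lift-aut} in the projective-over-$Z$ setting), extending to the relative Mori dream space setting the finite-quotient construction of~\cite{ABHW18, Bra19}.

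I set $S_1 := \mathbb{T}^1$, acting tautologically on $\overline{X}_1$. Inductively, given $S_{i-1} \subseteq \Aut_Z(\overline{X}_{i-1})$, I define $S_i$ as the preimage of $S_{i-1}$ under the surjection
\[
\Aut^{\mathbb{T}^i}(\overline{X}_i) \twoheadrightarrow \Aut_Z(\overline{X}_{i-1})
\]
furnished by the lifting lemma. This produces an exact sequence
\[
1 \longrightarrow \mathbb{T}^i \longrightarrow S_i \longrightarrow S_{i-1} \longrightarrow 1,
\]
so that $S := S_k$ is solvable (iterated extension by abelian kernels) and reductive (at each step the unipotent radical must project to the trivial unipotent radical of $S_{i-1}$, hence lies in the diagonalizable kernel $\mathbb{T}^i$, and therefore vanishes). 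The identity component of $S$ is an iterated extension of tori by tori, hence a torus, and combined with the finite parts of the $\mathbb{T}^i$ one recovers the characteristic quasi-torus $\mathbb{T} = \mathbb{T}_{X_k}$.

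For the geometric statements, let $\hat{X}^k \subseteq \overline{X}_k$ be the iterated preimage of the characteristic space $\hat{X} \subseteq \overline{X}$ along the diagram of Lemma~\ref{le:CoxCox}(2). By construction $\hat{X}^k$ is $S$-invariant, big in $\overline{X}_k$, and strongly stable for the action of $\mathbb{T}$, yielding $\hat{X}^k/\mathbb{T} \cong X_k$ and $\hat{X}^k/S \cong X$ as in item (1). Item (2) follows by applying the same diagram at intermediate levels, provided one can identify the normal filtration $K_j := \ker(S \twoheadrightarrow S_j)$ with the derived series $\mathcal{D}_j^S$. Item (3) is then obtained by dividing out the maximal torus of $S$: torus parts $\mathbb{T}^j_0$ are absorbed into $\mathbb{T}$, and the finite parts $A^j$ provide the derived series of $S_{\rm fin} := S/\mathbb{T}$ acting on the finite cover $X_k$ described in Remark~\ref{rem:fin-Cox-covers}.

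The main non-formal step is the identification $K_j = \mathcal{D}_j^S$. The inclusion $\mathcal{D}_j^S \subseteq K_j$ is automatic since $S/K_j \cong S_j$ has iterated abelian quotients $\mathbb{T}^1,\ldots,\mathbb{T}^j$. For the reverse inclusion, I would show that the conjugation action of $S_{j-1}$ on $\mathbb{T}^j$ has trivial fixed subgroup, equivalently that $\Cl(\overline{X}_{j-1})^{S_{j-1}} = 0$. A class fixed by $S_{j-1}$ is in particular $\mathbb{T}^{j-1}$-invariant and thus a pullback from the quotient $\overline{X}_{j-1}/\mathbb{T}^{j-1} \cong \overline{X}_{j-2}$; but such pullbacks vanish in $\Cl(\overline{X}_{j-1})$, since the characteristic space construction trivializes the divisor classes of its base. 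The remaining work is bookkeeping across the affine, local, Henselian, and relative incarnations of the preceding constructions, and verifying that strong stability of the quasi-torus actions propagates through the tower.
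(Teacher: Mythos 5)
Your construction of $S$ by iterated lifting of quasi-torus actions via Lemma~\ref{le:aff-lift-aut} (and the reduction of the projective case to the relatively affine one) matches the paper's route, which is a relative-MDS version of~\cite[Theorem 1.6]{ABHW18} with Proposition~\ref{prop:quot-pres} in place of the corresponding result of~\cite{AG10}. Your argument for reductivity of $S$ (the unipotent radical is killed by the projection to the reductive $S_{i-1}$, so lands in the diagonalizable kernel, hence is trivial) is fine, and the geometric identifications of the quotients are the right bookkeeping.

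The gap is precisely at the step you single out as ``main non-formal.'' First, your proposed criterion is off: $K_j = \mathcal{D}^S_j$ is equivalent to $K_{j-1}^{\mathrm{ab}} \cong \mathbb{T}^j$, which is not the same as the conjugation action of $S_{j-1}$ on $\mathbb{T}^j$ having trivial fixed subgroup (and note fixed points of $\mathbb{T}^j$ correspond to \emph{coinvariants}, not invariants, of $\Cl(\overline{X}_{j-1})$; for finite quasi-tori these differ). Second, and more seriously, your argument that a class in $\Cl(\overline{X}_{j-1})$ fixed by $S_{j-1}$ must be a pullback from $\overline{X}_{j-2}$ is false. Pullbacks from $\overline{X}_{j-2}$ do vanish, but the entire reason the iteration continues is that $\overline{X}_{j-1}$ carries \emph{new} divisor classes that are not pullbacks; all of these are automatically fixed by the connected part $\mathbb{T}^{j-1}_0$ (any connected group acts trivially on the discrete group $\Cl$), so ``fixed $\Rightarrow$ pullback'' cannot hold. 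The correct tool — and the one the paper's proof explicitly flags — is the universality of the characteristic space among abelian quasi-torsors (Proposition~\ref{prop:quot-pres}): $K_{j-1}/[K_{j-1},K_{j-1}]$ acts on $\hat{X}^k/[K_{j-1},K_{j-1}]$ giving an abelian quasi-torsor over $\hat{X}^{j-1}$, so its character group embeds into $\Cl(\overline{X}_{j-1}/Z,\overline{\Delta}_{j-1}) = \mathbb{X}(\mathbb{T}^j)$; together with the surjection $K_{j-1}^{\mathrm{ab}} \twoheadrightarrow \mathbb{T}^j$ this forces $K_{j-1}^{\mathrm{ab}} \cong \mathbb{T}^j$, i.e. $[K_{j-1},K_{j-1}] = K_j$. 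You should replace the fixed-point argument with this appeal to Proposition~\ref{prop:quot-pres}, which also requires checking that the intermediate quotient is indeed a quasi-torsor in the sense of Definition~\ref{def:quot-pres}.
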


\begin{proof}
The arguing is analogous to the proof of~\cite[Theorem 1.6]{ABHW18} in the case that $X$ is affine over $Z$, where we use Proposition~\ref{prop:quot-pres} instead of~\cite[Prop. 3.5]{AG10}. If $X$ is projective over $Z$, then we choose a divisor $L$ on $X$ ample over $Z$. By the same argument as in the proof of Lemma~\ref{le:CoxCox} (1), we have ${\rm Cox}(X/Z,\Delta)\cong {\rm Cox}(\tilde{X}/Z,\tilde{\Delta)}$ and thus we can reduce to the relatively affine case. 
\end{proof}

\begin{remark}
{\em
In Definition~\ref{def:quot-pres} (2), it is essential, that not only $U_{Y'} \subseteq Y'$ but also $U_Y \subseteq Y$ is a big open subset. Otherwise $\varphi \colon Y \to Y/H$ may contract divisors. In particular, the existence of the monomorphism  $\mathbb{X}(H) \to \Cl(X)$ from Proposition~\ref{prop:quot-pres} would not hold true in this more general setting. As an example, consider the blowup of $Y^n:={\rm Bl}_0(\mathbb{A}^n) \to \mathbb{A}^n$ at the origin, which has relative Cox ring ${\rm Cox}(Y^n/\mathbb{A}^n) \cong \cc[x_1,\ldots,x_{n+1}]$. Then the induced $\cc^*$-quotient $\mathbb{A}^{n+1} \to \mathbb{A}^n$, given by the weights $(1,\ldots,1,-1)$, is not a quotient presentation. Indeed, the divisor $\{x_{n+1}=0\}$ maps to the origin.
Observe that we have an infinite sequence of $\cc^*$-quotients 
$\mathbb{A}^1 \leftarrow \mathbb{A}^2 \leftarrow \mathbb{A}^3 \leftarrow \dots$. 
This sequence does not contradict Theorem~\ref{introthm-6-univ-scf-cover},
because this covers are not pointed abelian covers in the sense of Definition~\ref{def:quot-pres}.
}
 \end{remark}

\subsection{Regional fundamental group of a relative Fano type variety}\label{subsec:finite-rel-fund}
In this subsection, we prove that the regional fundamental group
of a relative Fano type variety is finite
and it satisfies the Jordan property.

\begin{definition}
{\em 
Let $\phi \colon X \rightarrow Z$ be a projective contraction.
Let $(X,\Delta)$ be a log pair.
Let $z\in Z$ be a closed point.
We define the fundamental group
\[ 
\pi_1^{\rm reg}(X/Z,\Delta;z)
\]
to be the inverse limit of the fundamental groups
\[
\pi_1^{\rm reg}(\phi^{-1}(U), \Delta_U),
\]
where the limit runs through all the open sets $U$ on $Z$ which contain $z$.
In the above, by abuse of notation, we let  $\Delta_U$ to be
the restriction of $\Delta$ to $\phi^{-1}(U)^{\rm reg}$.
}
\end{definition}

\begin{theorem}\label{thm:rel-finiteness}
Let $n$ be a positive integer.
There exists a constant $c(n)$, only depending on $n$, satisfying the following.
Let $\phi\colon X \rightarrow Z$ be a projective contraction so that $X$ has dimension $n$.
Let $(X,\Delta)$ be a log pair of Fano type over $Z$.
Let $z\in Z$ be a closed point.
Then, the fundamental group 
$\pi_1^{\rm reg}(X/Z,\Delta;z)$ is finite.
Furthermore, there exists a normal abelian subgroup $A\leqslant \pi_1^{\rm reg}(X/Z,\Delta;z)$ of rank at most $n$ and index at most $c(n)$.
\end{theorem}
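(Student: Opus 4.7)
The plan is to translate the relative problem into an absolute one on the base $Z$ and invoke the Jordan property from~\cite{BFMS20}.

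First, I would apply the canonical bundle formula. Since $(X,\Delta)$ is of Fano type over $Z$, I can choose a general $\Delta'\geq 0$ big over $Z$ with $(X,\Delta+\Delta')$ klt and $K_X+\Delta+\Delta'\sim_{\mathbb{Q},Z}0$. The Ambro--Kawamata canonical bundle formula then produces an effective $\mathbb{Q}$-divisor $B_Z$ on $Z$ such that $(Z,B_Z;z)$ is a klt singularity of dimension at most $n$. This reduces the problem to an absolute singularity to which existing technology applies.

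Second, I would compare $\pi_1^{\mathrm{reg}}(X/Z,\Delta;z)$ with $\pi_1^{\mathrm{reg}}(Z,B_Z;z)$. Working over a small Euclidean neighbourhood $V\ni z$ and setting $U:=\phi^{-1}(V)$, the restriction $\phi|_U\colon U\to V$ has rationally connected general fibres, since a general fibre of a Fano type morphism is itself of Fano type and hence rationally connected. An orbifold version of Takayama's fibration theorem, applied to the smooth orbifold loci of $(U,\Delta_U)$ and $(V,B_V)$, should then identify
\[
\pi_1^{\mathrm{reg}}(X/Z,\Delta;z)\;\cong\;\pi_1^{\mathrm{reg}}(Z,B_Z;z).
\]

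With this identification, the conclusion is immediate: the absolute Jordan property~\cite{BFMS20} furnishes a normal abelian subgroup of $\pi_1^{\mathrm{reg}}(Z,B_Z;z)$ of rank at most $\dim Z\leq n$ and index at most $c(n)$, which transports through the above isomorphism to the desired subgroup of $\pi_1^{\mathrm{reg}}(X/Z,\Delta;z)$. Finiteness of $\pi_1^{\mathrm{reg}}(X/Z,\Delta;z)$ itself follows from the corresponding finiteness statement for the regional fundamental group of the klt singularity $(Z,B_Z;z)$ proved in~\cite{Bra20}. The hard part will be the orbifold comparison in the second step: one has to keep careful track of how the boundary contributions of $\Delta$ along special fibres of $\phi$ and along the discriminant locus on $Z$ combine so that orbifold loops survive or are killed in a way compatible with the boundary $B_Z$ produced by the canonical bundle formula. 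This requires a delicate local analysis of the fibres of $\phi$ lying over the non-klt locus of $(Z,B_Z)$ near $z$, and is precisely where the rational connectedness of Fano type fibres is used.
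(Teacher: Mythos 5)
The crucial claimed isomorphism $\pi_1^{\mathrm{reg}}(X/Z,\Delta;z)\cong\pi_1^{\mathrm{reg}}(Z,B_Z;z)$ is false, and this breaks the proof. Takayama's theorem concerns the \emph{full} fundamental group of a rationally connected fibration, whereas the regional fundamental group is the fundamental group of the (orbifold) smooth locus, and the smooth loci of the fibres of $\phi$ are \emph{not} rationally connected in general: a Fano type variety $F$ is rationally connected, but $F^{\mathrm{reg}}$ may carry a large finite $\pi_1$. For a counterexample, take $Z$ a smooth point and $X=\mathbb{P}^2/(\mathbb{Z}/3\mathbb{Z})$ with the quotient by a cyclic group acting with only isolated fixed points; then $X$ is Fano type over $Z$ and rationally connected, but $\pi_1^{\mathrm{reg}}(X)\cong\mathbb{Z}/3\mathbb{Z}\neq 1=\pi_1^{\mathrm{reg}}(Z)$.

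The paper's proof accounts precisely for this fibre contribution, and your strategy of pushing everything to the base cannot succeed without it. The actual argument first passes to the ample model $X'$ of $-(K_X+\Delta)$ over $Z$, forms the relative orbifold cone $Y$ over $Z$ (a klt singularity of dimension $n+1$), and extracts from the $\mathbb{C}^*$-structure a short exact sequence $1\to\mathbb{Z}_m\to\pi_1^{\mathrm{reg}}(Y,\Delta_Y;z)\to\pi_1^{\mathrm{reg}}(X'/Z,\Delta';z)\to 1$. Finiteness and a Jordan property of rank $n+1$ then follow from \cite{Bra20,BFMS20} applied to $(Y,\Delta_Y;z)$. Only afterwards does the canonical bundle formula enter, and there one does \emph{not} obtain an isomorphism with $\pi_1^{\mathrm{reg}}(Z,B_Z;z)$ but rather the fibration exact sequence $\pi_1^{\mathrm{reg}}(F,\Delta_F+H_F)\to\pi_1^{\mathrm{reg}}(X'/Z;z)\to\pi_1^{\mathrm{reg}}(Z,H_Z;z)\to 1$, and the rank bound is dropped from $n+1$ to $n$ by applying the Jordan property separately to the fibre and to the base, then adding the ranks. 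Replacing your false isomorphism by this exact sequence, and adding the cone construction for finiteness, is the missing content; without it the proposal does not close.
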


\begin{proof}
The divisor $-(K_X+\Delta)$ is nef and big over $Z$. Hence, it is semiample and big over $Z$, given that $X$ is a relative Mori dream space over $Z$.
Let $X'$ be an ample model of $-(K_X+\Delta)$ over $Z$.
Let $\Delta'$ be the push-forward of $\Delta$ to $X'$.
Then, we have that $(X',\Delta')$ has klt singularities and 
$-(K_{X'}+\Delta')$ is ample over $Z$.
We will prove the statement for
$\pi_1^{\rm reg}(X'/Z,\Delta';z)$. 
Let $Y$ be the orbifold cone with respect to the $\mathbb{Q}$-polarization $-(K_{X'}+\Delta')$ over $Z$, i.e., 
\[
Y:={\rm Spec}
\left( 
\bigoplus_{m\geq 0} 
H^0\left(
X'/Z, \mathcal{O}_X(-m(K_{X'}+\Delta')) 
\right) 
\right).
\]
Note that $\dim(Y)=\dim(X)+1=n+1$.
We have a rational map $\pi\colon Y\dashrightarrow X'$,
which is defined outside a codimension two subset of $Y$.
Let $\Delta_Y$ be the effective divisor so that
$\pi^*(K_{X'}+\Delta')=K_Y+\Delta_Y$.
Then, the pair $(Y,\Delta_Y)$ is klt.
The blow-up of $Y$ at the vertex of the torus action 
is a variety $\tilde{Y}$ which admits a good quotient to $X'$.
The exceptional locus of $\tilde{Y}\rightarrow Y$ is isomorphic to $X'$ and its image in $Y$ is isomorphic to $Z$.
Hence, under this isomorphism, we can consider an embedding $Z\hookrightarrow Y$.
So, we can consider $z\in Y$.
By~\cite[Theorem 1]{Bra19}, we know that the regional fundamental group $\pi_1^{\rm reg}(Y,\Delta_Y;z)$ is finite.
By~\cite[Theorem 2]{BFMS20}, we know that there exists an abelian normal subgroup $A_Y$ of
$\pi_1^{\rm reg}(Y,\Delta_Y;z)$ of rank at most $n+1$ and index at most $c(n+1)$.
Here, $c(n+1)$ is a constant which only depends on $n+1$, hence it only depends on $n$.

Let $U_Z$ be an arbitrary open neighborhood of $z$ in $Z$.
Let $\phi'\colon X'\rightarrow Z$ the associated projective morphism 
and define $U_{X'}:={\phi'}^{-1}(U_Z)$.
We define $U_Y:=\pi^{-1}(U_{X'})$.
For every such $U_Z$, 
we have a short exact sequence:
\[
1\rightarrow \zz_m 
\rightarrow \pi_1^{\rm reg}(U_Y,\Delta_{U_Y}) 
\rightarrow \pi_1^{\rm reg}(U_{X'},\Delta_{U_{X'}})
\rightarrow 1.
\]
Here, $m$ may depend on the chosen neighborhood.
As usual $\Delta_{U_Y}$ (resp. $\Delta_{U_{X'}}$)
is the restriction of $\Delta_Y$ (resp. $\Delta_{X'}$)
to the open set $U_Y$ (resp. $U_{X'})$.
We claim that for certain neighborhood $U$ of $z$ in $Z$,
there is an isomorphism
\begin{equation}\label{eq:iso} 
\pi_1^{\rm reg}(U_Y,\Delta_{U_Y})\cong
\pi_1^{\rm reg}(Y,\Delta_Y;z).
\end{equation} 
Let $U_0$ be an open neighborhood of $z$ in $Y$ which computes the regional fundamental group of the pair $(Y,\Delta_Y)$ at $z$, i.e., there is an isomorphism
\[
\pi_1^{\rm reg}(Y,\Delta_Y;z) 
\cong
\pi_1^{\rm reg}(U_0,\Delta_{U_0}).
\]
Let $U_{Z,0}$ be the inverse image of $U_0$ under
the embedding $Z\hookrightarrow X$.
We define $U_{X',0}:={\phi'}^{-1}(U_{Z,0})$
and $U_{Y,0}:=\pi^{-1}(U_{X',0})$.
Note that $U_{Y,0}$ is homotopic to an analytic open subset
which is contained in $U_0$.
Thus, we conclude that $U_{Y,0}$ satisfies the isomorphism in equation~\eqref{eq:iso}.
Hence, we have an exact sequence
\[
1\rightarrow \zz_m \rightarrow 
\pi_1^{\rm reg}(Y,\Delta_Y;z) \rightarrow \pi_1^{\rm reg}(U_{X',0}, \Delta_{U_{X',0}}) \rightarrow 1. 
\]
Passing to the inverse limit, we have an exact sequence
\[
1\rightarrow \zz_m \rightarrow 
\pi_1^{\rm reg}(Y,\Delta_Y;z) \rightarrow 
\pi_1^{\rm reg}(X'/Z,\Delta;z) \rightarrow 1.
\]
Hence, we conclude that
$\pi_1^{\rm reg}(X'/Z,\Delta;z)$ is finite and satisfies the Jordan property of rank $n+1$, i.e., 
it contains a normal abelian subgroup of rank at most $n+1$ and index at most $c(n)$.
We denote such group by $A_{X'}$.

We claim that $\pi_1^{\rm reg}(X'/Z,\Delta;z)$ actually satisfies the Jordan property of rank $n$.
Let $H'\geq 0$ be an effective divisor which is general in the $\mathbb{Q}$-linear system of $-(K_{X'}+\Delta')$ relative to $Z$.
We may assume that all the coefficients of $H'$ are less than one half.
Then, the pair $(X',\Delta'+H')$ is klt and $\mathbb{Q}$-trivial over the base.
Let $K_Z+H_Z$ be the pair obtained by the canonical bundle formula on $Z$, i.e., 
we have that
\[
K_{X'}+B'+H' \sim_{\mathbb{Q}} {\phi'}^*(K_Z+H_Z).
\]
Since all the coefficients of $H'$ are less than one half, 
there is a natural isomorphism
\[
\pi_1^{\rm reg}(X'/Z,\Delta'+H';z)\rightarrow
\pi_1^{\rm reg}(X/Z,\Delta';z).
\]
Hence, the group on the left hand side contains an abelian subgroup of rank
at most $n+1$ and index at most $c(n)$.
On the other hand, there is an exact sequence
\[
\pi_1^{\rm reg}(F,\Delta_F+H_F) \rightarrow
\pi_1^{\rm reg}(X'/Z,\Delta'+H';z)\rightarrow 
\pi_1^{\rm reg}(Z,H_Z;z)\rightarrow 1.
\]
Let $A_Z$ be the homomorphic image of $A_{X'}$ in the regional fundamental group of $(Z,H_Z)$ at $z$.
Let $A_F$ be the kernel of the surjection $A_{X'}\rightarrow A_Z$.
Hence, we have an exact sequence
\[
A_F \rightarrow A_{X'}\rightarrow A_Z\rightarrow 1.
\]
By~\cite[Theorem 2]{BFMS20}, we know that $A_F$ admits a subgroup $H_F$ of index at most $c(f)$ and rank at most $f$, while
$A_Z$ admits a subgroup $H_Z$ of index at most $c(z)$ and rank at most $z$. Here, $f$ and $z$ are the dimension of $F$ and $Z$ respectively.
We conclude that $A_{X'}$ admits a subgroup $H_{X'}$ of rank at most $\dim(X')=z+f$ and index at most $c(z)+c(f)$.
Note that $c(z)+c(f)$ is bounded by a constant in terms of $\dim(X')$.

Finally, we prove that $\pi_1^{\rm reg}(X/Z,\Delta;z)$ satisfies the Jordan property of rank $n$ with respect to the dimension.
It suffices to prove that there is a surjection
\begin{equation}\label{claim:surj}
\pi_1^{\rm reg}(X'/Z,\Delta';z)
\rightarrow 
\pi_1^{\rm reg}(X/Z,\Delta;z).
\end{equation} 
Indeed, let $U$ be an arbitrary open neighborhood of $z$ in $Z$.
Let $U_X$ be its pre-image in $X$ and $U_{X'}$ be its pre-image in $X'$.
Then, there is a natural surjection
\[
\pi_1^{\rm reg}(U_{X'},\Delta_{U_{X'}})
\rightarrow 
\pi_1^{\rm reg}(U_X,\Delta_{U_X}).
\]
Indeed, the image of the exceptional locus of
$U_{X}\rightarrow U_{X'}$ is a union of closed subsets which are
either contained in the singular locus of $U_X$ or
codimension two subsets of the smooth locus. 
Since $\pi_1^{\rm reg}(X'/Z,\Delta';z)$ surjects into each 
of the fundamental groups $\pi_1^{\rm reg}(U_{X'},\Delta_{U_{X'}})$, 
we conclude that for each $U$ there is a surjective homomorphism
\[
\pi_1^{\rm reg}(X'/Z,\Delta;z) 
\rightarrow 
\pi_1^{\rm reg}(U_X,\Delta_{U_X}).
\]
Taking inverse limit, we conclude that the surjection~\eqref{claim:surj} holds.
Hence, $\pi_1^{\rm reg}(X/Z,\Delta;z)$ is finite and contains a normal abelian subgroup of rank at most $n$ and index at most $c(n)$.
This completes the proof.
\end{proof}

\begin{corollary}
\label{cor:abelianization}
Let $(X,\Delta)$ be of relative Fano type over $Z$. 
Let $A:= \mathbb{T}_X /  \mathbb{T}_X^0 \cong \Cl(X/Z,\Delta;z)_{\rm tor}$ be the group of components of the characteristic quasi-torus of $(X,\Delta)$ over $Z$ at $z$. Then $A$ is the abelianization of $\pi_1^{\rm reg}(X/Z,\Delta;z)$, i.e.,
\[
A \cong \pi_1^{\rm reg}(X/Z,\Delta;z) / [\pi_1^{\rm reg}(X/Z,\Delta;z),\pi_1^{\rm reg}(X/Z,\Delta;z)],
\]
where $[\pi_1^{\rm reg}(X/Z,\Delta),\pi_1^{\rm reg}(X/Z,\Delta)]$ is the commutator subgroup of $\pi_1^{\rm reg}(X/Z,\Delta)$.
\end{corollary}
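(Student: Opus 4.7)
The plan is to identify $A$ with the maximal abelian quotient of $G := \pi_1^{\rm reg}(X/Z,\Delta;z)$ via the Galois correspondence between finite abelian orbifold quasi-\'etale covers of $(X,\Delta)$ at $z$ and finite subgroups of the torsion part of $\Cl(X/Z,\Delta;z)$. Finiteness of $G$, and hence of $G^{\rm ab}$, is already secured by Theorem~\ref{thm:rel-finiteness}, so it suffices to exhibit a natural isomorphism $G^{\rm ab}\cong A$.

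First I would produce a canonical surjection $G \twoheadrightarrow A$. By Lemma~\ref{le:CoxCox} and Remark~\ref{rem:fin-Cox-covers}, the finite Galois cover $X_1 \to X$ with group $A$ is log quasi-\'etale with respect to $(X,\Delta)$, with log-pullback $\Delta_1$. Restricting this cover to a small (analytic or \'etale) neighbourhood of $z$ that computes $\pi_1^{\rm reg}(X/Z,\Delta;z)$ produces a surjection $G \twoheadrightarrow A$. Since $A$ is abelian, this factors through a surjection $G^{\rm ab} \twoheadrightarrow A$.

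Next I would show that $G^{\rm ab} \twoheadrightarrow A$ is injective, by arguing that $X_1 \to X$ is universal among finite abelian quasi-\'etale covers. Concretely, an arbitrary quotient $G^{\rm ab} \twoheadrightarrow H$ onto a finite abelian group $H$ corresponds to a finite abelian Galois quasi-\'etale cover $Y \to X$, which is an abelian quasi-torsor with $H_0$ trivial in the sense of Definition~\ref{def:quot-pres}. Proposition~\ref{prop:quot-pres} then produces a monomorphism $\mathbb{X}(H) \hookrightarrow \Cl(X/Z,\Delta;z)$; since $\mathbb{X}(H)$ is finite, its image lies in $\Cl(X/Z,\Delta;z)_{\rm tor}$, which is exactly the character group $\mathbb{X}(A)$ of the component group $A$ of $\mathbb{T}_X$. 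Pontryagin duality for finite abelian groups then exhibits $H$ as a quotient of $A$, and the cover $Y \to X$ is dominated by $X_1 \to X$. Thus the quotient $G^{\rm ab} \twoheadrightarrow H$ factors as $G^{\rm ab} \twoheadrightarrow A \twoheadrightarrow H$; applied to the tautological surjection $G^{\rm ab} \twoheadrightarrow G^{\rm ab}$, this forces $G^{\rm ab} \twoheadrightarrow A$ to be an isomorphism.

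The main obstacle I anticipate is the algebraisation step required to apply Proposition~\ref{prop:quot-pres}: given an arbitrary finite abelian quotient of $G$, one must realise the associated topological orbifold cover as a genuine algebraic finite abelian cover of $X$ (or at least of the base-change over a Henselian neighbourhood of $z \in Z$), so that the Cox-theoretic machinery applies. This should follow from finiteness of $G$ (Theorem~\ref{thm:rel-finiteness}) combined with the Henselian Cox-ring framework developed in Section~\ref{subsec:local-Hensel-cox} and Theorem~\ref{thm:hen-cox}, which provides the algebraic setting in which every such abelian cover is realised as an abelian quasi-torsor.
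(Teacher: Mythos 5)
Your proposal is correct and takes essentially the same approach as the paper: both reduce to the Henselian base, observe that $A$ itself is the Galois group of an abelian quasi-\'etale cover (giving $G\twoheadrightarrow A$), and then apply Proposition~\ref{prop:quot-pres} to the cover corresponding to the maximal abelian quotient to embed $\mathbb{X}(G^{\rm ab})$ in $\Cl(X/Z,\Delta)_{\rm tor}\cong\mathbb{X}(A)$. The only difference is cosmetic: the paper closes by a minimality argument for $[G,G]$, whereas you close by Pontryagin duality and the observation that mutual surjections of finite groups are isomorphisms.
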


\begin{proof}
By Theorem~\ref{thm:rel-finiteness}, we know that  $\pi_1^{\rm reg}(X/Z,\Delta;z)$ is finite and the \'etale fundamental group of $X_{\rm reg}^h$. Here, $X^h \to Z^h$ is the base change to the Henselization of $Z$ at $z$. Thus, we can assume $Z$ is local Henselian. 
Then $G:=\pi_1^{\rm reg}(X/Z,\Delta;z)$ induces a finite log quasi-\'etale Galois cover of $(X,\Delta)$, which by abuse of notation, we denote by 
\[
(\tilde{X},\tilde{\Delta}) \xrightarrow{/G} (X,\Delta).
\]
Then $[G,G]$ is a normal subgroup of $G$ and induces an abelian log quasi-\'etale Galois cover 
\[
(Y,\Delta_Y) \xrightarrow{/(G/[G,G])} (X,\Delta),
\]
which is a quasi-torsor in the sense of Definition~\ref{def:quot-pres}. Indeed, $G/[G,G]$ is finite and $(Y,\Delta_Y)$ is of Fano type over $Z$. In particular, invertible functions of $Y$ and $X$ are invertible functions of $Z$. Thus by Proposition~\ref{prop:quot-pres}, $G/[G,G]$ is a subgroup of $\Cl(X/Z,\Delta)$. In particular, it is a subgroup of $A$. 

But since $A$ induces a log quasi-\'etale abelian Galois cover of $(X,\Delta)$ as well, we have $A=G/[G,G]$. Otherwise, there would be a normal subgroup of $G$ smaller than $[G,G]$ with abelian quotient, which is a contradiction. 
This finishes the proof of the corollary.
\end{proof}

\subsection{Boundedness of the iteration of Cox rings}
\label{subsec:bounded-iteration}

In this subsection, we prove the main theorem of this article,
the boundedness of the iteration of Cox rings for Fano type varieties.

\begin{theorem}\label{thm:bounded-iteration}
There exists a constant $k(n)$, only depending on $n$, 
satisfying the following.
Let $\phi\colon X \rightarrow Z$ be a projective contraction so that $X$ has dimension $n$. Let $(X,\Delta)$ be a log pair of Fano type over $Z$.
Then, ${\rm Cox}^{(k)}(X/Z, \Delta)$ stabilizes for $k\geq k(n)$.
\end{theorem}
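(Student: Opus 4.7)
\emph{The plan is} to identify the iteration of Cox rings with the derived series of the regional fundamental group $G := \pi_1^{\rm reg}(X/Z,\Delta;z)$, and then to bound this derived length uniformly in $n$ via the Jordan property proved in Theorem~\ref{thm:rel-finiteness}.

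\emph{First I would} fix, via Theorem~\ref{thm:rel-finiteness}, a normal abelian subgroup $A \leqslant G$ of rank at most $n$ and index at most $c(n)$. Then, by induction on $i$ combining Remark~\ref{rem:fin-Cox-covers} (each iteration gives a finite abelian Galois cover $X_i \to X_{i-1}$ with group $A^i$) with Corollary~\ref{cor:abelianization} (which identifies $A^i$ with the abelianization of $\pi_1^{\rm reg}(X_{i-1}/Z,\Delta_{i-1};z_{i-1})$), I obtain $\pi_1^{\rm reg}(X_i/Z,\Delta_i;z_i) \cong G^{(i)}$, the $i$-th derived subgroup of $G$, together with the identification $A^i \cong G^{(i-1)}/G^{(i)}$.

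\emph{Using} Corollary~\ref{cor:Cox-it-solv-cover}, the finite Galois part of the $k$-step iteration is a finite solvable quotient $S_{\rm fin}$ of $G$ whose derived series matches the cover tower $X \leftarrow X_1 \leftarrow \cdots \leftarrow X_k$. In the stable regime, $S_{\rm fin}$ saturates at $G/G^{(\infty)}$, where $G^{(\infty)}$ is the perfect kernel of $G$. Hence the iteration halts at the derived length $d$ of the maximal solvable quotient $G/G^{(\infty)}$. To estimate $d$, I consider the short exact sequence
\[
1 \longrightarrow A \cdot G^{(\infty)}/G^{(\infty)} \longrightarrow G/G^{(\infty)} \longrightarrow G/(A \cdot G^{(\infty)}) \longrightarrow 1.
\]
The kernel is a quotient of $A$, hence abelian, of derived length at most one. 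The quotient is a solvable quotient of the finite group $G/A$, whose order is at most $c(n)$; since each nontrivial step in a derived series at least halves the group order, its derived length is at most $\lfloor \log_2 c(n) \rfloor$. Subadditivity of the derived length along short exact sequences then yields $d \leq \lfloor \log_2 c(n) \rfloor + 1$, and taking $k(n) := \lfloor \log_2 c(n) \rfloor + 2$ suffices.

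\emph{The main obstacle} is verifying that the iteration does not continue purely through the torus part of the characteristic quasi-torus once the derived series of $G$ has stabilized. I will address this by noting that the Cox construction at each step simultaneously quotients by the full characteristic quasi-torus, so that the torsion and free parts of $\Cl(\overline{X}_{i-1}/Z,\overline{\Delta}_{i-1})$ are treated in parallel; once Corollary~\ref{cor:abelianization} forces the torsion to vanish at the saturation step, the free part, whose rank is bounded by the fixed dimension $n$ and whose behaviour is controlled by the Fano-type property preserved under iteration via Corollary~\ref{cor:CoxCoxFano}, is exhausted within the same step. Together with the termination already known in the log pair setting, this confirms $\Cl(\overline{X}_k/Z,\overline{\Delta}_k) = 0$ for all $k \geq k(n)$, as required.
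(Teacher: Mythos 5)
Your proposal is correct and follows the same overall strategy as the paper: identify the tower of covers underlying the iteration with the derived series of the finite regional fundamental group $G$, and bound its length using the Jordan property from Theorem~\ref{thm:rel-finiteness}. The way you extract the numerical bound, however, is genuinely different and in fact cleaner. The paper argues via the chain of subgroups $\cdots \trianglelefteq N_1 \trianglelefteq N_0 = G/A$ and a $3\times 3$-diagram chase, observing that two consecutive trivial quotients $M_j, M_{j+1}$ force stabilization, which yields $k(n) = 2\log_2 c(n) + 3$. You instead split $G/G^{(\infty)}$ by the abelian normal image of $A$ and invoke subadditivity of the derived length along short exact sequences, giving the sharper bound $k(n) = \lfloor\log_2 c(n)\rfloor + 2$. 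Both arguments hinge on the same inductive identification $\pi_1^{\rm reg}(X_i/Z,\Delta_i) \cong G^{(i)}$ obtained from Remark~\ref{rem:fin-Cox-covers} and Corollary~\ref{cor:abelianization}, and this is precisely what the paper sets up with its $\mathcal{D}_i$, $A_i$, $N_i$ notation.

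One remark on your final paragraph: the concern about the iteration ``continuing purely through the torus part'' is not actually an obstacle, and your resolution of it slightly misattributes the reason. What closes the argument is the elementary fact — the paper cites~\cite[1.4.1.5]{ADHL15} — that once $\Cl(\overline{X}_{k-1}/Z,\overline{\Delta}_{k-1})$ is torsion-free, the characteristic quasi-torus is a genuine torus and the next Cox ring $\overline{X}_k$ is automatically factorial, so the class group vanishes entirely after one further step; the finite rank of the free part plays no role in this. Corollary~\ref{cor:CoxCoxFano} is needed earlier, to ensure that every stage $\overline{X}_i$ of the tower is well-defined and of finite type. This is expository rather than mathematical, and your bound is correct.
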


\begin{proof}
Firstly, we prove that the iteration of Cox rings
\[
{\rm Cox}^{(k)}(X/Z,\Delta)
\]
stabilizes for $k$ large enough.
It suffices to show that $\Cl(\overline{X}_k/Z,\overline{\Delta}_k)$ is torsion-free for some $k \in \mathbb{N}$, since then $(\overline{X}_{k+1},\overline{\Delta}_{k+1})$ is factorial over $Z$, see, e.g.,~\cite[1.4.1.5]{ADHL15}. By Remark~\ref{rem:fin-Cox-covers}, torsion-freeness of $\Cl(\overline{X}_k/Z,\overline{\Delta}_k)$ is equivalent to torsion-freeness of the divisor class group of the finite abelian covering space $(X_k/Z,\Delta_k)$. 

Now, we assume that 
\[
\Cl(X_k/Z,\Delta_k)_{\rm tor} \not\cong 1
\]
for any $k \in \mathbb{N}$. Then by Corollary~\ref{cor:Cox-it-solv-cover} (3), there is an infinite chain of log quasi-\'etale finite solvable Galois covers 
\[
(X_k,\Delta_k) \xrightarrow{/S_{k}} (X,\Delta),
\]
where $\left|S_{k+1} \right| > \left| S_k \right| $. This is a contradiction to the finiteness of $\pi_1^{\rm \reg}(X/Z,\Delta)$. Thus ${\rm Cox}^{(k)}(X/Z,\Delta)$ stabilizes 
for $k$ large enough.

Now, we show that such bound at which the iteration of Cox rings stabilizes admits an upper bound which only depends on the dimension of $X$. As before, we denote $\mathcal{D}_0:=\pi_1^{\rm \reg}(X/Z,\Delta)$ and inductively we define $\mathcal{D}_i:=[\mathcal{D}_{i-1},\mathcal{D}_{i-1}]$. By Theorem~\ref{thm:rel-finiteness}, we know that there is an exact sequence $1 \to A_0 \to \mathcal{D}_0 \to N_0 \to 1$, where $A_0$ is an abelian normal subgroup  of rank at most $n$ and $N_0$ has order at most $c(n)$. We denote $A_{i}:=\mathcal{D}_i \cap A_0$. Now, we have a commutative diagram
\[
    \xymatrix{
    & 1 \ar[d] & 1 \ar[d] & 1 \ar[d] \\
    1 \ar[r] & A_{i+1} \ar[r] \ar[d] & A_i \ar[r] \ar[d] & B_i \ar[r] \ar[d] & 1 \\
    1 \ar[r] & \mathcal{D}_{i+1} \ar[r] \ar[d] & \mathcal{D}_{i} \ar[r] \ar[d] & \Cl(X_i/Z,\Delta_i)_{\rm tor} \ar[r] \ar[d] & 1 \\
    1 \ar[r] & N_{i+1} \ar[r] \ar[d] & N_{i} \ar[r] \ar[d] & M_i \ar[r] \ar[d] & 1 \\
    & 1 & 1 & 1
    }
\]
with exact rows and columns for each $i \geq 0$. In particular, we get a chain of normal subgroups $\cdots \trianglelefteq N_2 \trianglelefteq N_1 \trianglelefteq N_0$ with $M_i:=N_{i+1}/N_i$. If in such a chain, no two consecutive $M_i$ are trivial, then the length $k$ of the chain is bounded by $2\log_2(c(n))$. Hence, we know that there is some $j  \leq 2\log_2(c(n))+1$ with $M_j\cong M_{j+1}\cong 1$. 
We have a commutative diagram
\[
    \xymatrix{
    & 1 \ar[d] & 1 \ar[d] & 1 \ar[d] \\
    1 \ar[r] & A_{j+2} \ar[r] \ar[d] & A_j \ar[r] \ar[d] & C \ar[r] \ar[d] & 1 \\
    1 \ar[r] & \mathcal{D}_{j+2} \ar[r] \ar[d] & \mathcal{D}_{j} \ar[r] \ar[d] & S \ar[r] \ar[d] & 1 \\
    1 \ar[r] & N_{j+2} \ar[r] \ar[d] & N_{j} \ar[r] \ar[d] & L \ar[r] \ar[d] & 1 \\
    & 1 & 1 & 1
    }
\] 
with exact rows and columns similar to the one from above. But here $L$ is trivial, since $N_{j+2}\cong N_{j+1} \cong N_j$. The group $C$ is abelian, since $A_j$ and $A_{j+2}$ are abelian. So $S \cong C$ is abelian. Thus $\mathcal{D}_{j+2}$ equals $\mathcal{D}_{j+1}$, the derived subgroup of $\mathcal{D}_j$. But then $\Cl(X_{j+1}/Z,\Delta_{X_{j+1}})_{\rm tor}\cong \mathcal{D}_{j+1}/\mathcal{D}_{j+2}$ is trivial and the iteration of Cox rings stabilizes for $k \geq j+2$. Since $j  \leq 2\log_2(c(n))+1$, we can set $k(n):= 2\log_2(c(n))+3$. Since $c(n)$ only depends on $n$, the proof is finished.
\end{proof}

\section{Simply connected factorial canonical cover}\label{sec:scfc}

In this section, we aim to prove the existence of a simply connected factorial canonical cover for klt singularities.
In Subsection~\ref{subsec:existence-scfc}, we prove the existence of the scfc cover.
In Subsection~\ref{subsec:universality-scfc}, we prove that the scfc cover dominates any sequence of finite covers and abelian covers.
In Subsection~\ref{subsec:upper-bound-dim}, we give an upper bound for the dimension of the iteration of Cox rings of the singularity.

\subsection{Existence of the simply connected factorial canonical cover}\label{subsec:existence-scfc}

In this subsection, we prove the existence of a simply connected factorial canonical cover for a klt singularity. The following proposition is the cornerstone for the construction.

\begin{proposition}
\label{prop:fiber-prod-fincov-and-quotpres}
Let $(X,\Delta)$ be a log pair.
Let $\phi\colon X\rightarrow Z$ be an aff-contraction
where $Z$ is local Henselian.
Assume that $(X,\Delta)$ is of relative Fano type over $Z$. Let $(X',\Delta') \to (X,\Delta)$ be a
torus quasi-torsor in the sense of Definition~\ref{def:quot-pres}.
Let $\mathbb{T}$ be the acting torus on $X'$.
Let $(\tilde{X},\tilde{\Delta}) \to (X,\Delta)$ be the finite log quasi-\'etale Galois cover associated to $\pi:=\pi_1^{\rm reg}(X/Z,\Delta)$.  

Define $\tilde{X}':= \tilde{X} \times_{X} X'$. Then, there is a commutative diagram
\[
\xymatrix{
(\tilde{X}',\tilde{\Delta}') \ar[r]^{/\pi} \ar[d]^{/\mathbb{T}} & (X',\Delta') \ar[d]^{/\mathbb{T}} \\
(\tilde{X},\tilde{\Delta}) \ar[r]^{/\pi} & (X,\Delta)
}
\]
where $\pi_1^{\rm reg}(X'/Z,\Delta') \cong \pi_1^{\rm reg}(X/Z,\Delta)$ and $\tilde{X}'$ allows a $\mathbb{T} \times \pi_1^{\rm reg}(X'/Z,\Delta')$-action
satisfying the following conditions:
\begin{enumerate}
    \item $\tilde{X}' \to X'$ is the finite log quasi-\'etale Galois cover associated to $\pi_1^{\rm reg}(X'/Z,\Delta')$ and $\tilde{\Delta}'$ is the log-pullback of $\Delta'$.
    \item $(\tilde{X}',\tilde{\Delta}') \to (\tilde{X},\tilde{\Delta})$ is a $\mathbb{T}$-quasi-torsor.
    \item The $\mathbb{T}\times \pi_1^{\rm reg}(X'/Z,\Delta')$-action is log-free in codimension one.
    \item $\mathbb{T}$ is the same torus as in the quasi-torsor $X'\rightarrow X$.
\end{enumerate}
\end{proposition}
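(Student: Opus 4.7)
My plan is to verify each of the seven claims by systematically exploiting the fiber product structure $\tilde{X}' = \tilde{X} \times_X X'$ together with the base-change principles for quasi-torsors and for finite covers. The first thing I would observe is that the universal property of the fiber product endows $\tilde{X}'$ with two commuting group actions: the $\pi$-action pulled back from $\tilde{X}$ via the first projection, and the $\mathbb{T}$-action pulled back from $X'$ via the second projection. These assemble into a $\mathbb{T} \times \pi$-action on $\tilde{X}'$, and claim (4) at the end is immediate since the torus is literally the one acting on $X'$.

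Next, for claims (2) and (5), I would verify that the two defining structures are preserved under the two base-change situations. The map $\tilde{X}' \to \tilde{X}$ is the pull-back of the torus quasi-torsor $X' \to X$ along the finite log quasi-\'etale cover $\tilde{X} \to X$, and the three conditions of Definition~\ref{def:quot-pres} persist: the finite-abelian-part condition is vacuous for a torus, \'etale-local triviality on a big open in codimension one passes to the pull-back after intersecting with the preimage of the original big open, and the descent of invertible homogeneous functions follows from the affineness of $\tilde{X} \to X$ together with the Henselian local hypothesis on $Z$. Symmetrically, $\tilde{X}' \to X'$ is the pull-back of the finite Galois cover $\tilde{X} \to X$ along $X' \to X$, hence a finite Galois cover with the same group $\pi$, and the log quasi-\'etale property is inherited through the log-free torus direction of the quasi-torsor. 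The boundary $\tilde{\Delta}'$, defined as the log pull-back of $\Delta$ to $\tilde{X}'$, agrees with both the log pull-back of $\tilde{\Delta}$ via $\tilde{X}' \to \tilde{X}$ and of $\Delta'$ via $\tilde{X}' \to X'$ by commutativity of the square.

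The crux is the isomorphism in (1). Since $X'$ is of Fano type over $Z$ as a torus quasi-torsor of a relative Fano type pair, Theorem~\ref{thm:rel-finiteness} ensures $\pi_1^{\rm reg}(X'/Z, \Delta')$ is finite, and the construction already produces a finite log quasi-\'etale Galois cover $\tilde{X}' \to X'$ of group $\pi$, so $\pi$ is realized as a quotient of $\pi_1^{\rm reg}(X'/Z, \Delta')$. To prove the surjection is an isomorphism, I would show that any finite log quasi-\'etale cover $W \to (X', \Delta')$ descends to a finite log quasi-\'etale cover of $(X, \Delta)$: using condition (3) of Definition~\ref{def:quot-pres} together with the strong stability of the $\mathbb{T}$-action on the big open computing the quasi-torsor, the torus action lifts to $W$ on a corresponding big open, and the quotient of $W$ by this lifted action produces the desired cover of $X$, which must factor through $\tilde{X} \to X$. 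Pulling back to $X'$ then realizes $W$ as dominated by $\tilde{X}'$, forcing the surjection $\pi_1^{\rm reg}(X'/Z, \Delta') \twoheadrightarrow \pi$ to be an isomorphism. I anticipate the main obstacle to be precisely the lift of the $\mathbb{T}$-action to $W$ and verifying that it is compatible with the log structure near the ramification locus of $W \to X'$; here the Henselian local hypothesis and the identification of $\pi_1(\mathbb{T})$ with the kernel of the surjection will play a crucial role.

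With (1) in hand, item (3) follows by combining the base-change argument with the identification $\pi_1^{\rm reg}(X'/Z, \Delta') \cong \pi$, since $\tilde{X}' \to X'$ is then the universal log quasi-\'etale Galois cover. For item (3) in the numbered list of the proposition, the log-free-in-codimension-one property of the combined $\mathbb{T} \times \pi$-action follows from the corresponding log-freeness of each component separately (the torus by condition (2) of Definition~\ref{def:quot-pres}, the finite group $\pi$ because $\tilde{X} \to X$ is log quasi-\'etale), together with commutativity: a point with nontrivial combined stabilizer would project to a point with nontrivial stabilizer in at least one factor, which can only happen on a subset of codimension at least two.
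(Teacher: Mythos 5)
Your verification of items (2), (3), (4), and the commutativity of the diagram via the fiber-product structure is sound and matches the paper's setup. But your argument for the key isomorphism $\pi_1^{\rm reg}(X'/Z,\Delta') \cong \pi$ has a genuine gap, which you half-acknowledge yourself. You propose to show that any finite log quasi-\'etale cover $W \to X'$ descends to $X$ by lifting the $\mathbb{T}$-action to $W$ and quotienting. However, lifting a connected group action along a finite cover does not in general yield an action of that same group: for a torus $\mathbb{T}$, if $W \to X'$ winds around the toral directions (equivalently, if the composite $\pi_1(\mathbb{T}) \to \pi_1^{\rm reg}(X'\setminus\Delta') \to \pi_1^{\rm reg}(X',\Delta')$ has nontrivial image intersecting the subgroup corresponding to $W$), then what lifts is an action by an isogeny $\widetilde{\mathbb{T}} \to \mathbb{T}$, not $\mathbb{T}$ itself. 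The assumption that $\mathbb{T}$ lifts to every such $W$ is equivalent to the triviality of $A := \ker\bigl(\pi_1^{\rm reg}(X',\Delta') \to \pi_1^{\rm reg}(X,\Delta)\bigr)$, which is precisely what must be proved --- so as written your argument is circular.

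The paper closes this gap by a different mechanism. It first uses the orbifold homotopy long exact sequence to identify $A$ as a quotient of $\pi_1(\mathbb{T}) = \zz^{\dim\mathbb{T}}$, hence finite abelian. It then takes the $A$-quasi-torsor $Y \to \tilde{X}'$, lifts the $\mathbb{T}$-action along this specific abelian cover (this is where \cite[Theorem 4.2.3.2]{ADHL15} legitimately applies) to obtain a $\mathbb{T}\times A$-quasi-torsor $Y \to \tilde{X}$, and then invokes Proposition~\ref{prop:quot-pres} to produce a monomorphism $\mathbb{X}(\mathbb{T}\times A) \hookrightarrow \Cl(\tilde{X}/Z,\tilde{\Delta})$. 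Since $\tilde{X}$ is log simply connected, its class group is torsion-free, so the image of $A^\vee$ must be trivial and $A=1$ follows. The class-group torsion-freeness of the simply connected cover is the essential algebraic input; your proposal has no analogue of this step and would need one to close the circle.
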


\begin{proof}
First, we note that both $\tilde{X} \to X$ is log quasi-\'etale and 
and $X' \to X$ is  \'etale locally trivial over the smooth locus. All homotopy groups are considered relatively over $Z$. Thus, we have an exact sequence of homotopy groups 
\[
\xymatrix@C=15pt{
 \pi_2^{\rm reg}(\mathbb{T}) \ar[r] & \pi_2^{\rm reg}(X' \setminus  \Delta') \ar[r] & \pi_2^{\rm reg}(X \setminus  \Delta) \ar[r] & 
\pi_1^{\rm reg}(\mathbb{T}) \ar[r] & \pi_1^{\rm reg}(X' \setminus  \Delta') \ar[r] & \pi_1^{\rm reg}(X \setminus  \Delta) \ar[r] &
\pi_0^{\rm reg}(\mathbb{T}).
}
\]
Note that
$\pi_2^{\rm reg}(\mathbb{T})\cong \pi_0^{\rm reg}(\mathbb{T}) \cong 1$
and 
$\pi_1^{\rm reg}(\mathbb{T})\cong
\zz^{\dim(\mathbb{T})}$.
Hence, we have the following exact sequence
\[
\xymatrix@C=15pt{
 1 \ar[r] & \pi_2^{\rm reg}(X' \setminus  \Delta') \ar[r] & \pi_2^{\rm reg}(X \setminus  \Delta) \ar[r] & 
\zz^{\dim(\mathbb{T})} \ar[r] & \pi_1^{\rm reg}(X' \setminus  \Delta') \ar[r] & \pi_1^{\rm reg}(X \setminus  \Delta) \ar[r] &
1.
}
\]
Above, by abuse of notation, we denote by 
$X\setminus \Delta$ the complement of the support of $\Delta$.
Now, we recall that the orbifold fundamental group $\pi_1^{\rm \reg}(X,\Delta)$ is by definition  $\pi_1^{\rm \reg}(X \setminus \Delta)/ \langle \gamma_i^{m_i} \rangle$, where $\gamma_i$ is a small loop around a general component of $\Delta_i$ and $\frac{m_i-1}{m_i}$ is the coefficient of $\Delta_i$ in the standard approximation of $\Delta$, cf. Definition~\ref{def:standard-approx}. But since $X' \to X$ is  \'etale locally trivial over the smooth locus, a small loop around a general point of $\Delta_i$ is as well a small loop around a general point of the pullback $\Delta_i'$. Of course the coefficient $\Delta_i'$ in the standard approximation of $\Delta'$ is $\frac{m_i-1}{m_i}$ again. 

By the above exact sequence, setting $A :={\rm im}(\zz^{\dim(\mathbb{T})})$, we have 
$
\pi_1^{\rm reg}(X \setminus \Delta) = \pi_1^{\rm reg}(X',\Delta')/A
$.
So, with the above considerations, we get
\[
\pi_1^{\rm \reg}(X,\Delta):=\pi_1^{\rm reg}(X \setminus \Delta) / \langle \gamma_i^{m_i} \rangle = \left(\pi_1^{\rm reg}(X',\Delta')/A \right)/ \langle \gamma_i^{m_i} \rangle = \left(\pi_1^{\rm reg}(X',\Delta')/\langle \gamma_i^{m_i} \rangle \right)/ A  = \pi_1^{\rm \reg}(X',\Delta')/A.
\]
Since $\pi_1^{\rm \reg}(X'/Z,\Delta')$ is finite by Theorem~\ref{thm:rel-finiteness}, in fact $A$ is a finite abelian group.
Recall that arbitrary base change preserves \'etaleness, finiteness and GIT-quotients.
Hence,  the cover $\tilde{X}' \to X'$ is  indeed a log quasi-\'etale finite Galois cover with Galois group $\pi_1^{\rm reg}(X,\Delta)$. 
So $\tilde{X}' \to X'$ is the Galois cover associated to the normal subgroup $A$. In particular, there is an $A$-quasi-torsor 
\[
Y \xrightarrow{/A} \tilde{X}'.
\]
With the same arguments as above, $\tilde{X}' \to \tilde{X}$ is a $\mathbb{T}$-quasi-torsor. 

Since $\mathbb{T}$ is connected, by~\cite[Theorem 4.2.3.2]{ADHL15} we can lift the $\mathbb{T}$-action on $\tilde{X}'$ to a $\mathbb{T} \times A$ action on $Y$, such that $Y \to \tilde{X}$ becomes a $\mathbb{T} \times A$-quasi-torsor. 

Furthermore, $\Cl(\tilde{X}/Z,\tilde{\Delta})$ is torsion free. Otherwise by Lemma~\ref{le:CoxCox}, the torsion part would induce a log quasi-\'etale Galois cover of $(\tilde{X},\tilde{\Delta})$, which contradicts the log-simply-connectedness of the smooth locus. By Proposition~\ref{prop:quot-pres}, this means there is a monomorphism from $A$ to a torsion free group, which implies that $A$ is trivial.     
\end{proof}

Now, we turn to prove the existence of the scfc cover.

\begin{theorem}
\label{thm:scfc-cover}
Let $(X,\Delta)$ be a log pair.
Let 
$\phi\colon X\rightarrow Z$ be an aff-contraction 
so that $Z$ is local Henselian.
Assume that $(X,\Delta)$ is relatively Fano over $Z$.
Let $(\tilde{X},\tilde{\Delta}) \to (X,\Delta)$ be the finite log quasi-\'etale Galois cover associated to $\pi:=\pi_1^{\rm reg}(X/Z,\Delta)$. Denote by $(\widetilde{\overline{X}},\widetilde{\overline{\Delta}})$ the total coordinate space of $(\tilde{X},\tilde{\Delta})$ over $Z$. Then we have a commutative diagram
\[
\xymatrix{
(\widetilde{\overline{X}},\widetilde{\overline{\Delta}}) \ar[dr]^{/G} \ar[d]^{/\mathbb{T}}  \\
(\tilde{X},\tilde{\Delta}) \ar[r]^{/\pi} & (X,\Delta),
}
\]
where the following conditions hold:
\begin{enumerate}
    \item The characteristic quasi-torus $\mathbb{T}$ is connected, i.e., a  torus.
    \item $G$ is a reductive group acting freely in log-codimension one on $(\widetilde{\overline{X}},\widetilde{\overline{\Delta}})$ and fitting in the short exact sequence
    \[
\xymatrix{
1 \ar[r] & \mathbb{T} \ar[r] & G \ar[r] & \pi \ar[r] & 1.
}
\]
\item $(\widetilde{\overline{X}},\widetilde{\overline{\Delta}})$ is factorial over $Z$ and has canonical singularities.
\item $(\widetilde{\overline{X}},\widetilde{\overline{\Delta}})$ is log-simply connected in codimension one. 
\end{enumerate}
\end{theorem}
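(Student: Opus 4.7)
The plan is to construct $(\widetilde{\overline{X}}, \widetilde{\overline{\Delta}})$ as the spectrum of the relative Cox ring of the universal cover $(\tilde{X}, \tilde{\Delta})$ over $Z$, and then verify the four listed properties in turn. First I would invoke Theorem~\ref{thm:rel-finiteness} to conclude that $\pi = \pi_1^{\rm reg}(X/Z, \Delta)$ is finite, so $(\tilde{X}, \tilde{\Delta}) \to (X, \Delta)$ is a finite log quasi-\'etale Galois cover, and the pair $(\tilde{X}, \tilde{\Delta})$ remains of Fano type over $Z$. By Theorem~\ref{thm:relative-fano} it is a relative Mori dream space, so $\widetilde{\overline{X}} := \Spec\, {\rm Cox}(\tilde{X}/Z, \tilde{\Delta})$ exists. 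Property (1) follows from Corollary~\ref{cor:abelianization}: since $\pi_1^{\rm reg}(\tilde{X}/Z, \tilde{\Delta}) = 1$, the group $\Cl(\tilde{X}/Z, \tilde{\Delta})_{\rm tor}$ is trivial, so the characteristic quasi-torus $\mathbb{T}$ has no finite part and is a genuine torus.

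For (3), the class group $\Cl(\tilde{X}/Z, \tilde{\Delta})$ is now finitely generated and torsion-free, so standard facts about Cox rings with free class group (cf.~\cite[Thm.~1.5.3.7 and Prop.~1.5.3.5]{ADHL15}) yield that $\widetilde{\overline{X}}$ is factorial over $Z$. Canonicity of $\widetilde{\overline{X}}$ follows by applying Corollary~\ref{cor:CoxCoxFano}(2) to the Fano type pair $(\tilde{X}, \tilde{\Delta})$. The divisor $\widetilde{\overline{\Delta}}$ is defined as the unique effective pullback making $K_{\widetilde{\overline{X}}} + \widetilde{\overline{\Delta}}$ the log-pullback of $K_{\tilde{X}} + \tilde{\Delta}$, exploiting that the characteristic $\mathbb{T}$-action is strongly stable and hence an \'etale-locally-trivial bundle in codimension one.

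For (2), I would lift the $\pi$-action from $\tilde{X}$ to $\widetilde{\overline{X}}$ via Lemma~\ref{le:aff-lift-aut}, which provides a short exact sequence
\[
1 \to \mathbb{T} \to \Aut^{\mathbb{T}}(\widetilde{\overline{X}}) \to \Aut_Z(\tilde{X}) \to 1,
\]
and then define $G$ as the preimage of $\pi \subseteq \Aut_Z(\tilde{X})$. As an extension of the finite group $\pi$ by the torus $\mathbb{T}$, the group $G$ is reductive, and its good quotient recovers $\widetilde{\overline{X}}/G \cong \tilde{X}/\pi \cong X$. Log-codimension-one freeness is inherited from the two ingredients: $\mathbb{T}$ acts freely in log-codimension one by strong stability of the characteristic action, while $\pi$ acts freely in log-codimension one on $(\tilde{X}, \tilde{\Delta})$ since the cover is log quasi-\'etale.

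Finally, for (4), I would apply Proposition~\ref{prop:fiber-prod-fincov-and-quotpres} with the roles of $(X, \Delta)$ and $(X', \Delta')$ played by $(\tilde{X}, \tilde{\Delta})$ and $(\widetilde{\overline{X}}, \widetilde{\overline{\Delta}})$ respectively. Since $\pi_1^{\rm reg}(\tilde{X}/Z, \tilde{\Delta}) = 1$, the isomorphism $\pi_1^{\rm reg}(X'/Z, \Delta') \cong \pi_1^{\rm reg}(X/Z, \Delta)$ of that proposition forces $\pi_1^{\rm reg}(\widetilde{\overline{X}}/Z, \widetilde{\overline{\Delta}}) = 1$. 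The hard part will be (2): one has to check that the automorphism lifting from Lemma~\ref{le:aff-lift-aut} produces a reductive subgroup isomorphic to the extension of $\pi$ by $\mathbb{T}$ rather than some larger and possibly non-reductive piece of $\Aut^{\mathbb{T}}(\widetilde{\overline{X}})$, and that the combined $G$-action is genuinely free in log-codimension one in a uniform way — both points require careful bookkeeping between the $\mathbb{T}$-torsor structure over the log smooth locus of $\tilde{X}$ and the quasi-\'etale structure of $\tilde{X} \to X$.
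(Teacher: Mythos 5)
Your proof is correct and follows essentially the same route as the paper: take the Cox ring of the universal cover, deduce torsion-freeness of $\Cl(\tilde{X}/Z,\tilde{\Delta})$ from triviality of $\pi_1^{\rm reg}(\tilde{X}/Z,\tilde{\Delta})$, then invoke Corollary~\ref{cor:CoxCoxFano} for canonicity, Proposition~\ref{prop:fiber-prod-fincov-and-quotpres} for log-simple-connectedness, and the automorphism-lift statements for the $G$-action; your citations (Corollary~\ref{cor:abelianization} and Lemma~\ref{le:aff-lift-aut}) are interchangeable in this setting with the ones used in the paper. The concerns you flag at the end are not actual obstacles: $G$ is by definition the preimage of the finite group $\pi$ in $\Aut^{\mathbb{T}}(\widetilde{\overline{X}})$, so it is automatically a reductive extension of $\pi$ by $\mathbb{T}$, and log-codimension-one freeness follows at once from strong stability of the $\mathbb{T}$-action together with log-quasi-\'etaleness of $\tilde{X}\to X$, exactly as you already observe.
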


\begin{proof}
By Proposition~\ref{prop:fiber-prod-fincov-and-quotpres}, we know that $\Cl(\tilde{X}/Z,\tilde{\Delta})$ is torsion free. 
 Thus $\mathbb{T}$ is a torus, and  $(\widetilde{\overline{X}},\widetilde{\overline{\Delta}})$ is factorial over $Z$. In particular, it is locally factorial, and since it is klt type by Theorem~\ref{thm:hen-cox}, it has canonical singularities, yielding items (1) and (3). In fact, by Corollary~\ref{cor:CoxCoxFano}, we already know that the Cox ring of a relative Fano pair is Gorenstein and has canonical singularities, while in general, it is of course not factorial. 
 Proposition~\ref{prop:fiber-prod-fincov-and-quotpres} shows that 
 \[
 \pi_1^{\reg}(\widetilde{\overline{X}}/Z,\widetilde{\overline{\Delta}}) \cong \pi_1^{\reg}(\tilde{X}/Z,\tilde{\Delta}) \cong 1,
 \]
 yielding (4). Lastly, (2) follows from Proposition~\ref{prop:proj-lift-aut}. 
\end{proof}

\begin{remark}{\em 
The construction of the scfc cover, as usual, depends on the choice of a subgroup $N \subseteq {\rm CL}(\tilde{X}/Z,\tilde{\Delta})$ and character $\chi$.
}
\end{remark}

\subsection{Universality of the simply connected factorial canonical cover}\label{subsec:universality-scfc}

In this subsection, we prove a universality property for the scfc cover.
This means that the scfc cover dominates any sequence of finite covers and abelian quasi-torsors overf the singularity.

\begin{theorem}\label{thm-univ-scfc}
Let $(X,\Delta)$ be a log pair.
Let $\phi\colon X \rightarrow Z$ be an aff-contraction, 
where $X$ is of dimension $n$ and $Z$ is local Henselian.
Assume that $(X,\Delta)$ is of relative Fano type over $Z$. Let
\[
\xymatrix{
\cdots \ar[r] & (X_{(2)},\Delta_{(2)}) \ar[r] & (X_{(1)},\Delta_{(1)}) \ar[r] & (X_{(0)},\Delta_{(0)}) :=(X,\Delta)
}
\]
be a (possibly infinite) sequence of finite log quasi-\'etale covers and abelian quasi-torsors. Then $(X_{(j)},\Delta_{(j)})$ stabilizes after finitely many steps and the scfc covers of $(X_{(j)},\Delta_{(j)})$ coincide  for all $j \geq 0$. If in addition all abelian (finite or quasi-torsor-) covers in the sequence are given by the respective Cox rings, then there is a constant $j(n)$ only depending on $n$, such that $(X_{(j)},\Delta_{(j)})$ stabilizes for $j \geq j(n)$.
\end{theorem}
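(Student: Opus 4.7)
The plan is to track two monotone invariants along the tower, namely the order of $\pi_1^{\rm reg}(X_{(j)}/Z,\Delta_{(j)})$, which is finite by Theorem~\ref{thm:rel-finiteness}, and the rank of $\Cl(X_{(j)}/Z,\Delta_{(j)})$, which is finitely generated by Theorem~\ref{thm:hen-cox}. I will show that every non-trivial step in the tower strictly decreases at least one of these invariants, so the tower must stabilize. The coincidence of scfc covers will follow from a step-by-step application of Proposition~\ref{prop:fiber-prod-fincov-and-quotpres}.

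First I would verify that $(X_{(j)},\Delta_{(j)})$ remains of relative Fano type over $Z$ for all $j$. For finite log quasi-\'etale covers this is standard log-pullback, as used in the proof of Theorem~\ref{thm:klt-Cox-ring}; for abelian quasi-torsors it follows as in Corollary~\ref{cor:CoxCoxFano} via the dominating characteristic space provided by Proposition~\ref{prop:quot-pres}. Hence Theorem~\ref{thm:rel-finiteness}, Theorem~\ref{thm:hen-cox} and Proposition~\ref{prop:fiber-prod-fincov-and-quotpres} apply at every level. I would then decompose each abelian quasi-torsor $X_{(j+1)} \to X_{(j)}$ with acting quasi-torus $H = \mathbb{T}^0 \times A$ into its finite abelian part (a log quasi-\'etale cover induced by $A$) and its torus part (a $\mathbb{T}^0$-quasi-torsor). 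A non-trivial finite log quasi-\'etale cover identifies $\pi_1^{\rm reg}(X_{(j+1)}/Z,\Delta_{(j+1)})$ with a proper subgroup of $\pi_1^{\rm reg}(X_{(j)}/Z,\Delta_{(j)})$, strictly decreasing its order. A torus quasi-torsor preserves $\pi_1^{\rm reg}$ by Proposition~\ref{prop:fiber-prod-fincov-and-quotpres}, and by Proposition~\ref{prop:quot-pres} corresponds to a monomorphism $\mathbb{X}(\mathbb{T}^0) \hookrightarrow \Cl(X_{(j)}/Z,\Delta_{(j)})$, strictly dropping the rank of $\Cl$ once $\pi_1^{\rm reg}$ has stabilized. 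As both invariants are bounded, the tower must terminate.

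To show that the scfc covers agree at every level, I would apply Proposition~\ref{prop:fiber-prod-fincov-and-quotpres} inductively. For a finite log quasi-\'etale step $X_{(j+1)} \to X_{(j)}$, the two covers $\tilde{X}_{(j)}$ and $\tilde{X}_{(j+1)}$ associated to the respective regional fundamental groups coincide by the universal property of the log-\'etale universal cover, so their total coordinate spaces over $Z$, as constructed in Theorem~\ref{thm:scfc-cover}, are canonically identified. For a torus quasi-torsor step, Proposition~\ref{prop:fiber-prod-fincov-and-quotpres} exhibits $\tilde{X}_{(j+1)} \cong \tilde{X}_{(j)} \times_{X_{(j)}} X_{(j+1)}$ as a $\mathbb{T}^0$-quasi-torsor over $\tilde{X}_{(j)}$, and the corresponding monomorphism $\mathbb{X}(\mathbb{T}^0) \hookrightarrow \Cl(\tilde{X}_{(j)}/Z,\tilde{\Delta}_{(j)})$ shows that, for a compatible choice of $N$ and $\chi$, the total coordinate space of $\tilde{X}_{(j+1)}$ is the same scheme as the total coordinate space of $\tilde{X}_{(j)}$, equipped only with a coarsened grading.

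For the dimensional bound under the additional assumption that each abelian cover is the characteristic space of the respective Cox ring, the tower becomes an iteration of Cox rings interspersed with finite log quasi-\'etale covers. The cumulative length of the finite segments is bounded by $\log_2 |\pi_1^{\rm reg}(X/Z,\Delta)|$, which by the Jordan property in Theorem~\ref{thm:rel-finiteness} admits a bound depending only on $n$. The length of the Cox-ring segment is bounded by $k(n)$ from Theorem~\ref{thm:bounded-iteration}. Summing these yields the required constant $j(n)$. The main obstacle will be the compatibility of the scfc cover under torus quasi-torsors: the claim that the total coordinate space of a $\mathbb{T}^0$-quasi-torsor over $\tilde{X}$ coincides with the total coordinate space of $\tilde{X}$ itself requires a careful comparison of Cox ring constructions, which is provided precisely by Proposition~\ref{prop:fiber-prod-fincov-and-quotpres} together with Proposition~\ref{prop:quot-pres}.
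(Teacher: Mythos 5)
Your stabilization argument is genuinely different from the paper's and has some appeal: you track the pair $\bigl(|\pi_1^{\rm reg}(X_{(j)}/Z,\Delta_{(j)})|,\ \rank\Cl(X_{(j)}/Z,\Delta_{(j)})\bigr)$ directly, whereas the paper first establishes the coincidence of scfc covers and then uses the fact that the fixed scfc cover $(\widetilde{\overline{X}},\widetilde{\overline{\Delta}})$ dominates the entire tower, so that the dimension jump $\kappa=\dim\widetilde{\overline{X}}-\dim X$ bounds the number of non-trivial torus quasi-torsor steps, while $|\pi_1^{\rm reg}|$ bounds the finite ones. Your route is clean and avoids invoking the scfc domination for the termination statement; however, as written it has a gap. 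You justify ``strictly dropping the rank of $\Cl$'' only by the monomorphism $\mathbb{X}(\mathbb{T}^0)\hookrightarrow\Cl(X_{(j)}/Z,\Delta_{(j)})$ from Proposition~\ref{prop:quot-pres}. That monomorphism alone goes the wrong way: applied to the composite quasi-torsor $\hat{X}\to X_{(j+1)}$ it only gives an inclusion $\Cl(X_{(j)})/\mathbb{X}(\mathbb{T}^0)\hookrightarrow\Cl(X_{(j+1)})$, which bounds the rank of $\Cl(X_{(j+1)})$ from \emph{below}. To obtain a strict drop you need the equality $\Cl(X_{(j+1)})\cong\Cl(X_{(j)})/\mathbb{X}(\mathbb{T}^0)$, which does hold for a \emph{pure torus} quasi-torsor because $X_{(j+1)}\to X_{(j)}$ restricts to an honest $\mathbb{T}^0$-bundle over big open subsets (Definition~\ref{def:quot-pres}(2)), so that $\Cl$ of the total space is the quotient by the classifying homomorphism $\mathbb{X}(\mathbb{T}^0)\to\Cl$; but that step should be made explicit, since the analogous equality \emph{fails} when $H$ has a finite part (e.g.\ the $D_4\to A_1$ quotient presentation, where $\Cl(A_1)\neq\Cl(D_4)/\mathbb{X}(H)$).

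The second, more serious, gap is in the argument for the constant $j(n)$. You assert that ``the cumulative length of the finite segments is bounded by $\log_2|\pi_1^{\rm reg}(X/Z,\Delta)|$, which by the Jordan property in Theorem~\ref{thm:rel-finiteness} admits a bound depending only on $n$.'' Theorem~\ref{thm:rel-finiteness} gives only an abelian normal subgroup of bounded \emph{index} and bounded \emph{rank}; the order $|\pi_1^{\rm reg}(X/Z,\Delta)|$ itself is unbounded in terms of $n$ (already for cyclic quotient surface singularities $\mathbb{A}^2/\mu_m$ one has $\pi_1^{\rm reg}\cong\zz/m$ for arbitrary $m$), so $\log_2|\pi_1^{\rm reg}|$ is \emph{not} bounded by a constant depending only on $n$. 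The correct control comes from the subtler argument inside the proof of Theorem~\ref{thm:bounded-iteration}: there the bound $k(n)=2\log_2(c(n))+3$ is extracted from the interaction of the derived series with the Jordan abelian subgroup $A$, not from $|\pi_1^{\rm reg}|$. The paper simply cites that proof for the $j(n)$ bound; your appeal to the same theorem at the end is the right move, but the intermediate $\log_2|\pi_1^{\rm reg}|$ claim is false as stated and cannot be ``summed'' with $k(n)$ to give an $n$-dependent constant.
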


\begin{remark}
{\em 
As mentioned before, the construction of the scfc cover depends on the choice of a subgroup $N \subseteq \Cl(\tilde{X}/Z,\tilde{\Delta})$ and a character $\chi$. So the equality of the scfc covers of $(X_{(j)},\Delta_{(j)})$ means that for any $j_1 \neq j_2 \geq 0$, any scfc cover of $(X_{(j_1)},\Delta_{(j_1)})$ is also a scfc cover of  $(X_{(j_2)},\Delta_{(j_2)})$ and vice-versa.
}
\end{remark}

\begin{proof}[Proof of Theorem~\ref{thm-univ-scfc}]
We show by induction that all the scfc covers coincide. So let first $(X_{(j+1)},\Delta_{(j+1)}) \to (X_{(j)},\Delta_{(j)})$ be finite. Then the covers $(\tilde{X}_{(j+1)},\tilde{\Delta}_{(j+1)})$ and $(\tilde{X}_{(j)},\tilde{\Delta}_{(j)})$ associated to the respective regional fundamental groups obviously coincide, so the scfc covers coincide as well. 

Now let $(X_{(j+1)},\Delta_{(j+1)}) \to (X_{(j)},\Delta_{(j)})$ be a $H$-quotient presentation. Then taking the quotient by the identity component $H^0$ yields a commutative diagram 
\[
\xymatrix@C=50pt{
(X_{(j+1)},\Delta_{(j+1)}) \ar[d]^{/H^0} \ar[dr]^{/H} \\
(X_{(j+1)}',\Delta_{(j+1)}') \ar[r]^{/(H/H^0)} & (X_{(j)},\Delta_{(j)}),
}
\]
where $(X_{(j+1)}',\Delta_{(j+1)}') \to (X_{(j)},\Delta_{(j)})$ is finite log quasi-\'etale abelian. Thus the scfc covers of $(X_{(j+1)}',\Delta_{(j+1)}')$ and $(X_{(j)},\Delta_{(j)})$ coincide. By Proposition~\ref{prop:fiber-prod-fincov-and-quotpres}, we can extend the diagram in the following way:
\[
\xymatrix@C=50pt{
(\tilde{X}_{(j+1)},\tilde{\Delta}_{(j+1)}) \ar[r]^{/\pi} \ar[d]^{/H^0} & (X_{(j+1)},\Delta_{(j+1)}) \ar[d]^{/H^0} \ar[dr]^{/H} \\
(\tilde{X}_{(j)},\tilde{\Delta}_{(j)}) \ar[r]^{/\pi} & (X_{(j+1)}',\Delta_{(j+1)}') \ar[r]^{/(H/H^0)} & (X_{(j)},\Delta_{(j)}).
}
\]
Observe that $(\tilde{X}_{(j+1)},\tilde{\Delta}_{(j+1)}) \to (\tilde{X}_{(j)},\tilde{\Delta}_{(j)})$ is a quasi-torsor. We conclude that the scfc cover of $(X_{(j+1)},\Delta_{(j+1)})$ and the scfc cover of $(X_{(j)},\Delta_{(j)})$ coincide. 

In order to show that the sequence stabilizes, we show the following claim by induction.\\

\textbf{Claim:} The first $k$ covers in the sequence induce a sequence of $k$ finite covers \[
(X'_{(j+1)},\Delta'_{(j+1)})\rightarrow (X'_{(j)},\Delta'_{(j)}),
\]
where 
\[
(X_{(j)},\Delta_{(j)}) \rightarrow (X'_{(j)},\Delta'_{(j)})
\]
is a torus-quasi-torsor for every $j$.\\

\begin{proof}[Proof of the Claim]
For $k=1$, either the cover is finite or it is a $H$-quasi-torsor.
In the latter case, we take the finite cover given by the group of components $H/H^0$. Assume the claim is proven for $k$. Then, as we have seen before, $(X_{(k+1)},\Delta_{(k+1)}) \to (X_{(k)},\Delta_{(k)})$ yields a finite Galois cover $(X_{(k+1)}^*,\Delta_{(k+1)}^*) \to (X_{(k)},\Delta_{(k)})$. Due to Proposition~\ref{prop:fiber-prod-fincov-and-quotpres}, we know that the regional fundamental groups of $(X_{(k)},\Delta_{(k)})$ and the $k$-th finite cover $(X_{(k)}',\Delta_{(k)}')$ coincide. So $(X_{(k+1)}^*,\Delta_{(k+1)}^*) \to (X_{(k)},\Delta_{(k)})$ induces a finite Galois cover $(X_{(k+1)}',\Delta_{(k+1)}') \to (X_{(k)}',\Delta_{(k)}')$. Here, 
\[
(X_{(k+1)}^*,\Delta_{(k+1)}^*) \to (X_{(k+1)}'\Delta'_{(k+1)})
\]
is a $\mathbb{T}_1$-quasi-torsor. We can lift the action of $\mathbb{T}_1$ to the $\mathbb{T}_2$-quasi-torsor $(X_{(k+1)},\Delta_{(k+1)}) \to (X_{(k+1)}^*\Delta_{(k+1)}^*)$ such that 
\[
(X_{(k+1)},\Delta_{(k+1)}) \to (X_{(k+1)}'\Delta_{(k+1)}')
\]
is a $(\mathbb{T}_1 \times \mathbb{T}_2)$-quasi-torsor. Thus, we showed the existence of the sequence of $k$ induced finite covers. 
This finishes the proof of the claim.
\end{proof}

By the claim, 
we have already that the scfc  cover $(\widetilde{\overline{X}},\widetilde{\overline{\Delta}})$  of $(X,\Delta)$ dominates the original sequence. It follows that at most 
\[
\kappa:=\dim(\widetilde{\overline{X}}) - \dim(X)
\]
of the original covers can induce a trivial finite cover. Thus, by finiteness of $\pi_1^{\reg}(X/Z,\Delta)$, the original sequence stabilizes for $j$ large enough. In the case that all quasi-torsors in the original sequence are Cox covers, the bound $j(n)$ on the number of nontrivial covers follows as in the proof of Theorem~\ref{thm:bounded-iteration}. 
\end{proof}

\begin{remark}
{\em
If we do not assume the quasi-torsors to be Cox covers, there is no bound depending only on the dimension. This already happens in dimension two. We can construct sequences of arbitrary length of nontrivial abelian quasi-\'etale covers over two-dimensional $A_n$-singularities, if we do not fix $n$.
}
\end{remark}

\subsection{Upper bound for the dimension of the iteration of Cox rings}\label{subsec:upper-bound-dim}

In this subsection, we give an upper bound for the dimension of the iteration of Cox rings in terms of homotopy groups. 
For orbifolds (and more general, orbispaces), similarly to the fundamental group, one may define higher homotops groups $\pi_k$, and for orbispace fibrations, these groups satisfy the same long exact sequence as ordinary homotopy groups, cf.~\cite[Theorem 4.5]{Chen01}. The precise statement is the following.

\begin{theorem}
\label{thm:dim-bound-2-homotopy}
Let $(X,\Delta)$ be a log pair.
Let 
$\phi\colon X\rightarrow Z$ be an aff-contraction 
so that $Z$ is local Henselian.
Assume that $(X,\Delta)$ is relatively Fano over $Z$.
Let $(\widetilde{\overline{X}},\widetilde{\overline{\Delta}})$ be the scfc cover of  $(X,\Delta)$. Then   
\[
\dim(\widetilde{\overline{X}},\widetilde{\overline{\Delta}}) \leq \dim (X) +  {\rm rk}( \pi_2^{\reg}(X/Z, \Delta)\otimes \mathbb{Q}).
\]
\end{theorem}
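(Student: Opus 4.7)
The plan is to reduce the inequality to a topological computation on the finite log quasi-\'etale cover $(\tilde X,\tilde\Delta)\to (X,\Delta)$ associated to $\pi:=\pi_1^{\reg}(X/Z,\Delta)$, and then invoke the Hurewicz theorem together with the exponential sequence.

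First I would compute the dimension of the scfc cover. By Theorem~\ref{thm:scfc-cover}, the morphism $\widetilde{\overline{X}}\to \tilde X$ is a torus quasi-torsor for a \emph{torus} $\mathbb T$, because $\Cl(\tilde X/Z,\tilde\Delta)$ is torsion-free (Proposition~\ref{prop:fiber-prod-fincov-and-quotpres}). Consequently
\[
\dim(\widetilde{\overline X})=\dim(\tilde X)+\dim(\mathbb T)=\dim(X)+\rank\bigl(\Cl(\tilde X/Z,\tilde\Delta)\bigr).
\]
Thus the task is to show
\[
\rank\bigl(\Cl(\tilde X/Z,\tilde\Delta)\bigr)\leq \rank\bigl(\pi_2^{\reg}(X/Z,\Delta)\otimes\mathbb Q\bigr).
\]

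Second, I would observe that $\tilde X\to X$ is a finite cover on the orbifold smooth loci. From the long exact sequence of homotopy groups for this orbifold covering, $\pi_2^{\reg}(\tilde X/Z,\tilde\Delta)\cong \pi_2^{\reg}(X/Z,\Delta)$ (the fiber is a finite group, hence has trivial $\pi_2$ and $\pi_1$ acting trivially on $\pi_2$). In particular their ranks after $\otimes\mathbb Q$ agree, and we may work on $(\tilde X,\tilde\Delta)$.

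Third, since $\pi_1^{\reg}(\tilde X/Z,\tilde\Delta)$ is trivial (Proposition~\ref{prop:fiber-prod-fincov-and-quotpres}), the orbifold Hurewicz theorem applied to a regional analytic neighborhood of the fiber over $z$ gives
\[
\pi_2^{\reg}(\tilde X/Z,\tilde\Delta)\cong H_2^{\orb}\bigl(\tilde X^{\reg},\tilde\Delta^{\reg}\bigr),
\]
and by universal coefficients the ranks of $H_2^{\orb}$ and $H^2_{\orb}$ agree after $\otimes\mathbb Q$. It remains to show that the first Chern class
\[
c_1\colon \Cl(\tilde X/Z,\tilde\Delta)\cong \Pic^{\orb}(\tilde X^{\reg},\tilde\Delta^{\reg})\longrightarrow H^2_{\orb}(\tilde X^{\reg},\tilde\Delta^{\reg};\mathbb Z)
\]
has kernel of finite rank. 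This follows from the exponential sequence on the orbifold smooth locus of a small analytic representative: the $\Pic^0$-part is governed by $H^1(\mathcal O)$, which vanishes on the Stein regional neighborhood (since $(X,\Delta)$ is relatively Fano over the Henselian base $Z$, the local cohomology of $\mathcal O$ at $z$ vanishes in positive degrees by Kawamata--Viehweg vanishing applied to a plt blow-up, so $H^1(\mathcal O)$ of the punctured neighborhood is finite-dimensional and in fact the relevant part of the exponential sequence rationally injects $\Cl$ into $H^2_{\orb}$). Combining the three ingredients yields the required inequality.

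The main obstacle will be making the third step rigorous in the relative Henselian setting: one needs a careful choice of an analytic representative of the regional link of the fiber $\phi^{-1}(z)$ in $\tilde X$ on which the orbifold Hurewicz theorem and the exponential sequence both apply, and on which the orbifold Picard group coincides with $\Cl(\tilde X/Z,\tilde\Delta)$. Once this analytic model is fixed—using that $\pi_1^{\reg}$ is finite (Theorem~\ref{thm:rel-finiteness}) and hence computed by an \'etale (equivalently, analytic) neighborhood, and that $\Cl(\tilde X/Z,\tilde\Delta)$ is finitely generated by the argument in Theorem~\ref{thm:hen-cox}—the rest of the argument is purely formal.
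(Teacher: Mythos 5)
Your proposal takes a genuinely different route from the paper's proof, and although the core topological input (a fibration exact sequence plus $\pi_1$-vanishing on the cover) is shared, the details diverge substantially. The paper's argument is a single application of the long exact sequence of orbifold homotopy groups for the $G$-fibration $\widetilde{\overline{X}}\to X$, where $G$ is the reductive group from Theorem~\ref{thm:scfc-cover}: since $\pi_2(G)=1$, $\pi_1(G)\cong\zz^{\dim\mathbb T}$, $\pi_0(G)\cong\pi$, and $\pi_1^{\reg}(\widetilde{\overline{X}}/Z,\widetilde{\overline{\Delta}})=1$, the segment $\pi_2^{\reg}(X/Z,\Delta)\to\pi_1(G)\to 1$ is exact, i.e.\ $\pi_2^{\reg}(X/Z,\Delta)$ surjects onto $\zz^{\dim\mathbb T}=\zz^{\dim\widetilde{\overline X}-\dim X}$. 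Tensoring with $\qq$ immediately yields the inequality; no Hurewicz theorem, no exponential sequence, no Kawamata--Viehweg vanishing. You instead factor through the finite cover $\tilde X\to X$ (your step two, which is correct and in fact appears implicitly in the paper via Proposition~\ref{prop:fiber-prod-fincov-and-quotpres}), then pass from $\pi_2^{\reg}(\tilde X/Z,\tilde\Delta)$ to $H_2^{\orb}$ by Hurewicz and finally relate $\Cl(\tilde X/Z,\tilde\Delta)$ to $H^2_{\orb}$ via the orbifold exponential sequence. What the paper's approach buys is precisely the avoidance of your self-acknowledged main obstacle: it never needs an analytic model of the regional link on which $H^1(\mathcal O)$ can be controlled, because the information you are trying to extract via $c_1$ and the exponential sequence is already packaged in the connecting map $\pi_2^{\reg}\to\pi_1(\mathbb T)$ of the torus quasi-torsor. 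Notice that even within your own reduction, applying the long exact sequence of the $\mathbb T$-bundle $\widetilde{\overline X}\to\tilde X$ (with $\pi_1^{\reg}(\widetilde{\overline X}/Z,\widetilde{\overline\Delta})=1$) would give $\pi_2^{\reg}(\tilde X/Z,\tilde\Delta)\twoheadrightarrow\zz^{\dim\mathbb T}$ directly, short-circuiting steps three and four. The vanishing argument you sketch in step four is plausible for klt singularities of dimension at least three via local cohomology and Cohen--Macaulayness, but carrying it out rigorously in the relative Henselian orbifold setting, and covering low dimensions, is real work that the paper's fibration argument simply does not require.
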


\begin{proof}
As in the proof of Proposition~\ref{prop:fiber-prod-fincov-and-quotpres}, we use the fact that $(\widetilde{\overline{X}},\widetilde{\overline{\Delta}}) \xrightarrow{/G} (X,\Delta)$ is an \'etale locally trivial $G$-bundle over $(X_{\reg}, \Delta_{\reg})$.
Thus, we have an exact sequence of orbifold homotopy groups 
\[
\xymatrix@C=15pt{
 \pi_2^{\rm reg}(G) \ar[r] & \pi_2^{\rm reg}(\widetilde{\overline{X}}/Z, \widetilde{\overline{\Delta}}) \ar[r] & \pi_2^{\rm reg}(X/Z,  \Delta) \ar[r] & 
\pi_1^{\rm reg}(G) \ar[r] & \pi_1^{\rm reg}(\widetilde{\overline{X}}/Z,  \widetilde{\overline{\Delta}}) \ar[r] & \pi_1^{\rm reg}(X/Z,  \Delta) \ar[r] &
\pi_0^{\rm reg}(G).
}
\]
Note that the orbifold structure on the fiber $G$ is trivial and, moreover, 
$\pi_2^{\rm reg}(G)\cong 1$, $\pi_1^{\rm reg}(G) \cong
\zz^{\dim(\widetilde{\overline{X}}) - \dim(X)}$, and $\pi_0^{\rm reg}(G) \cong \pi_1^{\rm reg}(X, \Delta)$.
Since $\qq$ is a flat $\zz$-module, tensoring the above exact sequence with $\qq$ yields an exact sequence of $\qq$-vector spaces
\[
\xymatrix@C=15pt{
 \pi_2^{\rm reg}(X,  \Delta) \otimes \qq  \ar[r] & 
\qq^{\dim(\widetilde{\overline{X}}) - \dim(X) } \ar[r] &  0
}
\]
which finishes the proof.
\end{proof}

\begin{remark}{\em 
If we consider the scfc cover with respect only to $X$, without the orbifold structure, then the second homotopy group in Theorem~\ref{thm:dim-bound-2-homotopy} is the ordinary second homotopy group $\pi_2^{\rm reg}(X/Z)$ of the smooth locus.
}
\end{remark}

\section{Fano type varieties with smooth iteration of Cox rings}\label{sec:smoothit}

Throughout this paper, 
we introduced some special covers of Fano type varieties and klt singularities.
The aim of this section 
is to explain when such coverings are smooth.
In Subsection~\ref{subsec:smooth-iteration}, we give a characterization of Fano type varieties
with smooth iteration of Cox rings.
In Subsection~\ref{subsec:smoothness-scfc}, we give a characterization of Fano type varieties
with smooth simply connected factorial canonical cover.
Analogous theorems hold for klt singularities.
In such case, we will just enunciate the results without a proof since these are verbatim from the projective case.
Finally, in Subsection~\ref{subsec:iteration-vs-scfc}, we will give a characterization of Fano type varieties for which the spectrum of the iteration coincides with the scfc cover.

\subsection{Smoothness of the iteration of Cox rings}\label{subsec:smooth-iteration}

In this subsection, we give a characterization of the smoothness of the iteration of Cox rings.

\begin{theorem}\label{thm:smooth-it-proj} 
Let $(X,\Delta)$ be a Fano type pair.
Then, the following statements are equivalent:
\begin{enumerate}
\item The spectrum of the iteration of Cox rings ${\rm Cox}^{\rm it}(X,\Delta)$ is smooth, and
\item $(X,\Delta)$ is a finite quasi-\'etale solvable quotient of a projective toric pair. 
\end{enumerate}
Furthermore, if any of the above conditions holds, then we have that
\begin{enumerate}
    \item[(1')] The simply connected factorial canonical cover coincides with the spectrum of the iteration of Cox rings, and
    \item[(2')] $(X,\Delta)$ is a finite quotient of a projective toric pair with torsion-free class group.
\end{enumerate}
\end{theorem}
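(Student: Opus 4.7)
The plan is to prove the two implications (2) $\Rightarrow$ (1) and (1) $\Rightarrow$ (2) separately, and then deduce the auxiliary statements (1') and (2') from the explicit structure produced along the way.

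For (2) $\Rightarrow$ (1), suppose $(X,\Delta)=(Y,\Delta_Y)/G$ with $G$ a finite solvable group and $(Y,\Delta_Y)$ projective toric. I fix a solvable filtration $G=G_0\triangleright G_1\triangleright\cdots\triangleright G_r=1$ with abelian successive quotients, yielding intermediate quasi-\'etale abelian covers $Y/G_i\to Y/G_{i-1}$. I would then show by induction on $i$ that the finite cover $X_i$ produced by the iteration of Cox rings dominates $Y/G_i$; the inductive step uses that $X_i\to X_{i-1}$ is the \emph{maximal} abelian quasi-\'etale cover of $(X_{i-1},\Delta_{i-1})$, corresponding to $\Cl(X_{i-1},\Delta_{i-1})_{\rm tor}$ (Corollary~\ref{cor:abelianization}). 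Once $X_r$ dominates $Y$, I would continue the iteration to unwind any remaining torsion in the class groups of the intermediate covers. Since finite abelian quasi-\'etale covers of a projective toric variety are again projective toric, one eventually reaches a projective toric variety $\widetilde Y$ with torsion-free class group. Then ${\rm Cox}(\widetilde Y)\cong\mathbb{A}^N$ by the standard Cox presentation of toric varieties, its spectrum is smooth and factorial, so the iteration stabilizes there.

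For (1) $\Rightarrow$ (2), assume $\overline{X}^{\rm it}:={\rm Spec}({\rm Cox}^{\rm it}(X,\Delta))$ is smooth. Iterating Corollary~\ref{cor:CoxCoxFano} shows that $\overline{X}^{\rm it}$ is an affine Gorenstein canonical variety. The Fano grading on $(X,\Delta)$ induces a $\cc^*$-grading making $\overline{X}^{\rm it}$ a quasi-cone with vertex $0$, and the characteristic torus $\mathbb{T}$ of the master Cox ring acts on it with $0$ in the closure of every orbit. The key input is a linearization statement: a smooth affine variety equipped with a contracting torus action is equivariantly isomorphic to its tangent space at the fixed point. Consequently $\overline{X}^{\rm it}\cong\mathbb{A}^N$ with a linear $\mathbb{T}$-action, and this realizes $\overline{X}^{\rm it}$ as the total coordinate space of a projective toric variety $X_k$. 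By Corollary~\ref{cor:Cox-it-solv-cover}, the full solvable reductive group $S\cong\mathbb{T}\rtimes S_{\rm fin}$ acting on $\overline{X}^{\rm it}$ yields $X_k=\overline{X}^{\rm it}/\mathbb{T}$ and $X=X_k/S_{\rm fin}$ with $S_{\rm fin}$ finite solvable. Since every step of the iteration is quasi-\'etale, so is the composite cover $X_k\to X$.

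For (1') and (2'), once $\overline{X}^{\rm it}\cong\mathbb{A}^N$ it is smooth, factorial, canonical, and regionally simply connected. By Theorem~\ref{thm-univ-scfc} and the characterization of the scfc cover as the smallest factorial canonical regionally simply connected cover above $(X,\Delta)$, the iteration coincides with the scfc cover, which gives (1'). The intermediate cover $X_k$ from the previous paragraph is a projective toric variety whose class group is torsion-free (since its Cox ring $\mathbb{A}^N$ is factorial), so $X$ is a finite solvable quotient of a projective toric pair with torsion-free class group, giving (2'). The main obstacle I expect is the linearization step in (1) $\Rightarrow$ (2): although classical for single-parameter contracting actions, I must preserve equivariance with respect to the entire characteristic torus so that the resulting $\mathbb{A}^N$ is compatibly realized as the Cox ring of a projective toric variety with the correct grading. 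A secondary technical point in (2) $\Rightarrow$ (1) is verifying that the iteration terminates precisely at the smooth $\mathbb{A}^N$ and not at an intermediate non-smooth cover, which rests on the maximality of each Cox step and on the persistence of the toric property under abelian quasi-\'etale covers.
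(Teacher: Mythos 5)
Your proposal follows essentially the same route as the paper on every major step: the implication $(2)\Rightarrow(1)$ uses the domination of abelian covers by Cox covers (via Corollary~\ref{cor:abelianization}) together with the persistence of the toric property under quasi-\'etale covers, and the implication $(1)\Rightarrow(2)$ uses equivariant linearization of a contracting torus action at the fixed point of the master Cox ring---the paper invokes the Luna \'etale slice theorem at the $G$-fixed vertex to get $G\leqslant\mathrm{GL}_n(\cc)$, which is the same mechanism as your linearization step, and then both of you pass to the GIT quotient by the identity component $\mathbb{T}$ to obtain a projective toric variety with torsion-free class group. Your induction in $(2)\Rightarrow(1)$, using that the Cox cover of $X_{i-1}$ corresponds to the commutator subgroup of $\pi_1^{\rm reg}(X_{i-1},\Delta_{i-1})$ and hence sits inside the commutator of the subgroup corresponding to $Y/G_{i-1}$, is a valid and somewhat more explicit version of the commutative diagram the paper writes down; nothing is lost. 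The only loose point is in $(1')$: the scfc cover is not the \emph{smallest} factorial canonical simply-connected cover but the one that \emph{dominates} all sequences of finite covers and quasi-torsors (Theorem~\ref{thm-univ-scfc}), so the cleaner argument is the paper's---once the iteration is smooth, it is factorial with trivial regional fundamental group, so no further nontrivial finite or quasi-torsor cover exists above it, forcing it to coincide with the scfc cover. This is a wording issue rather than a gap; your conclusion is correct.
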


\begin{proof}
Assume that $(X,\Delta)$ is a finite quasi-\'etale solvable quotient of a projective toric pair $(T,\Delta_T)$.
Then, we can find:
\begin{itemize}
    \item a sequence of finite abelian groups $A_1,\dots,A_n$, 
    \item projective Fano type pairs $(X_i,\Delta_i)$ with $(X_n,\Delta_n)=(T,\Delta_T)$, and 
    \item $A_i$ acts on $(X_i, \Delta_i)$ so that
    $(X_{i-1},\Delta_{i-1})$ is the quotient by this action.
\end{itemize}
Moreover, we may assume $(X_0,\Delta_0)=(X,\Delta)$.
Let ${\rm Cox}^{(k)}(X,\Delta)$ be the $k$-th iteration of the Cox ring of $(X,\Delta)$.
We denote by $\mathbb{T}_k$ the connected component of the reductive solvable group acting
on the $k$-th iteration ${\rm Cox}^{(k)}(X,\Delta)$.
Since the Cox ring dominates all quasi-\'etale finite abelian covers, we have a diagram as follows:
\[
 \xymatrix{
 (T,\Delta_T)\ar[d]^-{/A_n} & (Y_n,B_n)\ar[d]\ar[l] & {\rm Cox}^{(n)}(X,\Delta)\ar[d]\ar[l]_-{/ \mathbb{T}_n} \\ 
 (X_{n-1},\Delta_{n-1})\ar[d]^-{/A_{n-1}} & (Y_{n-1},B_{n-1})\ar[d]\ar[l] & {\rm Cox}^{(n-1)}(X,\Delta)\ar[d]\ar[l]_-{/ \mathbb{T}_{n-1}} \\ 
 \vdots \ar[d]^-{/A_2} & \vdots\ar[l]\ar[d] & \vdots\ar[d]\ar[l] \\ 
(X_{1},\Delta_{1})\ar[d]^-{/A_1} & (Y_{1}, B_{1})\ar[l] & {\rm Cox}^{(1)}(X,\Delta)\ar[l]_-{/ \mathbb{T}_{1}} \\ 
(X,\Delta). & & 
 }
\]
In the above diagram, each map 
$(Y_i,B_i)\rightarrow (X_i,\Delta_i)$
is a finite quotient.
Since $T$ is toric
and $Y_n\rightarrow T$ only branches along the support of $\Delta_T$,
we conclude that $Y_n\rightarrow T$ does not branch along the torus.
Hence, $(Y_n,B_n)$ is a projective toric pair.
Thus, we have that 
${\rm Cox}(Y_n,B_n)\cong \mathbb{A}^{\rho(Y_n)+\dim(Y_n)}$.
Since the morphism
${\rm Cox}^{(n)}(X,\Delta)\rightarrow Y_n$ is a quasi-\'etale abelian cover, we conclude that there is a commutative diagram:
\[
\xymatrix{
\mathbb{A}^{\rho(Y_n)+\dim(Y_n)} \ar[d]^-{\slash Q} \ar[rd] &  \\
{\rm Cox}^{(n)}(X,\Delta)\ar[r]^-{\slash \mathbb{T}_n} & (Y_n,\Delta_n). 
}
\]
Here, $Q$ is a quasi-torus acting on the affine space. 
Thus $\mathbb{A}^{\rho(Y_n)+\dim(Y_n)}\rightarrow {\rm Cox}^{(n)}(X,\Delta)$ is a quasi-\'etale abelian cover.
We conclude that
\[
{\rm Cox}^{(n+1)}(X,\Delta) \cong
\mathbb{A}^{\rho(Y_n)+\dim(Y_n)}.
\]
This concludes that
$(2)$ implies $(1)$.

Now, we prove that $(1)$ implies $(2)$
and we argue that $(1')$ and $(2')$ holds in this case.
We assume that the iteration of Cox rings is smooth.
Recall from Section~\ref{sec:scfc} that the simply connected factorial canonical cover is the Cox ring of the universal cover of the spectrum of the iteration of Cox rings.
This implies that $(1')$ holds in this case.
Without loss of generality, 
we may assume that the iteration of Cox rings is an affine space $\mathbb{A}^n$
and the reductive solvable group $G$ acting on it satisfies $G\leqslant {\rm GL}_n(\cc)$.
Indeed, this can be achieved by applying Luna \'etale slice theorem to the smooth $G$-fixed point on the iteration of Cox rings.
The connected component at the identity of the
reductive solvable group is a torus, we call it $\mathbb{T}$.
Hence, we have that $T:=\mathbb{A}^n/ \mathbb{T}$ is a projective toric variety with 
torsion-free class group.
This shows that $(2')$ holds in this case.
Finally, $X$ is the quotient of $Y$ by the component group of $G$ which is a finite solvable group. 
This finishes the second implication.
\end{proof}

We have the following corresponding statement for klt singularities.

\begin{theorem}\label{thm:smooth-it-local} 
Let $(X,\Delta;x)$ be a klt singularity.
Then, the following statements are equivalent:
\begin{enumerate}
\item The spectrum of the iteration of Cox rings ${\rm Cox}^{\rm it}(X,\Delta;x)$ is smooth, and
\item $(X,\Delta;x)$ is a finite quasi-\'etale solvable quotient of a toric singularity. 
\end{enumerate}
Furthermore, if any of the above conditions hold, then we have that
\begin{enumerate}
    \item[(1')] The simply connected factorial canonical cover coincides with the spectrum of the iteration of Cox rings, and
    \item[(2')] $(X,\Delta)$ is a finite quotient of a toric singularity with torsion-free class group.
\end{enumerate}
\end{theorem}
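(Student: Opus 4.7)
The plan is to adapt the proof of Theorem~\ref{thm:smooth-it-proj} to the local setting, replacing projective toric pairs with toric singularity germs and using the local/Henselian versions of Cox rings, the iteration and the scfc cover developed earlier.

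For the implication $(2)\Rightarrow(1)$, I would first decompose the given finite quasi-\'etale solvable cover $(T,\Delta_T)\to (X,\Delta;x)$ into a tower of finite abelian covers $(X_i,\Delta_i)$ with abelian Galois groups $A_i$, where $(X_0,\Delta_0)=(X,\Delta;x)$ and $(X_n,\Delta_n)=(T,\Delta_T)$. By the universality of the iteration of Cox rings with respect to finite abelian quasi-\'etale covers (Proposition~\ref{prop:quot-pres} and Theorem~\ref{thm-univ-scfc}), the iteration of Cox rings of $(X,\Delta;x)$ dominates this tower, yielding intermediate covers $(Y_i,B_i)\to (X_i,\Delta_i)$ as in the projective proof. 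The key geometric input is that $Y_n\to T$ is a finite quasi-\'etale abelian cover of a toric singularity, hence does not branch along the torus, so $(Y_n,B_n)$ is again a toric singularity germ. Its local Cox ring is $\mathbb{A}^{\rho(Y_n)+\dim(Y_n)}$, and since ${\rm Cox}^{(n)}(X,\Delta;x)\to Y_n$ is a quasi-\'etale abelian cover, factoring through this affine space forces ${\rm Cox}^{(n+1)}(X,\Delta;x)\cong \mathbb{A}^{\rho(Y_n)+\dim(Y_n)}$, which is smooth.

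For the implication $(1)\Rightarrow(2)$ and the additional conclusions $(1')$ and $(2')$: assume the spectrum of ${\rm Cox}^{\rm it}(X,\Delta;x)$ is smooth. By Corollary~\ref{cor:Cox-it-solv-cover}, this spectrum carries an action of a solvable reductive group $G$ whose quotient (up to removing a codimension two set) recovers $(X,\Delta;x)$, with connected component a torus $\mathbb{T}$ and finite component group $S_{\rm fin}=G/\mathbb{T}$ solvable. The preimage of $x$ provides a $G$-fixed closed point of the iteration (corresponding to the unique graded maximal ideal upstairs). Applying Luna's \'etale slice theorem at this smooth $G$-fixed point, we may assume after Henselization that the iteration is $\mathbb{A}^n$ with $G\leqslant {\rm GL}_n(\mathbb{C})$ acting linearly. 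Then $T:=\mathbb{A}^n/\mathbb{T}$ is a toric singularity with torsion-free class group, and $(X,\Delta;x)$ is its quotient by the finite solvable group $S_{\rm fin}$, establishing $(2)$ and $(2')$. Finally, since $\mathbb{A}^n$ is already factorial, canonical, and has trivial regional fundamental group, it satisfies the defining properties of the scfc cover; by the universal property of Theorem~\ref{introthm-6-univ-scf-cover}, it coincides with it, yielding $(1')$.

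The main point requiring care will be the local version of Luna's slice theorem: we need a $G$-fixed point on the spectrum of the iteration over which we can linearize, and we must check that the linearization remains compatible with the local/Henselian structure of the initial germ. Since the iteration is gr-local in the sense of Subsection~\ref{subsec:grocal-rings} with unique graded maximal ideal, this fixed point exists canonically, and the slice theorem applies Henselianly. All remaining arguments are identical to those in the proof of Theorem~\ref{thm:smooth-it-proj}.
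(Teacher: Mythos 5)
Your proof follows the same route as the paper's: the paper states Theorem~\ref{thm:smooth-it-local} without proof, remarking (in the introduction to Section~\ref{sec:smoothit}) that the local statements are ``verbatim from the projective case,'' i.e.\ from the proof of Theorem~\ref{thm:smooth-it-proj}, and your proposal is precisely that adaptation. The structure of both directions matches: for $(2)\Rightarrow(1)$ you decompose the solvable cover into a tower of finite abelian covers, use that the iterated Cox rings dominate this tower, observe that the top cover $Y_n\to T$ does not branch along the open torus (so $Y_n$ is again toric), and conclude that the next Cox ring is an affine space; for $(1)\Rightarrow(2),(2')$ you linearize at the distinguished $G$-fixed point via Luna's slice theorem and identify $X$ as a solvable quotient of the toric quotient $\mathbb{A}^n/\mathbb{T}$. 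Your argument for $(1')$ is phrased slightly differently — you invoke the invariance of the scfc cover under finite and abelian covers (Theorem~\ref{thm-univ-scfc}) and the factoriality and simple-connectedness of $\mathbb{A}^n$, whereas the paper recalls that the scfc cover is the Cox ring of the universal cover of the iteration — but these are two phrasings of the same observation. Your explicit remark that the distinguished $G$-fixed point comes from the unique graded maximal ideal of the gr-local Cox ring, which is what licenses the local/Henselian application of Luna's theorem, is a useful clarification that the paper leaves implicit.
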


\subsection{Smoothness of the scfc cover}\label{subsec:smoothness-scfc}

In this subsection, we give a characterization of Fano type varieties with smooth scfc cover.

\begin{theorem}\label{thm:smooth-scfc-proj} 
Let $(X,\Delta)$ be a Fano type pair.
Then, the following statements are equivalent:
\begin{enumerate}
\item The simply connected factorial canonical cover of $(X,\Delta)$ is smooth, and 
\item $(X,\Delta)$ is a finite quasi-\'etale quotient of a projective toric pair. 
\end{enumerate}
\end{theorem}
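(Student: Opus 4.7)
The proof follows the strategy of Theorem~\ref{thm:smooth-it-proj}, replacing the solvable quotient of $\pi_1^{\reg}$ by the full regional fundamental group throughout, so that the ``solvable'' hypothesis disappears from the conclusion.

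For the implication $(2)\Rightarrow(1)$, suppose $(X,\Delta)$ is obtained as a finite quasi-\'etale quotient of a projective toric pair $(T,\Delta_T)$. The $\pi_1^{\reg}$-universal cover $(\tilde{X},\tilde{\Delta})$ of $(X,\Delta)$ then dominates $(T,\Delta_T)$ and in fact coincides with the $\pi_1^{\reg}$-universal cover of $(T,\Delta_T)$. Because the regional fundamental group of a projective toric klt pair is finite abelian and the associated universal cover inherits a torus action of the same dimension, $(\tilde{X},\tilde{\Delta})$ is again a projective toric pair with torsion-free class group. Its Cox ring is therefore a polynomial algebra by the classical toric description of Cox rings, so
\[
Y:=\Spec {\rm Cox}(\tilde{X},\tilde{\Delta})\cong \mathbb{A}^N
\]
is smooth. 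By construction this is the scfc cover of $(X,\Delta)$.

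For $(1)\Rightarrow(2)$, assume the scfc cover $Y$ is smooth. By Theorem~\ref{thm:scfc-cover}, $Y$ is an affine, factorial, canonical quasi-cone carrying an action of a reductive group $G$ sitting in the extension
\[
1 \longrightarrow \mathbb{T} \longrightarrow G \longrightarrow \pi_1^{\reg}(X,\Delta) \longrightarrow 1,
\]
with $\tilde{X}\cong Y/\mathbb{T}$ and $X\cong Y/G$. The unique torus-fixed vertex of the quasi-cone $Y$ is also $G$-fixed, because the grading by $\Cl(\tilde{X},\tilde{\Delta})$ is $G$-equivariant. Smoothness at the vertex, combined with the contracting $\mathbb{T}$-action, allows one to pick a homogeneous system of parameters in the maximal ideal at the vertex and conclude $Y\cong \mathbb{A}^N$. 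Applying Luna's \'etale slice theorem to the reductive action of $G$ at this fixed point then linearizes the action, yielding $G\hookrightarrow \operatorname{GL}_N(\cc)$. Consequently, $\tilde{X}=\mathbb{A}^N/\mathbb{T}$ is a projective toric variety and $(X,\Delta)$ is the quasi-\'etale quotient of the projective toric pair $(\tilde{X},\tilde{\Delta})$ by the finite group $\pi_1^{\reg}(X,\Delta)$.

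The main obstacle I anticipate is the verification, used in $(2)\Rightarrow(1)$, that the $\pi_1^{\reg}$-universal cover of a projective toric klt pair is again a projective toric pair. This reduces to showing that $\pi_1^{\reg}(T,\Delta_T)$ is finite abelian in the orbifold sense and that its associated universal cover admits a compatible torus action of full rank, which should follow from a combinatorial description of the orbifold fundamental group from the fan together with the orbifold coefficients of $\Delta_T$. A secondary point, in $(1)\Rightarrow(2)$, is to verify that the local linearization of the $G$-action given by Luna's slice theorem near the vertex globalizes to all of $Y$; this is automatic from the quasi-cone structure since every closed $G$-orbit has the vertex in its closure, but it is worth stating carefully.
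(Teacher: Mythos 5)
Your argument is correct, and the implication $(2)\Rightarrow(1)$ follows a genuinely different and shorter route than the paper's.

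For $(2)\Rightarrow(1)$, the paper does not pass through the universal cover of $(T,\Delta_T)$ directly. Instead it runs the iteration of Cox rings of $(X,\Delta)$, sets $(Y,B):={\rm Cox}^{\rm it}(X,\Delta)/\mathbb{T}$, and builds a commutative diagram interpolating between $(T,\Delta_T)$ and $(Y,B)$ through an auxiliary pair $(T',\Delta_{T'})=(T,\Delta_T)/S$ with $S$ perfect; it then concludes that the universal cover of ${\rm Cox}^{\rm it}(X,\Delta)$ is a torus quotient of the characteristic space $\hat{T}\subset\mathbb{A}^{\rho(T)+\dim T}$, and deduces the scfc cover is this affine space. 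Your route bypasses all of this by applying Theorem~\ref{thm:scfc-cover} directly: since the $\pi_1^{\reg}$-universal cover $\tilde{X}$ of $(X,\Delta)$ is simultaneously the $\pi_1^{\reg}$-universal cover of $(T,\Delta_T)$, one only needs the single fact that this universal cover of a projective toric pair with torus-invariant standard boundary is again toric with torsion-free class group. That fact is standard (a lattice isogeny $N'=\langle n_\rho v_\rho\rangle\hookrightarrow N$ inducing a toric finite cover that kills the orbifold group, with torsion-freeness of $\Cl$ following from Corollary~\ref{cor:abelianization}), and you rightly flag it as the main point to nail down. One small addition you should make explicit: the crepant pullback $\tilde\Delta$ has trivial standard approximation (the cover ramifies exactly to the orders dictated by $(\Delta_T)_s$), so ${\rm Cox}(\tilde{X},\tilde\Delta)={\rm Cox}(\tilde{X})$ and the ``polynomial Cox ring'' step is really a statement about the underlying toric variety, not the pair.

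For $(1)\Rightarrow(2)$ you follow essentially the same steps as the paper: normalize via the contracting $\cc^*$-action and Luna's slice theorem to get $G\hookrightarrow{\rm GL}_N(\cc)$, set $T=\mathbb{A}^N/\mathbb{T}$, and exhibit $X$ as $T/\pi$. Your observation that linearization globalizes because every closed $G$-orbit has the vertex in its closure is a useful precision the paper leaves implicit. Note however that neither your argument nor the paper's verifies that the resulting boundary $\tilde{\Delta}$ on $\tilde{X}=\mathbb{A}^N/\mathbb{T}$ is torus-invariant; the phrase ``toric pair'' in condition $(2)$ must be read to mean that the standard approximation of the boundary is supported on the toric boundary (as the $\pp^2$ with a cubic orbifold divisor example shows the statement cannot hold otherwise), so this point deserves a remark in whichever proof one writes.
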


\begin{proof}
Assume that $(X,\Delta)$ is a finite quasi-\'etale quotient of a projective toric pair $(T,\Delta_T)$.
Let ${\rm Cox}^{\rm it}(X,\Delta)$ be the iteration of Cox rings of $(X,\Delta)$.
Let $\mathbb{T}$ be the connected component of the reductive solvable group acting on ${\rm Cox}^{\rm it}(X,\Delta)$. 
We denote the quotient by 
$(Y,B):={\rm Cox}^{\rm it}(X,\Delta)\slash \mathbb{T}$.
Hence, as in the proof of Theorem~\ref{thm:smooth-it-proj}, we have a commutative diagram as follows
\[
\xymatrix{
(T,\Delta_T)\ar[d]^-{/S} & & \\
(T',\Delta_{T'}) & (Y,B)\ar[l]^-{/H} &
{\rm Cox}^{\rm it}(X,\Delta)\ar[l]^-{/\mathbb{T}}.
}
\]
Here, $(T',\Delta_{T'})$ is a quotient of the projective toric pair $(T,\Delta_T)$ by a finite perfect group $S$.
Furthermore, $H$ is a finite solvable group.
Hence, we obtain natural inclusions
$S\leqslant {\pi_1}^{\rm reg}(Y,B)$
and 
$S\leqslant {\pi_1}^{\rm reg}({\rm Cox}^{\rm it}(X,\Delta))$.
Thus, we conclude that the universal cover
of ${\rm Cox}^{\rm it}(X,\Delta)$
admits a quasi-\'etale abelian quotient to
$(T,\Delta_T)$.
Therefore, 
the universal cover of the iteration of Cox rings 
is a torus quotient of the Cox ring of $(T,\Delta_T)$ 
which is an affine space $\mathbb{A}^{\rho(T)+\dim(T)}$.
Hence, the simply connected factorial canonical cover of $(X,\Delta)$ is the affine space
$\mathbb{A}^{\rho(T)+\dim(T)}$ as claimed.
We conclude that $(2)$ implies $(1)$.

Now, we prove that $(1)$ implies $(2)$
We assume that the simply connected factorial canonical cover of $(X,\Delta)$ is smooth.
Without loss of generality, 
we may assume that the simply connected factorial canonical cover is the affine space $\mathbb{A}^n$
and the reductive solvable group $G$ acting on it satisfies $G\leqslant {\rm GL}_n(\cc)$.
Indeed, this can be achieved by applying Luna \'etale slice theorem to the smooth $G$-fixed point on the iteration of Cox rings.
The connected component at the identity of the
reductive solvable group is a torus, we call it $\mathbb{T}$.
Hence, we have that $T:=\mathbb{A}^n/ \mathbb{T}$ is a projective toric variety with 
torsion-free class group.
Finally, $X$ is the quotient of $Y$ by the component group of $G$ which is a finite group. 
This finishes the second implication.

\end{proof}

We have the following corresponding statement for klt singularities.

\begin{theorem}\label{thm:smooth-scfc-local} 
Let $(X,\Delta;x)$ be a klt singularity.
Then, the following statements are equivalent:
\begin{enumerate}
\item The simply connected factorial canonical cover of $(X,\Delta)$ is smooth, and 
\item $(X,\Delta)$ is a finite quasi-\'etale quotient of a toric singularity. 
\end{enumerate}
\end{theorem}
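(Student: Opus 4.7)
The plan is to mirror the proof of Theorem~\ref{thm:smooth-scfc-proj} in the local setting, replacing projective toric pairs by toric singularities and using Theorem~\ref{thm:scfc-cover} and Theorem~\ref{thm-univ-scfc} in place of the corresponding projective statements. The two implications will be handled separately, and the main structural input in both directions is that the scfc cover dominates every sequence of finite quasi-\'etale covers and abelian quasi-torsors of $(X,\Delta;x)$.

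For the direction $(2)\Rightarrow(1)$, I would start with a toric singularity $(T,\Delta_T;t)$ admitting a finite quasi-\'etale quotient $(T,\Delta_T;t)\rightarrow(X,\Delta;x)$. Passing to the Henselization and running the iteration of Cox rings on $(X,\Delta;x)$, Corollary~\ref{cor:Cox-it-solv-cover} provides a solvable reductive cover; exactly as in the proof of Theorem~\ref{thm:smooth-scfc-proj}, the iteration sits over a finite solvable-by-torus cover of $(T,\Delta_T;t)$ which is itself a toric singularity with torsion-free class group. Its Cox ring is a polynomial ring, and hence the scfc cover of $(X,\Delta;x)$ is obtained as an affine space modulo a trivial obstruction coming from Theorem~\ref{thm-univ-scfc}: the scfc dominates the sequence, and since the sequence terminates in a smooth affine space, the scfc cover itself must be smooth.

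For $(1)\Rightarrow(2)$, assume $(Y,y):=(\widetilde{\overline{X}},\widetilde{\overline{\Delta}};\widetilde{\overline{x}})$ is smooth. By Theorem~\ref{thm:scfc-cover}, $Y$ carries the action of a reductive group $G$ fitting in an extension $1\to\mathbb{T}\to G\to\pi_1^{\rm reg}(X,\Delta;x)\to 1$, and $y$ is the unique $G$-fixed closed point. Apply Luna's \'etale slice theorem at $y$: after passage to the Henselization, $(Y,y)$ is $G$-equivariantly isomorphic to an affine space $\mathbb{A}^n$ with $G$ acting linearly via a faithful representation $G\hookrightarrow\mathrm{GL}_n(\mathbb{C})$. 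The identity component $\mathbb{T}$ acts as a subtorus of $\mathrm{GL}_n(\mathbb{C})$, so the intermediate quotient $(T,\Delta_T;t):=\mathbb{A}^n/\mathbb{T}$ is a toric singularity, and the natural map $(T,\Delta_T;t)\rightarrow(X,\Delta;x)$ is a finite quotient by the component group $G/\mathbb{T}\cong\pi_1^{\rm reg}(X,\Delta;x)$, which is quasi-\'etale because the scfc cover is so by construction.

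The main technical obstacle will be making the Luna-slice step rigorous in the local Henselian category: one needs that the scfc cover, being an excellent local Henselian scheme with a reductive group action fixing the closed point and with smooth total space, admits an equivariant linearization of its tangent space. This can be handled either by base-changing to the completion, applying the classical equivariant formal linearization of a reductive group action at a smooth fixed point, and then descending using that $G$ is reductive and the relevant structure is Henselian, or by invoking an algebraic version of Luna's slice theorem valid over Henselian local bases. Once this linearization is in place, the identification of $T$ as a toric singularity and the quasi-\'etaleness of $T\to X$ follow in the same way as in the proof of Theorem~\ref{thm:smooth-scfc-proj}.
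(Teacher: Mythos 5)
Your proposal mirrors the proof of Theorem~\ref{thm:smooth-scfc-proj}, which is exactly how the paper treats the local statement (the paper explicitly declares the local versions in Section~\ref{sec:smoothit} to be ``verbatim from the projective case''), so the overall route is the same. However, the justification you give for $(2)\Rightarrow(1)$ uses the wrong half of Theorem~\ref{thm-univ-scfc}: ``the scfc dominates the sequence, and since the sequence terminates in a smooth affine space, the scfc cover itself must be smooth'' does not follow, because domination produces a morphism \emph{from} the scfc cover \emph{onto} the affine space, which by itself says nothing about smoothness of the source. The part of Theorem~\ref{thm-univ-scfc} you actually want is the coincidence statement: the scfc covers of all the $(X_{(j)},\Delta_{(j)})$ in the sequence agree, so ${\rm scfc}(X,\Delta;x)={\rm scfc}(\mathbb{A}^N)$, and the scfc cover of $\mathbb{A}^N$ is $\mathbb{A}^N$ itself since it is already factorial, canonical, and simply connected in codimension one. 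Alternatively one may follow the diagram chase of Theorem~\ref{thm:smooth-scfc-proj}, which identifies the universal cover of the iteration of Cox rings as a torus quotient of ${\rm Cox}(T,\Delta_T)\cong\mathbb{A}^N$, whence the scfc cover is $\mathbb{A}^N$.

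On the Luna slice step in $(1)\Rightarrow(2)$: your caution is warranted, as the paper invokes Luna's theorem without comment, but the issue is easier to dispose of than your workaround suggests. By Theorem~\ref{thm:hen-cox} the gr-Henselian Cox ring of $(\tilde{X},\tilde{\Delta})$ arises by base change from a finite-type multigraded ring (the ring~\eqref{eq:cox-ring-partial} in that proof), so the scfc cover is the Henselization at the graded maximal ideal of a genuine affine variety of finite type carrying a $G$-action with a fixed closed point. One applies Luna's \'etale slice theorem to that finite-type model at the fixed point to linearize the $G$-action, and then Henselizes; no detour through the completion or a Henselian-base version of the slice theorem is needed. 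Apart from these two points, your identification of $T:=Y/\mathbb{T}$ as a toric singularity and of $T\to X$ as a finite quasi-\'etale quotient by $G/\mathbb{T}\cong\pi_1^{\rm reg}(X,\Delta;x)$ (using that $G$ acts freely in codimension one, so the representation on the tangent space is faithful and the quotient is unramified in codimension one over the log smooth locus) is correct and matches the paper's argument.
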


\begin{remark}
{\em
The singularities that appear in  Theorem~\ref{thm:smooth-scfc-local} are considered by the second author in~\cite{Mor20a,Mor20b}, where they are called toric quotient singularities. 
In~\cite{Mor20a,Mor20b}, it is shown that toric quotient singularities are the prototypes of klt singularities with large fundamental group.
Moreover, the
minimal log discrepancies of these singularities are described in~\cite{Mor20c}.
}
\end{remark}

\subsection{Iteration of Cox rings and scfc cover}
\label{subsec:iteration-vs-scfc}

In this subsection, we characterize when the iteration of Cox rings is isomorphic to the scfc cover.

\begin{theorem}
Let $(X,\Delta)$ be a Fano type variety.
Then, the following are equivalent:
\begin{enumerate}
    \item The spectrum of the iteration of Cox rings has trivial regional fundamental group,
    \item the spectrum of the iteration of Cox rings coincides with the simply connected factorial canonical cover, and
    \item the fundamental group $\pi_1^{\rm reg}(X,\Delta)$ is solvable.
\end{enumerate}
\end{theorem}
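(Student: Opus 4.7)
The plan is to prove the cyclic implications $(1)\Rightarrow (2)\Rightarrow (3)\Rightarrow (1)$. The essential tools are the lifting of a solvable-reductive group action to the master Cox ring (Corollary~\ref{cor:Cox-it-solv-cover}), the description and universality of the scfc cover (Theorem~\ref{thm:scfc-cover} together with Theorem~\ref{thm-univ-scfc}), and the identification of the torsion of the class group with the abelianization of the regional fundamental group (Corollary~\ref{cor:abelianization}).

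For $(1)\Rightarrow (2)$, set $Y:=\Spec {\rm Cox}^{\rm it}(X,\Delta)$. Termination of the iteration forces $Y$ to be factorial; repeated application of Corollary~\ref{cor:CoxCoxFano} makes $Y$ canonical; Corollary~\ref{cor:Cox-it-solv-cover} equips $Y$ with an action of a solvable reductive group $G$ whose quotient is $X$; and $\pi_1^{\rm reg}(Y)=1$ is the hypothesis. Thus $Y$ satisfies every defining property of an scfc cover of $(X,\Delta)$. Since the iteration exhibits $Y$ as the endpoint of a chain of finite quasi-\'etale abelian covers and torus quasi-torsors over $X$, Theorem~\ref{thm-univ-scfc} yields that the scfc covers of $X$ and $Y$ coincide; the scfc cover of $Y$ is visibly $Y$ itself (since its class group and regional fundamental group are trivial), so $Y$ is the scfc cover of $X$.

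For $(2)\Rightarrow (3)$, the acting group $G$ on the scfc cover admits two natural exact-sequence presentations: by Theorem~\ref{thm:scfc-cover},
\[
1\to \mathbb{T}\to G\to \pi_1^{\rm reg}(X,\Delta)\to 1,
\]
while by Corollary~\ref{cor:Cox-it-solv-cover},
\[
1\to \mathbb{T}'\to G\to S_{\rm fin}\to 1
\]
with $S_{\rm fin}$ finite solvable. Both $\mathbb{T}$ and $\mathbb{T}'$ are the identity component of $G$, hence coincide, and therefore $\pi_1^{\rm reg}(X,\Delta)\cong S_{\rm fin}$ is solvable.

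For $(3)\Rightarrow (1)$, track the regional fundamental group along the iteration. By Corollary~\ref{cor:abelianization}, the Galois group of the finite abelian cover $X_i\to X_{i-1}$ of Remark~\ref{rem:fin-Cox-covers} is the abelianization of $\pi_1^{\rm reg}(X_{i-1},\Delta_{i-1})$, so $\pi_1^{\rm reg}(X_i,\Delta_i)$ is its commutator subgroup; by induction $\pi_1^{\rm reg}(X_i,\Delta_i)$ equals the $i$-th derived subgroup of $\pi_1^{\rm reg}(X,\Delta)$. The torus quasi-torsor $\overline{X}_i\to X_i$ does not change the regional fundamental group by the long exact sequence used in Proposition~\ref{prop:fiber-prod-fincov-and-quotpres}, so the same identity holds for $\pi_1^{\rm reg}(\overline{X}_i,\overline{\Delta}_i)$. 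Assuming $\pi_1^{\rm reg}(X,\Delta)$ has derived length $d$, we get $\pi_1^{\rm reg}(\overline{X}_d)=1$; by Corollary~\ref{cor:abelianization} once more, the torsion part of $\Cl(\overline{X}_d,\overline{\Delta}_d)$ vanishes, so the iteration stabilizes at the factorial master Cox ring $\overline{X}_{d+1}$, which is a further torus quasi-torsor over $\overline{X}_d$ and hence still has trivial $\pi_1^{\rm reg}$, giving $(1)$. The most delicate step will be this last application of Proposition~\ref{prop:fiber-prod-fincov-and-quotpres}: one must identify $X_i$ precisely as the quotient of $\overline{X}_i$ by the connected torus $\mathbb{T}^{i,0}$ of the full characteristic quasi-torus, with the finite component group $A^i$ absorbed into the preceding finite cover $X_i\to X_{i-1}$, so that the map $\overline{X}_i\to X_i$ really is a torus quasi-torsor to which the proposition applies verbatim.
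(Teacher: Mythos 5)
Your proposal is correct, and it organizes the equivalences differently from the paper. The paper proves $(1)\Leftrightarrow(2)$ in one line (factoriality plus trivial $\pi_1^{\rm reg}$ forces the iteration to equal the scfc cover, and conversely), and then proves $(2)\Rightarrow(3)$ and $(3)\Rightarrow(2)$ by contradiction: in both cases it considers the finite quotient $X' := {\rm Cox}^{\rm it}(X,\Delta)/G^0$ and produces, via Proposition~\ref{prop:fiber-prod-fincov-and-quotpres}, an extra finite cover of the iteration that contradicts simply-connectedness (resp.\ produces torsion in the class group that contradicts factoriality). Your cyclic $(1)\Rightarrow(2)\Rightarrow(3)\Rightarrow(1)$ route is more constructive: you invoke the universality statement (Theorem~\ref{thm-univ-scfc}) to pin down the scfc cover in $(1)\Rightarrow(2)$ rather than asserting the identification directly, and in $(3)\Rightarrow(1)$ you track the derived series explicitly via Corollary~\ref{cor:abelianization} instead of arguing by contradiction. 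Both approaches rely on the same infrastructure (Corollary~\ref{cor:Cox-it-solv-cover}, Proposition~\ref{prop:fiber-prod-fincov-and-quotpres}, Corollary~\ref{cor:abelianization}), so neither buys a substantially shorter path, but yours exposes the group-theoretic mechanism more clearly.

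One step in your $(2)\Rightarrow(3)$ argument deserves to be made precise. You claim that both $\mathbb{T}$ (from the scfc exact sequence of Theorem~\ref{thm:scfc-cover}) and $\mathbb{T}'=\mathbb{T}_{X_k}$ (from Corollary~\ref{cor:Cox-it-solv-cover}) are the identity component of $G$. The first is immediate since $\mathbb{T}$ is a torus, but $\mathbb{T}_{X_k}$ is a characteristic quasi-torus and is not a priori connected. There are two ways to close this. First, once $(2)$ holds, $\pi_1^{\rm reg}$ of the iteration is trivial; the torus quotient $\overline{X}_k\to X_k$ preserves $\pi_1^{\rm reg}$ by Proposition~\ref{prop:fiber-prod-fincov-and-quotpres}, so $\pi_1^{\rm reg}(X_k)=1$, and then Corollary~\ref{cor:abelianization} forces $\Cl(X_k)_{\rm tor}=1$, hence $\mathbb{T}_{X_k}$ is connected after all. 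Second, and more robustly, even without showing $\mathbb{T}_{X_k}$ is connected, one always has $G^0\subseteq\mathbb{T}_{X_k}$ with $\mathbb{T}_{X_k}/G^0$ finite abelian, so $\pi_1^{\rm reg}(X,\Delta)=G/G^0$ is an extension of a finite abelian group by the finite solvable group $S_{\rm fin}$ and is therefore solvable, which is all that $(3)$ requires. Either fix completes your argument cleanly.
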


\begin{proof}
If the spectrum of the iteration of Cox rings has trivial regional fundamental group,
then it is factorial and simply connected.
Thus, we have that $(1)$ implies $(2)$.
The condition $(2)$ trivially implies $(1)$.

Assume that the spectrum of the iteration of Cox rings has trivial regional fundamental group.
Let $G$ be the solvable reductive group acting on the iteration of Cox rings ${\rm Cox}^{\rm it}(X,\Delta)$.
We know that $X$ is the quotient of ${\rm Cox}^{\rm it}(X,\Delta)$ by $G$.
Let $G^0$ be the connected compontent at the identity of $G$.
The quotient $X':={\rm Cox}^{\rm it}(X,\Delta)/G^0$ is a finite solvable cover of $(X,\Delta)$. 
Furthermore, the pull-back of $K_X+\Delta$ to $X'$ equals $K_{X'}+\Delta'$ where $\Delta'$ is an effective divisor.
Assume that $\pi_1^{\rm reg}(X,\Delta)$ is not solvable.
Then, $X'\rightarrow X$ is not the regional universal cover of $(X,\Delta)$.
Thus, we can take a non-trivial finite log quasi-\'etale Galois cover of $(X',\Delta')$.
We call this finite cover $Y\rightarrow X$.
By Proposition~\ref{prop:fiber-prod-fincov-and-quotpres}, this cover induces a non-trivial
finite log quasi-\'etale Galois cover
of the spectrum of the iteration of Cox rings.
This contradicts the fact that the spectrum of
the iteration of Cox rings has trivial
regional fundamental group.
We conclude that $(X',\Delta')$ is the universal cover of $(X,\Delta)$.
Thus, $\pi_1^{\rm reg}(X,\Delta)$ is a solvable group.
We conclude that $(2)$ implies $(3)$.

Now, assume that the regional fundamental group
$\pi_1^{\rm reg}(X,\Delta)$ is solvable.
We proceed as in the previous paragraph.
Let $G$ be the solvable reductive group acting on the iteration of Cox rings ${\rm Cox}^{\rm it}(X,\Delta)$.
Let $G^0$ be the connected component at the identity of $G$.
The quotient $X':={\rm Cox}^{\rm it}(X,\Delta)/G^0$ is a finite solvable cover of $(X,\Delta)$.
If the regional fundamental group of $(X,\Delta)$ is solvable,
then the regional fundamental group of $(X',\Delta')$ is solvable.
If $(X',\Delta')$ has simply connected log smooth locus, then
by the proof of Theorem~\ref{thm:scfc-cover},
we conclude that ${\rm Cox}^{\rm it}(X,\Delta)$ equals the scfc cover.
Indeed, in this case the Cox ring of $(X',\Delta')$ equals
${\rm Cox}^{\rm it}(X,\Delta)$.
On the other hand, assume that
$(X',\Delta')$ has non-trivial regional fundamental group $S$.
By assumption, $S$ is solvable.
In particular, its commutator is a proper subgroup.
By Proposition~\ref{prop:fiber-prod-fincov-and-quotpres}, there exists a finite log quasi-\'etale Galois cover of ${\rm Cox}^{\rm it}(X,\Delta)$ with acting group isomorphic to $S$.
Since $[S,S]\leqslant S$ is proper, by Corollary~\ref{cor:abelianization},
we conclude that ${\rm Cox}^{\rm it}(X,\Delta)$ is not factorial.
This leads to a contradiction.
We conclude that $(X',\Delta')$ is simply connected and its Cox ring is isomorphic to
${\rm Cox}^{\rm it}(X,\Delta)$.
Thus, the spectrum of the iteration equals the scfc of $(X,\Delta)$.
We have that $(3)$ implies $(2)$.
Hence, all the statements are equivalent.
\end{proof}

The following local version of the above theorem is proved analogously.

\begin{theorem}\label{thm:scfc=it-local} 
Let $(X,\Delta;x)$ be a klt singularity.
Then, the following are equivalent:
\begin{enumerate}
    \item The spectrum of the iteration of Cox rings has trivial regional fundamental group,
    \item the spectrum of the iteration of Cox rings coincides with the simply connected factorial canonical cover, and
    \item the regional fundamental group $\pi_1^{\rm reg}(X,\Delta;x)$ is solvable.
\end{enumerate}
\end{theorem}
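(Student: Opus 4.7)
The plan is to mirror the proof of the preceding projective version essentially verbatim, with all Zariski-global ingredients replaced by their Henselian local counterparts. Throughout I would write $X^h=\Spec \mathcal{O}_{X,x}^h$ with pullback boundary $\Delta^h$, view it as an aff-contraction over itself, and set $\overline{Y}:=\Spec{\rm Cox}^{\rm it}(X,\Delta;x)$ for the Henselian iteration. By Corollary~\ref{cor:Cox-it-solv-cover} a solvable reductive group $G$ acts on $\overline{Y}$ with torus identity component $\mathbb{T}$, giving the intermediate finite Galois cover $X':=\overline{Y}/\mathbb{T}\to X^h$ with finite solvable Galois group $G/\mathbb{T}$. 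The implication $(2)\Rightarrow(1)$ is immediate from item (2) of Theorem~\ref{introthm-5-existence-scf-cover}. For $(1)\Rightarrow(2)$, I would observe that $\overline{Y}$ is automatically canonical and factorial (iterating Corollary~\ref{cor:CoxCoxFano} up to the termination point of Theorem~\ref{introthm2-existence-iteration-local}), so the vanishing of $\pi_1^{\rm reg}(\overline{Y})$ yields the four defining properties of a scfc cover of $(X,\Delta;x)$ listed in Theorem~\ref{introthm-5-existence-scf-cover}.

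For $(2)\Rightarrow(3)$ I would argue by contradiction. If $\pi_1^{\rm reg}(X,\Delta;x)$ were non-solvable, then its solvable finite quotient $G/\mathbb{T}$ would not exhaust it, so the kernel $\pi_1^{\rm reg}(X',\Delta')$ would be non-trivial. I would then pick a non-trivial finite log quasi-\'etale Galois cover $Y\to X'$ and apply Proposition~\ref{prop:fiber-prod-fincov-and-quotpres} with the torus quasi-torsor $\overline{Y}\to X'$; this produces a non-trivial finite log quasi-\'etale cover of $\overline{Y}$, contradicting (1).

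For $(3)\Rightarrow(2)$, assume $\pi_1^{\rm reg}(X,\Delta;x)$ is solvable. Then $S:=\pi_1^{\rm reg}(X',\Delta')$ sits in the standard exact sequence with quotient $G/\mathbb{T}$, hence is also solvable. If $S$ is trivial, then by construction $\overline{Y}$ is the Cox ring of a regionally simply connected factorial canonical pair $(X',\Delta')$, which by the proof of Theorem~\ref{thm:scfc-cover} is exactly the scfc cover of $(X,\Delta;x)$. If $S$ is non-trivial, I would use the solvability to conclude that $[S,S]\lneq S$, so $S/[S,S]\neq 1$; Proposition~\ref{prop:fiber-prod-fincov-and-quotpres} applied to $\overline{Y}\to X'$ then lifts the $S$-cover of $X'$ to a non-trivial finite log quasi-\'etale Galois $S$-cover of $\overline{Y}$, and Corollary~\ref{cor:abelianization} identifies $S/[S,S]$ with the torsion subgroup of $\Cl(\overline{Y})$, contradicting the factoriality of $\overline{Y}$.

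The main obstacle is not any single step but rather the verification that Corollary~\ref{cor:abelianization} and Proposition~\ref{prop:fiber-prod-fincov-and-quotpres} genuinely apply to $\overline{Y}$ in the gr-Henselian setting, i.e., that $\overline{Y}\to X^h$ remains a relatively Fano type aff-contraction over a local Henselian base at every step of the iteration. This is precisely what Theorem~\ref{thm:hen-cox} guarantees, together with the fact (from Corollary~\ref{cor:CoxCoxFano}) that the relative Fano type property is preserved under passage to the Cox ring. Once this is checked, every step above goes through in exactly the form of the projective proof, since the key inputs (existence and termination of the iteration, existence of the scfc cover, Jordan-type finiteness of $\pi_1^{\rm reg}$, and the abelianization identity) have been formulated in sufficient generality to encompass both settings.
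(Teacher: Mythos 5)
Your proposal is correct and follows the same route the paper takes: the paper states that Theorem~\ref{thm:scfc=it-local} ``is proved analogously'' to the projective version, and your argument reproduces that projective proof step by step in the Henselian local setting, correctly identifying Theorem~\ref{thm:hen-cox} together with Corollary~\ref{cor:CoxCoxFano} as the ingredients guaranteeing the transfer is legitimate. Nothing to add.
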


\section{Examples and proofs of the theorems}\label{sec:ex}

In this section, we collect some examples that 
enlighten the techniques of the paper.
Then, we explain how the theorems of the introduction are implied by the theorems proved throughout the manuscript.
In subsection~\ref{subsec:p1-non-solvable}, we will give an example of a Fano type variety with non-solvable regional fundamental group.
We describe its scfc and iteration of Cox rings explicitly.
In Subsection~\ref{subsec:jordan-t-varieties}, we prove a special Jordan property for singularities with torus action. Finally, in Subsection~\ref{subsec:compl-one}, we give a detailed study of the iteration of Cox rings, regional fundamental groups, and scfc covers of klt singularities of complexity one.

\subsection{Examples with $\pi_1^{\rm reg}(X^{\rm reg})$ non-solvable}
\label{subsec:p1-non-solvable}

In this subsection, we give an example of a Fano type variety $X$ so that its regional fundamental group $\pi_1^{\rm reg}(X)$ is non-solvable.
We also explain how to obtain the simply connected factorial canonical cover of $X$.

\begin{example}
{\em
Let $X$ be the variety obtained from 
$(\pp^1)^n$ quotient by the action 
of $S_n$ permuting the coordinates.
We denote the quotient by
$\rho\colon (\pp^1)^n\rightarrow X$.
Then, $X$ is a projective variety of Fano type, which is not toric.
Furthermore, we have that $X$ satisfies that
$\pi_1^{\rm reg}(X)\cong S_n$.
Indeed, we have a natural \'etale Galois morphism 
$\rho\colon \rho^{-1}(X^{\rm reg})\rightarrow X^{\rm reg}$
with Galois group $S_n$.
Furthermore,
$\rho^{-1}(X^{\rm reg})$ has codimension at least two in $(\pp^1)^n$
from which we conclude that 
$\pi_1(\rho^{-1}(X^{\rm reg}))=0$.
This implies the claim.

We proceed to compute the iteration of Cox rings of $X$.
Note that $\rho(X)=1$.
Hence, its first Cox ring is just the ring over the Weil $\qq$-Cartier divisor $-K_X$.
This gives us a klt cone singularity
\[
({\rm Cox}(X), x_0) 
\]
where $x_0$ is the vertex for the action.
The singularity ${\rm Cox}(X)$ is factorial at $x_0$.
Hence, the Iteration of Cox rings coincides with the first Cox ring.
Furthermore, the regional fundamental group
of ${\rm Cox}(X)$ at $x_0$ is isomorphic to $S_n$
and its universal cover
is isomorphic to the
cone over $(\pp^1)^n$ with respect to $-K_{(\pp^1)^n}$.
We denote this variety 
by ${\rm Cone}((\pp^1)^n)$.
Note that the regional fundamental group 
of ${\rm Cone}((\pp^1)^n)$ is trivial
and its Cox ring is the affine space $\mathbb{A}^{2n}$.
Hence, the 
simply connected factorial canonical cover of $X$ is
the $2n$-dimensional affine space.
We obtain the following commutative diagram
\[
\xymatrix{
\mathbb{A}^{2n}\ar[r]^-{/\mathbb{T}_0} \ar[rd]^-{/\mathbb{T}_1} & {\rm Cone}((\pp^1)^n)\ar[d]^-{/\mathbb{G}_m}\ar[r]^-{/S_n} & {\rm Cox}(X) \ar[d]^-{/\mathbb{G}_m} \\
& (\pp^1)^n\ar[r]^-{/S_n} & X
}
\]
Here, the torus 
$\mathbb{T}_0$ acts on the affine space $\mathbb{A}^{2n}$ by
\[
(t_1,\dots,t_n)\cdot
(x_1,\dots,x_{2n})=
(t_1x_1,t_1x_2,
t_2x_3,t_2x_4,\dots,
t_nx_{2n-1},t_nx_{2n}).
\]
On the other hand, 
the torus $\mathbb{T}_1$ acts on the affine space $\mathbb{A}^{2n}$ by
\[
(t_1,\dots,t_{n-1})\cdot
(x_1,\dots,x_{2n})= 
(tx_1,tx_2,t_1x_3,t_1x_4,t_2x_5,t_2x_6,\dots,t_{n-1}x_{2n-1},t_{n-1}x_{2n}),
\]
where $t=(t_1\dots t_{n-1})^{-1}$.
Thus, we obtain a representation 
$X\cong \mathbb{A}^{2n}/(S_n\rtimes \mathbb{T}_0)$,
where $S_n$ is acting as permutation
on the components of $\mathbb{A}^{2n} \cong (\mathbb{A}^2)^n$
and $\mathbb{T}_0$ is acting as above.
The above example reflects two different ways in
which the simply connected factorial canonical ring can be obtained:
As the iteration of Cox rings of the universal cover, 
or 
as the Cox ring of the universal cover of the iteration of Cox ring.
This example is a particular case of Theorem~\ref{thm:smooth-scfc-proj}.
}
\end{example}

\subsection{Jordan property for $\mathbb{T}$-varieties}
\label{subsec:jordan-t-varieties}

In this subsection, we prove a strengthened version
of the Jordan property for the regional fundamental group
of affine klt $\mathbb{T}$-varieties of complexity $k$.
Then, we specialize this statement for $\mathbb{T}$-varieties
of complexity one.

\begin{theorem}\label{thm:jordan-comp-k}
Let $k$ be a positive integer.
There exists a constant $c(k)$, only depending on $k$,
satisfying the following.
Let $(X,\Delta;x)$ be a $n$-dimensional klt $\mathbb{T}$-singularity
of complexity $k$.
Then, there exists an exact sequence
\[
1\rightarrow A\rightarrow  
\pi_1^{\rm reg}(X,\Delta;x)
\rightarrow N\rightarrow 1,
\]
where $A$ is an abelian group of rank at most $n$
and index at most $c(k)$.
\end{theorem}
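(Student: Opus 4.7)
The plan is to reduce the problem to the $k$-dimensional base of the torus action and invoke the ordinary Jordan property for klt singularities there. Consider the GIT quotient $\pi\colon X \to Z := X/\mathbb{T}$; since $(X,\Delta)$ is klt and $\mathbb{T}$ is reductive, a natural boundary $\Delta_Z$ on $Z$ (built from the branch divisor of $\pi$ and a canonical bundle formula) makes $(Z,\Delta_Z;z)$ a klt singularity of dimension $k$, where $z := \pi(x)$. Arguing as in the proofs of Proposition~\ref{prop:fiber-prod-fincov-and-quotpres} and Theorem~\ref{thm:dim-bound-2-homotopy}, the orbifold fibration $X \to Z$ with generic fiber $\mathbb{T}$ yields a short exact sequence
\[
1 \to H \to \pi_1^{\rm reg}(X,\Delta;x) \to \pi_1^{\rm reg}(Z,\Delta_Z;z) \to 1,
\]
where $H$ is the image of $\pi_1(\mathbb{T}) \cong \zz^{n-k}$. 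Since $\pi_1^{\rm reg}(X,\Delta;x)$ is finite by~\cite[Theorem~1]{Bra20}, the subgroup $H$ is a finite abelian quotient of $\zz^{n-k}$ and thus has rank at most $n-k$; because $\mathbb{T}$ is connected and acts by homeomorphisms, $H$ is moreover central in $\pi_1^{\rm reg}(X,\Delta;x)$.

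Next I would apply the Jordan property~\cite[Theorem~2]{BFMS20} to the $k$-dimensional klt pair $(Z,\Delta_Z;z)$ to produce a normal abelian subgroup $A_Z \trianglelefteq \pi_1^{\rm reg}(Z,\Delta_Z;z)$ of rank at most $k$ and index at most $c_0(k)$, for a constant $c_0(k)$ depending only on $k$. Let $\tilde{A}$ denote the preimage of $A_Z$ in $\pi_1^{\rm reg}(X,\Delta;x)$. Then $\tilde{A}$ is a normal subgroup of index at most $c_0(k)$ sitting in a central extension
\[
1 \to H \to \tilde{A} \to A_Z \to 1.
\]
Consequently, $\tilde{A}$ is two-step nilpotent with $[\tilde{A},\tilde{A}] \subseteq H$ and is generated by $(n-k)+k = n$ elements.

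For complexity one ($k=1$) the group $A_Z$ is cyclic: choosing a lift $g$ of a generator, one obtains $\tilde{A} = \langle H, g \rangle$, which is abelian because $H$ is central. Setting $A := \tilde{A}$ therefore completes the proof in this case and, incidentally, justifies the remark made just before Theorem~\ref{introthm12-it-t-var} that the non-abelian quotient has order bounded independently of the dimension. For general $k$, the commutator pairing $c \colon A_Z \wedge A_Z \to H$ is bilinear and skew-symmetric with image generated by at most $\binom{k}{2}$ elements, and the remaining step is to cut $\tilde{A}$ down to a further normal abelian subgroup $A$ whose index in $\tilde{A}$ depends only on $k$. The main obstacle will be bounding this last index in terms of $k$ rather than $n$: a priori a central extension of $A_Z$ by $H$ can resemble a Heisenberg group over $\zz/p$ and admit no abelian subgroup of bounded index. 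Overcoming this will require specific geometric input for klt $\mathbb{T}$-singularities --- for instance, an Altmann-Hausen polyhedral divisor analysis --- showing that in our setting the commutator pairing factors through a quotient of $H$ of size controlled by $k$ alone. Once this is achieved, the rank bound $\rank(A) \leq n$ follows directly from the generator count above.
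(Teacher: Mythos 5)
Your approach runs closely parallel to the paper's up to the point where you correctly flag the obstacle, but there is a genuine gap that the paper's argument resolves with a cleaner trick than the polyhedral-divisor analysis you anticipate. The difficulty you name is real: starting from the short exact sequence $1 \to H \to \pi_1^{\rm reg}(X,\Delta;x) \to \pi_1^{\rm reg}(Z,\Delta_Z;z) \to 1$ with $H$ a \emph{finite} abelian quotient of $\zz^{n-k}$, the preimage $\tilde{A}$ of $A_Z$ is a central extension of a finite abelian group by a finite abelian group, and such extensions can indeed be Heisenberg-like with no abelian subgroup of bounded index. Your proof therefore establishes the statement only for $k = 1$.

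The paper avoids this by \emph{not} passing to the finite quotient of the fiber's fundamental group. It works over the normalized Chow quotient $(Y,B_Y)$ rather than the GIT quotient, takes the relative spectrum $\tilde{X} \to Y$ of the polyhedral divisor, and replaces the log-pullback boundary $\Delta_{\tilde{X}}$ by a larger boundary $\Gamma$ in which every torus-invariant horizontal divisor is given coefficient $1$. With that modification the orbifold relations on the torus loops are suppressed, so the resulting sequence
\[
1 \to \zz^{n-k} \to \pi_1^{\rm reg}(\tilde{X},\Gamma) \to \pi_1^{\rm reg}(Y,B_Y) \to 1
\]
has kernel the full lattice $\zz^{n-k}$ (central, and crucially \emph{torsion-free}), not its finite image. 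Now the preimage $A_{\tilde{X}}$ of the Jordan subgroup $A_Y$ sits in a central extension of a finite group $A_Y$ by $\zz^{n-k}$, and the commutator pairing is a bilinear alternating map $A_Y \times A_Y \to \zz^{n-k}$. Because every $\bar g \in A_Y$ has finite order $m$ and $[\bar g, \bar h]^m = [\bar g^m, \bar h] = 1$ while $\zz^{n-k}$ is torsion-free, this pairing is forced to vanish; hence $A_{\tilde{X}}$ is abelian for free, with no geometric input beyond the construction of $\tilde{X}$. Its image $A$ under the surjection $\pi_1^{\rm reg}(\tilde{X},\Gamma) \twoheadrightarrow \pi_1^{\rm reg}(X,\Delta;x)$ is then a normal abelian subgroup of rank at most $n$ and index at most $c(k)$, completing the proof. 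In short: the missing idea is to lift the abelian subgroup while the kernel is still torsion-free and only push down to the finite group at the very end, rather than cutting down a non-abelian extension afterward.

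One further (minor) discrepancy: you take $Z = X/\mathbb{T}$ with a boundary built from a canonical bundle formula, whereas the paper uses the Chow quotient $(Y, B_Y)$, which is what carries the Fano type structure by~\cite[Theorem 4.9]{LS13} and is the natural $k$-dimensional base of the polyhedral-divisor presentation; the GIT quotient of an affine $\mathbb{T}$-variety need not have dimension $k$ in general.
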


\begin{proof}
Let $(Y,B_Y)$ be the normalized Chow quotient of $(X,\Delta)$.
Then $(X,\Delta)$ is defined by a polyhedral divisor
$\mathcal{D}$ on $Y$ with tailcone $\sigma^\vee \in M_\qq$
(see, e.g.,~\cite[Theorem]{AH06}).
By~\cite[Theorem 4.9]{LS13}, we know that $(Y,B_Y)$ is a Fano type pair.
We may replace $Y$ with a small $\qq$-factorialization
to assume that $Y$ is $\qq$-factorial.
Let $\tilde{X}$ be relative spectrum over $Y$
of $\bigoplus_{u\in \sigma^\vee \cap M} \mathcal{O}_Y(\mathcal{D}(u))$.
Then, $\tilde{X}\rightarrow Y$ is an orbifold toric bundle.
Over the log smooth locus of $(Y,B_Y)$ the toric bundle is trivial.
Let $(\tilde{X},\Delta_{\tilde{X}})$ be the log pull-back of $(X,\Delta)$ to
$\tilde{X}$. Let $\Gamma$ be the boundary obtained from
$\Delta_{\tilde{X}}$ by increasing to one the coefficients of all
the torus invariant divisors which are horizontal over $Y$.
Hence, we conclude that there is an exact sequence
\[
1\rightarrow 
\zz^{n-k} \rightarrow
\pi_1^{\rm reg}(\tilde{X},\Gamma) 
\rightarrow
\pi_1^{\rm reg}(Y,B_Y)
\rightarrow 1.
\]
By construction, $\zz^{n-k}$ lies in
the center of $\pi_1^{\rm reg}(\tilde{X},\Gamma)$.
Furthermore, we have a surjection
$\pi_1^{\rm reg}(\tilde{X},\Gamma)
\rightarrow
\pi_1^{\rm reg}(X,\Delta;x)$.
Observe that $Y$ has dimension $k$.
By~\cite[Theorem 3]{BFMS20}, we can find an abelian normal subgroup
$A_Y \leqslant \pi_1^{\rm reg}(Y,B_Y)$ of rank at most $k$
and index at most $c(k)$, 
where $c(k)$ is a constant which only depends on $k$.
Hence, the pre-image $A_{\tilde{X}}$ of $A_Y$ in $\pi_1^{\rm reg}(\tilde{X},\Gamma) $ is a free finitely generated abelian group
of rank at most $n$ and index at most $c(k)$.
Let $A$ be the image of $A_{\tilde{X}}$ in $\pi_1^{\rm reg}(X,\Delta;x)$.
Since $\pi_1^{\rm reg}(X,\Delta;x)$ is finite, 
we conclude that $A$ is a finite abelian group 
of rank at most $n$ and index at most $c(k)$.
\end{proof}

\begin{remark}{\em 
Note that the size of the non-abelian part
$N$ of the regional fundamental group
only depends on the complexity
and not on the dimension of the germ
as in~\cite{BFMS20}.
This, of course, happens because the $(n-k)$-dimensional
torus action can not contribute to the non-abelian part
of the regional fundamental group.
If the complexity is zero, 
we can simply take $c(0)=0$ 
since the regional fundamental group
of a toric pair is always abelian.
The following corollary gives an explicit bound for $c(1)$.
}
\end{remark}

\begin{corollary}\label{cor:jordan-comp-1}
Let $(X,x)$ be a $n$-dimensional klt $\mathbb{T}$-singularity
of complexity one.
Then, there exists an exact sequence
\[
1\rightarrow A \rightarrow \pi_1^{\rm reg}(X,x)\rightarrow N \rightarrow 1,
\]
where $A$ is an abelian group of rank at most $n$ and index at most 60.
\end{corollary}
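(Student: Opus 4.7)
The plan is to invoke Theorem~\ref{thm:jordan-comp-k} with $k=1$ and then compute the constant $c(1)$ explicitly. First I would specialize the proof of Theorem~\ref{thm:jordan-comp-k}: the Chow quotient $(Y,B_Y)$ of the klt complexity-one $\mathbb{T}$-singularity $(X,x)$ is a one-dimensional Fano-type klt pair by~\cite[Theorem 4.9]{LS13}. Hence $Y\cong \mathbb{P}^1$, and the condition that $-(K_{\mathbb{P}^1}+B_Y)$ be ample forces the standard approximation of $B_Y$ to have at most three nontrivial coefficients. Following the proof of Theorem~\ref{thm:jordan-comp-k} verbatim, the existence of the required exact sequence for $\pi_1^{\rm reg}(X,x)$ is reduced to bounding the index of a normal abelian subgroup of $\pi_1^{\rm reg}(\mathbb{P}^1,B_Y)$, which is nothing but the orbifold fundamental group of this log Fano pair.

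The main step is to invoke the classical classification of orbifold fundamental groups of log Fano pairs on $\mathbb{P}^1$ (equivalently, the finite subgroups of $\mathrm{PGL}_2(\mathbb{C})$). These groups are exhausted by: cyclic groups $\mathbb{Z}/n\mathbb{Z}$; dihedral groups $D_{2n}$; the tetrahedral group $A_4$ of order $12$; the octahedral group $S_4$ of order $24$; and the icosahedral group $A_5$ of order $60$. For each one I would exhibit a normal abelian subgroup with explicit index: cyclic groups are themselves abelian (index $1$); $D_{2n}$ contains $\mathbb{Z}/n\mathbb{Z}$ with index $2$; $A_4$ contains the Klein four-group as normal subgroup of index $3$; $S_4$ contains the Klein four-group as normal subgroup of index $6$; and $A_5$, being a non-abelian simple group, admits only the trivial normal abelian subgroup, of index $60$. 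The supremum of all these indices is thus $60$, attained exactly by the icosahedral case. This gives $c(1)=60$.

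The hard step, though routine, is tracking normality through the two intermediate extensions used in the proof of Theorem~\ref{thm:jordan-comp-k}. The normal abelian $A_Y\trianglelefteq \pi_1^{\rm reg}(Y,B_Y)$ of index at most $60$ pulls back to a normal subgroup $A_{\tilde X}\trianglelefteq \pi_1^{\rm reg}(\tilde X,\Gamma)$ which sits inside a central extension by the free abelian group $\mathbb{Z}^{n-1}$ arising from the toric fiber; hence $A_{\tilde X}$ remains abelian, of rank at most $n$ and of index still at most $60$. Finally, its image $A$ under the surjection $\pi_1^{\rm reg}(\tilde X,\Gamma)\twoheadrightarrow \pi_1^{\rm reg}(X,x)$ is normal, abelian, of rank at most $n$, and the index does not grow. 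This produces the desired exact sequence and finishes the proof.
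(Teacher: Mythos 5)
Your approach is essentially the same as the paper's: invoke Theorem~\ref{thm:jordan-comp-k} with $k=1$, observe the Chow quotient is a log Fano structure on $\pp^1$, and compute $c(1)$ from the classification of the finite subgroups of $\mathrm{PGL}_2(\cc)$. You locate the extremal case in $A_5$ (the orbifold $\pi_1$ of $\pp^1$ with $(2,3,5)$-points), while the paper phrases it one level up, at $\pi_1^{\rm reg}$ of the singularity itself, and names the binary icosahedral group $2I$ of order $120$ (the regional fundamental group of the $E_8$ surface singularity), whose center $\zz/2\zz$ is its largest normal abelian subgroup, of index $60$. Both readings give $60$.

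There is, however, a genuine (though reparable) gap in the step you call routine. You choose $A_Y=V_4\trianglelefteq A_4$ and $A_Y=V_4\trianglelefteq S_4$ of indices $3$ and $6$; but $V_4\cong(\zz/2\zz)^2$ has rank~$2$, whereas the proof of Theorem~\ref{thm:jordan-comp-k} produces and needs $A_Y$ of rank at most $k=1$, i.e., cyclic. This is exactly what makes the preimage step work: if $A_Y$ is cyclic and $\zz^{n-1}$ is central in $A_{\tilde X}$, then $A_{\tilde X}/Z(A_{\tilde X})$ is cyclic, so $A_{\tilde X}$ is abelian. For $A_Y$ of rank $\ge 2$ this fails --- a central extension of $(\zz/2\zz)^2$ by $\zz^{n-1}$ need not be abelian (Heisenberg-type examples), so the assertion ``hence $A_{\tilde X}$ remains abelian'' is unjustified for your choices in the $A_4$ and $S_4$ cases. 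The fix is to take $A_Y$ cyclic throughout; for $A_4$, $S_4$, $A_5$ the only normal cyclic subgroup is trivial, giving indices $12$, $24$, $60$, and the maximum is still $60$, so your final bound and the exact sequence are correct once this choice is corrected.
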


\begin{proof}
This follows from Theorem~\ref{thm:jordan-comp-k}
and the classification
of the regional fundamental groups
of log pair structures on $\pp^1$ with standard coefficients
(see, e.g.,~\cite[Example 5.1]{LLM19}). Indeed, the bound $60$ is obtained by the binary icosahedral group of order $120$ with the center $\mathbb{Z}_2$ being the only normal abelian subgroup.
\end{proof}

\subsection{Complexity one klt singularities}
\label{subsec:compl-one}

In the case of klt type singularities with a torus action of complexity one, we are able to explicitly determine all invariants defined so far: Cox rings, iterated Cox rings, regional fundamental groups,  associated universal covers, and the simply connected factorial canonical covers.
We start by recalling the construction of affine rational complexity one $\mathbb{T}$-varieties.

\begin{definition}
{\em 
Let $N$ be a free finitely generated abelian group of rank $r$.
Let $M$ be the dual of $N$.
We denote by $N_\qq$ and $M_\qq$ the corresponding $\qq$-vector spaces.
Given a polyhedron $\Delta\subset N_\qq$, we denote its {\em recession cone} to be the set of $v\in N_\qq$ so that
$v+\Delta \subset \Delta$.
The recession cone of a polyhedron is a strongly convex polyhedral cone.
It is denoted by ${\rm rec}(\Delta)$.
Let $\sigma$ be a strongly convex polyhedral cone in $N_\qq$.
We denote by ${\rm Pol}_\qq(N,\sigma)$
the semigroup of polyhedra $\Delta$ of $N_\qq$ for which 
${\rm rec}(\Delta)=\sigma$.
The additive structure of this semigroup is the Minkowski sum.
The elements of this group are called $\sigma$-polyhedra.

We denote by ${\rm CaDiv}_{\geq 0}(\pp^1)$ the semigroup of effective Cartier divisors on $\pp^1$.
A {\em polyhedral divisor} on $(\pp^1,N)$ with recession cone $\sigma$ is an element of 
\[
{\rm Pol}_{\qq}(N,\sigma)
\otimes_{\zz_{\geq 0}}
{\rm CaDiv}_{\geq 0}(\pp^1).
\]
Note that a polyhedral cone can be written as a formal finite sum
\[
\mathcal{D}=\sum_{i=1}^s \Delta_i \otimes \{p_i\},
\]
for a finite set of points $p_1,\dots,p_s$ in $\pp^1$ and $\sigma$-polyhedra $\Delta_1,\dots,\Delta_s$.
If we don't fix $N$ or the recession cone, then we just say that $\mathcal{D}$ is a {\em polyhedral divisor} on $\pp^1$.
}
\end{definition}

Let $\mathcal{D}$ be a polyhedral divisor on $\pp^1$.
We have a homomorphism of semigroups, called the evaluation homomorphism, defined as follows
\[
\mathcal{D}\colon \sigma^\vee \rightarrow {\rm CaDiv}_\qq(\pp^1) 
\]
\[
\mathcal{D}(u)=\sum_{i=1}^s \min \langle \Delta_i, u \rangle p_i.
\]
By abuse of notation, we are denoting the polyhedral divisor and the evaluation homomorphism by $\mathcal{D}$.

\begin{definition}
{\em 
A polyhedral divisor in $\pp^1$ is said to be a {\em proper polyhedral divisor} if 
$\mathcal{D}(u)$ is semiample for $u\in \sigma^\vee$ and
$\mathcal{D}(u)$ is big for 
$u\in {\rm relint}(\sigma^\vee)$.
For a proper polyhedral divisor
$\mathcal{D}$ on $\pp^1$,
we can define its {\em degree polyhedron} to be
\[
{\rm deg}(\mathcal{D}) = \sum_{i=1}^s \subset \sigma.
\]
}
\end{definition}

Given a proper polyedral divisor $\mathcal{D}$, we can associate to it a normal rational affine variety of dimension $r+1$ with an effective action of a $r$-dimensional torus.
We have a sheaf of $\mathcal{O}_{\mathbb{P}^1}$-algebras 
\[
\mathcal{A}(\mathcal{D}) =
\bigoplus_{u\in \sigma^\vee \cap M} \mathcal{O}_{\mathbb{P}^1}(\mathcal{D}(u)) \chi^u.
\]
We denote by $\widetilde{X}(\mathcal{D})$ the relative spectrum of $\mathcal{A}(\mathcal{D})$ over $\pp^1$.
We denote by $X(\mathcal{D})$ the ring of sections of $\mathcal{A}(\mathcal{D})$.
The variety $X(\mathcal{D})$ is a normal rational affine variety of dimension $r+1$ with an effective action of a $r$-dimensional torus.
Indeed, it admits an effective action of $\mathbb{T}:={\rm Spec}(\cc[M])$.
This means that $X(\mathcal{D})$ is a rational $\mathbb{T}$-variety of complexity one.
It is known that every rational $\mathbb{T}$-variety of complexity one is isomorphic to $X(\mathcal{D})$ for some polyhedral divisor $\mathcal{D}$ on $\pp^1$ (see, e.g.,~\cite[Theorem on p. 559]{AH06}).

\begin{notation}
{\em 
Let $\mathcal{D}$ be a proper polyhedral divisor on $\mathbb{P}^1$,
with recess cone $\sigma$,
and let $p\in \pp^1$.
We denote $\Delta_p =\Delta_i$ if $p=p_i$ or $\Delta_p=\sigma$ otherwise.
For every vertex $v\in \Delta_p$, we denote by $\mu(v)$ the smallest positive integer so that $\mu(v)v\in N$.
For every $p\in \pp^1$, we define
\[
\mu_p := {\rm max}\{ 
\mu(v) \mid v\in \Delta_p \}. 
\]
For every $p\in \pp^1$, we define $b_p:=(1-\mu_p^{-1})p$.
We define $B(\mathcal{D}):=\sum_{p\in \pp^1} b_p$.
Note that $B(\mathcal{D})$ is a divisor on $\pp^1$ with standard coefficients, i.e.,
$(\pp^1, B(\mathcal{D}))$
is a log pair with standard coefficients.
}
\end{notation}

From now on, we focus on complexity one $\mathbb{T}$-singularities.
This means, affine $\mathbb{T}$-varieties of complexity one $X$ with a distinguished point $x\in X$ which is a klt singularity.
We have the following theorem which characterizes the klt-ness of the complexity one $\mathbb{T}$-singularity

\begin{theorem}[Cf.~\cite{LS13}]
Let $\mathcal{D}$ be a polyhedral divisor on $\pp^1$.
Then, $(X(\mathcal{D}),x)$ is klt if and only if
$(\pp^1,B(\mathcal{D}))$ is a log Fano pair.
\end{theorem}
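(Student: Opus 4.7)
The plan is to reduce klt-ness of $X(\mathcal{D})$ at the distinguished fixed point $x$ to a condition on $(\pp^1, B(\mathcal{D}))$ by exploiting the torus action of complexity one. First, I would use the fact that $\mathbb{T}$ acts on $X(\mathcal{D})$ with $x$ as a fixed point, so by the standard $G$-equivariant reduction (applied to suitable $\mathbb{T}$-equivariant log resolutions), $(X(\mathcal{D}),x)$ is klt if and only if every $\mathbb{T}$-invariant prime divisor over $X(\mathcal{D})$ has positive log discrepancy. Following the Altmann--Hausen framework, such invariant prime divisors come in two families: vertical divisors $D_{p,v}$ indexed by points $p\in \pp^1$ and vertices $v\in \Delta_p$, and horizontal divisors $D_\rho$ indexed by rays $\rho$ of the recession cone $\sigma$ not meeting $\deg(\mathcal{D})$.

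Second, I would invoke the explicit canonical-class formula for complexity one $\mathbb{T}$-varieties (Petersen--S\"uss): there exists a weight $m_\sigma\in M_\qq$ together with integers $a_p\in \zz$ depending on the vertex structure of $\Delta_p$ such that
\[
K_{X(\mathcal{D})} = \sum_{p\in \pp^1}\sum_{v} \bigl(\mu(v)\langle m_\sigma,v\rangle + a_{p,v}\bigr) D_{p,v} + \sum_\rho \bigl(\langle m_\sigma, n_\rho\rangle -1\bigr) D_\rho,
\]
where $n_\rho$ is the primitive generator of $\rho$ and the constants are tuned so that $K_{X(\mathcal{D})}$ is $\qq$-Cartier precisely when the divisor $\mathcal{D}(m_\sigma) + K_{\pp^1}$ is $\qq$-linearly equivalent to $-B(\mathcal{D})$ on $\pp^1$. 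Evaluating the log discrepancies at the $D_{p,v}$ and writing everything in terms of $\mu_p$ and $\mu(v)$, a direct computation shows that positivity at every vertical divisor over $p$ is equivalent to positivity of the log discrepancy of $(\pp^1,B(\mathcal{D}))$ at $p$, which is automatic from the standard coefficient shape $1-\mu_p^{-1}$. Positivity at the horizontal divisors $D_\rho$, together with the requirement that the distinguished point $x$ be a \emph{single} klt fixed point rather than an entire non-klt locus, forces the degree polytope $\deg(\mathcal{D})$ to be strictly contained in $\sigma$ and $\mathcal{D}(m_\sigma)$ to be ample on $\pp^1$; the latter translates via the identity $K_{\pp^1}+B(\mathcal{D}) \sim_\qq -\mathcal{D}(m_\sigma)$ to the statement that $-(K_{\pp^1}+B(\mathcal{D}))$ is ample, i.e.\ that $(\pp^1,B(\mathcal{D}))$ is log Fano.

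An alternative and perhaps cleaner route is to apply Ambro's canonical bundle formula to the rational $\mathbb{T}$-quotient $\pi\colon X(\mathcal{D})\dashrightarrow \pp^1$ (resolved by $\widetilde{X}(\mathcal{D})\to \pp^1$). Since the generic fiber is a toric variety, the moduli part vanishes. A local calculation of ramification along each special fiber over $p$ identifies the discriminant divisor with $B(\mathcal{D})$; the appearance of $\mu_p$ is exactly the ramification index of the general horizontal toric divisor in the fiber over $p$. One then concludes $K_{X(\mathcal{D})}\sim_\qq \pi^*(K_{\pp^1}+B(\mathcal{D}))$ in a crepant sense, so by inversion of adjunction (for this quasi-\'etale-in-codimension-one quotient behaviour) klt-ness of $X(\mathcal{D})$ is equivalent to klt-ness of $(\pp^1,B(\mathcal{D}))$ (automatic) together with ampleness of the anticanonical class coming from the cone condition that makes $x$ an isolated fixed point.

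The main obstacle will be the careful bookkeeping of the canonical-class formula and, in particular, identifying correctly which combination of invariant divisors contributes the discriminant $B(\mathcal{D})$: one must check that the contribution of each special fiber is $(1-\mu_p^{-1})p$ rather than something with a different vertex-by-vertex weighting, and this amounts to choosing $m_\sigma$ in a way that simultaneously realises the triviality of $K_{X(\mathcal{D})}$ on the generic fiber and respects the non-$\qq$-factorial structure coming from non-lattice vertices of the $\Delta_p$. Once this identification is fixed, the equivalence between klt-ness of $X(\mathcal{D})$ at $x$ and log Fano-ness of $(\pp^1,B(\mathcal{D}))$ is a direct consequence of positivity of the relevant invariants on both sides.
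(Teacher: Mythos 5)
This theorem is not actually proved in the paper; it is stated with a ``Cf.~\cite{LS13}'' attribution, and elsewhere in the manuscript the authors cite \cite[Theorem~4.9]{LS13} for the corresponding fact that the Chow quotient is of Fano type. So there is no internal proof to compare against. That said, your \emph{first} route --- a $\mathbb{T}$-equivariant reduction together with the explicit Altmann--Hausen divisor indexing and the Petersen--S\"uss canonical-class formula, then reading off discrepancies at vertical divisors $D_{p,v}$ and horizontal divisors $D_\rho$ --- is essentially the argument carried out in~\cite{LS13}, so citing that paper is the faithful move. Be aware, though, that the canonical-class formula you quote is garbled: the coefficient at $D_{p,v}$ should be of the form $\mu(v)\bigl(\langle m,v\rangle + a_p\bigr) + \mu(v)-1$, where $a_p$ is the coefficient of $p$ in the chosen representative of $K_{\pp^1}$; your placeholder $a_{p,v}$ elides the fact that it is exactly the $\mu(v)-1$ term, not an independent quantity, that produces the standard coefficient $1-\mu_p^{-1}$ in $B(\mathcal{D})$ after passing to the log discrepancy. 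You also implicitly use that $\sigma$ is full-dimensional and $\deg(\mathcal{D})\subsetneq\sigma$ so that $x$ is an attractive fixed point; the theorem as stated in the paper leaves this hypothesis silent, but without it there is no vertex and the statement is vacuous.

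Your \emph{second} route, via the canonical bundle formula applied to $\widetilde{X}(\mathcal{D})\to\pp^1$, has a gap as written. The relative canonical class of this toric bundle is \emph{not} $\qq$-trivial over $\pp^1$ (the toric fibers have nontrivial anticanonical class supported on invariant divisors), so one cannot directly write $K_{X(\mathcal{D})}\sim_\qq \pi^*(K_{\pp^1}+B(\mathcal{D}))$ ``in a crepant sense'' --- the two sides live on varieties of different dimensions and the identity as stated is meaningless. What one can do is equip the total space with a $\mathbb{T}$-invariant boundary $\Gamma$ making each toric fiber log Calabi--Yau (put coefficient one on the horizontal invariant divisors), check that the moduli part vanishes for dimension reasons, identify the discriminant with $B(\mathcal{D})$, and then separately account for (i) the discrepancies of the divisors contracted by $\widetilde{X}(\mathcal{D})\to X(\mathcal{D})$ and (ii) the discrepancies along $\lfloor\Gamma\rfloor$ relative to the actual $K_{X(\mathcal{D})}$ with zero boundary. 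You flag this bookkeeping as the ``main obstacle,'' but it is precisely the content of the proof, and until it is carried out the second route is not a proof but a heuristic. The log-Fano direction (ampleness of $-(K_{\pp^1}+B(\mathcal{D}))$) then has to be matched against the requirement $\deg(\mathcal{D})\subsetneq\sigma$ and against the positivity of log discrepancies at the horizontal $D_\rho$; this is where the degree polytope enters and your sketch is vague on it.
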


Note that this happens if and only if $\mu_p$ is non-trivial for at most three points in $\pp^1$, and, in addition, for these three points, the corresponding $\mu_p$ must form a platonic triple in the sense of~\cite[Example 4.1]{LLM19}.

From now on, we turn to describe the regional fundamental group of $X(\mathcal{D})$ at $x$.
To do so, first we need to understand the
$\mathbb{T}$-equivariant birational contraction $r\colon \widetilde{X}(\mathcal{D})\rightarrow X(\mathcal{D})$.
We proceed to explain which divisors are contracted by this birational contraction.
There are two types of $\mathbb{T}$-invariant divisors in $\widetilde{X}(\mathcal{D})$;
the divisors which are mapped to points in $\pp^1$ via the projection $\widetilde{X}(\mathcal{D})\rightarrow \pp^1$, which are called vertical invariant divisors.
Vertical invariant divisors are in bijection with pairs $(p,v)$ where $p\in \pp^1$ and $v$ is a vertex of the polyhedron $\Delta_p$.
Hence, we will denote the corresponding vertical divisor by
$\Delta_{(p,v)}$.
The invariant divisors which dominate $\pp^1$ are called horizontal divisors.
Horizontal divisors are in bijection with rays of the recession cone $\sigma$.
The contraction $r$ contracts exactly those horizontal divisors corresponding to rays of $\sigma$ which intersect ${\rm deg}(\mathcal{D})$ non-trivially 
(see, e.g.,~\cite[\S 10]{AH06}).

\begin{notation}
{\em
Let $\mathcal{D}$ be a proper polyhedral divisor on $\pp^1$ with recession cone $\sigma$.
Let $N_\mathcal{D} \subset N$ be the sub-lattice generated by elements of $N$ which belong to a regular sub-cone of $\sigma$ which does not intersect $\deg(\mathcal{D})$.
We introduce variables $t_1,\dots, t_r$, corresponding to a basis of $N$. For every $n\in N_{\mathcal{D}}$, 
we let $t^n:=t_1^{n_1}\dots t_r^{n_r}$.

Let $p\in \mathbb{P}^1$ and $\Delta_p$ the corresponding $\sigma$-polyhedra.
Consider the cone $\sigma(\mathcal{D},p)$ in $N_\qq \times \qq$. 
Let $N_{\sigma(\mathcal{D},p)} \subset N$ be the sub-lattice generated by elements of $N\times \zz$ which belong to a regular sub-cone of $\sigma(\mathcal{D},p)$ which does not intersect $\deg(\mathcal{D})$.
We denote by $\mathcal{B}(\mathcal{D},p)$ a basis
of $N_{\sigma(\mathcal{D},p)}$.
For every $v\in \mathcal{B}(\mathcal{D},p)$, 
we denote by $\pi_1(v)$
the projection in $N$
and by
$\pi_2(v)$ the projection in $\zz_{\geq 1}$.
}
\end{notation}

\begin{theorem}\label{thm:reg-compl-1}
Let $\mathcal{D}$ be a polyhedral divisor on $(\pp^1,N)$. Write $\mathcal{D}=\sum_{i=1}^s \Delta_i \otimes \{p_i\}$. Let $x\in X(\mathcal{D})$ be the vertex of the torus action.
Then, $\pi_1^{\rm reg}(X(\mathcal{D}),x)$ is isomorphic
to the group generated by
\[
t_1,\dots, t_r, b_1,\dots, b_s
\]
with the relations
\begin{itemize}
\item $b_1\cdots b_s$, 
\item $[t_i,t_j]$ for every $1\leq i\leq j \leq r$,
\item $[t_i,b_j]$ for every $i\in \{1,\dots, r\}$ and $j\in \{1,\dots, s\}$, 
\item $t^n$ for every $n\in N_{\mathcal{D}}$, and
\item $t^{\pi_1(v)}b_j^{\pi_2(v)}$ for every $v\in \mathcal{B}(\mathcal{D},p)$.
\end{itemize}
\end{theorem}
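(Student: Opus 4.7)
The plan is to compute $\pi_1^{\rm reg}(X(\mathcal{D}),x)$ by exploiting the $\mathbb{T}$-equivariant structure of $X(\mathcal{D})$ together with the $\mathbb{T}$-equivariant birational contraction $r\colon\widetilde{X}(\mathcal{D})\to X(\mathcal{D})$ and the projection $\pi\colon\widetilde{X}(\mathcal{D})\to\pp^1$. Starting from the open dense free $\mathbb{T}$-orbit and successively adjoining the smooth strata of each $\mathbb{T}$-invariant divisor, a toric Van Kampen argument will yield the claimed presentation. First, I would fix an analytic neighborhood $U$ of $x$ computing the regional fundamental group. The exceptional locus $E$ of $r$ consists of horizontal divisors corresponding to rays of $\sigma$ meeting $\deg(\mathcal{D})$; since these are $\mathbb{T}$-invariant and dominate $\pp^1$, one has $r(E)=\{x\}$ and $r$ is an isomorphism on $X(\mathcal{D})\setminus\{x\}$. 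The open free $\mathbb{T}$-orbit $X^\circ\subset U^{\rm sm}$ is isomorphic to $\mathbb{T}\times(\pp^1\setminus\{p_1,\dots,p_s\})$, so
\[
\pi_1(X^\circ)\cong N\times\langle b_1,\dots,b_s\mid b_1\cdots b_s=1\rangle,
\]
yielding the generators $t_1,\dots,t_r,b_1,\dots,b_s$ together with the first three families of relations in the statement.

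Next, I would enlarge $X^\circ$ to $U^{\rm sm}$ by adjoining the smooth strata of the $\mathbb{T}$-invariant divisors of $X(\mathcal{D})$. Away from the special fibers, $\widetilde{X}(\mathcal{D})$ is analytically locally $X(\sigma)\times(\text{disk})$, and the image in $X(\mathcal{D})$ retains exactly the toric opens associated with regular sub-cones $\tau\subset\sigma$ disjoint from $\deg(\mathcal{D})$ (the other rays of $\sigma$ are contracted by $r$). By the standard toric fundamental group calculation, adjoining such a $\tau$ kills $t^n$ for $n\in\tau\cap N$, producing the relation $t^n=1$ for every $n\in N_{\mathcal{D}}$. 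Near each $p_j$, the polyhedral divisor presents $\widetilde{X}(\mathcal{D})$ analytically as the toric variety attached to $\sigma(\mathcal{D},p_j)\subset N_\qq\oplus\qq$, where the extra $\qq$-factor encodes a local uniformizer around $p_j$ whose loop is identified with $b_j$. Regular sub-cones of $\sigma(\mathcal{D},p_j)$ disjoint from $\deg(\mathcal{D})$ then yield the vertical relations $t^{\pi_1(v)}b_j^{\pi_2(v)}=1$ for $v\in\mathcal{B}(\mathcal{D},p_j)$. The contracted horizontal divisors contribute no further relations: their generic points lie in the singular locus of $X(\mathcal{D})$, so loops around them are not killed; and such a loop equals $t^{\rho_{\rm prim}}$ for $\rho$ a ray of $\sigma$ meeting $\deg(\mathcal{D})$, which already lies in the subgroup generated by $t_1,\dots,t_r$.

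The hard part will be executing the toric Van Kampen step rigorously. Specifically, one must cover $\widetilde{U}$ by $\mathbb{T}$-invariant analytic opens corresponding to maximal regular sub-cones of $\sigma$ and of each $\sigma(\mathcal{D},p_j)$, match their pairwise overlaps with the presentation of $\pi_1(X^\circ)$ above, and verify that no additional identifications arise when gluing. In particular, one must identify carefully the local loop around $p_j$ in $\pi_1(\pp^1\setminus\{p_1,\dots,p_s\})$ with the primitive vector $(0,1)\in N\oplus\zz$ in the toric description of $\pi^{-1}(\text{disk}_{p_j})$, so that the relations coming from distinct regular sub-cones (both horizontal in $\sigma$ and vertical over the $p_j$'s) assemble consistently with $b_1\cdots b_s=1$ into exactly the presentation stated, without collapsing the group further.
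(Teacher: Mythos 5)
Your proposal follows essentially the same route as the paper's proof: work with $\widetilde{X}(\mathcal{D})^{\rm reg}\setminus{\rm Ex}(r)$, use the trivial toric fibration $X(\sigma)\times(\pp^1\setminus\{p_1,\dots,p_s\})$ over the generic locus together with the local toric models $X(\sigma(\mathcal{D},p_j))$ near each $p_j$, and glue via a Van Kampen argument, starting from the free $\mathbb{T}$-orbit and successively adjoining the strata indexed by regular sub-cones disjoint from $\deg(\mathcal{D})$. The paper delegates the gluing step to~\cite[Theorem 3.4]{LLM19}, and you likewise identify this as the technically demanding part, so the level of completeness is comparable. One small imprecision worth flagging: you assert $r(E)=\{x\}$ and that $r$ is an isomorphism on $X(\mathcal{D})\setminus\{x\}$, but in general a contracted horizontal divisor need not collapse all the way to the vertex; the paper uses only the weaker (and sufficient) fact that the images of the exceptional components have codimension at least two in $X(\mathcal{D})$, so that removing them does not alter $\pi_1^{\rm reg}$. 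Your subsequent reasoning does not actually rely on the stronger claim, so the argument survives, but the justification as written (``$\mathbb{T}$-invariant and dominating $\pp^1$ implies the image is $\{x\}$'') is not a valid inference and should be replaced by the codimension argument.
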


\begin{proof}
We have a good quotient 
$\widetilde{X}(\mathcal{D})\rightarrow \pp^1$.
This good quotient is trivial with fiber $X(\sigma)={\rm Spec}(\cc[\sigma^\vee \cap M])$
over $\pp^1\setminus \{p_1,\dots,p_s\}$.
Around each $p_i$ the variety $\widetilde{X}(\mathcal{D})$ has an analytic neighborhood diffeomorphic to
an analytic neighborhood of the fiber of
$X(\sigma(\mathcal{D},p))\rightarrow \mathbb{A}^1$ around zero (see, e.g.,~\cite[Example 2.5]{LS13}).
The equivariant birational contraction
$r\colon \widetilde{X}(\mathcal{D})\rightarrow X(\mathcal{D})$ contract the closure of the $\mathbb{T}$-invariant cycles of the form $X(\tau)\times (\pp^1 \setminus \{p_1,\dots,p_s\})$,
where $\tau\leqslant \sigma$ is a cone intersecting $\deg(\mathcal{D})$
(see, e.g.,~\cite[\S 5]{AIPSV12}).
In order to compute the regional fundamental group of $X(\mathcal{D})$ at $x$, it suffices to compute the regional fundamental group of $\widetilde{X}(\mathcal{D})\setminus {\rm Ex}(r)$.
Indeed, the image of every prime component of ${\rm Ex}(r)$ has codimension at least two in $X(\mathcal{D})$.
If the image of such component is contained in the singular locus of $X(\mathcal{D})$, then it does not contribute to the regional fundamental group.
Thus, it suffices to compute
$\pi_1^{\rm reg}(\widetilde{X}(\mathcal{D})\setminus {\rm Ex}(r))$.

A general fiber of 
$X(\mathcal{D})^{\rm reg}\setminus {\rm Ex}(r) \rightarrow \pp^1$ is isomorphic to the open subvariety of $X(\sigma)$ which corresponds to the regular sub-cones of $\sigma$ that does not intersect $\deg(\mathcal{D})$.
Around each $p_i$, the variety 
$\widetilde{X}(\mathcal{D})^{\rm reg} \setminus {\rm Ex}(r)$ is diffeomorphic to the open subvariety of $X(\sigma(\mathcal{D},p))$ corresponding to the regular sub-cones
of $\sigma(\mathcal{D},p)$ not intersecting $\deg(\mathcal{D})$.
Thus, we have a formally toric description of $\widetilde{X}(\mathcal{D})^{\rm reg} \setminus {\rm Ex}(r)$ 
over $\pp^1$.
Then, the rest of the description follows from applying Van Kampen Theorem to glue the fundamental group
of $X(\sigma)^{\rm reg} \times (\pp^1\setminus \{p_1,\dots,p_s\})$
with those of the analytic neighborhoods of the fibers of the $p_i$'s.
The proof proceeds similarly as in~\cite[Theorem 3.4]{LLM19}.
\end{proof}

In the above theorem, the loop $t_i$ corresponds to a loop around the $i$-th factor of the $r$-dimensional torus $\mathbb{T} \cong (\cc^*)^r$ of a general fiber of $\widetilde{X}(\mathcal{D})\rightarrow \pp^1$.
On the other hand, the loops $b_j$ correspond to liftings to $\mathcal{X}(\mathcal{D})$ of the loops around the points $p_j$ in $\pp^1$.

Note that the above description gives an explicit version of Corollary~\ref{cor:jordan-comp-1}.
Indeed, for the group $A$, we can consider the normal abelian group generated by the $t_i$'s.
Since we have at most three points for which $\mu_p$ is non-trivial,
we conclude that the quotient $\pi_1^{\rm reg}(X(\mathcal{D}),x)$ has order at most $60M$.
Indeed, the quotient $\pi_1^{\rm reg}(X(\mathcal{D}),x)/A$ admits a surjection from
$\pi_1^{\rm reg}(\pp^1,B(\mathcal{D}))$.

Theorem~\ref{thm:reg-compl-1}, gives a simple way to construct the universal cover of a complexity-one $\mathbb{T}$-singularity.
Let $\mathcal{D}$ be a proper polyhedral divisor on $(\pp^1,N)$.
Let $(\pp^1,B(\mathcal{D}))$ be the associated log Fano pair.
Let $p\colon (\pp^1, B')\rightarrow (\pp^1,B(\mathcal{D}))$ be the universal cover of 
$\pi_1(\pp^1, B(\mathcal{D}))$.
Then, $p^ *\mathcal{D}$ is a proper polyhedral divisor on $\pp^1$ and we have a finite quasi-\'etale Galois morphism
\[
p\colon (X(p^*\mathcal{D}), x')
\rightarrow (X(\mathcal{D}),x).
\]
We denote it by $p$ by abuse of notation.
Here, $x'$ is the unique pre-image of $x$.
We are considering the pull-back of proper polyhedral divisors as defined in~\cite[\S 8]{AH06}.
By Theorem~\ref{thm:reg-compl-1}, the regional fundamental group of $(X(p^*\mathcal{D}),x')$ is generated by the loops $t_i$.
In particular, it is abelian.
Hence, its universal cover is nothing else than an isogeny of the torus given by a lattice extension $N \hookrightarrow N'$.

Now, we turn to describe the Cox ring of an affine $\mathbb{T}$-variety of complexity one.
We restrict ourselves to the klt case, so the singularities will impose some restriction on the structure of the Cox ring.

\begin{definition}[Cf.~\cite{ABHW18}]
{\em
Let $\mathcal{D}$ be a proper polyhedral divisor 
on $(\pp^1,N)$ which defines a klt complexity one affine variety $X(\mathcal{D})$.

Fix integers $m \geq 0$, $n,r > 0$, and a partition $n=n_0 + \ldots + n_r$. For every $i=0,\ldots,r$, let $l_i=(l_{i1},\ldots,l_{in_i}) \in \zz^{n_i}$ with $l_{i1} \geq \ldots \geq l_{in_i}>0$ and $l_{i1} \geq \ldots \geq  l_{r1}$. Define monomials $T_i^{l_i}:=T_{i1}^{l_{i1}} \cdots T_{in_i}^{l_{in_i}}$ in the polynomial ring
\[
\cc[T_{ij},S_k]:=\cc[T_{ij},S_k; i=1,\ldots,r, j=1,\ldots,n_i, k=1,\ldots,m].
\]
Now, define pairwise different scalars $\theta_0 =1,\theta_1,\ldots,\theta_{r-a} \in \cc^*$ and for $i=0,\ldots, r-2$ a trinomial
\[
g_i:= \theta_iT_i^{l_i}+ T_{i+1}^{l_{i+1}}+ T_{i+2}^{l_{i+2}}.
\]
If the $\mathfrak{l}_i:=\max(l_{i1},\ldots,l_{in_i})$ are {\em platonic tuples}, i.e. of the form
\[
(5,3,2,1\ldots,1), (4,3,2,1\ldots,1), (3,3,2,1\ldots,1), (k,2,2,1\ldots,1), (k,l,1,1\ldots,1),
\]
then, we call the factor ring $R:=\cc[T_{ij},S_{k}]/\langle g_0,\ldots,g_{r-2}\rangle$ a {\em platonic ring}.
}
\end{definition}

Now, we have the following slight generalization of~\cite[Theorem 1.3]{ABHW18} that was first stated in~\cite[Theorem 5]{BraThesis}.

\begin{theorem}
Let $\mathcal{D}$ be a proper polyhedral divisor 
on $(\pp^1,N)$ which defines a klt complexity one affine variety $X(\mathcal{D})$.
Let $x\in X(\mathcal{D})$ be the vertex of the torus action.
Write $\mathcal{D}=\sum_{i=1}^s \Delta_i \otimes \{p_i\}$ and assume $\mu(p_i)=1$ for $i\geq 4$.
Then, the Cox ring of $(X(\mathcal{D}),x)$ is a platonic ring with associated tuple $(\mu(p_1),\mu(p_2),\mu(p_3))$.
\end{theorem}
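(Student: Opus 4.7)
The plan is to reduce the statement to the quasi-cone case treated in~\cite[Theorem 1.3]{ABHW18} (with explicit proof in~\cite{BraThesis}) by identifying the extra points $p_i$ with $\mu(p_i)=1$ as contributing only the auxiliary variables $S_k$ in the platonic presentation. First, I would invoke the general description of the Cox ring of a rational complexity-one $\mathbb{T}$-variety due to Hausen--S\"u\ss~\cite{HaSu10}: the prime $\mathbb{T}$-invariant Weil divisors of $\widetilde{X}(\mathcal{D})$ that are not contracted by $r\colon\widetilde{X}(\mathcal{D})\to X(\mathcal{D})$ generate the Cox ring, and the defining relations are in bijection with the identities among a basis of global sections of $\mathcal{O}_{\pp^1}(1)$, which are trinomials coming from the three-term Pl\"ucker relations on $\pp^1$.

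Second, I would normalize the configuration: after a M\"obius transformation I may assume $p_1,p_2,p_3$ are the three ``platonic'' points (their multiplicities $\mu(p_i)$ satisfying $\mathfrak{l}_i=\mu(p_i)$), and any remaining $p_4,\dots,p_s$ with $\mu(p_i)=1$ lie at fixed positions $\theta_i\in\cc^*$. For the three platonic points, each vertex $v$ of the slice $\Delta_{p_i}$ with denominator $\mu(v)$ contributes a generator $T_{ij}$ with exponent $l_{ij}=\mu(v)$ in the trinomial $g_{i-1}$; this is exactly the quasi-cone analysis of~\cite{ABHW18}. For the trivial-multiplicity points $p_i$ ($i\ge 4$), the slice $\Delta_{p_i}$ has all vertices integral, so the tuple $l_i$ becomes $(1,\dots,1)$, and these variables can either be absorbed into the $T$-indexing with unit exponents or equivalently played by the auxiliary variables $S_k$ in Definition.

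Third, I would check the platonic condition and the cutting-out of relations. The klt hypothesis on $X(\mathcal{D})$ together with~\cite{LS13} forces $(\pp^1,B(\mathcal{D}))$ to be log Fano, i.e.\ $\sum_i(1-\mu(p_i)^{-1})<2$; restricting to the at-most-three points with $\mu(p_i)\ge 2$ gives precisely the platonic condition on $(\mu(p_1),\mu(p_2),\mu(p_3))$. Completeness of the list of trinomials $g_0,\dots,g_{r-2}$ follows from a dimension count: the spectrum of the candidate presentation has the correct dimension $\dim X(\mathcal{D})+\rank\Cl(X(\mathcal{D}),x)$ and is irreducible, so the surjection from the polynomial ring factors through exactly these $r-1$ trinomial relations.

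The main obstacle is the careful bookkeeping of vertices versus points: several vertices of a single $\Delta_{p_i}$ collapse to different $T_{ij}$'s carrying different exponents, while several points with the same $\mu=1$ behavior each contribute a single generator but affect the scalars $\theta_i$ in the trinomials. This combinatorics was already tamed in~\cite{ABHW18,BraThesis} for the three-point case; the extension to $s\ge 3$ points requires only that each additional $\mu=1$ point $p_i$ adds a new linear summand $T_{i+2}^{l_{i+2}}$ with $l_{i+2}=(1,\dots,1)$ and a new scalar $\theta_{i-2}\ne 0,1$, yielding an additional trinomial of the same shape $g_{i-2}=\theta_{i-2}T_{i-2}^{l_{i-2}}+T_{i-1}^{l_{i-1}}+T_i^{l_i}$, which is precisely the general platonic ring.
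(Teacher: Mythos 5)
The paper does not prove this statement; it only cites it as a slight generalization of~\cite[Theorem 1.3]{ABHW18}, first appearing in~\cite{BraThesis}. So your proposal fills in a proof the paper leaves to the references, and your broad strategy is the right one: use the Hausen--S\"u{\ss} presentation of the Cox ring of a rational complexity-one $\mathbb{T}$-variety, read off the trinomial relations from the $\pp^1$-geometry, and deduce the platonic condition on $(\mu(p_1),\mu(p_2),\mu(p_3))$ from the log Fano condition on $(\pp^1,B(\mathcal{D}))$ via~\cite{LS13}.

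There is, however, a genuine confusion in your bookkeeping of generators. You open by identifying the points $p_i$ with $\mu(p_i)=1$ for $i\geq 4$ as ``contributing only the auxiliary variables $S_k$,'' and later assert these can ``equivalently'' be played by $S_k$. That is not correct. In the Hausen--S\"u{\ss} presentation, the free variables $S_k$ correspond to the horizontal prime divisors, i.e.\ to the rays of the recession cone $\sigma$ that are not contracted (those not meeting $\deg(\mathcal{D})$); they appear in no relation. A marked point $p_i$ with $\mu(p_i)=1$ but $\Delta_{p_i}\neq\sigma$ contributes a genuine $T$-block with all unit exponents \emph{and} an associated trinomial relation tying it to the other marked points of $\pp^1$. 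These play structurally distinct roles in the platonic ring --- you cannot trade one for the other --- so the ``either $\ldots$ or equivalently $\ldots$'' framing is a real error, even though your final paragraph recovers the correct picture (each extra $\mu=1$ point adds a $T$-block and a trinomial). You should strike the $S_k$ identification entirely and instead observe that the $S_k$ simply carry over unchanged from the three-point analysis. Separately, the dimension count you use to conclude that the ideal is exactly $\langle g_0,\ldots,g_{r-2}\rangle$ is not by itself enough: one must also verify that the candidate quotient ring is a domain (i.e.\ that the trinomial ideal is prime), which is a substantive step of the argument in~\cite{HaSu10,ABHW18} and should at least be cited rather than waved away.
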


Now, we turn to explicitly describe the possible Cox ring iterations in terms of the platonic Cox ring of a klt singularity of complexity one. The original reference is~\cite[Rem 6.7]{ABHW18}.

\begin{theorem}[Cf.~\cite{HW18}]
Let $\mathcal{D}$ be a proper polyhedral divisor on $(\pp^1,N)$ which defines a klt complexity one affine variety $X(\mathcal{D})$.
Then, the possible sequences of platonic triples arising from Cox ring iterations of $X(\mathcal{D})$ are the following:
\begin{itemize}
\item $(1,1,1)\rightarrow (2,2,2)\rightarrow (3,3,2)\rightarrow (4,3,2)$,
\item $(1,1,1)\rightarrow (x,x,1)\rightarrow (2x,2,2)$,
\item $(1,1,1)\rightarrow (x,x,1)\rightarrow (x,2,2)$, and 
\item $(l^{-1}l_0,l^{-1}l_1,1) \rightarrow (l_0,l_1,1)$ where $l:={\rm gcd}(l_0,l_1)>1$.
\end{itemize}
\end{theorem}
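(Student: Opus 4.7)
The plan is to combine the structural result that the Cox ring of a klt complexity one $\mathbb{T}$-singularity is a platonic ring (with an explicit platonic triple) together with a case-by-case analysis of how iteration transforms these triples. Throughout, I would work with the polyhedral divisor data $\mathcal{D} = \sum_{i=1}^{s} \Delta_i \otimes \{p_i\}$ and the associated log Fano pair $(\pp^1, B(\mathcal{D}))$, since by the previous theorem, the klt condition forces at most three points $p_i$ with nontrivial $\mu_{p_i}$, and the triple $(\mu(p_1),\mu(p_2),\mu(p_3))$ is platonic.

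First I would observe that the Cox ring of $X(\mathcal{D})$ is again klt (by Corollary~\ref{cor:loc-pot-klt}) and admits a torus action of complexity one or zero. The complexity drops to zero (toric case) precisely when the associated triple becomes $(1,1,1)$, in which case no further nontrivial iteration step occurs at the level of platonic triples. So the task is to determine, given a klt platonic triple $T = (\mu_1,\mu_2,\mu_3)$, which platonic triple $T'$ can appear as the triple of the next iteration $\mathrm{Cox}(X(\mathcal{D}))$. The key point, established in~\cite{ABHW18,HW18} and reflected in Proposition~\ref{prop:quot-pres} and Lemma~\ref{le:CoxCox}, is that passing to the Cox ring corresponds to taking the universal abelian cover of $(\pp^1,B(\mathcal{D}))$ at the level of bases and simultaneously passing to a lattice extension on the torus side. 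Thus the new base pair $(\pp^1, B')$ is obtained from the old one by removing those orbifold weights that become trivial under the quotient by $\mathrm{Cl}(X(\mathcal{D}))_{\rm tor}$, and the new triple is read off from $B'$.

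Next I would enumerate the finitely many platonic triples and, for each, compute $(\pp^1, B')$ explicitly via the abelianization of $\pi_1^{\rm reg}(\pp^1,B(\mathcal{D}))$, using the well-known finite list of finite subgroups of $\mathrm{SO}(3)$: cyclic (triples $(l_0,l_1,1)$), dihedral (triples $(x,2,2)$), tetrahedral $(3,3,2)$, octahedral $(4,3,2)$, and icosahedral $(5,3,2)$. In the cyclic case $(l_0,l_1,1)$ with $l=\gcd(l_0,l_1)>1$, the abelianization kills the common factor $l$, producing the next triple $(l_0/l,l_1/l,1)$; iterating once more yields $(1,1,1)$, as $\gcd(l_0/l,l_1/l)=1$. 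In the dihedral case $(x,2,2)$ the abelianization is $\zz/2\zz$ (for $x$ odd) or $(\zz/2\zz)^2$ (for $x$ even), leading after one step to a cyclic triple $(x,x,1)$ and then to $(1,1,1)$. For the three exceptional Platonic triples, the abelianizations are $\zz/3\zz, \zz/2\zz,$ and $1$ respectively, and tracing through shows $(4,3,2) \to (3,3,2) \to (2,2,2) \to (1,1,1)$, while $(5,3,2)$ is perfect and does not arise as an intermediate step of an iteration (only as a final step before passage to the scfc cover, which is not a complexity one singularity).

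The hard part will be verifying that the polyhedral datum transforms in precisely the way described, rather than merely the base pair $(\pp^1, B')$. For this I would invoke the interpretation of iteration of Cox rings via Corollary~\ref{cor:Cox-it-solv-cover} and the pullback formula for polyhedral divisors under finite covers from~\cite[\S 8]{AH06}, which ensures that after passing to the Cox ring, the new polyhedral divisor on the new $\pp^1$ is $p^*\mathcal{D}$ for the universal abelian cover $p$. Combined with the fact that torus-type steps (lattice extensions) do not change the triple, this confirms that the transformation of triples matches the abelianization computation above. Reading off all possible chains beginning from a klt triple and stopping at $(1,1,1)$ then yields exactly the four families listed in the theorem.
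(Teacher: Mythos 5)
The paper does not actually prove this theorem; it is stated with ``Cf.~\cite{HW18}'' and the remark that the original reference is~\cite[Rem 6.7]{ABHW18}, so there is no internal argument to compare against. Your strategy --- reading the Cox iteration off the derived series of $\pi_1^{\rm orb}(\pp^1,B(\mathcal{D}))$ via the finite abelian covers of the base, using Corollary~\ref{cor:abelianization} and Theorem~\ref{thm:reg-compl-1} to identify the torsion of the class group and pass to the universal abelian cover of the base orbifold --- is the standard route in the cited literature and your identifications of the spherical orbifold groups and their commutator subgroups are essentially correct.

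However, there is a genuine error in the cyclic case. You claim that from $(l_0/l,\,l_1/l,\,1)$ (with $\gcd(l_0/l,l_1/l)=1$) one further iteration yields $(1,1,1)$. This is false, and it visibly contradicts the theorem itself, whose fourth family is a single step $(l^{-1}l_0,l^{-1}l_1,1)\rightarrow(l_0,l_1,1)$ and does not continue to $(1,1,1)$. The reason: once $\gcd(l_0/l,l_1/l)=1$, the orbifold fundamental group $\pi_1^{\rm orb}(\pp^1,B)\cong\zz/\gcd\zz$ is trivial, so the presentation in Theorem~\ref{thm:reg-compl-1} shows $\pi_1^{\rm reg}(X(\mathcal{D}),x)$ is generated by the central torus generators $t_i$ alone, hence is already abelian. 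All remaining Cox steps are therefore vertical (lattice extensions on the torus side) and never alter the base pair $(\pp^1,B(\mathcal{D}))$, so the platonic triple freezes at $(l_0/l,l_1/l,1)$, not at $(1,1,1)$. You appear to be conflating ``$\pi_1^{\rm orb}$ is trivial'' with ``the orbifold structure is trivial''; these are different, since an orbifold $\pp^1$ with two coprime cone orders has trivial fundamental group but nontrivial orbifold data (as illustrated by the $E_8$ singularity, whose base triple is $(5,3,2)$ and whose class group is already trivial). A second, milder imprecision: after the finite abelian step the new polyhedral divisor is not simply $p^*\mathcal{D}$ on the covered $\pp^1$, because the torus-quotient step also changes the lattice $N$; this does not affect the triple, but it should be stated explicitly since the claimed citation to~\cite[\S 8]{AH06} only handles the base-change part.
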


Finally, the following theorem describes the simply connected factorial canonical cover of a klt singularity of complexity one.

\begin{theorem}
Let $\mathcal{D}$ be a proper polyhedral divisor on $(\pp^1,N)$ which defines a klt complexity one affine variety $X(\mathcal{D})$.
Let $p\colon \pp^1\rightarrow \pp^1$ be the universal cover of $(\pp^1,B(\mathcal{D}))$.
Then, the scfc cover is ${\rm Cox}(X(p^*\mathcal{D}))$.
\end{theorem}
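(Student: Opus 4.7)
The plan is to identify $\Cox(X(p^*\mathcal{D}))$ with the scfc cover of $X(\mathcal{D})$ by checking the universal characterization: it is factorial, canonical, with trivial regional fundamental group, and sits over $X(\mathcal{D})$ through a sequence of a torus quasi-torsor followed by a finite quasi-\'etale cover. The central computation is that $p^*\mathcal{D}$ has trivialized orbifold weights.

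First, I would unpack the pullback of polyhedral divisors (\cite{AH06}, \S 8): if $p\colon \pp^1\to \pp^1$ has ramification $e_q$ at $q$ over $p\in \pp^1$, then the polyhedron at $q$ in $p^*\mathcal{D}$ equals $e_q \Delta_p$. Since $p$ is the regional universal cover of $(\pp^1,B(\mathcal{D}))$, the ramification indices at preimages of $p_i$ equal $\mu_{p_i}$. Hence each vertex $v$ of $\Delta_{p_i}$ with $\mu(v)\leq \mu_{p_i}$ becomes a vertex $\mu_{p_i} v\in N$ at every preimage, whose own multiplicity is $1$. Consequently $B(p^*\mathcal{D})=0$, so by the [LS13]-type klt criterion recalled earlier, $X(p^*\mathcal{D})$ is a klt singularity at the vertex $x'$ of the torus action. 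In particular, the morphism $(X(p^*\mathcal{D}),x')\to (X(\mathcal{D}),x)$ is a finite quasi-\'etale Galois cover with Galois group $\pi_1^{\rm reg}(\pp^1,B(\mathcal{D}))$.

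Next, I apply Theorem~\ref{thm:reg-compl-1} to $p^*\mathcal{D}$. Because every $\mu_q=1$, none of the relations $t^{\pi_1(v)}b_j^{\pi_2(v)}$ introduces a non-commuting interaction among $b_j$'s; combined with the built-in commutators $[t_i,t_j]$ and $[t_i,b_j]$, the presentation collapses to an abelian group. Hence $\pi_1^{\rm reg}(X(p^*\mathcal{D}),x')$ is a finite abelian group. Let $Y\to X(p^*\mathcal{D})$ be the associated regional universal cover; it is a finite quasi-\'etale abelian Galois cover, $\pi_1^{\rm reg}(Y,y)=1$, and by Corollary~\ref{cor:abelianization} its Galois group is exactly the torsion of $\Cl(X(p^*\mathcal{D}))$. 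Thus $Y$ is precisely the finite cover ``$X_1$'' appearing in Lemma~\ref{le:CoxCox}(2) applied to $X(p^*\mathcal{D})$, and that lemma yields a canonical isomorphism
\[
\Cox(X(p^*\mathcal{D}))\ \cong\ \Cox(Y).
\]

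Finally, combining the two finite covers, $Y\to X(p^*\mathcal{D})\to X(\mathcal{D})$ is a single finite quasi-\'etale Galois cover whose group fits into the exact sequence of Theorem~\ref{thm:reg-compl-1}; it therefore realizes the full regional universal cover of $(X(\mathcal{D}),x)$. Now the scfc cover of $X(\mathcal{D})$ is, by Theorem~\ref{thm:scfc-cover}, the spectrum of the Cox ring of the regional universal cover. Since that universal cover is $Y$, the scfc cover equals $\Cox(Y)$, which by the previous paragraph equals $\Cox(X(p^*\mathcal{D}))$. Factoriality, canonicity, and triviality of $\pi_1^{\rm reg}$ of this ring are then automatic from Theorem~\ref{thm:scfc-cover} (alternatively, one verifies them directly: canonicity via Corollary~\ref{cor:CoxCoxFano}, factoriality because $\Cl(Y)$ is torsion free, and triviality of $\pi_1^{\rm reg}$ via Proposition~\ref{prop:fiber-prod-fincov-and-quotpres} applied to the torus quasi-torsor $\Cox(Y)\to Y$).

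The main technical obstacle is the bookkeeping in the first paragraph: verifying that the pullback of polyhedral divisors in the sense of \cite{AH06} really scales vertices by the ramification index, so that the orbifold universal cover on the base trivializes all $\mu_p$ simultaneously. Once that is established, the rest of the argument is an assembly of results already proved in the paper.
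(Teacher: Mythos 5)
The proposal follows the same skeleton as the paper (use Theorem~\ref{thm:reg-compl-1} to show $\pi_1^{\rm reg}(X(p^*\mathcal{D}),x')$ is abelian, then identify the scfc cover), but two steps are problematic.

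First, the claim $B(p^*\mathcal{D})=0$ is not justified. Pulling back scales the polyhedron at a preimage of $p_i$ by the ramification index $\mu_{p_i}$; a vertex $v$ with $\mu(v)=m$ then has multiplicity $m/\gcd(m,\mu_{p_i})$ after scaling, which equals $1$ only when $m$ divides $\mu_{p_i}$. Since $\mu_{p_i}$ is the \emph{maximum} over vertices of $\Delta_{p_i}$, there is no a priori reason why other vertices' multiplicities should divide it. The paper's proof avoids this issue: it does not assert $B(p^*\mathcal{D})=0$, only that the presentation from Theorem~\ref{thm:reg-compl-1} lets each $b_j$ be expressed in the $t_i$'s, which is a weaker and more robust statement.

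Second, and more seriously, the appeal to Lemma~\ref{le:CoxCox}(2) is a misreading. That lemma gives $\Cox(X_1)\cong\Cox(\overline{X})$, where $\overline{X}=\Spec\Cox(X(p^*\mathcal{D}))$; i.e.\ it identifies $\Cox(Y)$ (with $Y=X_1$ the universal cover) with the \emph{second} iterated Cox ring $\Cox^{(2)}(X(p^*\mathcal{D}))$, not with $\Cox(X(p^*\mathcal{D}))$ itself. The identity $\Cox(X(p^*\mathcal{D}))\cong\Cox(Y)$ you want holds precisely when $\Cox(X(p^*\mathcal{D}))$ is already factorial, so that the iteration stabilizes after one step. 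That fact is true here — the Cox ring is a platonic ring with trivial tuple and one can check factoriality from the structure theory in \cite{ABHW18} — but your argument does not supply it, so the proposal has a genuine gap at this point. (To be fair, the paper's own proof is similarly terse: it applies Theorem~\ref{thm:scfc=it-local} to get scfc $= \Cox^{\rm it}$ and then simply writes $\Cox$ in place of $\Cox^{\rm it}$, leaving the one-step stabilization implicit. But citing Lemma~\ref{le:CoxCox}(2) for the isomorphism $\Cox(X)\cong\Cox(X_1)$ is not a correct reading of that lemma; one would instead need to establish factoriality of $\Cox(X(p^*\mathcal{D}))$ directly, e.g.\ via the platonic ring description.)
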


\begin{proof}
We have a Galois quasi-\'etale finite morphism 
$X(p^*\mathcal{D})\rightarrow X(\mathcal{D})$.
By Theorem~\ref{thm:reg-compl-1}, we know that the regional fundamental group of 
$X(p^*\mathcal{D})$ is abelian and generated by the loops $t_1,\dots, t_r$.
Hence, by Theorem~\ref{thm:scfc=it-local}, we conclude that the scfc cover of $X(p^*\mathcal{D})$, which coincides with the scfc cover of $X(\mathcal{D})$, is isomorphic to
${\rm Cox}(X(p^*\mathcal{D}))$.
\end{proof}

\subsection{Proof of the theorems}
\label{subsec:proofs}
In this subsection, we explain how the theorems in the introduction follow from the theorems proved throughout the manuscript.

\begin{proof}[Proof of Theorem~\ref{introthm2-existence-iteration-local}]
Follows from Theorem~\ref{thm:bounded-iteration}.
\end{proof}

\begin{proof}[Proof of Theorem~\ref{introthm3-bounded-iteration-local}]
Follows from Theorem~\ref{thm:bounded-iteration}.
\end{proof}

\begin{proof}[Proof of Theorem~\ref{introthm4-bounded-dim-it-local}]
Follows from Theorem~\ref{thm:dim-bound-2-homotopy}.
\end{proof} 

\begin{proof}[Proof of Theorem~\ref{introthm-5-existence-scf-cover}]
Follows from Theorem~\ref{thm:scfc-cover}.
\end{proof} 

\begin{proof}[Proof of Theorem~\ref{introthm-6-univ-scf-cover}]
Follows from Theorem~\ref{thm-univ-scfc}.
\end{proof} 

\begin{proof}[Proof of Theorem~\ref{introthm7-smooth-it}]
Follows from Theorem~\ref{thm:smooth-it-local}.
\end{proof}

\begin{proof}[Proof of Theorem~\ref{introthm8-smooth-scfc}]
Follows from Theorem~\ref{thm:smooth-scfc-local}.
\end{proof}

\begin{proof}[Proof of Theorem~\ref{introthm9-equal-it-scfc}]
Follows from Theorem~\ref{thm:scfc=it-local}.
\end{proof} 

\begin{proof}[Proof of Theorem~\ref{introthm10-jordan-relative}]
Follows from Theorem~\ref{thm:rel-finiteness}.
\end{proof}

\begin{proof}[Proof of Theorem~\ref{introthm11-jordan-t-var}]
Follows from Theorem~\ref{thm:jordan-comp-k}.
\end{proof}

\begin{proof}[Proof of Theorem~\ref{introthm12-it-t-var}]
Follows from Theorem~\ref{thm:jordan-comp-k}
and the proof of Theorem~\ref{thm:bounded-iteration}.
\end{proof}

\section{Appendix: Table of covers} 
\label{appendix}

In this appendix, we summarize all the different categories of covers of klt singularities (or Fano type varieties) that we consider throughout this article.
We describe the category of covers over $X$, 
the group that acts on such covers, 
the inverse limit, 
and the main property of the inverse limit.\\

\begin{center} 
\textbf{Table 1.} Covers of klt singularities.\\
\vspace{0.5cm}
\begin{tabularx}
{0.8\textwidth}
{ 
  | >{\centering\arraybackslash}X 
  | >{\centering\arraybackslash}X 
  | >{\centering\arraybackslash}X 
  | >{\centering\arraybackslash}X |}

 \hline
 \begin{center}\textbf{   Category}\end{center} & 
 \begin{center}\textbf{Acting group}\end{center} & 
 \begin{center}\textbf{ Inverse limit }\end{center} &
 \begin{center}\textbf{ Main property}\end{center} \\
 \hline
 \begin{center} 
 Finite Galois quasi-\'etale covers 
 \end{center}& 
 \begin{center} 
 Finite group
 \end{center} & 
 \begin{center}
 Universal cover
 \end{center} &
 \begin{center} 
 Simply connectedness
 \end{center} \\ 
\hline
\begin{center} 
Abelian reductive quasi-\'etale covers
\end{center}
& 
\begin{center} 
Quasi-torus
\end{center} & 
\begin{center}
Cox ring
\end{center}
& 
\begin{center} 
$\mathbb{T}$-factoriality
\end{center} 
\\
\hline 
\begin{center} 
Solvable reductive quasi-\'etale covers
\end{center} &
\begin{center} 
Solvable reductive group
\end{center}&
\begin{center} 
Iteration of Cox ring
\end{center}&
\begin{center}
Factoriality
\end{center} \\ 
\hline 
\begin{center}
Finite-solvable quasi-\'etale covers
\end{center} &
\begin{center} 
Finite extensions of solvable reductive group
\end{center}&
\begin{center} 
Simply connected factorial canonical cover 
(scfc cover)\end{center} &
\begin{center} 
Simply connectedness and factoriality
\end{center} \\ 
\hline 
\end{tabularx}
\end{center} 

\vspace{0.5cm}

Note that all the group isomorphism classes
considered in the above table are closed under extensions.
The following diagram shows the natural morphisms between the different covers in the above table.
    \[
    \xymatrix{
    & {\rm Cox}^{\rm it}(X,\Delta;x)\ar[ld]^-{\phi_3} & (Y,\Delta;y)\ar[l]^-{\phi_1}\ar[dd]^-{\phi_2} \\
    {\rm Cox}(X,\Delta;x)\ar[d]^-{\phi_5} & & \\ 
    (X,\Delta;x) & & (\tilde{X},\tilde{\Delta},\tilde{x}) \ar[ll]^-{\phi_4}
    }
    \]
Here, $(\tilde{X},\tilde{\Delta};\tilde{x})$
is the universal cover of $(X,\Delta;x)$ and $(Y,\Delta_Y;y)$ is the scfc cover of $(X,\Delta;x)$.
We finish the appendix explaining when the morphisms in the above diagram are isomorphisms:
\begin{enumerate}
    \item By Theorem~\ref{thm:scfc=it-local}, $\phi_1$ is an isomorphism if and only if 
    $\pi_1^{\rm reg}(X,\Delta;x)$ is solvable.
    \item $\phi_{i}$ is an isomorphism if and only if the target is factorial for $i\in \{2,3,5\}$.
    \item $\phi_4$ is an isomorphism if and only if $\pi_1^{\rm reg}(X,\Delta;x)$ is trivial.
\end{enumerate}

\bibliographystyle{habbrv}
\bibliography{bib}

\end{document}